\pgfplotsset{compat=1.18}
\definecolor{fond}{rgb}{0.05,0.05,0.25}
\DeclareMathOperator{\Poiss}{Poiss}
\DeclareMathOperator{\dtv}{d_{TV}}
\DeclareMathOperator{\ch}{\mathrm{ch}}
\DeclareMathOperator{\Id}{\mathrm{Id}}
\DeclareMathOperator{\Cay}{\mathrm{Cay}}
\DeclareMathOperator{\Var}{\mathrm{Var}}
\DeclareMathOperator{\Unif}{\mathrm{Unif}}
\DeclareMathOperator{\cyc}{\mathrm{cyc}}
\DeclareMathOperator{\imin}{i_\mathrm{min}}
\DeclareMathOperator{\ibis}{i_\mathrm{bis}}
\DeclareMathOperator{\sgn}{sgn}
\setlist[itemize,1]{nosep}
\setlist[enumerate,1]{nosep,label=(\alph*)}
\newtheorem*{theorem*}{Theorem}
\newtheorem{theorem}{Theorem}[section]
\newtheorem{proposition}[theorem]{Proposition}
\newtheorem{lemma}[theorem]{Lemma}
\newtheorem{corollary}[theorem]{Corollary}
\newtheorem{example}[theorem]{Example}
\newtheorem{definition}[theorem]{Definition}
\newtheorem{conjecture}[theorem]{Conjecture}
\theoremstyle{remark}
\newtheorem{remark}{Remark}[section]
\newcommand{\cC}{{\ensuremath{\mathcal C}} }
\newcommand{\cL}{{\ensuremath{\mathcal L}} }
\newcommand{\bbN}{{\ensuremath{\mathbb N}} }
\newcommand{\bbZ}{{\ensuremath{\mathbb Z}} }
\newcommand{\bbR}{{\ensuremath{\mathbb R}} }
\newcommand{\bbP}{{\ensuremath{\mathbb P}} }
\newcommand{\kS}{{\ensuremath{\mathfrak{S}}} }
\newcommand{\kA}{{\ensuremath{\mathfrak{A}}} }
\newcommand{\kE}{{\ensuremath{\mathfrak{E}}} }
\newcommand{\ag}{\left\{ } 
\newcommand{\ad}{\right\} }
\newcommand{\cg}{\left[}
\newcommand{\cd}{\right]}
\newcommand{\pg}{\left(} 
\newcommand{\pd}{\right)}
\newcommand{\bg}{\left|}
\newcommand{\bd}{\right|}
\newcommand{\lf}{\left\lfloor}
\newcommand{\rf}{\right\rfloor}
\newcommand{\lc}{\left\lceil}
\newcommand{\rc}{\right\rceil}
\newcommand{\du}{{\ensuremath{\;:\;}}} 
\renewcommand{\P}{\mathbb{P}}
\newcommand{\E}{\mathbb{E}}
\DeclareMathOperator{\ST}{ST}
\newcommand*\bigcdot{\mathpalette\bigcdot@{.5}}
\newcommand*\bigcdot@[2]{\mathbin{\vcenter{\hbox{\scalebox{#2}{$\m@th#1\bullet$}}}}}
\numberwithin{equation}{section}
\title{Characters of symmetric groups: sharp bounds on virtual degrees and the Witten zeta function}
\author[1]{Lucas Teyssier}
\author[2]{Paul Thévenin}
\affil[1]{
University of British Columbia, \texttt{teyssier@math.ubc.ca}
}
\affil[2]{
Université d'Angers, \texttt{paul.thevenin@univ-angers.fr
}}
\date{}
\begin{document}

\renewcommand{\theparagraph}{\thesubsection.\arabic{paragraph}}
\maketitle

\begin{abstract}
We prove sharp bounds on the virtual degrees introduced by Larsen and Shalev. This leads to improved bounds on
characters of symmetric groups. We then sharpen bounds of Liebeck and Shalev concerning the Witten zeta function. Our main application is a characterization of the fixed-point free conjugacy classes whose associated random walk mixes in 2 steps.
\end{abstract}

\renewcommand\abstractname{Résumé}
\begin{abstract}
Nous prouvons des bornes précises sur les degrés virtuels introduits par Larsen et Shalev. Cela induit de meilleures bornes sur les caractères du groupe symétrique. Dans un second temps, nous améliorons certaines bornes de Liebeck et Shalev sur la fonction zeta de Witten. Notre application principale est une caractérisation des classes de conjugaison sans point fixe dont la marche aléatoire associée est mélangée au temps 2.
\end{abstract}

\renewcommand\abstractname{Resumo}
\begin{abstract}
Ni pruvas precizajn barojn je la virtualaj gradoj enkondukitaj de Larsen kaj Shalev. Tio induktas plibonigitajn barojn je karakteroj de la simetria grupo. Ni sekve pliakrigas barojn de Liebeck kaj Shalev pri la zeto-funkcio de Witten. Nia ĉefa aplikaĵo estas karakterizo de la senfikspunktaj konjugklasoj kies asociata hazarda promenado miksiĝas post 2 paŝoj.
\end{abstract}

\tableofcontents

\section{Introduction}

\subsection{Context}
Representation theory has been used to solve various problems in different areas of mathematics. A striking example is the pioneering work of Diaconis and Shahshahani on a random process called the random transposition shuffle \cite{DiaconisShahshahani1981}. They used character estimates to prove a sharp phase transition -- now known as the cutoff phenomenon -- concerning the minimum number of random transpositions whose product yields an almost uniform random permutation. This was the starting point of the field of mixing times, whose goal is to understand how long it takes for a random process to approach stationarity.

Other techniques to study mixing times were developed in the following years, notably by Aldous and Diaconis \cite{Aldous1983mixing, AldousDiaconis1986}, and mixing properties of emblematic card shufflings were precisely understood, also for small decks of cards \cite{BayerDiaconis1992}. We refer to \cite{LivreLevinPeres2019MarkovChainsAndMixingTimesSecondEdition} for an introduction to mixing times and to \cite{LivreDiaconisFulman2023} for the mathematics of card shuffling.

\medskip

A natural way to generalize Diaconis and Shahshahani's random transposition shuffle on the symmetric group $\kS_n$ is to replace the conjugacy class of transpositions -- from which we pick uniform elements -- by another conjugacy class.
In specific cases, a similar cutoff phenomenon was proved; for $k$-cycles (where $k>n/2$) in \cite{LulovPak2002}; for $k$-cycles (where $k$ is fixed) in \cite{BerestyckiSchrammZeitouni2011}; for $k$-cycles (where $k=o(n)$) in \cite{Hough2016}; and for all conjugacy classes whose support has size $o(n)$ in \cite{BerestyckiSengul2019} (the support of a permutation is the set of its non-fixed points). Most of these results rely on representation theory. However, interestingly, the proof of \cite{BerestyckiSchrammZeitouni2011} follows the cycle structure of the permutation obtained after several multiplications via probabilistic arguments, and that of \cite{BerestyckiSengul2019}, which was the first cutoff result for a large class of conjugacy classes, relies on a curvature argument. 

In recent years, several results were obtained concerning the cutoff profile of such processes, that is, what happens within the cutoff window (when we zoom in, around the phase transition). The cutoff profile of the random transposition shuffle was determined by the first author in \cite{Teyssier2020}, and this was later generalized to $k$-cycles (for $k=o(n)$) by Nestoridi and Olesker-Taylor \cite{NestoridiOlesker-Taylor2022limitprofiles}. We also mention that another proof of the cutoff profile for transpositions was recently obtained by Jain and Sawhney \cite{JainSawhney2024transpositionprofileotherproof}, using a hybrid technique that combines probabilistic and representation theoretic arguments.

In the companion paper \cite{Olesker-TaylorTeyssierThévenin2025sharpboundsandprofiles}, we extend the cutoff result of \cite{BerestyckiSengul2019} to all conjugacy classes with at least $n/(\ln n)^{1/4}$ fixed points, also with respect to the $L^2$ distance, and determine the cutoff profiles.

For conjugacy classes with cycle type $[r^{n/r}]$, Lulov \cite{Lulovthesis1996} proved that the mixing time is $3$ when $r=2$ and $2$ when $r\geq 3$. Lulov and Pak later conjectured in \cite[Conjecture 4.1]{LulovPak2002} that the mixing time is at most 3 for all fixed-point free conjugacy classes (i.e., without fixed points), and Larsen and Shalev \cite{LarsenShalev2008} proved that conjecture.

\medskip

Motivated by the study of mixing times of Markov chains (among various other problems), uniform bounds on characters have been established and sharpened over the past few decades. 
Roichman \cite{Roichman1996} proved bounds that led to the correct order of the mixing time of conjugacy class walks on $\mathfrak{S}_n$ that have a number of fixed points of order $n$. This result was later strengthened by Müller and Schlage-Puchta \cite{MullerSchlagePuchta2007precutoff}, who obtained results that are uniform over all conjugacy classes.

On the other hand, sharp bounds on characters of permutations with few fixed points were proved in the landmark paper \cite{LarsenShalev2008} of Larsen and Shalev. They proved several conjectures, among which the Lulov--Pak conjecture as discussed earlier. The bounds that we obtain here build on this paper; see Section \ref{s: description of the results of Larsen and Shalev} for a more detailed description of their character bounds.
In the companion paper \cite{Olesker-TaylorTeyssierThévenin2025sharpboundsandprofiles} we prove an optimal uniform character bound as well as the sharpest possible \textit{exponential} character bounds for permutations with enough fixed points.

An important tool to estimate characters of $\kS_n$ is the Murnaghan--Nakayama rule (see e.g. Theorem 3.10 in \cite{LivreMeliot2017}), which is a key element in the proofs of \cite{Roichman1996, MullerSchlagePuchta2007precutoff,LarsenShalev2008, Olesker-TaylorTeyssierThévenin2025sharpboundsandprofiles}. On the other hand, other techniques have been proved to be more powerful for certain diagrams. Féray and Śniady \cite{FeraySniady2011} found uniform bounds on characters that are especially good for diagrams that do not have a long first row or column, using a reformulation of Stanley's conjectured character formula. More precisely, \cite{Feray2010Stanleyformula} proves Conjecture 3 from \cite{Stanley2006conj}, while the reformulation of Stanley's formula appears as Theorem 2 in \cite{FeraySniady2011}.

In a different direction, Lifschitz and Marmor \cite{LifschitzMarmor2024charactershypercontractivity} established character bounds using hypercontractivity on symmetric groups (see also \cite{FilmusKindlerLifschitzMinzer2024,KeevashLifschitz2023sharp}). 
They obtained uniform bounds on characters for permutations with at most a given number of cycles (instead of considering the full cycle structure, as done in the previously mentioned works). This allows them to consider permutations having up to $n/(\ln n)^{O(1)}$ cycles, thus covering new cases.

\medskip

Bounds on characters play a crucial role in the broader context of finite groups, and were notably used to prove a conjecture due to Ore \cite{LiebeckOBrienShalevTiep2010Oreconjecture}. General bounds on characters of finite classical groups were recently obtained by Guralnick, Larsen and Tiep \cite{GuralnickLarsenTiep2020characterlevels1,GuralnickLarsenTiep2024characterlevels2}, and uniform bounds on characters for all finite quasisimple groups of Lie type were established by Larsen and Tiep \cite{LarsenTiep2024FiniteClassicalGroups}.

\medskip

A canonical tool used to convert character bounds for a group $G$ into practical estimates for applications is the Witten zeta function. It is defined as $\zeta(s) = \sum_\alpha (d_\alpha)^{-s}$ for $s>0$, where the sum is taken over all irreducible representations $\alpha$ of $G$, and $d_\alpha$ is the dimension of $\alpha$. Such a zeta function initially appeared in the computation of volumes of moduli spaces in Witten's work \cite{Witten1991gaugetheory} (see Equation (4.72) there). Liebeck and Shalev proved bounds for the symmetric group \cite{LiebeckShalev2004} and for Fuchsian groups \cite{LiebeckShalev2005}, which  were useful for a variety of applications, including mixing times and covering numbers \cite{LarsenShalev2008, LarsenShalevTiep2024characteristiccoveringnumbers,  KellerLifschitzScheinfeld2024}.  We refer to \cite{LarsenShalev2008}, \cite{LarsenTiep2024FiniteClassicalGroups}, and the survey article  \cite{Liebeck2017survey} for further applications of characters bounds.

\medskip

Throughout the paper, $\mathfrak{S}_n$ denotes the symmetric group of order $n$, and $\widehat{\mathfrak{S}_n}$ the set of its irreducible representations. For a permutation $\sigma$ and $i \in \{1, \ldots, n\}$, $f_i(\sigma)$ denotes the number of cycles of length $i$ in $\sigma$. By extension, for a conjugacy class $\cC$, we denote by $f_i(\cC)$ the number of cycles of length $i$ in any permutation $\sigma \in \cC$. In addition, we will use the convenient notation $f=\max(f_1,1)$. For convenience, we will use $\lambda$ to simultaneously denote a representation, the associated integer partition, and the associated Young diagram (see Section \ref{ssec:young diagrams representations and partitions} for definitions). If $E$ is a finite set, we denote by $\Unif_E$ the uniform probability measure on $E$.

\subsection{The Larsen--Shalev character bounds}\label{s: description of the results of Larsen and Shalev}
We describe here the results of Larsen and Shalev \cite{LarsenShalev2008}, whose improvement is the main purpose of the paper.
We assume familiarity with the representation theory of symmetric groups and will follow the notation from the textbook \cite{LivreMeliot2017}: we denote by $d_\lambda$ the dimension of a representation $\lambda$, $\ch^\lambda(\sigma)$ the character of $\lambda$ evaluated at a permutation $\sigma \in \mathfrak{S}_n$, and $\chi^\lambda(\sigma) = \frac{\ch^\lambda(\sigma)}{d_\lambda}$ the associated normalized character.

\medskip

Larsen and Shalev obtained character bounds by introducing and studying new objects. One of them is called the virtual degree $D(\lambda)$ of a representation $\lambda$. It is defined as

\begin{equation}\label{eq: def virtual degree}
    D(\lambda) := \frac{(n-1)!}{\prod_i a_i!b_i!},
\end{equation}
where $a_i = \lambda_i-i$ and $b_i = \lambda'_i-i$. Here $\lambda_i$ (resp. $\lambda'_i$) denotes the size of the $i$-th row (resp. $i$-th column) of the Young diagram associated to $\lambda$ (see Section \ref{ssec:young diagrams representations and partitions} for precise definitions). 
The virtual degree is a convenient substitute for the dimension $d_\lambda$, which allows for easier computations. See Section \ref{s:diagram notation} for more details.

Another object introduced in \cite{LarsenShalev2008} is the orbit growth exponent $E(\sigma)$. Let $n\geq 1$ and $\sigma\in\mathfrak{S}_n$. Denote by $f_i(\sigma)$ the number of cycles of length $i\geq 1$ of $\sigma$, and for $k\geq 1$, set $\Sigma_k = \Sigma_k(\sigma):= \sum_{1\leq i\leq k} if_i(\sigma)$. We define the orbit growth sequence $(e_i)_{i \geq 1}$ by $n^{e_1+\ldots+e_i}=\max\pg \Sigma_i, 1 \pd$ for all $i \geq 1$, and set
\begin{equation}
\label{eq:orbit growth exponent}
    E(\sigma) = \sum_{i\geq 1} \frac{e_i}{i}.
\end{equation}

A key intermediate result that Larsen and Shalev proved is the following, which we will refer to as a theorem.

\begin{theorem}[\cite{LarsenShalev2008}]\label{thm:LS concrete bound}
    For any $n\geq 1$, $\sigma \in \mathfrak{S}_n$ and $\lambda \in \widehat{\mathfrak{S}_n}$, we have the following character bound:
    \begin{equation}
        \bg \ch^\lambda(\sigma)\bd\leq D(\lambda)^{E(\sigma)}.
    \end{equation}
\end{theorem}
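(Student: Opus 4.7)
The plan is to combine the Murnaghan--Nakayama rule with a sharp per-step estimate comparing the virtual degrees of Young diagrams that differ by a border strip.

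\textbf{Step 1: Murnaghan--Nakayama peeling.}
Decompose $\sigma$ as a product $c_1 c_2 \cdots c_m$ of disjoint cycles, ordered by non-increasing length $i_1 \geq i_2 \geq \cdots \geq i_m$. Iterating the Murnaghan--Nakayama rule (removing $c_1$, then $c_2$, and so on) expresses the character as a signed sum
\begin{equation*}
    \ch^\lambda(\sigma) \;=\; \sum_{\lambda = \mu^{(0)} \supset \mu^{(1)} \supset \cdots \supset \mu^{(m)} = \emptyset} \prod_{j=1}^m (-1)^{\height(\mu^{(j-1)}/\mu^{(j)})}
\end{equation*}
over chains of Young diagrams in which $\mu^{(j-1)}/\mu^{(j)}$ is a border strip of size $i_j$. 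Taking absolute values bounds $|\ch^\lambda(\sigma)|$ by the number of such admissible chains, and more flexibly opens the door to a weighted induction.

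\textbf{Step 2: Single-step virtual-degree estimate.}
The combinatorial heart of the argument is a bound of the form
\begin{equation*}
    \sum_{\mu :\, \nu/\mu \text{ border strip of size } k} D(\mu)^{\theta} \;\leq\; D(\nu)^{\theta + \eta_k(|\nu|)}
\end{equation*}
for any diagram $\nu$ and any $k \geq 1$, where $\eta_k(|\nu|)$ is an explicit quantity matching the contribution of a single $k$-cycle to $E(\sigma)$ when $\sigma \in \kS_{|\nu|}$. The proof is a direct manipulation of the factorial product \eqref{eq: def virtual degree}: removing a border strip of size $k$ from $\nu$ amounts to decreasing exactly one of the coordinates $a_i = \nu_i - i$ or $b_i = \nu'_i - i$ by $k$, the others being unchanged. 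The ratio $D(\mu)/D(\nu)$ therefore reduces to a product of two factorials, and summing over the $O(k)$ valid shifted coordinates yields the claimed inequality.

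\textbf{Step 3: Iteration and identification of the exponent.}
A strong induction on the number of cycles, applying the Step 2 estimate to each peeling, gives
\begin{equation*}
    |\ch^\lambda(\sigma)| \;\leq\; D(\lambda)^{\sum_{j=1}^m \eta_{i_j}(|\mu^{(j-1)}|)}.
\end{equation*}
It remains to identify the total exponent with $E(\sigma) = \sum_i e_i/i$, where the $e_i$ are determined by the partial sums $\Sigma_i = \sum_{l \leq i} l f_l(\sigma)$. Since $|\mu^{(j-1)}|$ is determined by the cycles of length $\geq i_j$, this identification is a bookkeeping exercise; the Abel-type rearrangement
\begin{equation*}
    \sum_{i \geq 1} \frac{e_i}{i} \;=\; \sum_{i \geq 1} \frac{e_1 + \cdots + e_i}{i(i+1)}
\end{equation*}
reorganizes the cycle-by-cycle contributions into the cumulative form defining $E(\sigma)$. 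The base case $\sigma = \mathrm{id}$ amounts to $d_\lambda \leq D(\lambda)$, a classical consequence of the hook-length formula.

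\textbf{Main obstacle.}
The crucial step is the single-step estimate in Step 2: it must be sharp enough, with no extraneous multiplicative constants, for iterating it through up to $m$ peelings to produce a bound whose exponent matches $E(\sigma)$ exactly. This sharpness hinges on a careful analysis of the factorial product defining $D$ and on summing efficiently over border strips of a given size. A secondary subtlety is matching the accumulated exponent to the specific quantity $E(\sigma)$ rather than some weaker surrogate like $\sum_i f_i/i$: the choice of peeling order (longest cycles first) and the definition of $E$ through the cumulative partial sums $\Sigma_i$, rather than through the raw frequencies $f_i$, are precisely what makes the bookkeeping work out.
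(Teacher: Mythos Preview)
Your overall strategy---Murnaghan--Nakayama induction combined with tracking how $D$ changes under rim-hook removal---is indeed the Larsen--Shalev approach, but Step~2 contains concrete errors. The claim that removing a size-$k$ border strip ``amounts to decreasing exactly one of the coordinates $a_i$ or $b_i$ by $k$'' is incomplete: when the strip crosses the diagonal it instead deletes a pair $(a_i, b_j)$ with $a_i + b_j + 1 = k$ and the Frobenius rank drops, giving $D(\mu)/D(\nu) = a_i!\, b_j!/(|\nu|-1)^{\downarrow k}$ rather than a single falling-factorial ratio. More seriously, ``$O(k)$ valid shifted coordinates'' is false: the number of rim hooks of size $k$ in $\nu$ equals the number of cells of $\nu$ with hook length exactly $k$, which can be as large as $|\nu|/k$ (take $\nu = [k^{|\nu|/k}]$) or of order $\sqrt{|\nu|}$ for $k=1$. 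So the key inequality in Step~2 cannot be established by a term count; a genuine estimate is needed.

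There is also a structural mismatch between Steps~2 and~3. A standalone single-cycle bound $\sum_\mu D(\mu)^\theta \leq D(\nu)^{\theta + \eta_k(|\nu|)}$ valid for \emph{every} $\theta$ and every $\nu$ forces $\eta_k$ to absorb the worst case over $\theta$, whereas identifying the accumulated exponent with $E(\sigma)$ pins $\eta_k$ down very specifically; you have not shown these two requirements are compatible. Larsen--Shalev avoid this tension by peeling all cycles of the current maximum length $k$ at once, passing from $\kS_n$ directly to $\kS_{\Sigma_{k-1}}$. The relation $e_j = e'_j \cdot \ln \Sigma_{k-1}/\ln n$ between the two orbit-growth sequences then makes the exponent bookkeeping transparent, and the per-step inequality becomes a concrete estimate on weighted counts of rim-hook tableaux of a fixed strip size---this is their induction hypothesis~Eq.~(17).
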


This is proved via an elegant induction in the proof of their main theorem (for reference, the induction hypothesis is \cite[Eq. (17)]{LarsenShalev2008}), where the orbit growth exponent $E(\sigma)$ naturally appears. They also showed that $D(\lambda)$ cannot be much larger than $d_\lambda$.

\begin{theorem}[\cite{LarsenShalev2008}, Theorem 2.2]\label{thm:LS virtual degree asymptotic bound}
    As $|\lambda|\to \infty$, we have
    \begin{equation}
        D(\lambda) \leq d_\lambda^{1+o(1)}.
    \end{equation}
\end{theorem}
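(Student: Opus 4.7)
The plan is to derive a closed-form identity for the ratio $D(\lambda)/d_\lambda$ in Frobenius coordinates and then estimate it. Let $d$ denote the size of the Durfee square of $\lambda$, and write $(a_1,\dots,a_d \mid b_1,\dots,b_d)$ for its Frobenius coordinates, where $a_i = \lambda_i - i$ and $b_j = \lambda'_j - j$ form two strictly decreasing sequences of nonnegative integers. Combining the hook length formula (via the Cauchy determinant identity applied to the matrix $\bigl(1/(a_i + b_j + 1)\bigr)_{i,j=1,\dots,d}$) with \eqref{eq: def virtual degree} yields the starting identity
$$\frac{D(\lambda)}{d_\lambda} = \frac{1}{n} \cdot \frac{\prod_{i,j=1}^{d}(a_i + b_j + 1)}{\prod_{1 \le i < j \le d}(a_i - a_j)(b_i - b_j)}.$$

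For the numerator, which equals the product of hook lengths in the Durfee square, the identity $\sum_{i,j=1}^d (a_i + b_j + 1) = d(n-d) + d^2 = dn$ and the AM--GM inequality give $\prod_{i,j}(a_i + b_j + 1) \le (n/d)^{d^2}$. For the denominator, strict monotonicity of the Frobenius coordinates yields $a_i - a_j \ge j - i$, hence $\prod_{1 \le i<j \le d}(a_i - a_j) \ge \prod_{k=0}^{d-1} k!$, with the same bound for the $b_j$'s. Applying Stirling to this super-factorial leads to
$$\log\frac{D(\lambda)}{d_\lambda} \le d^2 \log(n/d^2) + O(d^2).$$

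The final step is to show the right-hand side is $o(\log d_\lambda)$. When $d = 1$ (so that $\lambda$ is a hook) one verifies $D(\lambda) = d_\lambda$ directly; when $d$ is of order $\sqrt n$, $\log d_\lambda$ is of order $n \log n$, which dominates the bound above. The intermediate regime requires a more refined argument: one separates representations of small dimension (a finite list of near-hook or two-row shapes, treated by direct computation of the Frobenius coordinates) from those of large dimension, for which one establishes a lower bound $\log d_\lambda \ge \Omega(d^2 \log(en/d^2))$ by a careful analysis of the hooks outside the Durfee square. The main obstacle is to ensure that the $O(d^2)$ error term in the bound above is genuinely of smaller order than $\log d_\lambda$ uniformly over all shapes; this requires sharpening the AM--GM step by using that $a_i + b_j + 1$ is substantially below its average away from the corner $i = j = 1$, and combining this with a matching refinement of the lower bound on $\log d_\lambda$ obtained from the arms and legs of $\lambda$.
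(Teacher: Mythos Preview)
Your starting identity
\[
\frac{D(\lambda)}{d_\lambda} \;=\; \frac{1}{n}\,\frac{\prod_{i,j=1}^{d}(a_i+b_j+1)}{\prod_{1\le i<j\le d}(a_i-a_j)(b_i-b_j)}
\]
is correct and is a very natural point of departure, but the estimates you apply to it are too coarse, and the gap you flag at the end is real and is not closed by the remarks you offer. Concretely, take $\lambda=[n-2,2]$. Here $d=2$, $(a_1,a_2)=(n-3,0)$, $(b_1,b_2)=(1,0)$, so the true ratio is $\tfrac{2(n-1)(n-2)}{n(n-3)}\to 2$, while your AM--GM bound on the numerator gives $(n/2)^4$ and your superfactorial bound on the denominator gives $1$, so your displayed inequality yields $\log(D/d_\lambda)\le 4\log n+O(1)$ against $\log d_\lambda\sim 2\log n$. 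The ratio of these is $2+o(1)$, not $o(1)$. More generally, for any \emph{fixed} $d\ge 2$ and $\lambda_1$ close to $n$, your bound is $\sim d^2\log n$ whereas $\log d_\lambda\sim (n-\lambda_1)\log n$ with $n-\lambda_1\ge d(d-1)$; the ratio tends to a positive constant, not to zero. So ``a finite list of near-hook or two-row shapes treated by direct computation'' does not suffice: for every fixed $d$ there is an infinite family of shapes on which the crude bound fails by a constant factor in the exponent. The losses in the AM--GM step and in the $a_i-a_j\ge j-i$ step are each of the same order as $\log d_\lambda$ in this regime, so neither can be ignored; a genuine proof along your lines would have to replace both bounds simultaneously by something that tracks the actual arm/leg structure, and you have not indicated how to do this.

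By contrast, the paper (which proves the sharper bound $D(\lambda)\le d_\lambda^{1+C/\ln n}$) avoids this difficulty by a completely different route. It introduces an intermediate quantity, the \emph{augmented dimension} $d_\lambda^+$, and factors $D(\lambda)=\tfrac{\prod_i s_i}{n}\,d_\lambda^+$ where $s_i=a_i+b_i+1$. The diagonal factor $\tfrac{\prod_i s_i}{n}$ is handled directly by elementary bounds (Proposition~\ref{prop:estimée virtual degree augmented dimension}), and the comparison $d_\lambda^+\le d_\lambda^{1+C/\ln n}$ is proved by strong induction on $n$, peeling off the external hook $s=\lambda^1$ and using the key dimension estimate $d_\lambda\ge e^{-6\sqrt{|c|}}\binom{n}{|c|}d_sd_c$ (Proposition~\ref{prop: borne abdelta et dimension centre}(b)), together with the exact multiplicativity $d_\lambda^+=\binom{n}{|c|}d_s^+d_c^+$. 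This inductive hook-peeling is what circumvents the uniform-in-$d$ loss that your global AM--GM incurs.
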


Combining Theorem \ref{thm:LS concrete bound}, Theorem \ref{thm:LS virtual degree asymptotic bound} and the fact that $E(\sigma) \leq 1$ for all $\sigma$, they obtain the following character bound.

\begin{theorem}[\cite{LarsenShalev2008}, Theorem 1.1 (a)]\label{thm:LS form E(sigma) + o(1)}
    As $n\to \infty$, we have uniformly over all $\sigma \in \mathfrak{S}_n$ and $\lambda \in \widehat{\mathfrak{S}_n}$, 
    \begin{equation}
        \bg \ch^\lambda(\sigma)\bd \leq d_\lambda^{E(\sigma) + o(1)}.
    \end{equation}
\end{theorem}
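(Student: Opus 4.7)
The proof is essentially a direct combination of Theorems \ref{thm:LS concrete bound} and \ref{thm:LS virtual degree asymptotic bound}, using the basic bound $E(\sigma) \leq 1$, so my plan is to simply assemble these three ingredients while being careful about uniformity.

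Fix $\varepsilon > 0$. By Theorem \ref{thm:LS virtual degree asymptotic bound}, there exists $N = N(\varepsilon)$ such that for every partition $\lambda$ with $|\lambda| \geq N$,
\[
    D(\lambda) \leq d_\lambda^{1+\varepsilon}.
\]
This bound is uniform in $\lambda$ (it depends only on $|\lambda|$), which is exactly what we need in order to get a $o(1)$ uniform in $\lambda \in \widehat{\mathfrak{S}_n}$. Now for any $n \geq N$, any $\sigma \in \mathfrak{S}_n$, and any $\lambda \in \widehat{\mathfrak{S}_n}$, Theorem \ref{thm:LS concrete bound} gives
\[
    |\ch^\lambda(\sigma)| \leq D(\lambda)^{E(\sigma)} \leq d_\lambda^{(1+\varepsilon)E(\sigma)}.
\]

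Next I would invoke the fact (noted just before the statement) that $E(\sigma) \leq 1$ for every $\sigma$; this is where it is crucial that $E$ is bounded, so that multiplying the exponent $1+\varepsilon$ back into $E(\sigma)$ only produces an additive error:
\[
    (1+\varepsilon) E(\sigma) = E(\sigma) + \varepsilon E(\sigma) \leq E(\sigma) + \varepsilon.
\]
Therefore, provided $d_\lambda \geq 1$ (which is always the case),
\[
    |\ch^\lambda(\sigma)| \leq d_\lambda^{E(\sigma)+\varepsilon},
\]
uniformly over $\sigma \in \mathfrak{S}_n$ and $\lambda \in \widehat{\mathfrak{S}_n}$, as soon as $n \geq N(\varepsilon)$. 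Since $\varepsilon > 0$ was arbitrary, this is precisely the statement that the exponent is $E(\sigma) + o(1)$ as $n \to \infty$.

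There is no real obstacle here since all the work has been done in the two preceding theorems; the only point to verify is the two cases where the passage from $(1+\varepsilon)E(\sigma)$ to $E(\sigma) + \varepsilon$ might look degenerate, namely (i) the trivial and sign representations where $d_\lambda = 1$, for which $|\ch^\lambda(\sigma)| = 1$ equals the right-hand side, and (ii) the case $\sigma = \mathrm{id}$, for which $E(\sigma) = 1$ and both sides equal $d_\lambda^{1+o(1)}$, which is consistent with the trivial equality $\ch^\lambda(\mathrm{id}) = d_\lambda$.
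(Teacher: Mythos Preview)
Your proof is correct and follows exactly the approach indicated in the paper: combine Theorem~\ref{thm:LS concrete bound} with Theorem~\ref{thm:LS virtual degree asymptotic bound} and the trivial bound $E(\sigma)\leq 1$. The uniformity and the degenerate cases you check are handled just as they should be.
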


The orbit growth exponent $E(\sigma)$ defined in \eqref{eq:orbit growth exponent} looks complicated at first glance, but it is actually simple to compute and appears to lead to the best possible character bounds in different regimes such as in \eqref{eq:bound Larsen Shalev on the Lulov Pak conjecture} below.

Let us give a few important examples of values of $E(\sigma)$. We always denote by $f_1$ the number of fixed points of a permutation $\sigma$ and set $f=\max(f_1,1)$.
\begin{example}\label{ex:values of E(sigma)}
\begin{enumerate}
    \item Assume that $\sigma \sim [2^{n/2}]$, then $e_2 = 1$ and $e_i = 0$ for $i\ne 2$, so $E(\sigma) = 1/2$.
    \item Assume that $\sigma$ has no fixed point, then $e_2+e_3+... = 1$ so $E(\sigma) \leq 1/2$.
    \item Assume that $\sigma$ has at least one fixed point, then $e_1 = \frac{\ln f}{\ln n}$ so $E(\sigma) \leq e_1 + \frac{1}{2}(1-e_1) = \frac{1+e_1}{2} = \frac{1}{2} + \frac{\ln f}{2\ln n}$, that is, $E(\sigma) - 1 \leq -\frac{1}{2}\frac{\ln (n/f)}{\ln n}$
    \item Assume that $\sigma\sim [k,1^{n-k}]$, then $E(\sigma) = 1/n$ if $k=n$ (i.e. if $\sigma$ is an $n$-cycle) and if $f_1\geq 1$, $E(\sigma) = e_1+\frac{e_k}{k} = e_1 + \frac{1-e_1}{k} = \frac{\ln f}{\ln n} + \frac{\ln(n/f)}{k\ln n}$.
\end{enumerate}
\end{example}

Combining Theorem \ref{thm:LS form E(sigma) + o(1)} with Example \ref{ex:values of E(sigma)} (b), Larsen and Shalev obtained the following bound as $n\to \infty$, uniform over fixed-point free permutations $\sigma \in \mathfrak{S}_n$ and irreducible representations $\lambda \in \widehat{\mathfrak{S}_n}$,
\begin{equation}\label{eq:bound Larsen Shalev on the Lulov Pak conjecture}
    \bg\ch^\lambda(\sigma)\bd \leq d_\lambda^{1/2+o(1)}.
\end{equation}
This allowed them to prove the abovementioned conjecture of Lulov and Pak, and even extend it (in Theorem 1.8) to the mixing time of conjugacy classes that have a small number of fixed points.

\subsection{Main results}\label{s: main results}

We present here our main contributions: the main result, Theorem \ref{thm:ALS virtual degree asymptotic bound}, which improves on Theorem \ref{thm:LS virtual degree asymptotic bound}, provides uniform bounds on virtual degrees and allows us to obtain refined character bounds (Theorem \ref{thm:ALS improved character bound}). Proposition \ref{prop: bounds witten zeta intro iff} provides bounds on the Witten zeta function, which we apply together with Theorem \ref{thm:ALS improved character bound} to characterize fixed-point free conjugacy classes that mix in 2 steps (Theorem \ref{thm: application Lulov Pak extended}). We also deduce from our bounds that conjugacy classes with $o(\sqrt{n})$ cycles mix fast (Theorem \ref{thm: mixing time 2 if few cycles}), and finally that there is a cutoff whenever the conjugacy class has a large support and no short cycles (Theorem \ref{thm: cutoff large support no short cycles}).

\begin{theorem}
    \label{thm:ALS virtual degree asymptotic bound}
    There exists a universal constant $C$ such that, for every $n\geq 2$ and any integer partition $\lambda\vdash n$, we have
    \begin{equation}\label{eq:virtual degree asymptotic bound}
        D(\lambda) \leq d_\lambda^{1+\frac{C}{\ln n}}.
    \end{equation}
\end{theorem}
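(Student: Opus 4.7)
The plan is to extract an exact formula for $D(\lambda)/d_\lambda$ in terms of the Frobenius coordinates of $\lambda$ and then quantify each factor. Write the Frobenius data as $(a_1,\dots,a_r \mid b_1,\dots,b_r)$, where $r$ is the side of the Durfee square, $a_i = \lambda_i - i$, $b_i = \lambda'_i - i$, so that $a_1 > \cdots > a_r \geq 0$ and $b_1 > \cdots > b_r \geq 0$, and set $L_i := a_i + b_i + 1$ for the hook length of the $i$-th diagonal cell, which satisfies $\sum_i L_i = n$. Starting from the hook length formula $d_\lambda = n!/\prod_c h(c)$ and factoring $\prod_c h(c)$ along the $r$ principal hooks, a direct computation yields the identity
\[
\frac{D(\lambda)}{d_\lambda} \;=\; \frac{\prod_{i=1}^r L_i}{n}\,\prod_{1\leq i<j\leq r}\left(1 + \frac{L_iL_j}{(a_i-a_j)(b_i-b_j)}\right),
\]
in which every factor is at least $1$ since the Frobenius coordinates are strictly decreasing. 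Equivalently, the cross factor is $(a_i+b_j+1)(a_j+b_i+1)/((a_i-a_j)(b_i-b_j))$, where the numerators are products of two hook lengths of $\lambda$ inside the Durfee square.

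Taking logarithms, one bounds each factor. For the first, AM--GM gives $\log(\prod L_i/n) \leq (r-1)\log(n/r)$. For the cross factors, using $a_i - a_j \geq j-i$ and $b_i - b_j \geq j-i$ (strict integer decrease), together with $a_i+b_j+1, a_j+b_i+1 \leq L_i$, and the Stirling estimate $\prod_{i<j}(j-i)^2 = G(r+1)^2$ for the Barnes $G$-function, one obtains an upper bound of the form $\log(D(\lambda)/d_\lambda) \leq C_1 r^2 + C_2 r \log(n/r)$ for universal constants $C_1, C_2$. A matching lower bound $\log d_\lambda \geq c\bigl(r^2 \log(n/r) + r \log n\bigr)$ follows from the analogous Frobenius expression for $d_\lambda$ via Stirling, except in the degenerate case $r = 1$ of hook partitions, where one checks directly that $D(\lambda) = d_\lambda$. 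Dividing through, $\log(D(\lambda)/d_\lambda) \leq C \log d_\lambda / \log n$ for a universal $C$, which is the claim.

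The main obstacle is to make the estimates uniform across the full range $r \in \{1, \ldots, \lfloor\sqrt n\rfloor\}$. The extreme cases ($r=1$ and $r \asymp \sqrt n$) are clean but use different ingredients: the former is essentially trivial, while the latter saturates the bound and forces $C \geq 3$ (as can be seen by inspection of the rectangular partition $(r^r)$). The intermediate regime, where neither the Durfee square nor the tails dominate, requires a careful interpolation with sharp Stirling constants to ensure that the $O(r \log(n/r))$ and $O(r^2)$ contributions to $\log(D/d_\lambda)$ are each absorbed by the corresponding contribution to $\log d_\lambda/\log n$.
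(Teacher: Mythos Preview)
Your Frobenius identity for $D(\lambda)/d_\lambda$ is correct, but the claimed upper bound $\log(D(\lambda)/d_\lambda)\leq C_1r^2+C_2r\log(n/r)$ is false. Take the rectangle $\lambda=[K^r]$ with $K\gg r$ (so $n=Kr$): here every cross factor is $(K+r-i-j+1)^2/(j-i)^2$, and a Stirling computation gives $\log(D/d_\lambda)=r^2\log(K/r)+O(r^2)$. For $K=r^3$ this is $\sim 2r^2\log r$, which exceeds $C_1r^2+3C_2r\log r$ for any fixed $C_1,C_2$ once $r$ is large. Your sketched argument (numerators $\leq L_i^2$, denominators $\geq(j-i)^2$, then Stirling on $G(r+1)$) in fact only yields $\log(D/d_\lambda)\lesssim r^2\log(n/r^2)+r^2$, which is tight on rectangles and does not imply your stated bound. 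More seriously, the whole strategy of bounding $\log(D/d_\lambda)$ from above and $\log d_\lambda$ from below by functions of $(r,n)$ alone cannot work. Already for $r=2$: the two-equal-rows shape $[\tfrac n2,\tfrac n2]$ has $D/d_\lambda=\tfrac{k(k^2-1)}2$ with $k=n/2$, so $\log(D/d_\lambda)\sim 3\log n$; while the near-hook $[n-2,2]$ has $\log d_\lambda\sim 2\log n$. Thus $\bigl(\sup_\lambda\log(D/d_\lambda)\bigr)\big/\bigl(\inf_\lambda\log d_\lambda\bigr)$ over $r=2$ shapes is bounded away from $0$, not $O(1/\log n)$. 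The theorem holds for each shape individually because the two quantities are correlated within a shape (the first example has $\log d_\lambda\sim n\log 2$, the second has $\log(D/d_\lambda)\to\log 2$), and this correlation is precisely what separated bounds in $(r,n)$ discard.

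The paper never decouples. It writes $D(\lambda)=(\prod_iL_i/n)\,d_\lambda^+$ where the \emph{augmented dimension} $d_\lambda^+:=n!/(\prod_iL_i\prod_ia_i!b_i!)$ satisfies the exact peeling identity $d_\lambda^+=\binom{n}{|c|}d_s^+d_c^+$ under removal of the outer hook $s$ (with $c=\lambda\setminus s$). The ratio $d_\lambda^+/d_\lambda$ is then controlled by strong induction on $|\lambda|$ via the shape-dependent comparison $d_\lambda\geq\binom{n}{|c|}d_sd_c\,e^{-6\sqrt{|c|}}$, which gives $d_\lambda^+/d_\lambda\leq e^{6\sqrt{|c|}}\,d_c^+/d_c$; the diagonal factor $\prod_iL_i/n$ is handled separately by a case analysis on $|c|$. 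The induction keeps $D$ and $d_\lambda$ tied together through the same hook-peeling, which is the ingredient your plan is missing.
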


We will show in Section \ref{s:examples} that Theorem \ref{thm:ALS virtual degree asymptotic bound} is sharp for different shapes of diagrams. 

\medskip

Plugging this into Theorem \ref{thm:LS concrete bound}, we get the following character bound.

\begin{theorem}\label{thm:ALS improved character bound}
    For every $n\geq 2$, any permutation $\sigma \in \mathfrak{S}_n$ and any integer partition $\lambda\vdash n$, we have
    \begin{equation}
        \bg \ch^\lambda(\sigma)\bd \leq d_\lambda^{\pg 1+ \frac{C}{\ln n} \pd E(\sigma)},
    \end{equation}
where $C$ is the universal constant from Theorem $\ref{thm:ALS virtual degree asymptotic bound}$.
\end{theorem}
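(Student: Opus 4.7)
The plan is simply to chain the two preceding results. Combining Theorem~\ref{thm:LS concrete bound} with Theorem~\ref{thm:ALS virtual degree asymptotic bound} formally gives
\[
\bigl|\ch^\lambda(\sigma)\bigr| \leq D(\lambda)^{E(\sigma)} \leq \bigl(d_\lambda^{1+C/\ln n}\bigr)^{E(\sigma)} = d_\lambda^{(1+C/\ln n)\, E(\sigma)},
\]
which is the claimed bound. The only thing to justify is that raising both sides of $D(\lambda)\leq d_\lambda^{1+C/\ln n}$ to the $E(\sigma)$-th power preserves the inequality, which amounts to checking that $E(\sigma)\geq 0$ (the bases $D(\lambda)$ and $d_\lambda$ are at least $1$).

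To verify nonnegativity, I would argue directly from the definition of the orbit growth sequence in~\eqref{eq:orbit growth exponent}: the truncated sums $\Sigma_i(\sigma) = \sum_{j\leq i} j f_j(\sigma)$ are nondecreasing in $i$, so the partial sums $e_1+\cdots+e_i = \log_n \max(\Sigma_i,1)$ are nondecreasing in $i$, hence each $e_i\geq 0$ and therefore $E(\sigma)=\sum_{i\geq 1} e_i/i \geq 0$.

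There is no real obstacle here: all the substantive content of this theorem has been absorbed into Theorem~\ref{thm:ALS virtual degree asymptotic bound}, and once that is granted, the character bound is a one-line formal consequence of the Larsen--Shalev inequality of Theorem~\ref{thm:LS concrete bound}.
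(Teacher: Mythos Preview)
Your proposal is correct and matches the paper's own argument exactly: the paper simply states ``Plugging this into Theorem~\ref{thm:LS concrete bound}, we get the following character bound'' and gives no further details. Your added verification that $E(\sigma)\geq 0$ is a reasonable elaboration of the one implicit step.
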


In Proposition \ref{prop: borne E sigma avec max cyc 2} we prove that $E(\sigma) \leq \frac{\ln \left(\max(\cyc(\sigma),2)\right)}{\ln n}$, where $\cyc(\sigma)$ is the number of cycles of $\sigma$. This allows us to deduce the following character bound, which improves on \cite[Theorem 1.4]{LarsenShalev2008} and \cite[Theorem 1.7]{LifschitzMarmor2024charactershypercontractivity}.

\begin{theorem}\label{thm: borne caractères nombre de cycles}
    As $n\to \infty$, uniformly over all $\sigma \in \kS_n$ and $\lambda \in \widehat{\kS_n}$, we have
    \begin{equation}
        \bg \ch^\lambda(\sigma)\bd \leq d_\lambda^{\frac{\ln \cyc(\sigma)}{\ln n} + O\pg \frac{1}{\ln n} \pd}.
    \end{equation}
\end{theorem}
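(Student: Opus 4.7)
The plan is to derive the bound as a direct combination of Theorem \ref{thm:ALS improved character bound} with the stated inequality $E(\sigma) \leq \ln \max(\cyc(\sigma),2)/\ln n$ of Proposition \ref{prop: borne E sigma avec max cyc 2}. Since the former gives $|\ch^\lambda(\sigma)| \leq d_\lambda^{(1+C/\ln n)E(\sigma)}$ uniformly in $\sigma$ and $\lambda$, and the latter is a uniform upper bound on $E(\sigma)$, substituting one into the other is essentially the whole proof; no case splitting on the structure of $\lambda$ is required.

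Explicitly, I would write
\[ \bigl(1 + \tfrac{C}{\ln n}\bigr) E(\sigma) \;\leq\; \bigl(1 + \tfrac{C}{\ln n}\bigr)\cdot \frac{\ln \max(\cyc(\sigma),2)}{\ln n} \;=\; \frac{\ln \max(\cyc(\sigma),2)}{\ln n} + \frac{C\,\ln\max(\cyc(\sigma),2)}{(\ln n)^2}, \]
and use the trivial bound $\cyc(\sigma)\leq n$ to conclude that the second summand is $O(1/\ln n)$. The only remaining discrepancy with the target exponent is that the numerator carries $\ln\max(\cyc(\sigma),2)$ rather than $\ln \cyc(\sigma)$; the two differ by at most $\ln 2$, and only in the degenerate case $\cyc(\sigma)=1$ (a single $n$-cycle), so this loss is also absorbed into the $O(1/\ln n)$ error.

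There is no real obstacle in assembling the final bound, since all the work has been pushed into the two inputs: Theorem \ref{thm:ALS improved character bound} (itself immediate from Theorem \ref{thm:ALS virtual degree asymptotic bound} plugged into Theorem \ref{thm:LS concrete bound}), and the cycle-count estimate on $E(\sigma)$. The latter, which is where the genuine technical work lies, should reduce to rewriting $E(\sigma)\ln n$ by Abel summation as $\sum_{i\geq 1} \ln\max(\Sigma_i(\sigma),1)/(i(i+1))$ and applying the elementary inequality $\Sigma_i(\sigma)\leq \min(n,\,i\cdot\cyc(\sigma))$, split at $i=n/\cyc(\sigma)$. The comparison with \cite[Theorem 1.4]{LarsenShalev2008} and \cite[Theorem 1.7]{LifschitzMarmor2024charactershypercontractivity} is then a matter of noting that our exponent $\frac{\ln \cyc(\sigma)}{\ln n}+O(1/\ln n)$ strictly refines the exponents appearing there.
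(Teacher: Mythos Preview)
Your proposal is correct and follows essentially the same route as the paper: combine Theorem~\ref{thm:ALS improved character bound} with Proposition~\ref{prop: borne E sigma avec max cyc 2}, then absorb both the $\max(\cdot,2)$ and the multiplicative factor $(1+C/\ln n)$ into the $O(1/\ln n)$ error. The only cosmetic difference is that the paper first uses $E(\sigma)\leq 1$ to write $(1+C/\ln n)E(\sigma)\leq E(\sigma)+C/\ln n$ and then applies the proposition, whereas you apply the proposition first and then invoke $\cyc(\sigma)\leq n$; your Abel-summation sketch for Proposition~\ref{prop: borne E sigma avec max cyc 2} is a valid alternative to the paper's telescoping argument via Proposition~\ref{prop: borne E sigma avec imin}, but that lies outside the proof of the present theorem.
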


We now define the Witten zeta function. For $n\geq 1$, $A \subset \widehat{\mathfrak{S}_n}$ and $s \geq 0$,
\begin{equation}\label{eq:def witten zeta}
    \zeta_n (A,s) := \sum_{\lambda\in A} \frac{1}{\pg d_\lambda\pd^{s}}.
\end{equation}
Note that compared to the usual definition where the sum is over all irreducible representations, we add the subset of representations $A$ on which we sum as a parameter of the function $\zeta_n$.

In \cite{LiebeckShalev2004}, Liebeck and Shalev proved that $\zeta_n(\widehat{\mathfrak{S}_n}^{**},s) = O(n^{-s})$, if $s>0$ is fixed and $n\to \infty$, where $\widehat{\mathfrak{S}_n}^{**} = \widehat{\mathfrak{S}_n}\backslash\ag [n], [1^n] \ad$. We improve their result to allow the argument $s$ to tend to 0.

\begin{proposition}\label{prop: bounds witten zeta intro iff}
Let $(s_n)$ be a sequence of positive real numbers. We have
    \begin{equation}
  \zeta_n(\widehat{\mathfrak{S}_n}^{**},s_n)\xrightarrow[n\to \infty]{} 0 \quad \text{ if and only if } \quad s_n\ln n \xrightarrow[n\to \infty]{} \infty.
\end{equation}
\end{proposition}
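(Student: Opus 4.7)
For the implication ``$\zeta_n(\widehat{\kS_n}^{**}, s_n) \to 0 \Rightarrow s_n \ln n \to \infty$'' the plan is a direct contrapositive. If $s_n \ln n$ does not tend to infinity, extract a subsequence along which $s_n \ln(n-1) \leq M$ for some constant $M$. Both $[n-1,1]$ and $[2, 1^{n-2}]$ belong to $\widehat{\kS_n}^{**}$ and have dimension $n-1$, so along that subsequence
\begin{equation*}
\zeta_n(\widehat{\kS_n}^{**}, s_n) \geq 2(n-1)^{-s_n} \geq 2 e^{-M},
\end{equation*}
contradicting $\zeta_n \to 0$.

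For the converse implication, the plan is to establish a bound
\begin{equation*}
\zeta_n(\widehat{\kS_n}^{**}, s) \leq C (n-1)^{-s}
\end{equation*}
uniform in $s > 0$, with $C$ an absolute constant and $n$ sufficiently large. This immediately gives the conclusion: $s_n \ln n \to \infty$ forces $\zeta_n \leq C(n-1)^{-s_n} \to 0$. The point is to strengthen Liebeck--Shalev's bound $O_s(n^{-s})$ by making the implicit constant independent of $s$, which is precisely what is needed to accommodate $s = s_n$ tending to $0$.

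To prove the uniform bound I would partition $\widehat{\kS_n}^{**}$ into bands $\mathcal{B}_K := \{\lambda : (n-1)^{K} \leq d_\lambda < (n-1)^{K+1}\}$ for $K \geq 1$, giving
\begin{equation*}
\zeta_n(\widehat{\kS_n}^{**}, s) \leq \sum_{K \geq 1} |\mathcal{B}_K|\, (n-1)^{-Ks}.
\end{equation*}
The crucial input is the counting estimate $|\mathcal{B}_K| \leq f(K)$ for some function $f$ growing sub-exponentially in $K$, uniformly in $n$. Theorem \ref{thm:ALS virtual degree asymptotic bound} gives a clean route: the inequality $D(\lambda) \leq d_\lambda^{1+C/\ln n}$ transfers $d_\lambda \leq (n-1)^{K+1}$ into $D(\lambda) \leq n^{O(K)}$. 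Since $D(\lambda)$ is a multinomial coefficient $\binom{n-1}{a_1, b_1, \ldots, a_d, b_d, d-1}$ whose parts sum to $n-1$, the bound $D(\lambda) \leq n^{O(K)}$ forces its largest part to be at least $n-1-O(K)$ (once $K$ is moderate), i.e.\ $\lambda_1 \geq n - O(K)$ or $\lambda_1' \geq n - O(K)$. The remaining cells then form a partition of an integer of order $K$, and such near-row/near-column diagrams are enumerated by at most $p(O(K)) \leq e^{O(\sqrt{K})}$. Summing the resulting series against the factors $(n-1)^{-Ks}$ yields a bound dominated by its $K=1$ term. The main obstacle is precisely the uniformity in $s$: as $s \to 0$, each individual factor $d_\lambda^{-s}$ tends to $1$, and only a sharp count of small-dimensional representations --- enabled by the improved virtual degree bound of the present paper --- ensures that the decay in $K$ wins against the growth of $|\mathcal{B}_K|$.
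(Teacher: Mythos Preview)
Your contrapositive for the ``only if'' direction is correct and is essentially the paper's argument (Corollary~\ref{cor: witten zeta iff condition asym}).

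For the ``if'' direction, the intermediate claim that $\zeta_n(\widehat{\mathfrak{S}_n}^{**}, s) \leq C(n-1)^{-s}$ holds uniformly over all $s>0$ with an absolute constant $C$ is false: for fixed $n$, letting $s \to 0^+$ sends the left side to $p(n)-2$ and the right side to $C$. What your banding argument actually delivers --- and what suffices --- is this bound uniformly over $s$ with $s\ln n \geq \alpha$ for any fixed $\alpha>0$: the series $\sum_{K\geq 1} e^{O(\sqrt{K})}(n-1)^{-Ks}$ is comparable to its $K=1$ term only once $(n-1)^{-s}$ is bounded away from $1$. Since $s_n\ln n \to \infty$ eventually exceeds any threshold, this restriction is harmless. (A minor point: $D(\lambda)$ equals $(d-1)!$ times the multinomial $\binom{n-1}{a_1,\ldots,b_d,d-1}$, not the multinomial itself, but this only strengthens your inequality.)

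Your route is genuinely different from the paper's. The paper (Proposition~\ref{prop: bound Witten zeta with alpha}) does not touch Theorem~\ref{thm:ALS virtual degree asymptotic bound}: it refines Liebeck--Shalev's original proof directly, partitioning by $\ell = n-\lambda_1$, using the elementary lower bound $d_\lambda \geq \binom{n-\ell}{\ell}$, and splitting the resulting sum $\sum_\ell p(\ell)\binom{n-\ell}{\ell}^{-s_n}$ at $\ell=n^{2/3}$. Your closing remark that the virtual-degree bound is what enables the sharp count is therefore misleading --- the Witten zeta improvement in the paper is logically independent of Theorem~\ref{thm:ALS virtual degree asymptotic bound}. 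Your detour through $D(\lambda)$ works (once the uniformity claim is corrected) but invokes a heavier tool than the paper needs.
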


In Section \ref{s: bounds Witten zeta improvement Liebeck Shalev}, we will also prove variants of Proposition \ref{prop: bounds witten zeta intro iff}, for other subsets $A\subset \widehat{\kS_n}$, and when $s_n$ is of order $\frac{1}{\ln n}$.

\medskip

As an application of our bounds on characters and on the Witten zeta function, we are able to characterize which fixed-point free conjugacy classes mix in 2 steps. We recall that the convolution product of two measures $\mu, \nu$ on a group $G$ is defined by $\mu \ast \nu (g) = \sum_{x \in G} \mu(x)\nu(g x^{-1}) $ for $g \in G$. As such, if $S\subset G$, then $\Unif_{S}^{*t}$ is the distribution of the simple random walk (started at the neutral element) on the Cayley graph $\Cay(G, S)$ after $t$ steps.

\begin{theorem}\label{thm: application Lulov Pak extended}
For each $n\geq 2$ let $\cC^{(n)}$ be a conjugacy class of $\kS_n$, which is fixed-point free (i.e. $f_1(\cC^{(n)})=0$). Recall that $f_2(\cC^{(n)})$ denotes the number of transpositions of a permutation $\sigma \in \cC^{(n)}$. Then we have

\begin{equation}
    \dtv\pg \Unif_{\cC^{(n)}}^{*2}, \Unif_{\kA_n} \pd \xrightarrow[n\to \infty]{} 0 \quad \quad \text{ if and only if } \quad \quad  f_2( \cC^{(n)} ) = o(n).
\end{equation}

\end{theorem}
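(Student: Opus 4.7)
The plan is to handle the two implications separately: the forward direction $f_2(\cC^{(n)}) = o(n) \Rightarrow \dtv \to 0$ will follow from Theorem~\ref{thm:ALS improved character bound} combined with the Witten zeta estimate of Proposition~\ref{prop: bounds witten zeta intro iff}, and the converse from a test-function argument using the character $\ch^{[n-2,2]}$.

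For the forward direction, fix $\sigma_n \in \cC^{(n)}$ and apply the Diaconis--Shahshahani upper bound lemma for conjugacy class walks,
\begin{equation*}
4\, \dtv\!\left(\Unif_{\cC^{(n)}}^{*2},\Unif_{\kA_n}\right)^{2} \;\le\; \sum_{\lambda \notin \{[n],[1^n]\}} \frac{\ch^\lambda(\sigma_n)^4}{d_\lambda^2}.
\end{equation*}
Theorem~\ref{thm:ALS improved character bound} bounds the right-hand side by $\zeta_n(\widehat{\kS_n}^{**}, s_n)$ with $s_n := 2 - 4(1+C/\ln n)\,E(\sigma_n)$. For fixed-point free $\sigma_n$, $e_1(\sigma_n) = 0$ and $\sum_{i \ge 1} e_i(\sigma_n) = 1$, and a short refinement of Example~\ref{ex:values of E(sigma)}(b) then gives $E(\sigma_n) \le \tfrac{1}{3} + \tfrac{e_2}{6}$ with $e_2 = \log_n \max(2 f_2(\cC^{(n)}), 1)$. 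Hence
\begin{equation*}
s_n \ln n \;\ge\; \tfrac{2}{3}\bigl(\ln n - \ln\max(2f_2, 1)\bigr) - O(1),
\end{equation*}
which diverges precisely when $f_2(\cC^{(n)}) = o(n)$, at which point Proposition~\ref{prop: bounds witten zeta intro iff} forces the Witten zeta to vanish and thus $\dtv \to 0$.

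For the converse, assume along a subsequence that $f_2(\cC^{(n)}) \ge c n$ for some $c > 0$, and use $h := \ch^{[n-2,2]}$ as a test function. The identity $\mathrm{Sym}^2 V_{[n-1,1]} = V_{[n]} \oplus V_{[n-1,1]} \oplus V_{[n-2,2]}$ gives $h(\pi) = \tfrac{f_1(\pi)(f_1(\pi)-3)}{2} + f_2(\pi)$, so in particular $h(\sigma_n) = f_2(\sigma_n)$. Combined with the standard identity $\bbE_{\Unif_{\cC^{(n)}}^{*2}}[\ch^\lambda] = \ch^\lambda(\sigma_n)^2/d_\lambda$, this yields the mean gap $\bbE_{\Unif_{\cC^{(n)}}^{*2}}[h] - \bbE_{\Unif_{\kA_n}}[h] = f_2^2/d_{[n-2,2]} \sim 2c^2$. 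To convert this into a lower bound on $\dtv$, we control $\bbE_{\Unif_{\cC^{(n)}}^{*2}}[h^2]$: by the Brauer--Murnaghan stability of Kronecker coefficients, the decomposition $h^2 = \sum_\mu g^\mu_{[n-2,2],[n-2,2]}\, \ch^\mu$ is supported on partitions $\mu$ with $n - \mu_1 \le 4$, of which there are finitely many (uniformly in $n$) with uniformly bounded multiplicities, and for each such $\mu$ we have $d_\mu = O(n^4)$. Theorem~\ref{thm:ALS improved character bound} together with $E(\sigma_n) \le 1/2$ then yields $\ch^\mu(\sigma_n)^2/d_\mu \le d_\mu^{C/\ln n} = O(1)$, hence $\bbE_{\Unif_{\cC^{(n)}}^{*2}}[h^2] = O(1)$. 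Truncating $h$ at a threshold of order $1/c^2$ produces a bounded test function whose expectations under the two measures differ by $\Omega(c^2)$, giving $\liminf_n \dtv > 0$ (of order $c^4$).

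The main difficulty is the uniform bound on $\bbE_{\Unif_{\cC^{(n)}}^{*2}}[h^2]$ in the converse direction. The original Larsen--Shalev estimate (Theorem~\ref{thm:LS form E(sigma) + o(1)}) would only give $\ch^\mu(\sigma_n)^2/d_\mu \le d_\mu^{o(1)}$, too weak to keep each Kronecker term $O(1)$; the improved exponent $C/\ln n$ in Theorem~\ref{thm:ALS improved character bound} is precisely what makes the argument go through. Likewise, the sharpness of Proposition~\ref{prop: bounds witten zeta intro iff} at the critical scale $s_n \asymp 1/\ln n$ is exactly what captures the threshold $f_2 \asymp n$ in the forward direction.
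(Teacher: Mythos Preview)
Your forward direction is essentially the paper's argument, just unpacked: the paper wraps the same computation inside Proposition~\ref{prop:bounds on E(sigma) different conditions}(a) and Proposition~\ref{prop: sufficient condition for mixing with sum of two E(sigma)}, but the content (the bound $E(\sigma)\le \tfrac13+\tfrac{e_2}{6}$ for fixed-point free $\sigma$, fed into Theorem~\ref{thm:ALS improved character bound} and Proposition~\ref{prop: bounds witten zeta intro iff}) is identical.

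Your converse is genuinely different from the paper's and is correct. The paper argues combinatorially (Lemma~\ref{lem:lower bound fixed points via transpositions} and Proposition~\ref{prop:lower bound at time 2 via compensations of transpositions}): if both factors have $\ge \alpha n$ transpositions, then with positive probability $\Omega_\alpha(1)$ many transpositions of $x_1$ and $x_2$ coincide, producing $\ge m$ fixed points in $x_1x_2$; the event $\{f_1\ge m\}$ then splits $\Unif_{\cC}^{*2}$ from $\Unif_{\kA_n}$. Your route is purely representation-theoretic: the test function $h=\ch^{[n-2,2]}$ has mean $f_2^2/d_{[n-2,2]}\asymp c^2$ under $\Unif_{\cC}^{*2}$ and mean $0$ under $\Unif_{\kA_n}$, and you control $\bbE_{\Unif_{\cC}^{*2}}[h^2]$ via the Kronecker decomposition of $(\ch^{[n-2,2]})^2$, using Murnaghan stability to restrict to finitely many $\mu$ of depth $\le 4$ and Theorem~\ref{thm:ALS improved character bound} to bound each $\ch^\mu(\sigma_n)^2/d_\mu\le d_\mu^{C/\ln n}=O(1)$. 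The truncation then yields $\liminf\dtv\gtrsim c^4$. The trade-off: the paper's argument is elementary and needs no character bounds for the lower bound, while yours is more systematic and visibly generalizable to other low-depth characters, but it genuinely requires the sharpened exponent of Theorem~\ref{thm:ALS improved character bound} --- as you correctly observe, Theorem~\ref{thm:LS form E(sigma) + o(1)} alone would leave $\bbE[h^2]$ unbounded.
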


We also prove that conjugacy classes with few cycles mix fast.

\begin{theorem}\label{thm: mixing time 2 if few cycles}
For each $n\geq 2$ let $\cC^{(n)}$ be a conjugacy class of $\kS_n$. Assume that $\cyc(\sigma) = o\pg \sqrt{n} \pd$ for $\sigma \in \cC^{(n)}$. Then
\begin{equation}
    \dtv\pg \Unif_{\cC^{(n)}}^{*2}, \Unif_{\kA_n} \pd \xrightarrow[n\to \infty]{} 0.
\end{equation}
\end{theorem}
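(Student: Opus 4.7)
The plan is to follow the standard Diaconis--Shahshahani Fourier-analytic route, substituting our new character bound (Theorem \ref{thm: borne caractères nombre de cycles}) and Witten zeta estimate (Proposition \ref{prop: bounds witten zeta intro iff}) at the key steps.

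First I would observe that $\Unif_{\cC^{(n)}}^{*2}$ is supported on $\kA_n$: any two elements of $\cC^{(n)}$ share the same sign, so their product is even. Since $\Unif_{\cC^{(n)}}$ is a class function on $\kS_n$, Schur's lemma yields $\widehat{\Unif_{\cC^{(n)}}^{*2}}(\lambda) = \chi^\lambda(\cC^{(n)})^{2}\, I_{d_\lambda}$. A short Plancherel computation, in which the contributions of the representations $[n]$ and $[1^n]$ cancel against those of $\Unif_{\kA_n}$, then produces the upper bound lemma
\begin{equation*}
    4\, \dtv\!\pg \Unif_{\cC^{(n)}}^{*2},\, \Unif_{\kA_n} \pd^{2} \leq \sum_{\lambda \in \widehat{\kS_n}^{**}} d_\lambda^{2}\, \bg \chi^\lambda(\cC^{(n)}) \bd^{4}.
\end{equation*}

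Next I would plug in Theorem \ref{thm: borne caractères nombre de cycles}: for a universal constant $C$ and all sufficiently large $n$, uniformly in $\lambda$ and in $\sigma \in \cC^{(n)}$,
\begin{equation*}
    \bg \chi^\lambda(\sigma) \bd \;\leq\; d_\lambda^{\frac{\ln \cyc(\sigma) + C}{\ln n} - 1}.
\end{equation*}
Raising this bound to the fourth power and multiplying by $d_\lambda^{2}$ shows that the Fourier sum above is dominated by $\zeta_n\pg \widehat{\kS_n}^{**},\, s_n \pd$ with
\begin{equation*}
    s_n \;:=\; 2 - \frac{4 \ln \cyc(\sigma) + 4C}{\ln n}.
\end{equation*}

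Finally I would invoke Proposition \ref{prop: bounds witten zeta intro iff}, which reduces the problem to checking $s_n \ln n \to +\infty$. This rewrites as $s_n \ln n = -4 \ln\pg \cyc(\sigma)/\sqrt{n} \pd - 4C$, which indeed tends to $+\infty$ precisely because $\cyc(\sigma) = o(\sqrt{n})$. There is no serious obstacle in this argument; the only point requiring a bit of care is to compare with $\Unif_{\kA_n}$ rather than $\Unif_{\kS_n}$, which is what forces the exclusion of both the trivial and sign representations in the Fourier sum. It is worth noting that the threshold $\sqrt{n}$ is sharp for this method: the exponent $-2 + 4 \ln \cyc(\sigma)/\ln n$ changes sign precisely at $\cyc(\sigma) \asymp \sqrt{n}$.
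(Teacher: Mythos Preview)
Your proof is correct and follows essentially the same route as the paper: the Diaconis--Shahshahani upper bound lemma, the character bound in terms of $\cyc(\sigma)$, and the Witten zeta estimate. The only cosmetic difference is that the paper phrases the middle step via $E(\sigma)$ (Proposition \ref{prop:bounds on E(sigma) different conditions}(b) together with Proposition \ref{prop: sufficient condition for mixing with sum of two E(sigma)}), whereas you invoke the already-packaged Theorem \ref{thm: borne caractères nombre de cycles}; since that theorem is itself proved by combining those same ingredients, the two arguments are identical in substance.
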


We believe that the only parts of the cycle structure that affect mixing times are the number of fixed points and the number of transpositions. We make the following conjecture.

\begin{conjecture}\label{conj: characterization mixing time 2}
    For each $n\geq 2$ let $\cC^{(n)}$ be a conjugacy class of $\kS_n$. Then
\begin{equation}
    \dtv\pg \Unif_{\cC^{(n)}}^{*2}, \Unif_{\kA_n} \pd \xrightarrow[n\to \infty]{} 0  \quad \text{ if and only if } \quad [f_1 = o(\sqrt{n}) \text{ and } f_2 = o(n)],
\end{equation}
where $f_1 = f_1(\cC^{(n)})$ and $f_2 = f_2(\cC^{(n)})$.
\end{conjecture}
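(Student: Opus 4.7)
I would split the conjecture into a necessity and a sufficiency part.

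\textbf{Necessity.} I would use a moment argument on $f_1$ and $f_2$. A direct computation shows that for $\sigma_1,\sigma_2$ independent uniform in $\cC^{(n)}$,
\[
\E[f_1(\sigma_1\sigma_2)] \;=\; \frac{f_1^2}{n}+\frac{(n-f_1)^2}{n(n-1)} \;=\; 1+\frac{f_1^2}{n}+O\!\left(\frac{1}{n}\right),
\]
so if $f_1\geq\epsilon\sqrt{n}$ along a subsequence, the expected number of fixed points of $\sigma_1\sigma_2$ exceeds the uniform value $\to 1$ by at least $\epsilon^2-o(1)$. Combined with a second-moment estimate giving $\Var[f_1(\sigma_1\sigma_2)]=O(1)$, and the approximate Poisson nature of $f_1$ under both laws, this implies that an event such as $\{f_1=0\}$ has probabilities differing by a positive constant; hence $\dtv\geq c(\epsilon)>0$. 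An analogous but more intricate computation of $\E[f_2(\sigma_1\sigma_2)]$ treats the case $f_2\geq\epsilon n$, extending the argument already used in the proof of Theorem~\ref{thm: application Lulov Pak extended}.

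\textbf{Sufficiency.} The $L^2$ Fourier bound
\[
4\,\dtv(\Unif_{\cC^{(n)}}^{*2},\Unif_{\kA_n})^2 \;\leq\; \tfrac12 \sum_{\lambda\notin\{[n],[1^n]\}} d_\lambda^2\,\chi^\lambda(\cC^{(n)})^4
\]
reduces the problem to showing the right-hand side is $o(1)$. The difficulty is that the hypotheses $f_1=o(\sqrt n)$ and $f_2=o(n)$ only force $E(\cC^{(n)})\leq 3/4-\tau_n/\ln n$ with $\tau_n\to\infty$, but in general not better: for example, a class with $f_1\asymp\sqrt n/(\ln n)^{10}$, $f_2\asymp n/(\ln n)^{10}$, and the remaining mass carried by $3$-cycles satisfies $E(\cC^{(n)})=3/4-O(\ln\ln n/\ln n)$, which is still $\geq 1/2+\Omega(1)$. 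Plugging Theorem~\ref{thm:ALS improved character bound} into the Fourier sum yields only $d_\lambda^2\chi^\lambda(\cC^{(n)})^4\leq d_\lambda^{4E(\cC^{(n)})-2+o(1)}=d_\lambda^{1+o(1)}$, whose sum over $\lambda$ is enormous, so Proposition~\ref{prop: bounds witten zeta intro iff} alone cannot close the argument. The \emph{main obstacle} is therefore the absence of a character bound sharper than Larsen-Shalev in the regime $E(\cC^{(n)})\geq 1/2$.

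\textbf{Hybrid strategy.} To overcome this, I would stratify the Fourier sum by the shape of $\lambda$. For $\lambda$ with both first row and first column of length at most $n-n^{1/4}$, the Féray-Śniady bound and the hypercontractivity estimates of \cite{LifschitzMarmor2024charactershypercontractivity} are considerably tighter than Larsen-Shalev, and should suffice under $f_1=o(\sqrt n)$ since the support of $\sigma$ is then essentially all of $[n]$. For $\lambda$ with a very long first row or column, a direct Murnaghan-Nakayama analysis exploiting the smallness of $f_1$ is expected to give the required improvement. The delicate part is the intermediate regime, where neither approach is individually sharp; resolving it requires exploiting the two constraints $f_1=o(\sqrt n)$ and $f_2=o(n)$ jointly, and I expect this to require genuinely new character bounds beyond those developed in the present paper. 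The techniques of the companion paper \cite{Olesker-TaylorTeyssierThévenin2025sharpboundsandprofiles}, where a related hybrid strategy yields cutoff and $L^2$-mixing results across comparable regimes, appear to be the most promising starting point, though the full conjecture currently seems to remain out of reach.
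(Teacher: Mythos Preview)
The statement you are addressing is labeled a \emph{Conjecture} in the paper and is not proved there; there is no paper proof to compare your proposal against. Your proposal is therefore not a proof either, and you correctly recognize this in your final paragraph.

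That said, your assessment is accurate and aligns with what the paper does establish. On the \textbf{necessity} side, the paper proves the $f_2$ half via Proposition~\ref{prop:lower bound at time 2 via compensations of transpositions} (the transposition-cancellation argument you reference), and sketches the $f_1$ half for a specific family in the remark following the proof of Theorem~\ref{thm: mixing time 2 if few cycles}, using essentially the fixed-point moment/Poisson comparison you outline. So the necessity direction is, as you suggest, within reach of standard arguments.

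On the \textbf{sufficiency} side, your diagnosis of the obstruction is exactly right: under only $f_1=o(\sqrt{n})$ and $f_2=o(n)$ one can have $E(\cC^{(n)})$ as large as $3/4-o(1)$ (your example with many $3$-cycles is the correct one to have in mind), and then Theorem~\ref{thm:ALS improved character bound} together with Proposition~\ref{prop: bounds witten zeta intro iff} gives nothing useful in the Fourier sum. The paper does not claim to resolve this regime; it explicitly leaves the statement as a conjecture. Your proposed hybrid stratification by diagram shape is a reasonable program, but as you acknowledge, it is not a proof.
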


As a third application, we finally show that conjugacy classes with a large support and no short cycles exhibit cutoff. If $\cC$ is a conjugacy class of $\kS_n$ and $t\geq 0$, we denote by $\mathfrak{E}(\cC, t)$ 
the set $\mathfrak{S}_n \backslash\mathfrak{A}_n$ if $\cC \subset \mathfrak{S}_n \backslash\mathfrak{A}_n$ and $t$ is odd, and $\mathfrak{A}_n$ otherwise.

\begin{theorem}\label{thm: cutoff large support no short cycles}
    For each $n\geq 2$ let $\cC^{(n)} \ne \ag \Id\ad$ be a conjugacy class of $\kS_n$. Write $f = \max(f_1(\cC^{(n)}),1)$. 
    Assume that $f=o(n)$ and $\min \ag i\geq 2 \du f_i(\cC^{(n)}) \geq 1 \ad \xrightarrow[n\to \infty]{} \infty$. Then for every $\varepsilon>0$, we have
    \begin{equation}
       \mathrm{d}^{(n)}\pg \lf \frac{\ln n}{\ln(n/f)}(1-\varepsilon)\rf\pd \xrightarrow[n\to \infty]{} 1 \quad \text{ and } \quad  \mathrm{d}^{(n)}\pg \lc \frac{\ln n}{\ln(n/f)}(1+\varepsilon)\rc\pd \xrightarrow[n\to \infty]{} 0,
    \end{equation}
where $ \mathrm{d}^{(n)}(t) := \dtv\pg \Unif_{\cC^{(n)}}^{*t}, \Unif_{\kE_n(\cC^{(n)}, t)}\pd$ for $t\geq 0$.
\end{theorem}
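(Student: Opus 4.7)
\smallskip

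The plan is to handle upper and lower bounds separately, both anchored to the value of the orbit growth exponent $E(\sigma)$ for $\sigma \in \cC^{(n)}$. Set $k_n := \min\ag i \geq 2 \du f_i(\cC^{(n)}) \geq 1 \ad$, which tends to $\infty$ by hypothesis. One has $\Sigma_i(\sigma) = f_1$ for $1 \leq i \leq k_n - 1$ (with $f = 1$ by convention when $f_1 = 0$), so $e_1 = \ln f / \ln n$, $e_2 = \cdots = e_{k_n - 1} = 0$, and $\sum_{i \geq k_n} e_i = \ln(n/f)/\ln n$. Bounding $e_i/i \leq e_i/k_n$ for $i \geq k_n$ gives $E(\sigma) \leq \frac{\ln f}{\ln n} + \frac{1}{k_n} \frac{\ln(n/f)}{\ln n}$, equivalently
\begin{equation*}
1 - E(\sigma) \geq \pg 1 - \frac{1}{k_n} \pd \frac{\ln(n/f)}{\ln n}.
\end{equation*}

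\emph{Upper bound.} The Diaconis--Shahshahani $L^2$ upper bound lemma gives
\begin{equation*}
4 \, \mathrm{d}^{(n)}(t)^2 \leq \sum_{\lambda \in \widehat{\kS_n}^{**}} d_\lambda^2 \, |\chi^\lambda(\cC^{(n)})|^{2t},
\end{equation*}
the trivial and sign representations being absorbed into the choice of the target set $\kE_n(\cC^{(n)}, t)$. Plugging Theorem \ref{thm:ALS improved character bound} into the right-hand side bounds it by $\zeta_n(\widehat{\kS_n}^{**}, s_t)$ with $s_t = 2t\pg 1 - (1 + C/\ln n) E(\sigma) \pd - 2$. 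Since $f = o(n)$ we have $\ln(n/f) \to \infty$, hence $2tC E(\sigma)/\ln n \leq 2tC/\ln n = O(1/\ln(n/f)) = o(1)$; taking $t = \lc (1+\varepsilon) \ln n/\ln(n/f) \rc$ and using the bound on $1 - E(\sigma)$ yield $s_t \geq 2\varepsilon - o(1)$, so $s_t \ln n \to \infty$ and Proposition \ref{prop: bounds witten zeta intro iff} concludes.

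\emph{Lower bound.} I would use the number of fixed points $F := f_1(\sigma_t)$ as distinguishing statistic. Character orthogonality gives $\bbE_{p_t}[F] - 1 = (n-1) \chi^{[n-1,1]}(\cC^{(n)})^t = (f-1)^t/(n-1)^{t-1}$, which at $t = \lf (1-\varepsilon) \ln n/\ln(n/f) \rf$ is at least $(1-o(1)) n^\varepsilon$ provided $f \geq n^\varepsilon$ (when $f < n^\varepsilon$ one has $t = 0$ and the lower bound is trivial since $\cC^{(n)} \neq \ag \Id \ad$). Under the target $\pi = \Unif_{\kE_n(\cC^{(n)}, t)}$, $F$ is essentially $\Poiss(1)$-distributed, so $\pi(\ag F \geq n^{\varepsilon/2} \ad) \to 0$. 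Expanding $F^2 = 2 + 3\ch^{[n-1,1]} + \ch^{[n-2,2]} + \ch^{[n-2,1,1]}$ via $[n-1,1]^{\otimes 2} = [n] \oplus [n-1,1] \oplus [n-2,2] \oplus [n-2,1,1]$, and using $f_2(\cC^{(n)}) = 0$, a direct Murnaghan--Nakayama computation yields $\ch^{[n-2,2]}(\cC^{(n)}) \sim f^2/2$ and $\ch^{[n-2,1,1]}(\cC^{(n)}) \sim f^2/2$. Since $d_{[n-2,2]}, d_{[n-2,1,1]} \sim n^2/2$, the resulting contribution $d_\lambda \chi^\lambda(\cC^{(n)})^t$ is of order $\bbE_{p_t}[F]^2$ for $\lambda \in \{[n-2,2], [n-2,1,1]\}$. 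Hence $\bbE_{p_t}[F^2] = O(\bbE_{p_t}[F]^2)$, and Paley--Zygmund gives $p_t(\ag F \geq n^{\varepsilon/2} \ad) \geq c > 0$, completing the lower bound.

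\emph{Main obstacle.} The crux is the Murnaghan--Nakayama computation for $\ch^{[n-2,2]}$ and $\ch^{[n-2,1,1]}$ at $\cC^{(n)}$: the virtual-degree bound of Theorem \ref{thm:ALS improved character bound} is too crude at the subcritical time, so one must exploit the hypothesis $k_n \to \infty$ (which forces $f_2 = 0$ and precludes almost all small border strips) to obtain the explicit polynomial-in-$f$ character values and the matching second moment. The boundary cases $f \in \{1,2,3\}$ need a separate but elementary analysis.
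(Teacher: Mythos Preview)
Your upper bound is correct and is essentially the paper's argument: bound $E(\sigma)$ via $1-E(\sigma)\ge(1-1/k_n)\ln(n/f)/\ln n$, plug Theorem~\ref{thm:ALS improved character bound} into the Diaconis--Shahshahani lemma, and finish with the Witten zeta bound.

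Your lower bound, however, has a genuine gap. The theorem asks for $\mathrm d^{(n)}(t)\to 1$, but Paley--Zygmund from $\bbE_{p_t}[F^2]=O\!\big(\bbE_{p_t}[F]^2\big)$ only yields $p_t(F\ge n^{\varepsilon/2})\ge c>0$, hence $\mathrm d^{(n)}(t)\ge c-o(1)$, not convergence to~$1$. The fix is available within your own computation: in the regime $f\ge n^\varepsilon$ (so $f\to\infty$) and $f=o(n)$ one checks $t/f\to 0$ and $t/n\to 0$, whence
\[
d_{[n-2,2]}\,\chi^{[n-2,2]}(\cC^{(n)})^t=\tfrac12\big(1+o(1)\big)\,\big((n-1)\chi^{[n-1,1]}(\cC^{(n)})^t\big)^2
\]
and likewise for $[n-2,1,1]$, giving $\bbE_{p_t}[F^2]=(1+o(1))\,\bbE_{p_t}[F]^2$. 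Chebyshev then yields $p_t(F\ge n^{\varepsilon}/2)\to 1$, which is what you need. (The boundary cases you flag for $f\le 3$ are absorbed into $t=0$ and are indeed trivial.)

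For comparison, the paper's lower bound avoids characters entirely: it uses a coupon-collector second-moment argument directly on the set of points fixed by \emph{all} $t$ factors, showing that with probability $1-o(1)$ at least $n^{\varepsilon}/2$ such common fixed points survive. This is shorter, uses only the support size $n-f$, and in particular does not need the hypothesis $k_n\to\infty$; your route recovers the same conclusion but requires the extra input $f_2=0$ to control the two-row characters.
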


\begin{remark}
Our improved bounds are useful in several ways. They lead to sharp results for permutations with few fixed points or few cycles, as we saw in Theorem \ref{thm: application Lulov Pak extended} and Theorem \ref{thm: mixing time 2 if few cycles}. They also extend the applicability of the Larsen--Shalev character bounds (recalled as Theorem \ref{thm:LS form E(sigma) + o(1)}) from $f=n^{1-\varepsilon(n)}$ (where $\varepsilon(n)$ is the non-explicit $o(1)$ from Theorem \ref{thm:LS form E(sigma) + o(1)}) to $f=o(n)$. In the regime $f=o(n)$, recalling Example \ref{ex:values of E(sigma)} (c), this leads to the uniform bound
\begin{equation}
\label{eq:bound with constant 1/2}
    \frac{|\ch^\lambda(\sigma)|}{d_\lambda} \leq d_\lambda^{-\pg\frac{1}{2}+o(1)\pd \frac{\ln (n/f)}{\ln n}}.
\end{equation}
The uniform constant $1/2$ above cannot be improved, as it is sharp for fixed point free involutions. In the follow-up paper \cite{Olesker-TaylorTeyssierThévenin2025sharpboundsandprofiles}, we prove that \eqref{eq:bound with constant 1/2} holds uniformly over all permutations and irreducible representations.
\end{remark}

As a last application, we prove results on the geometry of random maps. We detail in Section \ref{s: random maps} the connection, due to Gamburd \cite{Gamburd2006}, between random permutations and random maps. Gamburd’s results, which concern random regular graphs, were later improved by Chmutov and Pittel \cite{ChmutovPittel2016} using the Larsen--Shalev character bounds \cite{LarsenShalev2008}, and by Budzinski, Curien and Petri \cite{BudzinskiCurienPetri2019} with probabilistic techniques. Our result on random permutations is the following.

\begin{theorem}\label{thm: products of two permutations and random maps}
For each $n\geq 2$ even, let $\cC_1^{(n)}$ be a conjugacy class of $\kS_n$ and denote by $\cC_2^{(n)}$ the conjugacy class of fixed point free involutions (that is, of cycle type $[2^{n/2}]$).  Then

\begin{equation}
    \dtv\pg \Unif_{\cC_1^{(n)}}*\Unif_{\cC_2^{(n)}}, \Unif_{\kE((\cC_1^{(n)}, \cC_2^{(n)}))} \pd \xrightarrow[n\to \infty]{} 0 \quad \text{ if and only if } \quad f_2( \cC_1^{(n)} ) = o(n),
\end{equation}
where $\kE((\cC_1^{(n)}, \cC_2^{(n)}))$ is the coset of $\kA_n$ with sign $\sgn(\cC_1^{(n)})\sgn(\cC_2^{(n)})$.
\end{theorem}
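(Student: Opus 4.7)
The proof follows the strategy of Theorem \ref{thm: application Lulov Pak extended}: the sufficient direction combines the upper bound lemma with Theorem \ref{thm:ALS improved character bound} and Proposition \ref{prop: bounds witten zeta intro iff}, while the necessary direction exhibits a low-dimensional test statistic. I focus on the case where $\cC_1^{(n)}$ is fixed-point free, which is the setting relevant to the random map application of Section \ref{s: random maps}.

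For the direction $f_2(\cC_1^{(n)}) = o(n) \Rightarrow \dtv \to 0$, the upper bound lemma yields
\begin{equation*}
4\,\dtv\pg \Unif_{\cC_1^{(n)}} * \Unif_{\cC_2^{(n)}},\, \Unif_{\kE} \pd^2 \;\leq\; \sum_{\lambda \in \widehat{\kS_n}^{**}} d_\lambda^{\,2}\, \abs{\chi^\lambda(\cC_1^{(n)})}^2 \abs{\chi^\lambda(\cC_2^{(n)})}^2.
\end{equation*}
Since $E(\cC_2^{(n)}) = 1/2$ by Example \ref{ex:values of E(sigma)}(a), Theorem \ref{thm:ALS improved character bound} gives $\abs{\chi^\lambda(\cC_2^{(n)})}^2 \leq d_\lambda^{-1+C/\ln n}$. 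For fixed-point-free $\cC_1^{(n)}$ the definition of the orbit growth exponent shows $E(\cC_1^{(n)}) \leq 1/3 + e_2/6$, where $e_2 = \ln(\max(2f_2(\cC_1^{(n)}),1))/\ln n$; the hypothesis $f_2 = o(n)$ then forces $E(\cC_1^{(n)}) \leq 1/2 - \delta_n/6$ for some $\delta_n$ with $\delta_n\ln n \to \infty$. Plugging these in, every summand is at most $d_\lambda^{-s_n}$ with $s_n\ln n \to \infty$, and Proposition \ref{prop: bounds witten zeta intro iff} closes the argument.

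For the converse, extract a subsequence along which $f_2(\cC_1^{(n)}) \geq \alpha n$ for some $\alpha > 0$, and use the test statistic
\begin{equation*}
T(\sigma) := \ch^{[n-2,2]}(\sigma) = \binom{f_1(\sigma)}{2} + f_2(\sigma) - f_1(\sigma).
\end{equation*}
Orthogonality of $\ch^{[n-2,2]}$ against $\ch^{[n]}$ and $\ch^{[1^n]}$ gives $\bbE_{\Unif_{\kE}}[T] = 0$, while the Fourier identity $\bbE_{\Unif_{\cC_1}*\Unif_{\cC_2}}[\ch^\lambda] = d_\lambda\,\chi^\lambda(\cC_1)\chi^\lambda(\cC_2)$, together with $\ch^{[n-2,2]}(\cC_1^{(n)}) = f_2(\cC_1^{(n)})$ and $\ch^{[n-2,2]}(\cC_2^{(n)}) = n/2$, yields $\bbE_{\Unif_{\cC_1^{(n)}}*\Unif_{\cC_2^{(n)}}}[T] = f_2(\cC_1^{(n)})/(n-3) \geq \alpha + o(1)$. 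Character orthogonality gives $\Var_{\Unif_{\kE}}(T) = 1$, and a direct moment computation on $\sigma_1\sigma_2$ gives $\Var_{\Unif_{\cC_1^{(n)}}*\Unif_{\cC_2^{(n)}}}(T) = O(1)$; Chebyshev's inequality applied to the event $\{T \geq \alpha/2\}$ then yields $\dtv \geq c(\alpha) > 0$ along the subsequence, contradicting $\dtv \to 0$.

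\emph{Main obstacle.} The most delicate step is the uniform $O(1)$ bound on $\Var(T)$ under the convolution measure, which reduces to controlling the low-order joint moments of $f_1$ and $f_2$ of the random product $\sigma_1\sigma_2$. These can be handled by a combinatorial count of matched short cycles between $\sigma_1$ and a uniform fixed-point-free involution, and turn out to be $O(1)$ uniformly in the cycle structure of $\sigma_1$; everything else is a direct application of the character and Witten zeta toolkit developed in the earlier sections.
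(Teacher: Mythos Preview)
Your forward direction coincides with the paper's: both bound $E(\cC_1^{(n)})+E(\cC_2^{(n)})\leq 1-\omega_+(1/\ln n)$ under $f_2=o(n)$ and conclude via the upper bound lemma together with the Witten zeta estimate (packaged as Proposition~\ref{prop: sufficient condition for mixing with sum of two E(sigma)}). Your explicit restriction to $f_1(\cC_1^{(n)})=0$ matches the paper's implicit use of Proposition~\ref{prop:bounds on E(sigma) different conditions}(a).

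For the converse the paper argues differently, via Proposition~\ref{prop:lower bound at time 2 via compensations of transpositions}: when both classes carry $\geq\alpha n$ transpositions, a direct probabilistic count of coincident $2$-cycles (Lemma~\ref{lem:lower bound fixed points via transpositions}, using unseparated pairs and a Poisson approximation) shows that the product has an anomalously heavy fixed-point tail, $\bbP(f_1(\sigma_1\sigma_2)\geq m)\gtrsim \alpha^{m/2}/(m/2)!$; comparing with the uniform tail $\lesssim 1/m!$ at a well-chosen $m=m(\alpha)$ gives the separation. No second-moment computation is needed.

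Your Fourier route through $T=\ch^{[n-2,2]}$ is a genuine alternative and the mean computation is correct, but the Chebyshev step has a gap. With both variances of order $1$ and mean gap only $\alpha$, Chebyshev on $A=\{T\geq\alpha/2\}$ gives only
\[
\bbP_{\mathrm{conv}}(A)-\Unif_{\kE}(A)\;\geq\;1-\frac{4(\Var_{\mathrm{conv}}(T)+1)}{\alpha^2},
\]
which is negative for every $\alpha\in(0,1/2]$; so as written the argument never produces a positive lower bound. The repair is standard: truncate via $h_K(x)=\max(-K,\min(x,K))$ with $K$ a large multiple of $1/\alpha$, use the second-moment bounds to control $|\bbE[T]-\bbE[h_K(T)]|\leq\bbE[T^2]/K$, and conclude from the bounded test function $h_K(T)$ that $\dtv\gtrsim\alpha^2$. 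Your variance claim $\Var_{\mathrm{conv}}(T)=O(1)$ is correct; a cleaner justification than the combinatorial one you sketch is the Kronecker decomposition of $(\ch^{[n-2,2]})^2$ into finitely many $\ch^\mu$ with $n-\mu_1\leq 4$, combined with the observation that for fixed-point-free classes one has $|\ch^\mu(\cC_i)|=O(n^{\lfloor(n-\mu_1)/2\rfloor})$ while $d_\mu\asymp n^{n-\mu_1}$.
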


This extends the result of \cite[Theorem 2.2]{ChmutovPittel2016}. Our theorem implies in particular the convergence in distribution of the number of vertices in a certain model of random maps (see Theorem \ref{thm: number of vertices in random maps}), improving \cite[Theorem 3.1]{ChmutovPittel2016} and recovering the result of \cite[Theorem 3]{BudzinskiCurienPetri2019} under a stronger assumption. Note that Theorem \ref{thm: products of two permutations and random maps} is strictly stronger than Theorem \ref{thm: number of vertices in random maps}, and therefore is not contained in \cite[Theorem 3]{BudzinskiCurienPetri2019}.

\subsection{Structure of the paper}

Section \ref{s:preliminaries} is devoted to preliminaries on Young diagrams and some elementary results. There we recall in particular the notions of hook length and hook product, and the hook length formula.

In Section \ref{s:sliced hook products}, we introduce and study \textit{sliced hook products}. Sliced hook products are defined in Section \ref{s: sliced hook products definitions}: they generalize the idea of replacing the whole hook product of a diagram by a simpler product to any \textit{set partition} of a diagram. We study these sliced hook products in Section \ref{s: sliced hook products bounds} and obtain precise approximations of (classical) hook products. A key result is the bound $d_\lambda \geq \binom{n}{|c|}d_s d_c e^{6\sqrt{|c|}}$ presented in Proposition \ref{prop: borne abdelta et dimension centre} (b), which improves on \cite[Lemma 2.1]{LarsenShalev2008}. Here, $s$ denotes the external hook of $\lambda$ and $c=\lambda\backslash s$ is the center of $\lambda$ (as drawn in Section \ref{s:diagram notation}). In Section \ref{s: virtual degrees and augmented dimensions}, we introduce the notion of \textit{augmented dimension} $d_\lambda^+$ and consider some of its useful properties.

We prove Theorem \ref{thm:ALS virtual degree asymptotic bound} in Section \ref{s:proof of the main result}, in two steps. We first show in Section \ref{s: proof of prop:estimée virtual degree augmented dimension} that the virtual degree $D(\lambda)$ and the augmented dimension $d_\lambda^+$ are close to each other. We then prove in Section \ref{s: proof of prop:récurrence sur les dimensions augmentées}, by induction, slicing the external hooks of diagrams $\lambda$, that $d_\lambda^+ = d_\lambda^{1+O(1/\ln|\lambda|)}$. Finally, in Section \ref{s:examples} we provide examples that show that Theorem \ref{thm:ALS virtual degree asymptotic bound} is sharp up to the value of the constant.

Section \ref{s: Character bounds in function of the number of cycles} is devoted to the proof of Theorem \ref{thm: borne caractères nombre de cycles}, which shows character bounds in terms of the number of cycles.

We improve the bounds from Liebeck and Shalev \cite{LiebeckShalev2004} on the Witten zeta function in Section \ref{s: bounds Witten zeta improvement Liebeck Shalev}, proving Proposition \ref{prop: bounds witten zeta intro iff}.

In Section \ref{s: Products of two conjugacy invariant random permutations} we prove sufficient conditions for products of two conjugacy invariant random permutations to be close to being uniform.

In Section \ref{s: beyond the lulov pak conjecture}, we apply the previous results and characterize which fixed-point free conjugacy classes mix in 2 steps, proving Theorem \ref{thm: application Lulov Pak extended}.

Section \ref{s: random maps} is devoted to the proof of Theorem \ref{thm: products of two permutations and random maps} and the study of random maps.

Finally, in Section \ref{s: further mixing time estimates} we prove Theorems \ref{thm: mixing time 2 if few cycles} and \ref{thm: cutoff large support no short cycles} about mixing time estimates for specific conjugacy classes.

\section{Preliminaries}\label{s:preliminaries}

\subsection{Young diagrams, representations and integer partitions}
\label{ssec:young diagrams representations and partitions}

We say that $\lambda := [\lambda_1, \ldots, \lambda_k]$ is a \textbf{partition} of the integer $n$ if $\lambda_i \in \mathbb{Z}_{\geq 1} := \{1, 2, \ldots \}$ for all $1 \leq i \leq k$, $\lambda_i \geq \lambda_{i+1}$ for all $i \leq k-1$, and $\sum_{i=1}^k \lambda_i = n$. We write $\lambda \vdash n$ if $\lambda$ is a partition of $n$. It turns out that integers partitions of $n$ are in bijection with irreducible representations of $\kS_n$, which makes it a good tool to study characters. 

A common and useful way to code an integer partition (and thus a representation of $\kS_n$) is through \textbf{Young diagrams}. The Young diagram of shape $\lambda := \cg \lambda_1, \ldots, \lambda_k \cd$ (where $\lambda$ is a partition of an integer $n$) is a table of boxes, whose $i$-th row is made of $\lambda_i$ boxes, see Figure \ref{fig:youngdiagram521} for an example.

\begin{figure}[!ht]
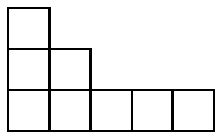

\begin{center}
\begin{ytableau}
    \none   & \\
    \none   &  & \\
    \none   &  &  &  &  & \\
    \none   & \none & \none & \none
\end{ytableau}
\end{center}
\caption{The Young diagram coding the partition $[5,2,1]$ of the integer $8$. It has 5 boxes on the first row, 2 on the second, and 1 on the third.}
\label{fig:youngdiagram521}
\end{figure}

We also denote by $\lambda$ this Young diagram, and say that $n$ is its size.

\subsection{Hook lengths, hook products and the hook-length formula}
Let $\lambda$ be a Young diagram. If $u \in \lambda$, the \textbf{hook} of $u$ in $\lambda$ is the set of boxes on the right or above $u$, including $u$. We call hook length the size of this set and denote it by $H(\lambda, u)$. See Figure \ref{fig:hooklength} for an example.

\begin{figure}[!ht]
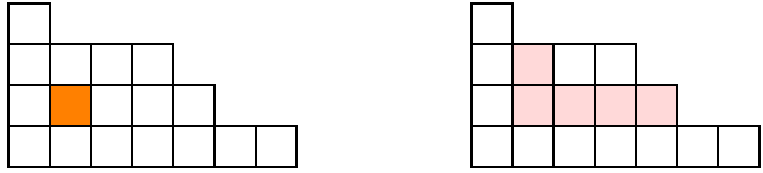

\begin{center}
\begin{ytableau}
    \none   & \\
    \none   & & & & \\
    \none   & & *(orange) & & & \\
    \none   & & & & & & & & \none&\none&\none
\end{ytableau}
\begin{ytableau}
    \none   & \\
    \none   & & *(pink!60) & & \\
    \none   & & *(pink!60)& *(pink!60) & *(pink!60)& *(pink!60)\\
    \none   & & & & & & & 
\end{ytableau}
\end{center}
\caption{
    Left: the Young diagram associated to $\lambda = [7,5,4,1]$, with a box $u$ colored in orange.  Right: the hook associated to $u$ is in pink. Its length is $H(\lambda, u) = 5$.}
    \label{fig:hooklength}
\end{figure}

Let us now consider a subset of boxes $E\subset \lambda$. We define
\begin{equation}
    H(\lambda, E) := \prod_{u\in E} H(\lambda, u)
\end{equation}
the \textit{hook-product of $E$ in $\lambda$}. See Figure \ref{fig:hookproduct} for an example.

\begin{figure}[!ht]
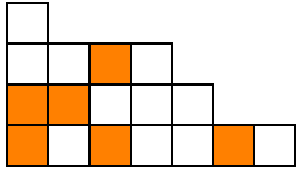

\begin{center}
\begin{ytableau}
    \none   & \\
    \none   & & &*(orange) & \\
    \none   &*(orange) & *(orange) & & & \\
    \none   & *(orange) & & *(orange)& & & *(orange)& & \none&\none&\none
\end{ytableau}
\end{center}
\caption{The Young diagram coding $\lambda = [7,5,4,1]$, with the set $E = \ag (1,1), (3,1), (6,1), (1,2),(2,2), (3,3)\ad$ in orange. Here, $H(\lambda, E) = 10\cdot 7\cdot 2 \cdot 7\cdot 5 \cdot 2=9800$.}
\label{fig:hookproduct}
\end{figure}

We can now state the hook-length formula. Let $\lambda$ be a Young diagram and $n:=|\lambda|$ its number of boxes. A standard tableau of $\lambda$ is a filling of $\lambda$ with the numbers from 1 to $n$ such that the numbers are increasing on each row and column. We denote by $\ST(\lambda)$ the set of standard tableaux of $\lambda$, and $d_\lambda := |\ST(\lambda)|$, see Figure \ref{fig:standardtableaux}. It is well-known that $d_\lambda$ is the dimension of the irreducible representation of the symmetric group associated to $\lambda$.
\begin{figure}[!ht]
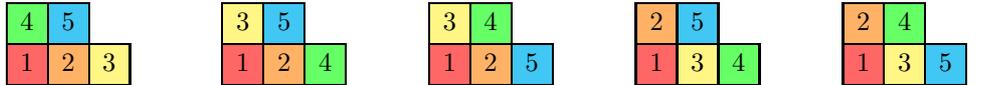

\begin{center}
\begin{ytableau}
    \none   & *(green!60)4 & *(cyan!60)5 \\
    \none   &*(red!60)1& *(orange!60)2 & *(yellow!60)3& \none
\end{ytableau}
\begin{ytableau}
    \none   & *(yellow!60)3 & *(cyan!60)5\\
    \none   & *(red!60)1 & *(orange!60)2& *(green!60)4& \none
\end{ytableau}\begin{ytableau}
     \none   & *(yellow!60)3& *(green!60)4\\
    \none   & *(red!60)1 & *(orange!60)2& *(cyan!60)5& \none
\end{ytableau}\begin{ytableau}
    \none   & *(orange!60)2 & *(cyan!60)5\\
    \none   & *(red!60)1 & *(yellow!60)3 & *(green!60)4 & \none
\end{ytableau}\begin{ytableau}
    \none   & *(orange!60)2 & *(green!60)4 \\
    \none   & *(red!60)1 & *(yellow!60)3 & *(cyan!60)5 & \none[.]
\end{ytableau}
\end{center}
\caption{
    The diagram $\lambda = [3,2]$ and its $d_\lambda = |\ST(\lambda)|= 5$ standard tableaux.}
\label{fig:standardtableaux}
\end{figure}

The hook-length formula was discovered by Frame, Robinson, and Thrall \cite{FrameRobinsonThrall1954}, and allows to compute the number of standard tableaux of a diagram looking only at its hook lengths:
for any diagram $\lambda$ of size $n$, we have
\begin{equation}
    d_\lambda = \frac{n!}{H(\lambda,\lambda)}.
\end{equation}
\begin{example}
    Consider again $\lambda = [3,2]$, which has size 5. We have $\frac{n!}{H(\lambda, \lambda)} = \frac{5!}{4\cdot 3 \cdot 1\cdot 2 \cdot 1} = \frac{120}{24} = 5$. Therefore, by the
    hook length formula, we recover that $d_\lambda = 5$.
\end{example}

\subsection{Diagram notation}\label{s:diagram notation}
Let us define some notation that we will use for diagrams. To keep the notation light, we will usually use the same pieces of notation for both the diagrams and their lengths. For example we will say the first row of $\lambda$ is $\lambda_1$ and has length $\lambda_1$. If we want to emphasize that we are considering a diagram or a size, we will add brackets or absolute values. For example we can say that the first row $\cg \lambda_1 \cd$ has length $\bg \lambda_1 \bd$.

\begin{definition}[Definition-notation]
\label{def:definitionnotation}
Let $\lambda$ be a Young diagram.    
\begin{itemize}
    \item We denote by $\lambda_i$ the $i$-th row of $\lambda$ and by $\lambda_i'$ its $i$-th column.
    \item We denote by $\delta_i$ the $i$-th diagonal box of $\lambda$, that is, $\delta_i = \lambda_i \cap \lambda'_i$.
    \item We denote by $\delta = \delta(\lambda)$ the diagonal of $\lambda$, that is, $\delta = \cup_i \delta_i$. 
    \item We denote by $r$ the truncated diagram (removing the first row of $\lambda$, $s = s(\lambda) := \lambda_1 \cup \lambda_1'$ the external hook of $\lambda$, and by $c= \lambda \backslash s$ its center (i.e. the diagram, except the first row and the first column).
    \item We define $\lambda_{\geq i} := \cup_{j\geq i} \lambda_j$ and $\lambda_{\leq i}:=\cup_{j\leq i} \lambda_j$.
    \item Similarly, we define $\lambda'_{\geq i} := \cup_{j\geq i} \lambda'_j$, $\lambda'_{\leq i}:=\cup_{j\leq i} \lambda'_j$, and $\lambda'_{i_1 \to i_2} := \cup_{i_1 \leq j\leq i} \lambda'_j$.
    \item We denote by $\lambda_a^i = a_i$ and $\lambda_b^i = b_i$ the truncated $i$-th rows and columns of $\lambda$.
    \item We denote by $\lambda^i$ the $i-th$ hook of $\lambda$, that is the boxes that are on the right of $\delta_i$ on $\lambda_i$ or above $\delta_i$ on $\lambda'_i$. Formally we can write $\lambda^{i} = (\lambda_i \cap \lambda'_{\geq i}) \cup (\lambda_{\geq i} \cap \lambda'_i)$.
    \item We define $\lambda^{\geq i} := \cup_{j\geq i} \lambda^j$, $\lambda^{\leq i}:=\cup_{j\leq i} \lambda^j$.
    \item We denote by $u_i$ the part of the $i$-th column which is above the first row. That is $u_i = \lambda_i' \cap \lambda_{\geq 2}$.
    \item We define $u_{\geq i} := \cup_{j\geq i} u_j$, $u_{\leq i}:=\cup_{j\leq i} u_j$.
    \item 
    We denote by $\lambda_{\geq i,\geq j}$ the subdiagram of $\lambda$ whose boxes are on a row after the $i$-th and on a column after the $j$-th. 
\end{itemize}
\end{definition}

Figure \ref{fig:illustationsdiagrams} illustrates these definitions on diagrams.

\begin{figure}
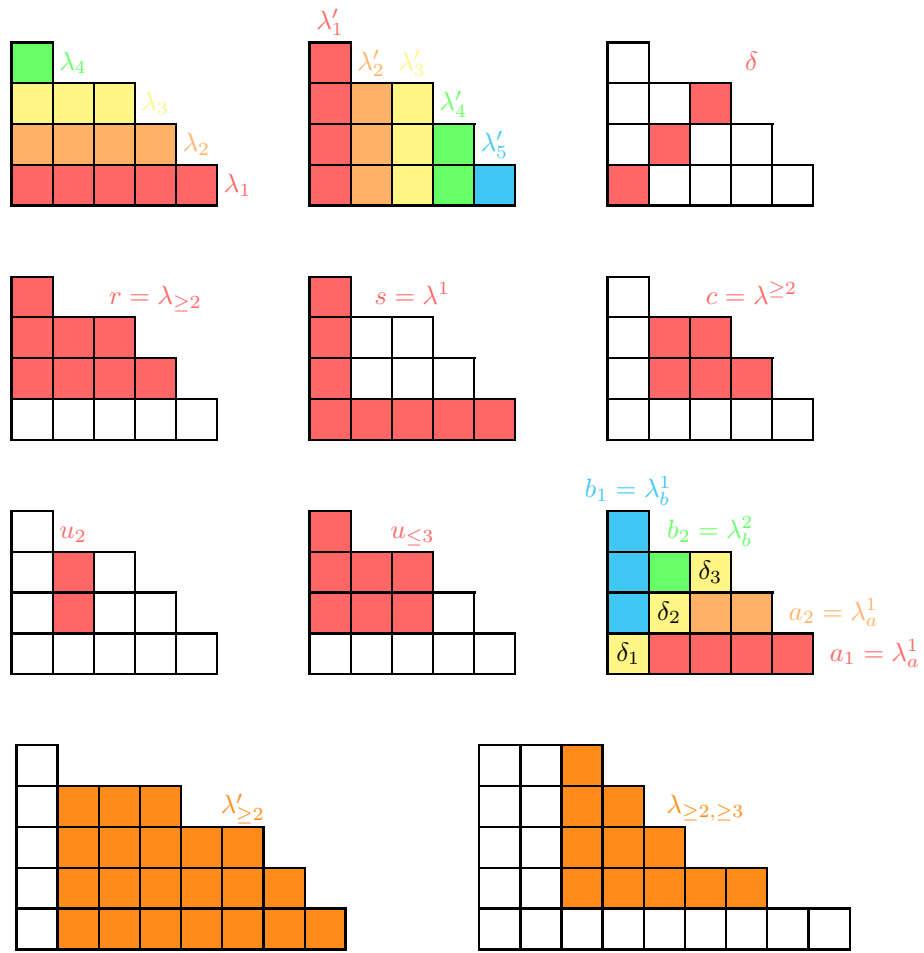

\begin{center}
    \begin{ytableau}
        \none \\
        \none &*(green!60)&\none[\textcolor{green!60}{\lambda_4}]\\
        \none &*(yellow!60)&*(yellow!60)&*(yellow!60)&\none[\textcolor{yellow!60}{\lambda_3}]\\
        \none &*(orange!60)&*(orange!60)&*(orange!60)&*(orange!60) &\none[\textcolor{orange!60}{\lambda_2}]\\
        \none &*(red!60)&*(red!60)&*(red!60)&*(red!60)&*(red!60) &\none[\textcolor{red!60}{\lambda_1}]
    \end{ytableau}
    \begin{ytableau}
        \none &\none[\textcolor{red!60}{\lambda_1'}]\\
        \none &*(red!60)&\none[\textcolor{orange!60}{\lambda_2'}]&\none[\textcolor{yellow!60}{\lambda_3'}]\\
        \none &*(red!60)&*(orange!60) &*(yellow!60)&\none[\textcolor{green!60}{\lambda_4'}]\\
        \none &*(red!60)&*(orange!60)&*(yellow!60)&*(green!60)&\none[\textcolor{cyan!60}{\lambda_5'}]\\
        \none &*(red!60)&*(orange!60)&*(yellow!60) &*(green!60) &*(cyan!60) &\none
    \end{ytableau}
    \begin{ytableau}
        \none \\
        \none & &\none &\none&\none[\textcolor{red!60}{\delta}]\\
        \none &&&*(red!60)&\none\\
        \none &&*(red!60)&&\\
        \none &*(red!60)&&&& &\none
    \end{ytableau}
\end{center}

\begin{center}
     \begin{ytableau}
        \none \\
        \none &*(red!60) &\none &\none&\none[\textcolor{red!60}{r = \lambda_{\geq 2}}]\\
        \none &*(red!60)&*(red!60)&*(red!60)\\
        \none &*(red!60)&*(red!60)&*(red!60)&*(red!60) \\
        \none &&&&& &\none
    \end{ytableau}
    \begin{ytableau}
        \none  \\
        \none &*(red!60) &\none &\none[\textcolor{red!60}{s = \lambda^1}]&\none\\
        \none &*(red!60)&&&\none\\
        \none &*(red!60)&&&\\
        \none &*(red!60)&*(red!60)&*(red!60) &*(red!60) &*(red!60) &\none 
    \end{ytableau}
    \begin{ytableau}
        \none \\
        \none &&\none &\none&\none[\textcolor{red!60}{c = \lambda^{\geq 2}}]\\
        \none &&*(red!60)&*(red!60)&\none\\
        \none &&*(red!60)&*(red!60)&*(red!60)\\
        \none &&&&& &\none
    \end{ytableau}
\end{center}

\begin{center}
    \begin{ytableau}
        \none &\none   \\
        \none &\none &  &\none[\textcolor{red!60}{u_2}]\\
        \none &\none&&*(red!60)&&\none\\
        \none &\none&&*(red!60)&&\\
        \none &\none&&&&& &\none
    \end{ytableau}
     \begin{ytableau}
        \none  \\
        \none &*(red!60) &\none &\none[\textcolor{red!60}{u_{\leq 3}}]\\
        \none &*(red!60)&*(red!60)&*(red!60)&\none\\
        \none &*(red!60)&*(red!60)&*(red!60)&\\
        \none &&&&& &\none 
    \end{ytableau}  
 \begin{ytableau}
        \none &\none[\textcolor{cyan!60}{b_1 = \lambda_b^1}]\\
        \none &*(cyan!60) &\none &\none[\textcolor{green!60}{b_2 = \lambda_b^2}] &\none&\none\\
        \none &*(cyan!60)&*(green!60)&*(yellow!60) \delta_3&\none\\
        \none &*(cyan!60)&*(yellow!60)\delta_2&*(orange!60)&*(orange!60)&\none &\none[\textcolor{orange!60}{a_2 = \lambda_a^1}]\\
        \none &*(yellow!60)\delta_1&*(red!60)&*(red!60)&*(red!60)&*(red!60) &\none &\none[\textcolor{red!60}{a_1 = \lambda_a^1}]
    \end{ytableau}
\end{center}

 \begin{center}
  \begin{ytableau}
    \none    \\
    \none & \\
    \none  &&*(orange!90)&*(orange!90)&*(orange!90) &\none &\none[\textcolor{orange!90}{\lambda'_{\geq 2}}]\\
    \none   &&*(orange!90)&*(orange!90)&*(orange!90) &*(orange!90)&*(orange!90) \\
    \none   &&*(orange!90)&*(orange!90)&*(orange!90) &*(orange!90) &*(orange!90) &*(orange!90)\\
    \none   & &*(orange!90) &*(orange!90)  &*(orange!90)&*(orange!90)&*(orange!90)&*(orange!90)&*(orange!90) &\none &\none
\end{ytableau}
    \begin{ytableau}
        \none \\
        \none & &&*(orange!90)\\
        \none & &&*(orange!90)&*(orange!90) &\none &\none[\textcolor{orange!90}{\lambda_{\geq 2,\geq 3}}]\\
        \none & &&*(orange!90)&*(orange!90)&*(orange!90)\\
        \none & &&*(orange!90)&*(orange!90)&*(orange!90)&*(orange!90)&*(orange!90)\\
        \none & &&&&&&&& 
    \end{ytableau}
\end{center}
    \caption{Examples of Definition \ref{def:definitionnotation}.}
    \label{fig:illustationsdiagrams}
\end{figure}

\subsection{Some elementary results}

We collect here standard or elementary results that we will us throughout the paper. We give proofs for completeness.

\begin{lemma}\label{lem:standard or elementary results}
    \begin{enumerate}
        \item For any $n\geq 1$ and $\lambda \vdash n$ we have $d_\lambda \leq \sqrt{n!}$.
    \item For $n \geq 6$, we have $n! \leq (n/2)^n$.
    \item Let $n\geq 1$ and $\lambda\vdash n$. Denote by $s = s(\lambda)$ the external hook of $\lambda$. Then $d_s  = \binom{s-1}{\lambda_a^1}$.
    \end{enumerate}
\end{lemma}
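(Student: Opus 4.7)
The proof naturally splits into three essentially independent parts, each of which is standard; I would treat them one at a time.

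For part (a), the plan is to invoke the decomposition of the regular representation of $\kS_n$, which gives the classical identity $\sum_{\mu \vdash n} d_\mu^2 = n!$. Since every term in this sum is nonnegative, I can extract the single term indexed by $\lambda$ to obtain $d_\lambda^2 \leq n!$, i.e.\ $d_\lambda \leq \sqrt{n!}$.

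For part (b), I would argue by induction on $n \geq 6$. The base case is a direct numerical check: $6! = 720 \leq 729 = 3^6$. For the inductive step from $n$ to $n+1$, I would write $(n+1)! = (n+1)\cdot n!$, bound $n!$ using the inductive hypothesis, and compare to $((n+1)/2)^{n+1}$; after dividing both sides by $((n+1)/2)^{n+1}$, the desired inequality reduces to $(1 + 1/n)^n \geq 2$. This is a classical fact: the sequence $(1 + 1/n)^n$ is increasing with limit $e$ and already equals $2$ at $n = 1$, so in particular it is $\geq 2$ for the range we need.

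For part (c), the key observation is that $s = s(\lambda) = \lambda_1 \cup \lambda_1'$ is itself a hook-shaped Young diagram, consisting of a single row of length $\lambda_1$ and a single column of length $\lambda_1'$ meeting at the corner box; in particular, under the paper's convention of conflating a diagram with its size, we have $s = \lambda_1 + \lambda_1' - 1$. The plan is then to count standard tableaux of this hook combinatorially. In any standard tableau of $s$, the corner box is forced to contain the entry $1$ (it is the unique minimum along both the row and the column), and the remaining $s - 1$ entries $\{2, \ldots, s\}$ must be split between the $\lambda_a^1 = \lambda_1 - 1$ non-corner boxes of the row and the $\lambda_1' - 1$ non-corner boxes of the column. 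Since both the row and the column have to be filled in increasing order, each choice of which $\lambda_a^1$ entries go into the row determines the tableau uniquely, and conversely. This yields $d_s = \binom{s-1}{\lambda_a^1}$, as claimed.

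None of these parts poses a real obstacle; the only thing requiring mild care is matching the paper's notational convention in part (c) (the same symbol $s$ denoting both the diagram and its size). Part (b) could alternatively be handled via Stirling's formula, but the induction I outlined is shorter and entirely self-contained.
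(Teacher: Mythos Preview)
Your proposal is correct and matches the paper's proof in all three parts: the same regular-representation identity for (a), the same induction with the same base case for (b), and the same combinatorial count of standard tableaux on a hook for (c). The only cosmetic difference is in the inductive step of (b), where you reduce to $(1+1/n)^n \geq 2$ via monotonicity while the paper equivalently shows $2(n/(n+1))^n \leq 1$ via concavity of $\ln$.
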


\begin{proof}
    \begin{enumerate}
        \item A classical formula from representation theory is the following: for any finite group $G$ we have $|G| = \sum_{\rho \in \widehat{G}} d_\rho^2$, where $\widehat{G}$ is the set of all irreducible representations of $G$. This identity represents for example the dimensions on both sides of the Fourier isomorphism (see \cite[Section 1.3]{LivreMeliot2017}). Hence, for any $\lambda \in \widehat{G}$ of a finite group $G$ we have 
\begin{equation}
    d_\lambda^2 \leq \sum_{\rho \in \widehat{G}} d_\rho^2 = |G|. 
\end{equation}
The result follows taking square roots on both sides, since for $n\geq 1$ we have $|\mathfrak{S}_n| = n!$.
\item We proceed by induction. The result holds for $n=6$ since $6! = 720 \leq 729 = 3^6 = (6/2)^6$. Let now $n\geq 6$ and assume that $n! \leq (n/2)^n$. Then
\begin{equation}
    \frac{(n+1)!}{((n+1)/2)^{n+1}} = \frac{n!}{(n/2)^n} (n+1) \frac{2}{n+1}\pg\frac{n}{n+1}\pd^n \leq 2 \pg\frac{n}{n+1}\pd^n.
\end{equation}
Since $\ln$ is concave, we have $\ln(1+x) \geq x\ln 2$ for $0\leq x\leq 1$, and therefore, we have for $n\geq 1$
\begin{equation}
    2 \pg\frac{n}{n+1}\pd^n = 2 e^{-n\ln(1+1/n)}\leq 2 e^{-n\frac{\ln 2}{n}} = 1,
\end{equation}
concluding the proof.
\item Recall that $d_s$ is the number of standard tableaux of $s$. Observe that a standard tableau necessarily has 1 placed in the bottom-left corner. Furthermore, since $s$ is a hook, the rest of the standard diagram is determined by which numbers we chose to place on the first row, for which there are $\binom{s-1}{\lambda_a^1}$ possibilities.
    \end{enumerate}
\end{proof}

\section{Sliced hook products}\label{s:sliced hook products}

We introduce here a new tool in the study of Young diagrams, which we call sliced hook products. This notion of hook product, which consists in cutting a diagram along its rows and columns, turns out to be suited for proofs by induction.

\subsection{Definitions}\label{s: sliced hook products definitions}

Despite being elegant, the hook-length formula may be tricky to use, because of how hard it is to estimate hook products. A powerful idea to approximate hook products is to take into account the contribution to hook lengths of only some boxes in the hook of a box, making the resulting hook product easier to compute and to use.

We first extend the definition of hook lengths to any set of boxes. We represent boxes by the coordinates of their top right corner in the plane $\bbZ^2$. As such a box can be seen as an element of $\bbZ^2$ and a set of boxes is a subset $S\subset \bbZ^2$, see Figure \ref{fig:hooklengthsfragmented}.

\begin{definition}
    Let 
    $S \subset \bbZ^2$ be a finite set of boxes, and let $u = (x,y) \in \bbZ$ (not necessarily in $S$). We define the hook length of $u$ with respect to $S$ by
\begin{equation}
    H(S, u) = \bg \ag (x',y') \in S \du [x=x' \text{ and } y\leq y'] \text{ or } [y=y' \text{ and } x\leq x'] \ad \bd.
\end{equation}
\end{definition}

\begin{figure}
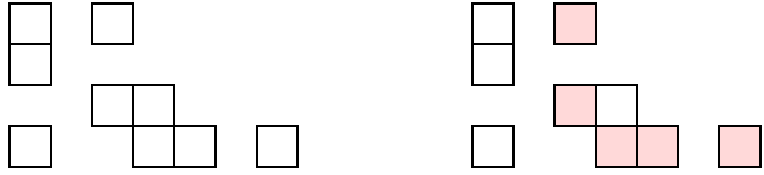

\begin{center}
\begin{ytableau}
    \none & &\none & \\
    \none & \\
    \none &\none &\none && \\
    \none & &\none&\none &&&\none & &\none &\none &\none 
\end{ytableau}
\begin{ytableau}
    \none & &\none  &*(pink!60) \\
    \none & \\
    \none &\none &\none &*(pink!60)& \\
    \none & &\none&\none &*(pink!60)&*(pink!60)&\none & *(pink!60) 
\end{ytableau}
\end{center}

\caption{The hook of $u= (3,1)$ in the set of boxes $S = \ag (1,1), (4,1),(5,1),(7,1), (3,2), (4,2),(1,3),(1,4),(3,4)\ad $. In pink are the boxes in the hook of $u$, and we have $H(S,u) = 5$.}
\label{fig:hooklengthsfragmented}
\end{figure}

\begin{definition}
    Let $\lambda$ be an integer partition and $P = \ag \nu_1, \nu_2, ..., \nu_r\ad$ be a set partition of $\lambda$ (i.e. such that no $\nu_i$ is empty and $\sqcup_i \nu_i = \lambda$). The hook product sliced along $P$ is the product
\begin{equation}
   H^{*P}(\lambda, \lambda) := \prod_{i= 1}^r H(\nu_i, \nu_i).
\end{equation}

If $u\in \nu_i$, we call the quantity $H(\nu_i, u)$ the \textit{hook length} of $u$ sliced along $P$, or sliced hook length.

More generally, if $E$ is a subset of $\lambda$, we define the following (partial)
sliced hook product:
\begin{equation}
    H^{*P}(\lambda, E) := \prod_{i= 1}^r H(\nu_i, \nu_i\cap E).
\end{equation}
We will refer to “slicing” for the procedure consisting of replacing $H(\lambda, \lambda)$ by $H^{*P}(\lambda,\lambda)$, as well as for the associated partition $P$.
\end{definition}

Let us give some examples of slicings. We will each time provide an example where the $\nu_i$ are represented in different colors, and the boxes of the diagrams $\lambda$ are filled with their \textit{sliced hook lengths}.

\begin{definition}
    Let $\lambda$ be a Young diagram. 
    \begin{itemize}
        \item  We call $\lambda_1$-slicing (of $\lambda$) a slicing along $\ag\lambda_1, \lambda_{\geq 2}\ad$, see Figure \ref{fig:lambdadown1slicing}.
        \item We call $\lambda^1$-slicing (of $\lambda$) a slicing along $\ag\lambda^1, \lambda^{\geq 2}\ad$, see Figure \ref{fig:lambdaup1slicing}. 
        \item We call $ab\delta$-slicing (of $\lambda$) a slicing along $P:= \ag\lambda^i_a\ad_i \sqcup  \ag\lambda^i_b\ad_i \sqcup\ag\delta_i\ad_i$, see Figure \ref{fig:abdeltaslicing}. We denote the (partial) $ab\delta$-sliced hook products by $H^{*ab\delta}(\lambda, \cdot)$.
    \end{itemize}

\end{definition}

\begin{figure}
\begin{center}
\begin{ytableau}
        \none &*(red!60)1\\
        \none &*(red!60)3&*(red!60)1\\
        \none &*(red!60)4&*(red!60)2\\
        \none &*(red!60)6&*(red!60)4&*(red!60)1 &\none &\none[\textcolor{red!60}{\lambda_{\geq 2}}]\\
        \none &*(red!60)8&*(red!60)6&*(red!60)3&*(red!60)1\\
        \none &*(blue!60)14&*(blue!60)13&*(blue!60)12&*(blue!60)11&*(blue!60)10&*(blue!60)9&*(blue!60)8&*(blue!60)7&*(blue!60)6&*(blue!60)5&*(blue!60)4&*(blue!60)3&*(blue!60)2&*(blue!60)1 &\none[\textcolor{blue!60}{\lambda_1}]
    \end{ytableau}
\end{center}
\caption{The $\lambda_1$-slicing for $\lambda = [14,4,3,2,2,1]$. Numbers in the boxes correspond to the hook lengths after slicing.}
\label{fig:lambdadown1slicing}
\end{figure}

\begin{figure}[!ht]
    \begin{center}
    \begin{ytableau}
        \none &*(blue!60)1\\
        \none &*(blue!60)2&*(red!60)1\\
        \none &*(blue!60)3&*(red!60)2\\
        \none &*(blue!60)4&*(red!60)4&*(red!60)1 &\none &\none[\textcolor{red!60}{\lambda^{\geq 2}}]\\
        \none &*(blue!60)5&*(red!60)6&*(red!60)3&*(red!60)1\\
        \none &*(blue!60)19&*(blue!60)13&*(blue!60)12&*(blue!60)11&*(blue!60)10&*(blue!60)9&*(blue!60)8&*(blue!60)7&*(blue!60)6&*(blue!60)5&*(blue!60)4&*(blue!60)3&*(blue!60)2&*(blue!60)1 &\none[\textcolor{blue!60}{\lambda^1}]
    \end{ytableau}
\end{center}
\caption{The $\lambda^1$-slicing for $\lambda = [14,4,3,2,2,1]$. Numbers in the boxes correspond to the hook lengths after slicing.}
\label{fig:lambdaup1slicing}
\end{figure}

\begin{figure}[!ht]
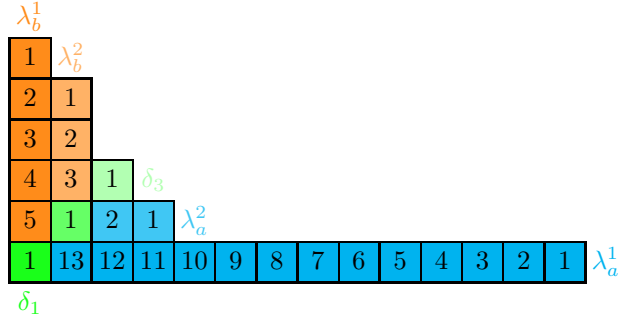

The $ab\delta$-slicing of the diagram $\lambda = [14,4,3,2,2,1]$ can be represented as follows.
    \begin{center}
    \begin{ytableau}
        \none &\none[\textcolor{orange!90}{\lambda_b^1}] \\
        \none &*(orange!90)1 &\none[\textcolor{orange!60}{\lambda_b^2}]\\
        \none &*(orange!90)2&*(orange!60)1\\
        \none &*(orange!90)3&*(orange!60)2\\
        \none &*(orange!90)4&*(orange!60)3&*(green!30)1 &\none[\textcolor{green!30}{\delta_3}]\\
        \none &*(orange!90)5&*(green!60)1&*(cyan!60)2&*(cyan!60)1 &\none[\textcolor{cyan!60}{\lambda_a^2}]\\
        \none &*(green!90)1&*(cyan!90)13&*(cyan!90)12&*(cyan!90)11&*(cyan!90)10&*(cyan!90)9&*(cyan!90)8&*(cyan!90)7&*(cyan!90)6&*(cyan!90)5&*(cyan!90)4&*(cyan!90)3&*(cyan!90)2&*(cyan!90)1 &\none[\textcolor{cyan!90}{\lambda_a^1}] \\
        \none &\none[\textcolor{green!90}{\delta_1}]
    \end{ytableau}
\end{center}

\caption{The $ab\delta$-slicing for $\lambda = [14,4,3,2,2,1]$. Numbers in the boxes correspond to the hook lengths after slicing. If $E$ is the subdiagram $[3,2]$ of $\lambda$ (i.e. $E := \{ (1,1), (2,1), (3,1), (1,2), (2,2) \}$), we have $H^{*ab\delta}(\lambda, E) = 1\cdot 13 \cdot 12 \cdot 5\cdot 1 = 780$.}
\label{fig:abdeltaslicing}
\end{figure}

\subsection{Bounds on sliced hook products}\label{s: sliced hook products bounds}
Let us define, for any Young diagrams $\mu \subset \lambda$ and any set partition $P$ of $\lambda$, the ratio 
\begin{equation}
    R_P(\lambda, \mu) := \frac{H(\lambda, \mu)}{H^{*P}(\lambda, \mu)}.
\end{equation}
If $P$ is the $ab\delta$-(resp. $\lambda_1$-, $\lambda^1$-)slicing, we denote the corresponding ratio by $R_{ab\delta}$ (resp. $R_{\lambda_1}$, $R_{\lambda^1}$).

We start with rewriting the ratio, in the case of a slicing with respect to the first row.
\begin{lemma}
\label{lem:premiers resultats}
If $\mu, \lambda$ are two Young diagrams such that $\mu \subset \lambda$, then we have:
\begin{itemize}
      \item[(i)]  $R_{\lambda_1}(\lambda, \mu) = \frac{H(\lambda, \mu_1)}{H(\lambda_1, \mu_1)}$;
       \item[(ii)] $R_{\lambda_1}(\lambda, \mu) \leq R_{\lambda_1}(\lambda, \lambda)$.
    \end{itemize}    
\end{lemma}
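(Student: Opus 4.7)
The plan is to prove (i) by directly unpacking the definition of the $\lambda_1$-slicing, and then to deduce (ii) as a short monotonicity consequence of (i).

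For (i), by definition of the slicing along $\{\lambda_1,\lambda_{\geq 2}\}$,
\[
H^{*\lambda_1}(\lambda,\mu) \;=\; H(\lambda_1,\mu_1)\cdot H(\lambda_{\geq 2},\mu_{\geq 2}),
\]
while the full hook product factors as $H(\lambda,\mu)=H(\lambda,\mu_1)\cdot H(\lambda,\mu_{\geq 2})$. The key observation is that for any box $u$ in row $i\geq 2$, every box of $\lambda$ contributing to its hook (boxes in the same row to the right, boxes in the same column above, and $u$ itself) already lies in $\lambda_{\geq 2}$. Hence $H(\lambda,u)=H(\lambda_{\geq 2},u)$ for each $u\in\mu_{\geq 2}$, so $H(\lambda,\mu_{\geq 2})=H(\lambda_{\geq 2},\mu_{\geq 2})$. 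The $\lambda_{\geq 2}$-factors cancel in $R_{\lambda_1}(\lambda,\mu)$, leaving exactly $H(\lambda,\mu_1)/H(\lambda_1,\mu_1)$.

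For (ii), I will apply (i) to both $\mu$ and to $\lambda$ itself; the inequality then reduces to showing
\[
\prod_{u\in\lambda_1\setminus\mu_1}\frac{H(\lambda,u)}{H(\lambda_1,u)} \;\geq\; 1.
\]
For a first-row box $u=(j,1)$, one has $H(\lambda_1,u)=\lambda_1-j+1$ (counting only its arm plus itself), whereas $H(\lambda,u)=\lambda_1-j+\lambda'_j$ also picks up the leg $\lambda'_j-1\geq 0$ present in the full diagram. Each factor is therefore at least $1$, proving (ii).

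There is no real obstacle here: (i) is a direct unfolding of the definition, reflecting the fact that hooks of non-first-row boxes do not reach into $\lambda_1$, and (ii) is an immediate positivity observation once (i) is in hand. The only subtle point is to carefully verify that the hook of a box in the first row really does only grow when we enlarge $\lambda_1$ to $\lambda$, which follows from the convention that the arm lies within $\lambda_1$ and the leg lies entirely above in $\lambda_{\geq 2}$.
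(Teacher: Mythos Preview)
Your proof is correct and follows essentially the same approach as the paper's own argument: part (i) is obtained by observing that hooks of boxes in $\mu_{\geq 2}$ lie entirely in $\lambda_{\geq 2}$, so the corresponding factors cancel, and part (ii) then reduces to the termwise inequality $H(\lambda,u)\geq H(\lambda_1,u)$ for $u\in\lambda_1\setminus\mu_1$. The only cosmetic difference is that the paper bounds the ratio $R_{\lambda_1}(\lambda,\mu)/R_{\lambda_1}(\lambda,\lambda)$ from above by $1$ whereas you bound its reciprocal from below, which is of course equivalent.
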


\begin{proof}
Let us first prove (i). Since the first row of $\lambda$ does not appear in the hook products of boxes on the second row and above, we have $H(\lambda,\mu_{\geq 2}) = H(\lambda_{\geq 2},\mu_{\geq 2})$. Therefore,
\begin{equation}
    R_{\lambda_1}(\lambda, \mu) = \frac{H(\lambda, \mu_1)H(\lambda, \mu_{\geq 2})}{H(\lambda_1, \mu_1)H(\lambda_{\geq 2}, \mu_{\geq 2})} = \frac{H(\lambda, \mu_1)}{H(\lambda_1, \mu_1)}.
\end{equation}
Now we prove (ii). For each $u\in \lambda$, we have $H_{\lambda_1}(\lambda, u) \leq H(\lambda, u)$, i.e. $\frac{H_{\lambda_1}(\lambda, u)}{H(\lambda, u)}\leq 1$. Using (i), we get that
 \begin{equation}
 \begin{split}
     \frac{R_{\lambda_1}(\lambda, \mu)}{R_{\lambda_1}(\lambda, \lambda)} &= \frac{H(\lambda, \mu_1)/H(\lambda_1, \mu_1)}{H(\lambda, \lambda_1)/H(\lambda_1, \lambda_1)} = \frac{H(\lambda_1, \lambda_1)/H(\lambda_1, \mu_1)}{H(\lambda, \lambda_1)/H(\lambda, \mu_1)} = \frac{H(\lambda_1, \lambda_1\backslash\mu_1)}{H(\lambda, \lambda_1\backslash\mu_1)} \\
     &  = \prod_{u\in \lambda_1 \backslash\mu_1}\frac{H(\lambda_1, u)}{H(\lambda, u)}\leq 1,
 \end{split}
\end{equation}
where $\lambda_1 \backslash \mu_1$ denotes the set of boxes that are in $\lambda_1$ but not in $\mu_1$, that is, the set of boxes whose top right angle is in $\ag (i,1) \du \mu_1 + 1 \leq i \leq \lambda_1 \ad$.
\end{proof}

Let us now prove various bounds on $\lambda_1$-slicings. To ease notations, we set as before $r := \lambda_{\geq 2}$. We start with a lemma which will be useful at several places.

\begin{definition}
      Let $p \geq 1$ and $(b_i)_{1 \leq i \leq p}, (b'_i)_{1 \leq i \leq p}$ be two (weakly) decreasing $p$-tuples of positive real numbers. We say that $(b_1, \ldots, b_p) \succeq (b'_1, \ldots, b'_p)$ if, for all $1 \leq \ell \leq p$:
    \begin{align}
        \sum_{i=1}^\ell b_i \geq \sum_{i=1}^\ell b'_i,
    \end{align}
    and
    \begin{equation}
    \label{eq:plusgrand}
        \sum_{i=1}^p b_i=\sum_{i=1}^p b'_i.
    \end{equation}
We say that $(b_1, \ldots, b_p) \succ (b'_1, \ldots, b'_p)$ if, in addition, they are not equal.
\end{definition}

\begin{lemma}
    \label{lem:truccombinatoire}
   Let $p \geq 1$ and $(a_i)_{1 \leq i \leq p}, (b_i)_{1 \leq i \leq p}, (b'_i)_{1 \leq i \leq p}$ be three (weakly) decreasing $p$-tuples of positive real numbers such that $(b_1, \ldots, b_p) \succeq (b'_1, \ldots, b'_p)$. Then,
\begin{equation}
\label{eq:egalalafin}
    \prod_{i=1}^p (a_i+b_i) \leq \prod_{i=1}^p (a_i+b'_i).
\end{equation}
\end{lemma}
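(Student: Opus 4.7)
The plan is to reduce to a single elementary one-step transfer by a standard majorization argument, and then verify the inequality directly on the two factors that change. The argument has three steps.

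First, if $(b_i) \succ (b'_i)$, I would construct a weakly decreasing sequence $(\tilde{b}_k)_{1 \leq k \leq p}$ satisfying $(b_k) \succeq (\tilde{b}_k) \succeq (b'_k)$ and differing from $(b_k)$ in exactly two coordinates, say $\tilde{b}_i = b_i - \delta$ and $\tilde{b}_j = b_j + \delta$ for some indices $i < j$ and some $\delta > 0$. The natural choice takes $i$ to be the smallest index with $b_i > b'_i$ and $j$ the smallest index $> i$ with $b_j < b'_j$ (which must exist by \eqref{eq:plusgrand} and the partial-sum inequalities), and $\delta$ the maximal positive real such that the modified sequence remains weakly decreasing and still majorizes $(b'_k)$. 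A small subtlety arises when $(b_k)$ has repeated values near $i$ or $j$, in which case one shifts $i$ to the last index of its constant run and $j$ to the first index of its run; with this adjustment one always has $\delta > 0$. Iterating this transfer strictly decreases $\sum_{k} |b_k - b'_k|$ by $2\delta$ at each step, so finitely many transfers bring $(b_k)$ to $(b'_k)$; by telescoping, it suffices to prove \eqref{eq:egalalafin} for a single such transfer.

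For a single transfer, only two factors of the product change, and the inequality reduces to
\[ (a_i + b_i)(a_j + b_j) \leq (a_i + b_i - \delta)(a_j + b_j + \delta). \]
Expanding, the difference of the right- and left-hand sides equals
\[ \delta\bigl((a_i + b_i) - (a_j + b_j) - \delta\bigr), \]
so it is enough to show $a_i + b_i - a_j - b_j \geq \delta$. Since $i < j$ and $(a_k)$ is weakly decreasing, $a_i \geq a_j$; it therefore suffices to check $b_i - b_j \geq \delta$. By the choice of $\delta$ one has $\delta \leq b_i - b'_i$, and since $(b'_k)$ is weakly decreasing and $j$ was chosen with $b_j < b'_j$, one has $b'_i \geq b'_j > b_j$, so
\[ b_i - b_j > b_i - b'_i \geq \delta, \]
as required.

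The main obstacle will be the bookkeeping in the first step, specifically the choice of indices $i, j$ and transfer size $\delta$ ensuring that the intermediate sequence remains weakly decreasing and that some monotone quantity strictly decreases at each iteration. Once this setup is in place, both the two-factor computation and the telescoping are immediate, and no further analytic input is needed.
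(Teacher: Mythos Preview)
Your approach---Robin Hood transfers between two coordinates---is the same idea the paper uses. The difference is that the paper takes an \emph{infinitesimal} transfer (small $\varepsilon>0$) at adjacent indices and then invokes continuity of $f$ on the compact set $\{c : b \succeq c \succeq b'\}$, whereas you take the \emph{maximal} transfer and claim finite termination via the $\ell^1$-distance $\sum_k|b_k-b'_k|$.

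There are two concrete gaps in your execution. First, the termination claim is false as stated: take $p=4$, $b=(6,6,1,1)$, $b'=(5,5,2,2)$. Your procedure shifts $i$ to $2$, keeps $j=3$, and the maximal admissible $\delta$ (from both monotonicity and majorization) is $\delta=2$, giving $\tilde b=(6,4,3,1)$. But $\sum_k|b_k-b'_k|=4=\sum_k|\tilde b_k-b'_k|$, so the $\ell^1$-distance does not strictly decrease, let alone by $2\delta$. Thus finite termination is not established.

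Second, in the two-factor check your justification ``$\delta\le b_i-b'_i$ by the choice of $\delta$'' fails once $i$ has been shifted: in the same example $b_i-b'_i=6-5=1<2=\delta$. The conclusion $b_i-b_j\ge\delta$ does hold, but via the monotonicity constraint on $\tilde b$ rather than the majorization constraint: since the shifted $i$ ends its constant run one has $\delta\le b_i-b_{i+1}\le b_i-b_j$ when $j>i+1$, and $\delta\le(b_i-b_j)/2$ when $j=i+1$.

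Both issues are repairable (for instance, $\sum_k b_k^2$ does strictly decrease at each step, and one can then argue by compactness), but the paper's continuity approach sidesteps them entirely by never needing finite termination.
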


\begin{proof}
Define $f: (z_1, \ldots, z_p) \mapsto \prod_{i = 1}^{p} (a_i+z_i)$, on the simplex 
\begin{equation}
    \Delta :=\left\{ (x_1, \ldots, x_p), x_1 \geq \ldots \geq x_p > 0, \sum_{i=1}^p x_i=1 \right\}.
\end{equation}
 We will prove that, for all $(b_1, \ldots, b_p) \succ (b'_1, \ldots, b'_p)$, there exists $(c_1, \ldots, c_p) \in \Delta$ such that $(b_1, \ldots, b_p) \succ (c_1, \ldots, c_p) \succeq (b'_1, \ldots, b'_p)$ and $f(b_1, \ldots, b_p) \leq f(c_1, \ldots, c_p)$. By continuity of $f$ (which is polynomial), this will allow us to conclude that $f(b_1, \ldots, b_p) \leq f(b'_1, \ldots, b'_p)$. Extending the result on simplices of the form $\Delta_s := \left\{ (x_1, \ldots, x_p), x_1 \geq \ldots \geq x_p > 0, \sum_{i=1}^p x_i=s \right\}$ for $s \neq 1$ is immediate.

    Fix $(b_1, \ldots, b_p) \succ (b'_1, \ldots, b'_p)$, 
    and let $j := \min \{i \in \llbracket 1, p \rrbracket, b_i>b'_i \}$. Necessarily $j \leq p-1$.
    Let also $k := \max \{i \geq j, b_i=b_j \}$ (in particular $k=j$ if $b_{j+1}<b_j$). By assumption, $j \leq k \leq p-1$. 
    Let $\varepsilon \in (0, \min \{ \frac{b_k-b'_k}{2}, \frac{b_k-b_{k+1}}{2}  \})$ to be fixed later, and define $(c_1, \ldots,c_p)$ as: $c_i=b_i$ if $i \notin \{k,k+1 \}, c_k=b_k-\varepsilon, c_{k+1}=b_{k+1}+\varepsilon$. 
    By assumption, we have $(c_1, \ldots, c_p) \in \Delta$ and $(b_1, \ldots, b_p) \succ (c_1, \ldots, c_p) \succeq (b'_1, \ldots, b'_p)$. In addition, we have
    \begin{align}
        \frac{f(c_1, \ldots, c_p)}{f(b_1, \ldots, b_p)} = \frac{(a_k+c_k)(a_{k+1}+c_{k+1})}{(a_k+b_k)(a_{k+1}+b_{k+1})} \geq 1.
    \end{align}
    Indeed,
    \begin{equation}
        \begin{split}
            & (a_k+c_k)(a_{k+1}+c_{k+1}) - (a_k+b_k)(a_{k+1}+b_{k+1}) \\ 
        = & a_k(c_{k+1}-b_{k+1}) + a_{k+1}(c_k-b_k) + c_kc_{k+1}-b_kb_{k+1}\\
        = &\varepsilon (a_k-a_{k+1} + b_k - b_{k+1})  - \varepsilon^2 >0
        \end{split}
    \end{equation}
    for $\varepsilon>0$ small enough, since $b_k>b_{k+1}$ by assumption. The result follows. 
\end{proof}

For $k\leq n$ integers, we write
\begin{equation}
    n^{\downarrow k} := n(n-1)...(n-k+1) = \prod_{0\leq i \leq k-1} (n-i) = \frac{n!}{(n-k)!} = k!\binom{n}{k}.
\end{equation}

\begin{proposition}\label{prop: propriétés produits équerres découpe}
        Let $\mu\subset\lambda$ be (non-empty) Young diagrams. Recall the notation $r=\lambda_{\geq 2}$.
\begin{enumerate}
    \item We have 
    \begin{equation}
        R_{\lambda_1}(\lambda, \mu)   \leq \frac{\pg\lambda_1+\left\lceil \frac{u_{\leq \mu_1}(\lambda)}{\mu_1}\right\rceil\pd^{\downarrow\mu_1}}{\lambda_1^{\downarrow \mu_1}}.
    \end{equation}
    
\item We have
\begin{equation}
    R_{\lambda_1}(\lambda, \lambda) \leq \binom{\lambda_1 + \left\lceil \frac{r}{\lambda_1}\right\rceil}{\lambda_1}.
\end{equation}
\item If $r\leq \lambda_1$, then
\begin{equation}
    R_{\lambda_1}(\lambda, \lambda) \leq 1+\frac{r}{\lambda_1-r+1}.
\end{equation}
\item Finally, we have
\begin{equation}
    R_{\lambda_1}(\lambda, \lambda) \leq e^{3\sqrt{r}}.
\end{equation}
\end{enumerate}
\end{proposition}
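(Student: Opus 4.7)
All four bounds rest on a single explicit formula. By Lemma \ref{lem:premiers resultats}(i), and computing that the hook length of the box $(i,1)$ in $\lambda$ is $(\lambda_1-i)+\lambda'_i$ while in the single row $\lambda_1$ it is $\lambda_1-i+1$, the ratio factors as
$$R_{\lambda_1}(\lambda,\mu) \;=\; \prod_{i=1}^{\mu_1}\frac{(\lambda_1-i+1)+u_i}{\lambda_1-i+1},$$
where $u_i := \lambda'_i-1$. The sequence $(u_i)_{i\geq 1}$ is weakly decreasing (columns of a partition weakly decrease) and $\sum_{i=1}^{\mu_1} u_i = u_{\leq\mu_1}(\lambda)$.

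To prove (a), the plan is to apply Lemma \ref{lem:truccombinatoire} with $b_i = u_i$ and $b'_i = \bar u := u_{\leq\mu_1}(\lambda)/\mu_1$ (constant): both sequences are weakly decreasing with the same total sum, and $(u_i)\succeq(\bar u)_i$ because the partial averages of a decreasing sequence dominate its full average. The lemma yields $\prod_i[(\lambda_1-i+1)+u_i]\leq\prod_i[(\lambda_1-i+1)+\bar u]$, and since $\bar u\leq\lceil\bar u\rceil$ and all factors are positive, replacing $\bar u$ by $\lceil\bar u\rceil$ only enlarges the right-hand side. Part (b) is then immediate from (a) applied to $\mu=\lambda$: one has $u_{\leq\lambda_1}(\lambda)=r$, and the resulting bound rearranges to $\binom{\lambda_1+\lceil r/\lambda_1\rceil}{\lambda_1}$.

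For (c), I apply Lemma \ref{lem:truccombinatoire} again, this time with $b'_i = \mathbf{1}_{i\leq r}$. Since the $u_i$ are weakly decreasing nonnegative integers summing to $r$, at most $r$ of them are nonzero and each positive one is at least $1$, so $\sum_{i\leq\ell}u_i\geq\min(\ell,r)=\sum_{i\leq\ell}b'_i$, i.e.\ $(u_i)\succeq(b'_i)$ with equal totals. The lemma then gives $\prod_{i=1}^{\lambda_1}[(\lambda_1-i+1)+u_i]\leq\prod_{i=1}^{r}(\lambda_1-i+2)\prod_{i>r}(\lambda_1-i+1)$, so $R_{\lambda_1}(\lambda,\lambda)\leq\prod_{i=1}^{r}\frac{\lambda_1-i+2}{\lambda_1-i+1}$; this telescoping product of ratios of consecutive integers equals $(\lambda_1+1)/(\lambda_1-r+1) = 1 + r/(\lambda_1-r+1)$.

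Finally, (d) will be proved by a case split. When $r\leq\lambda_1$, part (c) gives $R_{\lambda_1}(\lambda,\lambda)\leq 1+r$, and $\ln(1+r)\leq 3\sqrt{r}$ follows from an elementary calculus check (both sides vanish at $r=0$, and the derivative inequality $3/(2\sqrt{r})\geq 1/(1+r)$ for $r>0$ reduces to $9(r+1)^2\geq 4r$). When $r>\lambda_1$, part (b) gives $R\leq\binom{\lambda_1+k}{\lambda_1}$ with $k=\lceil r/\lambda_1\rceil$, whence $\lambda_1 k\leq r+\lambda_1<2r$. I then invoke the entropy bound $\ln\binom{a+b}{a}\leq 2\sqrt{ab}$ (equivalent to $H(p)\leq 2\sqrt{p(1-p)}$ in nats for the binary entropy) to conclude $\ln R\leq 2\sqrt{\lambda_1 k}<2\sqrt{2r}<3\sqrt{r}$, using $2\sqrt{2}<3$. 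The main obstacle is that entropy estimate; if it is considered non-standard, it can be replaced by a direct case-split on whether $\lambda_1\leq\sqrt{r}$ or not, using in each sub-case the elementary bound $\binom{\lambda_1+k}{\min(\lambda_1,k)}\leq\bigl(e(\lambda_1+k)/\min(\lambda_1,k)\bigr)^{\min(\lambda_1,k)}$ and optimizing after the substitution $\lambda_1=t\sqrt{r}$.
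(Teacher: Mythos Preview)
Your proof is correct, and for parts (a)--(c) it follows the paper's argument essentially verbatim: the same product formula, the same application of Lemma~\ref{lem:truccombinatoire} with the constant comparison sequence for (a), and the same comparison with the ``flat'' configuration $[\lambda_1,r]$ for (c). Your treatment of (c) is in fact slightly cleaner than the paper's, since you handle $r=\lambda_1$ and $r<\lambda_1$ in one stroke rather than splitting them off.

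The genuine difference is in (d), case $r>\lambda_1$. The paper bounds $\binom{\lambda_1+m}{\lambda_1}$ by splitting on whether $\lambda_1\le m$ or $m\le\lambda_1$, using $\binom{\lambda_1+m}{\lambda_1}\le(2em/\lambda_1)^{\lambda_1}$, substituting $m\le 2r/\lambda_1$, and then optimizing the function $x\mapsto (T/x)^x$ to land on $e^{4\sqrt{r}/\sqrt{e}}\le e^{3\sqrt r}$. You instead invoke the single inequality $\ln\binom{a+b}{a}\le 2\sqrt{ab}$ (equivalently $H_e(p)\le 2\sqrt{p(1-p)}$, which is a genuine and not entirely trivial fact), giving $\ln R\le 2\sqrt{\lambda_1 k}<2\sqrt{2r}<3\sqrt r$ in one line. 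Your route is shorter and gives a marginally better constant ($2\sqrt2$ versus $4/\sqrt e$); the paper's route is more self-contained in that it uses only the crude bound $\binom{N}{j}\le (eN/j)^j$ rather than the sharper entropy estimate. Both are valid, and your fallback sketch (case-split on $\lambda_1\lessgtr\sqrt r$ with the $(eN/j)^j$ bound) is essentially the paper's argument.

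One small technical remark that applies equally to the paper: Lemma~\ref{lem:truccombinatoire} is stated for \emph{positive} tuples, but in (a) and (c) you apply it with some $u_i=0$ or $b'_i=0$. This is harmless (the lemma extends to nonnegative reals by continuity, or by adding $\varepsilon$ and letting $\varepsilon\to0$), but worth a one-line acknowledgement if you want to be fully rigorous.
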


\begin{proof}
\begin{enumerate}
    \item First observe that, as a consequence of Lemma \ref{lem:truccombinatoire}, we have
    \begin{equation}\label{eq: borne R lambda mu avec les u_i par convexité sans partie entière}
        \prod_{i = 1}^{\mu_1} (\lambda_1-(i-1) + u_i) \leq \prod_{i = 1}^{\mu_1} \pg\lambda_1-(i-1) + \frac{u_{\leq \mu_1}(\lambda)}{\mu_1}\pd.
    \end{equation}

\noindent This implies that, setting $m := \left\lceil \frac{u_{\leq \mu_1}(\lambda)}{\mu_1}\right\rceil$, we have by Lemma \ref{lem:premiers resultats} (i):
\begin{equation}
    \begin{split}
        R_{\lambda_1}(\lambda, \mu) &= \frac{H(\lambda, \mu_1)}{H(\lambda_1, \mu_1)}
     = \prod_{i = 1}^{\mu_1} \frac{\lambda_1-(i-1)+ u_i }{\lambda_1-(i-1)}\\
     &\leq  \prod_{i = 1}^{\mu_1} \frac{\lambda_1-(i-1) + m}{\lambda_1-(i-1)} = \frac{(\lambda_1+m)^{\downarrow\mu_1}}{\lambda_1^{\downarrow \mu_1}}.
    \end{split}
\end{equation}
\noindent This concludes the proof of (a).

\item In the case $\mu = \lambda$, we have $\frac{u_{\leq \mu_1}(\lambda)}{\mu_1} = \frac{r}{\lambda_1}$. We therefore have directly, by (a):
\begin{equation}
    R_{\lambda_1}(\lambda, \lambda) \leq \frac{(\lambda_1+m)^{\downarrow\lambda_1}}{\lambda_1^{\downarrow \lambda_1}} = \binom{\lambda_1 + m}{\lambda_1}.
\end{equation}
\item If $r=\lambda_1$, then (b) yields
\begin{align*}
    R_{\lambda_1}(\lambda, \lambda) \leq \binom{\lambda_1 + \left\lceil \frac{r}{\lambda_1}\right\rceil}{\lambda_1} = \lambda_1+1,
\end{align*}
 which is what we want. If $r<\lambda_1$, then, by Lemma \ref{lem:truccombinatoire}, $\lambda_1$ and $r$ being fixed, the smallest value of $R_{\lambda_1}(\lambda, \lambda)$ is reached when $r$ is flat, that is, $\lambda= [\lambda_1,n-\lambda_1]$. We therefore get
\begin{equation}
    R_{\lambda_1}(\lambda, \lambda) \leq R_{\lambda_1}([\lambda_1,n-\lambda_1], [\lambda_1,n-\lambda_1]) = \frac{\lambda_1 + 1}{\lambda_1-r+1}.
\end{equation}
\item We split the proof according to the value of $r$.  
\begin{itemize}
    \item If $r<\lambda_1$, then starting from (c) we have 
\begin{equation}
     R_{\lambda_1}(\lambda, \lambda) \leq \frac{\lambda_1 + 1}{\lambda_1-r+1} \leq \frac{r+ 1}{r-r+1} = r+1 \leq e^{\sqrt{r}},
\end{equation}
and thus $R_{\lambda_1}(\lambda, \lambda) \leq e^{3\sqrt{r}}$.
    \item Assume now that $r\geq \lambda_1$. Let $n = |\lambda|$. Note that here again $\mu = \lambda$, so that $\mu_1 = \lambda_1$ and $u_{\leq \mu_1} = r$. Hence, $m = \left\lceil \frac{r}{\lambda_1}\right\rceil$, and since $r\geq \lambda_1$ we have $m\geq 1$. Hence, $m =  \left\lceil \frac{r}{\lambda_1}\right\rceil \leq \frac{r}{\lambda_1} + 1 \leq 2\frac{r}{\lambda_1}$, so that $\lambda_1 \leq \frac{2r}{m}$.
First assume that $\lambda_1 \leq m$. Using that $p^{\downarrow p} = p!\geq (p/e)^p$ for all $p\geq 0$, we therefore have, using $m\leq 2r/\lambda_1$ in the last inequality,
\begin{equation}
    \binom{\lambda_1 + m}{\lambda_1} = \frac{(\lambda_1+m)^{\downarrow\lambda_1}}{\lambda_1^{\downarrow \lambda_1}} \leq \frac{(2m)^{\lambda_1}}{(\lambda_1/e)^{\lambda_1}} = \pg \frac{2em}{\lambda_1}\pd^{\lambda_1}\leq \pg \frac{4er}{\lambda_1^2}\pd^{\lambda_1}.
\end{equation}
Now observe that we can rewrite $\pg \frac{4er}{\lambda_1^2}\pd^{\lambda_1} = \pg\pg \frac{2\sqrt{er}}{\lambda_1} \pd^{\lambda_1}\pd^2$. Furthermore, for $T>0$ the function $x\in \bbR_+^*\mapsto (T/x)^x$ is maximal at $x= T/e$,  we get, with $T = 2\sqrt{er}$,
\begin{equation}
    \binom{\lambda_1 + m}{\lambda_1} \leq \pg e^{T/e} \pd^2 = e^{\frac{4\sqrt{r}}{\sqrt{e}}} \leq e^{3\sqrt{r}}.
\end{equation}
The case $m\leq \lambda_1$ is proved the same way by symmetry, since $ \binom{\lambda_1 + m}{\lambda_1} =  \binom{\lambda_1 + m}{m}$. The result follows by (b). This concludes the proof of (d).
\end{itemize}

\end{enumerate}
\end{proof}

We can now use these results to bound classical hook products and obtain a crucial inequality involving $d_\lambda, d_s$ and $d_c$.

\begin{proposition}\label{prop: borne abdelta et dimension centre}
Let $\lambda$ be a (non-empty) integer partition.
    \begin{enumerate}
\item We have
\begin{equation}
    H(\lambda, \lambda) = \frac{s!}{\binom{s-1}{\lambda_b^1}} \frac{H(\lambda, \lambda_a^1)}{H(\lambda_a^1,\lambda_a^1)}\frac{H(\lambda, \lambda_b^1)}{H(\lambda_b^1,\lambda_b^1)}H(c,c),
\end{equation}
and in particular
\begin{equation}
   \frac{d_\lambda}{\binom{n}{c}d_sd_c} =  \frac{H(\lambda_a^1, \lambda_a^1)}{H(\lambda,\lambda_a^1)}\frac{H(\lambda_b^1, \lambda_b^1)}{H(\lambda,\lambda_b^1)}.
\end{equation}
\item We have  \begin{equation}
    \frac{d_\lambda}{\binom{n}{c}d_s d_{c}} \geq e^{-6\sqrt{c}},
\end{equation}
where we recall that $n= |\lambda|$, $c=\lambda^{\geq 2}$, and $s = n-c = \lambda^1$.
    \end{enumerate}
    
\end{proposition}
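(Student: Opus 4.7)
The plan is to treat (a) as a bookkeeping identity coming from the decomposition $\lambda = s\sqcup c$, and then to deduce (b) by recognizing the two relevant row/column ratios as first-row slicing ratios of a smaller diagram, so that Proposition~\ref{prop: propriétés produits équerres découpe}(d) applies directly.

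For (a), I would split $H(\lambda,\lambda)$ according to $\lambda=s\sqcup c$. Any box $u\in c$ has $u=(i,j)$ with $i,j\geq 2$, so its hook in $\lambda$ stays inside $c$, giving $H(\lambda,c)=H(c,c)$. On the external hook I would further split $s=\{\delta_1\}\sqcup \lambda_a^1\sqcup\lambda_b^1$, with $H(\lambda,\delta_1)=s$; and since $\lambda_a^1,\lambda_b^1$ are a single row and column, $H(\lambda_a^1,\lambda_a^1)=\lambda_a^1!$ and $H(\lambda_b^1,\lambda_b^1)=\lambda_b^1!$. Combining these with the identity $\frac{s!}{\binom{s-1}{\lambda_b^1}}=s\cdot\lambda_a^1!\,\lambda_b^1!$ (which uses $s-1=\lambda_a^1+\lambda_b^1$) yields the first displayed formula. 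The dimension form then follows by inserting $d_\lambda=n!/H(\lambda,\lambda)$, $d_c=c!/H(c,c)$, $\binom{n}{c}=n!/(s!\,c!)$, and $d_s=\binom{s-1}{\lambda_b^1}$ from Lemma~\ref{lem:standard or elementary results}(c).

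For (b), identity (a) reduces the task to proving the two bounds $\frac{H(\lambda,\lambda_a^1)}{H(\lambda_a^1,\lambda_a^1)}\leq e^{3\sqrt{c}}$ and $\frac{H(\lambda,\lambda_b^1)}{H(\lambda_b^1,\lambda_b^1)}\leq e^{3\sqrt{c}}$. Let $\hat\lambda$ be the diagram obtained by removing the first column of $\lambda$: its first row has length $\lambda_a^1$, its column heights are $\hat\lambda'_j=\lambda'_{j+1}$, and the size of its rows beyond the first is $\hat r=\sum_{i\geq 2}(\lambda_i-1)=|c|$. A direct check on hook lengths shows that the hooks of boxes of $\hat\lambda_1$ computed in $\hat\lambda$ coincide one-to-one with those of $\lambda_a^1$ computed in $\lambda$: if $u'=(i,1)\in\lambda_a^1$ then both equal $\lambda_1-i+\lambda'_i$. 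Hence $R_{\hat\lambda_1}(\hat\lambda,\hat\lambda)=\frac{H(\lambda,\lambda_a^1)}{H(\lambda_a^1,\lambda_a^1)}$, and Proposition~\ref{prop: propriétés produits équerres découpe}(d) applied to $\hat\lambda$ yields the bound $e^{3\sqrt{\hat r}}=e^{3\sqrt{c}}$. The column analogue follows by the same argument applied to the transpose $\lambda'$ (whose center is again $c$). Multiplying the two bounds and invoking (a) gives the desired inequality.

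There is no real obstacle. Part (a) is an algebraic repackaging, and the essence of (b) is the clean observation that, after stripping away the first column of $\lambda$, the first-row slicing ratio of the resulting diagram matches exactly the row ratio appearing in (a); the hard analytic work has already been done in Proposition~\ref{prop: propriétés produits équerres découpe}(d).
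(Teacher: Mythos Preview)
Your proposal is correct and matches the paper's approach essentially line for line. For part (b) the paper is terser---it simply says Proposition~\ref{prop: propriétés produits équerres découpe}(d) is applied to the first row of $\lambda'_{\geq 2}$ and the first column of $\lambda_{\geq 2}$---but your diagram $\hat\lambda$ is exactly $\lambda'_{\geq 2}$, and your transpose argument for the column ratio is the same as passing to $\lambda_{\geq 2}$, so the two proofs coincide.
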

\begin{proof}
    \begin{enumerate}
        \item First, we have
        \begin{equation}
        \begin{split}
            H(\lambda,\lambda) & = sH(\lambda,\lambda_a^1)H(\lambda, \lambda_b^1)H(\lambda , c) \\ 
            & = s\cdot \lambda_a^1! \cdot \lambda_b^1! \frac{H(\lambda, \lambda_a^1)}{H(\lambda_a^1,\lambda_a^1)}\frac{H(\lambda, \lambda_b^1)}{H(\lambda_b^1,\lambda_b^1)}H(c,c) \\
            & = \frac{s!}{\binom{s-1}{\lambda_b^1}} \frac{H(\lambda, \lambda_a^1)}{H(\lambda_a^1,\lambda_a^1)}\frac{H(\lambda, \lambda_b^1)}{H(\lambda_b^1,\lambda_b^1)}H(c,c).
        \end{split}
        \end{equation}
We therefore deduce from the hook length formula and Lemma \ref{lem:standard or elementary results} that
\begin{equation}
\begin{split}
        \frac{d_\lambda}{\binom{n}{c}} = \frac{c!s!}{n!}\frac{n!}{H(\lambda,\lambda)} & = \binom{s-1}{\lambda_b^1} \frac{H(\lambda_a^1,\lambda_a^1)}{H(\lambda, \lambda_a^1)}\frac{H(\lambda_b^1,\lambda_b^1)}{H(\lambda, \lambda_b^1)} \frac{c!}{H(c,c)} \\
        & = \binom{s-1}{\lambda_b^1} \frac{H(\lambda_a^1,\lambda_a^1)}{H(\lambda, \lambda_a^1)}\frac{H(\lambda_b^1,\lambda_b^1)}{H(\lambda, \lambda_b^1)} d_c \\
        & = \frac{H(\lambda_a^1,\lambda_a^1)}{H(\lambda, \lambda_a^1)}\frac{H(\lambda_b^1,\lambda_b^1)}{H(\lambda, \lambda_b^1)} d_s d_c,
        \end{split}
\end{equation}
as desired.
\item 
By Proposition \ref{prop: propriétés produits équerres découpe} (d) applied twice respectively to the first row and the first column of the diagrams $\lambda'_{\geq 2}$ and $\lambda_{\geq 2}$, we get 

\begin{equation}
\frac{H(\lambda_a^1,\lambda_a^1)}{H(\lambda, \lambda_a^1)}\frac{H(\lambda_b^1,\lambda_b^1)}{H(\lambda, \lambda_b^1)} \geq e^{-3\sqrt{c}}e^{-3\sqrt{c}} = e^{-6\sqrt{c}}.
\end{equation}
Plugging this into (b) concludes the proof of (c).
    \end{enumerate}
    
\end{proof}

\subsection{Virtual degrees and augmented dimensions}\label{s: virtual degrees and augmented dimensions}
If $P$ is a set partition of a diagram $\lambda\vdash n$, we can associate to it a notion of $P$-dimension via the analog of the hook length formula:
\begin{equation}
    d_\lambda^{*P} := \frac{n!}{H^{*P}(\lambda,\lambda)}.
\end{equation}
The virtual degree, defined in \eqref{eq: def virtual degree}, and $ab\delta$-dimension are closely related: we have
\begin{equation}
    D(\lambda) = \frac{d_\lambda^{*ab\delta}}{n}.
\end{equation}

We may therefore refer to $ab\delta$-sliced hook lengths and products as \textit{virtual} hook lengths and products.

Let us now define a last notion of dimension, which will prove to be very convenient in the proof of Theorem \ref{thm:ALS virtual degree asymptotic bound}. 
We define the \textit{augmented dimension} $d_\lambda^+$ of a Young diagram $\lambda$ by
\begin{equation}
    d^+_\lambda := \frac{n!}{H^{+}(\lambda, \lambda)},
\end{equation}
where
\begin{equation}
    H^+(\lambda, \lambda) = \pg \prod_i s_i \pd\pg\prod_i a_i! b_i!\pd.
\end{equation}
(Here $s_i$ denotes the hook started at $\delta_i$ and we recall that $a_i = \lambda_a^i$ and $b_i = \lambda_b^i$.)

The augmented dimension $d_\lambda^+$ is hybrid between $d_\lambda$ and $D(\lambda)$: the on-diagonal augmented hook lengths are the usual (non-sliced) hook lengths, while the off-diagonal augmented hook-lengths are the virtual hook lengths.

\begin{figure}[!ht]
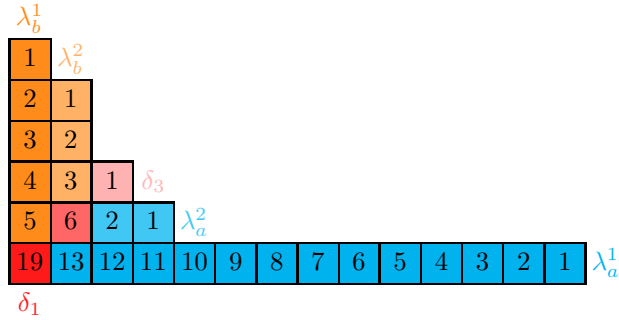

\begin{center}
    \begin{ytableau}
        \none &\none[\textcolor{orange!90}{\lambda_b^1}] \\
        \none &*(orange!90)1 &\none[\textcolor{orange!60}{\lambda_b^2}]\\
        \none &*(orange!90)2&*(orange!60)1\\
        \none &*(orange!90)3&*(orange!60)2\\
        \none &*(orange!90)4&*(orange!60)3&*(red!30)1 &\none[\textcolor{red!30}{\delta_3}]\\
        \none &*(orange!90)5&*(red!60)6&*(cyan!60)2&*(cyan!60)1 &\none[\textcolor{cyan!60}{\lambda_a^2}]\\
        \none &*(red!90)19&*(cyan!90)13&*(cyan!90)12&*(cyan!90)11&*(cyan!90)10&*(cyan!90)9&*(cyan!90)8&*(cyan!90)7&*(cyan!90)6&*(cyan!90)5&*(cyan!90)4&*(cyan!90)3&*(cyan!90)2&*(cyan!90)1 &\none[\textcolor{cyan!90}{\lambda_a^1}] \\
        \none &\none[\textcolor{red!90}{\delta_1}]
    \end{ytableau}
\end{center}
\caption{The augmented hook lengths for $\lambda = [14,4,3,2,2,1]$. Numbers in the boxes correspond to the augmented hook lengths.}
\label{fig:augmentedslicing}
\end{figure}

Note that $H^+(\lambda, \lambda)$ is \textbf{not} a sliced hook product, since we keep the full hook lengths of the boxes on the diagonal.
By definition we have
\begin{equation} \label{eq:réécriture quotient dimensions virtuelle et semivirtuelle}
    \frac{D(\lambda)}{d_\lambda^+} = \frac{\frac{(n-1)!}{\prod_i a_i! b_i!}}{\frac{n!}{\pg\prod_i a_i! b_i!\pd \pg \prod_i s_i \pd}} =\frac{\prod_i s_i}{n}.
\end{equation}
One advantage of using $d_\lambda^+$ is the following product identity (which we prove later on, in Lemma \ref{lem:approximation est une égalité}):
\begin{equation}
    d_\lambda^+ = \binom{n}{c}d_s^+d_c^+.
\end{equation}
This will be convenient when bounding $d_\lambda^+$ by induction on the size of the center $c$ of $\lambda$.

\section{Sharp bounds on virtual degrees}\label{s:proof of the main result}

The aim of this section is to use the results of the previous sections to prove Theorem \ref{thm:ALS virtual degree asymptotic bound}. 

\subsection{Strategy}
In regards of \eqref{eq:réécriture quotient dimensions virtuelle et semivirtuelle}, which rewrites as
\begin{equation} \label{eq:réécriture identité dimension virtuelle et semivirtuelle}
    D(\lambda)= \frac{\prod_i s_i}{n} d_\lambda^+,
\end{equation}
it is enough to prove the two following statements.

\begin{proposition}
\label{prop:estimée virtual degree augmented dimension}
There exists a constant $C_{diag}>0$ such that, for every $n\geq 2$ and every diagram $\lambda \vdash n$, we have
\begin{equation}
    \frac{\prod_i s_i}{n} \leq d_\lambda^{C_{diag}/\ln n}.
\end{equation}
\end{proposition}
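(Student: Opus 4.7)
My plan is to proceed by strong induction on $n = |\lambda|$, with a finite check of small $n$ as the base case. For the inductive step, if $\lambda$ is a hook ($d=1$) then $\prod_i s_i = n$ and the inequality $\prod_i s_i/n = 1 \le d_\lambda^{C_{diag}/\ln n}$ is immediate. Otherwise, set $c := \lambda^{\geq 2}$ of size $|c| = n - s_1$. The principal hooks of $c$ (viewed as a standalone diagram) are precisely $\delta_2(\lambda), \ldots, \delta_d(\lambda)$, and since the hook lengths inside $c$ agree whether computed in $c$ or in $\lambda$, we have $\prod_i s_i(\lambda) = s_1 \prod_j s_j(c)$. Applying the induction hypothesis to $c$ gives $\prod_j s_j(c)/|c| \le d_c^{C_{diag}/\ln|c|}$, which reduces the goal to
\[
(s_1 |c|/n)\, d_c^{C_{diag}/\ln|c|} \;\le\; d_\lambda^{C_{diag}/\ln n}.
\]

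To lower-bound $d_\lambda$ I invoke Proposition~\ref{prop: borne abdelta et dimension centre}(b), $d_\lambda \ge \binom{n}{|c|} d_s d_c\, e^{-6\sqrt{|c|}}$. Taking logarithms and multiplying through by $\ln n/C_{diag}$, the target inequality becomes
\[
\frac{\ln n}{C_{diag}} \ln(s_1 |c|/n) + \ln d_c \cdot \frac{\ln(n/|c|)}{\ln|c|} + 6\sqrt{|c|} \;\le\; \ln \binom{n}{|c|} + \ln d_s.
\]
I control the left side using $\ln(s_1|c|/n) \le \ln|c|$ (from $s_1 \le n$) and $\ln d_c \le \tfrac{1}{2}\ln|c|! \lesssim \tfrac{|c|}{2}\ln|c|$ (from $d_c^2 \le |c|!$), which bounds the second term by $\tfrac{|c|}{2}\ln(n/|c|)$. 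On the right I apply Stirling, $\ln\binom{n}{|c|} \ge \min(|c|,s_1)\ln(n/\min(|c|,s_1)) - \min(|c|,s_1)$, together with the elementary bound $\ln d_s \ge \ln(s_1 - 1)$ valid for any non-hook $\lambda$ (since then $a_1, b_1 \ge 1$), and the geometric constraint $s_1 \ge 2\sqrt{n} - 1$ coming from $n \le \lambda_1 \lambda'_1 \le ((s_1+1)/2)^2$.

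I expect the main obstacle to be the error term $6\sqrt{|c|}$ in Proposition~3.4(b), which is too large to absorb for diagrams with $|c|$ of order $n$ and balanced Frobenius coordinates; direct application of Proposition~3.4(b) then leaves the induction unable to close with any universal constant. The fix is to insert the finer ratio bounds of Proposition~\ref{prop: propriétés produits équerres découpe}(b)--(c) directly into the factorization of Proposition~\ref{prop: borne abdelta et dimension centre}(a), replacing the exponential $e^{6\sqrt{|c|}}$ factor by the polynomial factor $\binom{a_1 + \lceil|c|/a_1\rceil}{a_1}\binom{b_1 + \lceil|c|/b_1\rceil}{b_1}$, which has logarithm $O(\sqrt{|c|})$ but with a much smaller implied constant and favorable structure in the balanced regime. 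Once the inequality is split into the two regimes $|c| \le n/2$ (where $\tfrac{|c|}{2}\ln(n/|c|)$ on the right dominates everything on the left) and $|c| > n/2$ (where the geometric constraint $s_1 \ge 2\sqrt n-1$ forces $\ln\binom{n}{s_1} \gtrsim \sqrt n\, \ln n$ to dominate the $\sqrt{n}$-scale error), a single sufficiently large universal $C_{diag}$ closes the induction, with the finitely many small $n$ absorbed by the base case.
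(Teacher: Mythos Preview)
Your inductive approach (peeling off the external hook $s = \lambda^1$ and applying the hypothesis to $c$) is a genuinely different route from the paper's proof. The paper argues \emph{directly}, with no induction: it first bounds $\frac{\prod_i s_i}{n} \le e^{\sqrt{c}\ln c}$ via a one-line concavity argument (Lemma~\ref{lem: diag bounds product s_i in function of c}), and then lower-bounds $d_\lambda$ by case analysis on the size of $c$: Proposition~\ref{prop: borne abdelta et dimension centre}(b) together with $\binom{n}{c}\ge (n/c)^c$ handles $c\le n-n^{2/3}$, and a Durfee-square bound (Lemma~\ref{lem: lower bound dimension via durfee square}) handles $c>n-n^{2/3}$. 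No recursion, no comparison of $d_c$ and $d_\lambda$ is needed. Your scheme instead mimics the paper's proof of Proposition~\ref{prop:récurrence sur les dimensions augmentées}, which \emph{is} inductive; so you are effectively proposing to prove both propositions by the same inductive skeleton, whereas the paper treats them by two different mechanisms.

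Your plan is viable, but the diagnosis of the ``main obstacle'' is off. You write that the error $6\sqrt{|c|}$ from Proposition~\ref{prop: borne abdelta et dimension centre}(b) is too large to absorb when $|c|\asymp n$ and the Frobenius coordinates are balanced, and that this prevents the induction from closing for any universal constant. That is not so: in the regime $|c|\ge n/2$ the geometric constraint $s_1\ge 2\sqrt{n}-1$ forces $\ln\binom{n}{s_1}\ge s_1\ln(n/s_1)\gtrsim \sqrt{n}\ln n$, which dominates $6\sqrt{|c|}\le 6\sqrt{n}$ for all large $n$. Combined with your (correct) bound $\frac{\ln(n/|c|)}{\ln|c|}\ln d_c\le \tfrac{|c|}{2}\ln(n/|c|)\le s_1/2$ in that regime, the induction closes directly from Proposition~\ref{prop: borne abdelta et dimension centre}(b) once $n$ exceeds an absolute threshold; the base case then absorbs small $n$ by enlarging $C_{diag}$ (which only shrinks the positive term $\frac{\ln n}{C_{diag}}\ln(s_1|c|/n)$ in the reduced inequality). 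Your proposed ``fix'' via Proposition~\ref{prop: propriétés produits équerres découpe}(b)--(c) is therefore unnecessary, and in any case would not change the order of the error: for balanced diagrams those binomial factors still contribute $\Theta(\sqrt{|c|})$ to the logarithm.

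Two small points you should tidy up: (i) the induction hypothesis is stated for sizes $\ge 2$, so the case $|c|=1$ (where $d_c^{C_{diag}/\ln|c|}$ is undefined) must be handled separately; this is trivial since then $\prod_i s_i/n=(n-1)/n<1$. (ii) For small but fixed $|c|\ge 2$ and large $n$, the asymptotic arguments above degrade, and one should verify the reduced inequality directly in that range (this is easy, using $\ln\binom{n}{|c|}\ge |c|\ln(n/|c|)$ and $\prod_{j}s_j(c)\le |c|!$). The paper sidesteps all of this by not inducting at all.
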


\begin{proposition}\label{prop:récurrence sur les dimensions augmentées}
    There exists a constant $C_{aug}>0$ such that, for every $n\geq 2$ and every diagram $\lambda \vdash n$, we have
\begin{equation}
    d_\lambda^+ \leq d_\lambda^{1+C_{aug}/\ln n}.
    \end{equation}
\end{proposition}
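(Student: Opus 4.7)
The plan is to prove Proposition~\ref{prop:récurrence sur les dimensions augmentées} by induction on $n=|\lambda|$, peeling off the external hook at each step. The base case is when $\lambda$ is a hook: then its diagonal consists of a single box and a direct inspection shows that $H^+(\lambda,\lambda)=H(\lambda,\lambda)$, so $d_\lambda^+=d_\lambda$ and the inequality is trivial. In particular this handles all $\lambda$ with $n\leq 2$.

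For the inductive step, fix a non-hook $\lambda\vdash n$, write $s=\lambda^1$ for its external hook and $c=\lambda^{\geq 2}$ for its center (so $1\leq |c|<n$), and combine four ingredients. First, the multiplicative identity $d_\lambda^+=\binom{n}{|c|}\,d_s^+\,d_c^+$, which follows from the factorization $H^+(\lambda,\lambda)=H^+(s,s)\,H^+(c,c)$ (the diagonal of $\lambda$ is the disjoint union of those of $s$ and $c$, and augmented hook lengths only see rows/columns contained in $s$ or in $c$). Second, for a hook $s$ one has $d_s^+=d_s$, since both equal $\binom{|s|-1}{\lambda_a^1}$ by Lemma~\ref{lem:standard or elementary results}(c). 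Third, the sharp inequality $d_\lambda\geq e^{-6\sqrt{|c|}}\binom{n}{|c|}d_sd_c$ from Proposition~\ref{prop: borne abdelta et dimension centre}(b). Fourth, the inductive hypothesis $d_c^+\leq d_c^{1+C_{aug}/\ln|c|}$ (trivial when $|c|\leq 1$, applicable otherwise since $|c|<n$). Putting these together gives
\begin{equation*}
\frac{d_\lambda^+}{d_\lambda}=\frac{\binom{n}{|c|}\,d_s\,d_c^+}{d_\lambda}\leq e^{6\sqrt{|c|}}\,d_c^{\,C_{aug}/\ln|c|},
\end{equation*}
so closing the induction reduces to finding a universal $C_{aug}$ with
\begin{equation}\label{eq:planclosing}
6\sqrt{|c|}\leq C_{aug}\!\left(\frac{\ln d_\lambda}{\ln n}-\frac{\ln d_c}{\ln|c|}\right).
\end{equation}

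The delicate point is \eqref{eq:planclosing}: since $1/\ln|c|$ exceeds $1/\ln n$, the trivial estimate $\ln d_c\leq\ln d_\lambda$ is not enough. The plan is to insert the lower bound $\ln d_\lambda\geq\ln\binom{n}{|c|}+\ln d_s+\ln d_c-6\sqrt{|c|}$ of Proposition~\ref{prop: borne abdelta et dimension centre}(b), to bound $\ln d_c\leq\tfrac{1}{2}\ln|c|!\leq\tfrac{|c|\ln|c|}{2}$ via Lemma~\ref{lem:standard or elementary results}(a), and to split on $|c|$ into three regimes. When $|c|\leq n/2$ is sufficiently large (say $|c|\gtrsim (\ln n)^2$), the estimate $\ln\binom{n}{|c|}\geq|c|\ln(n/|c|)\geq|c|\ln 2$ makes the term $\frac{|c|\ln(n/|c|)}{2\ln n}$ overpower $\frac{6\sqrt{|c|}}{C_{aug}}$ once $C_{aug}$ is a large enough constant. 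When $|c|>n/2$ (so $|s|=n-|c|$ is small), the symmetric bound $\ln\binom{n}{|c|}=\ln\binom{n}{|s|}\geq|s|\ln(n/|s|)$ is much larger than $|c|\ln(n/|c|)$ and provides the required slack (for nearly-square $\lambda$ one has $|s|\asymp\sqrt{n}$ and the gain is of order $\sqrt{n}\ln n$, which beats $6\sqrt{|c|}\asymp 6\sqrt{n}$). Finally, when $|c|$ is very small, $\binom{n}{|c|}\gtrsim n^{|c|}/|c|!$ forces $\ln d_\lambda/\ln n\gtrsim|c|$, which easily dominates the bounded penalty $6\sqrt{|c|}$, and a brief finite check handles the finitely many centers below a universal threshold. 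Choosing $C_{aug}$ larger than the constants produced by each of these three regimes closes the induction, the main obstacle being precisely the careful verification that the same $C_{aug}$ works simultaneously in all three.
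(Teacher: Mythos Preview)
Your proposal is correct and follows essentially the same route as the paper: strong induction on $n$, peeling the external hook via the identity $d_\lambda^+=\binom{n}{|c|}d_s^+d_c^+$ together with $d_s^+=d_s$ and the key inequality $d_\lambda\ge e^{-6\sqrt{|c|}}\binom{n}{|c|}d_sd_c$ (Proposition~\ref{prop: borne abdelta et dimension centre}(b)), followed by a case split on the size of $|c|$ using $\ln d_c\le\tfrac12|c|\ln|c|$. The paper's three regimes ($c\le c_0$; $c_0\le c\le 7n/8$; $7n/8\le c\le n-\sqrt n$) and yours are equivalent reorganizations of the same estimates, and the paper's reduced inequality $e^{6\sqrt{c}(1+\ln c/C)}\binom{n}{c}^{-\ln c/\ln n}d_c^{\,1-\ln c/\ln n}\le 1$ is just an algebraic rearrangement of your \eqref{eq:planclosing} after substituting the bound on $d_\lambda$; one small wording issue is that your ``very small $|c|$'' regime must in fact cover the whole range up to roughly $\sqrt n$ (not merely a universal constant) to meet your first regime's threshold $|c|\gtrsim(\ln n)^2$, but the argument you sketch there does extend that far.
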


We will prove Proposition \ref{prop:estimée virtual degree augmented dimension} in Section \ref{s: proof of prop:estimée virtual degree augmented dimension} and Proposition \ref{prop:récurrence sur les dimensions augmentées} in Section \ref{s: proof of prop:récurrence sur les dimensions augmentées}.

\subsection{Proof of Proposition \ref{prop:estimée virtual degree augmented dimension}}\label{s: proof of prop:estimée virtual degree augmented dimension}

Let us first give two general upper bounds on $\frac{\prod_i s_i}{n}$.

\begin{lemma}\label{lem: diag bounds product s_i in function of c}
    Let $n\geq 1$ and $\lambda \vdash n$ such that $c = c(\lambda) \geq 1$.
Then we have
\begin{enumerate}
    \item \begin{equation}
    \frac{\prod_i s_i}{n} \leq (c/\delta(c))^{\delta(c)},
\end{equation}
    where $c=\lambda^{\geq 2}$ is the center of $\lambda$ and $\delta(c):=\delta(\lambda)-1$ is the diagonal length of $c$.
    \item Furthermore,
    \begin{equation}
         \frac{\prod_i s_i}{n} \leq e^{\sqrt{c}\ln c}.
    \end{equation}
\end{enumerate}

\end{lemma}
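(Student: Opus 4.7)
The plan is to exploit the partition of $\lambda$ by its diagonal hooks. Recall that the hooks $\lambda^1, \lambda^2, \ldots, \lambda^{\delta(\lambda)}$ (based at the diagonal boxes $\delta_1, \ldots, \delta_{\delta(\lambda)}$) form a set partition of $\lambda$, so
\begin{equation*}
    \sum_{i=1}^{\delta(\lambda)} s_i = n.
\end{equation*}
Since $s_1 = |\lambda^1| = n - c$, the remaining terms satisfy $\sum_{i=2}^{\delta(\lambda)} s_i = c$, and there are $\delta(c) = \delta(\lambda) - 1$ of them (these are precisely the diagonal hook sizes of the subdiagram $c$).

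For part (a), I would use the trivial bound $s_1/n \leq 1$ to write
\begin{equation*}
    \frac{\prod_i s_i}{n} \;=\; \frac{s_1}{n}\prod_{i\geq 2} s_i \;\leq\; \prod_{i\geq 2} s_i,
\end{equation*}
and then apply AM-GM to the $\delta(c)$ positive reals $s_2, \ldots, s_{\delta(\lambda)}$ of total sum $c$:
\begin{equation*}
    \prod_{i\geq 2} s_i \;\leq\; \left(\frac{c}{\delta(c)}\right)^{\delta(c)}.
\end{equation*}

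For part (b), the key input is the Durfee square inequality $\delta(c)^2 \leq c$, which holds because $c$ is itself a Young diagram (of size $c$) and its Durfee square sits inside it. Setting $d = \delta(c)$, and using $d \geq 1$ (so $\ln d \geq 0$) together with $d \leq \sqrt{c}$, one gets
\begin{equation*}
    \ln\!\left(\frac{c}{d}\right)^{d} \;=\; d\bigl(\ln c - \ln d\bigr) \;\leq\; d\ln c \;\leq\; \sqrt{c}\,\ln c,
\end{equation*}
which combined with (a) yields the claim.

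There is no serious obstacle here: the only potentially non-obvious facts are the identity $\sum s_i = n$ (immediate from the diagonal hook decomposition) and $\delta(c) \leq \sqrt{c}$ (standard Durfee bound). Everything else is AM-GM and the crude estimate $\ln d \geq 0$.
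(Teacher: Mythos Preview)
Your proof is correct and essentially identical to the paper's. For (a) the paper also bounds $s_1/n\leq 1$ and then invokes concavity of the logarithm (i.e.\ AM--GM) on $s_2,\ldots,s_{\delta(\lambda)}$; for (b) the paper uses the slightly cruder direct bound $\prod_{i\geq 2} s_i \leq c^{\delta(c)} \leq c^{\sqrt{c}}$ rather than chaining through (a), but this is the same idea and the same Durfee inequality $\delta(c)\leq\sqrt{c}$.
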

\begin{proof}
    \begin{enumerate}
        \item First, since $s_1\leq n$, we have
        \begin{equation}
            \frac{\prod_i s_i}{n} \leq \prod_{i\geq 2} s_i.
        \end{equation}
Moreover, by concavity of the logarithm, we have (using that $\delta(c) = \delta(\lambda) - 1$)
\begin{equation}
    \prod_{i\geq 2} s_i = \exp\pg\sum_{2\leq i \leq \delta(\lambda) }\ln(s_i)\pd \leq \exp\pg\sum_{2\leq i \leq \delta(\lambda) }\ln\pg\frac{c}{\delta(c)}\pd\pd = (c/\delta(c))^{\delta(c)},
\end{equation}
concluding the proof of (a).
\item We have $\delta(c) \leq \sqrt{c}$ (since the square of side length $\delta(c)$ is included in the diagram $c$), and $s_i \leq c$ for each $i\geq 2$. It therefore follows that
\begin{equation}
    \frac{\prod_i s_i}{n} \leq \prod_{i\geq 2} s_i \leq c^{\delta(c)} \leq c^{\sqrt{c}} = e^{\sqrt{c}\ln c},
\end{equation}
as desired.
    \end{enumerate}
\end{proof}

In Proposition \ref{prop: borne abdelta et dimension centre} (b), we showed that $d_\lambda \geq \binom{s-1}{\lambda_b^1} e^{-6\sqrt{c}}\binom{n}{c}d_{c}$. Depending on the size of $c$, only some of the terms in the lower bound will be useful, here we give simple lower bounds on some of these terms.

\begin{lemma}\label{lem: simple bounds on parts of the lower bounds on the dimension}
    Let $n\geq 1$ and $\lambda \vdash n$ such that $c\geq1$. Then
\begin{enumerate}
    \item  
    \begin{equation}
        \binom{s-1}{\lambda_b^1} \geq s-1.
    \end{equation}
    \item \begin{equation}
        \binom{n}{c} \geq \max\pg (n/c)^c, (n/s)^s\pd.
    \end{equation}
\end{enumerate}
\end{lemma}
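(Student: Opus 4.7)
The plan is to observe that both bounds reduce to very elementary combinatorial inequalities once the correct identifications are made, so there is essentially no hard step.

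For (a), I would first unpack $s-1$: by definition the external hook $s = \lambda^1$ is the union of the first row $\lambda_1$ and first column $\lambda_1'$, which share the corner box, so $|s| = \lambda_1 + \lambda_1' - 1$, giving
\begin{equation}
    s - 1 = \lambda_a^1 + \lambda_b^1.
\end{equation}
The hypothesis $c\geq 1$ means $\lambda$ has at least one box outside its first row and first column, so $\lambda_1 \geq 2$ and $\lambda_1' \geq 2$, i.e.\ $\lambda_a^1 \geq 1$ and $\lambda_b^1 \geq 1$. Then $\binom{s-1}{\lambda_b^1} = \binom{\lambda_a^1+\lambda_b^1}{\lambda_b^1}$, and since we are looking at a binomial coefficient $\binom{m}{k}$ with $1\leq k\leq m-1$, the classical inequality $\binom{m}{k}\geq m$ (which is saturated only at $k=1$ or $k=m-1$) yields $\binom{s-1}{\lambda_b^1}\geq s-1$.

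For (b), the key observation is that $s$ and $c$ partition $\lambda$, so $s + c = n$ and therefore $\binom{n}{c} = \binom{n}{s}$. It then suffices to apply the standard inequality
\begin{equation}
    \binom{n}{k} \;\geq\; \left(\frac{n}{k}\right)^k \qquad (1\leq k\leq n),
\end{equation}
which follows from writing $\binom{n}{k} = \prod_{i=0}^{k-1}\frac{n-i}{k-i}$ and noting that each factor is at least $n/k$ since $(n-i)k - (k-i)n = i(n-k) \geq 0$. Applying this once with $k=c$ and once with $k=s$ (both of which lie in $[1,n]$ as soon as $c\geq 1$, which implies $s = n-c < n$ and $s\geq 3$) gives the two lower bounds, and taking their maximum concludes.

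Neither step presents a real obstacle; the only thing to be careful about is the edge-case bookkeeping — namely checking that the assumption $c\geq 1$ guarantees $\lambda_a^1,\lambda_b^1 \geq 1$ in (a) and that $c$ and $s$ both lie in $\{1,\dots,n-1\}$ in (b) — and the trivial but easy-to-miss identity $\binom{n}{c}=\binom{n}{s}$ that lets the two bounds in (b) be proved in one stroke.
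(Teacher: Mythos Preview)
Your proposal is correct and follows essentially the same approach as the paper: for (a) the paper also notes that $c\geq 1$ forces $1\leq \lambda_b^1 \leq s-2$ and then uses $\binom{s-1}{\lambda_b^1}\geq \binom{s-1}{1}$, and for (b) it uses exactly the same factor-by-factor bound $\binom{n}{k}\geq (n/k)^k$ together with the symmetry $\binom{n}{c}=\binom{n}{s}$. Your write-up simply unpacks a bit more (e.g.\ making the identity $s-1=\lambda_a^1+\lambda_b^1$ explicit), but the argument is the same.
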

\begin{proof}
   \begin{enumerate}
       \item Since $c\geq 1$, we have $1\leq \lambda_b^1 \leq s-2$. We therefore have
       \begin{equation}
           \binom{s-1}{\lambda_b^1} \geq \binom{s-1}{1} = s-1.
       \end{equation}
       \item This bound is a classical bound on binomial coefficients. We have
       \begin{equation}
           \binom{n}{c} = \frac{n^{\downarrow c}}{c!} \geq (n/c)^c,
       \end{equation}
       and since $\binom{n}{c} = \binom{n}{s}$, we also have $ \binom{n}{c} \geq (n/s)^s$ by symmetry, concluding the proof.
   \end{enumerate}
\end{proof}

We finally give a lower bound on $d_\lambda$ which depends only on the diagonal length of $\lambda$.

\begin{lemma}\label{lem: lower bound dimension via durfee square}
Let $n\geq 1$ and $\lambda \vdash n$. Then
    \begin{equation}
        d_\lambda \geq (\delta/2)^{\delta(\delta-1)},
    \end{equation}
    where $\delta$ is the diagonal length of $\lambda$. In particular, if $n\geq (2e)^3$ and $\delta \geq n^{1/3}$, we have \begin{equation}
    d_\lambda \geq e^{n^{2/3}/2}.
\end{equation}
\end{lemma}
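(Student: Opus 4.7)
The natural strategy is to reduce to the case where $\lambda$ is the Durfee square, and then apply the hook-length formula directly. Since $\lambda$ has diagonal length $\delta$, we have $\lambda_i\geq i$ for all $1\leq i\leq\delta$, so the Durfee square $D:=[\delta^\delta]$ is contained in $\lambda$. The branching rule for standard Young tableaux, $d_\mu=\sum_{\nu\nearrow\mu} d_\nu$, implies $d_\mu\geq d_\nu$ whenever $\nu$ is obtained from $\mu$ by removing one box; iterating yields $d_\lambda\geq d_D$ for any $D\subseteq\lambda$.

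For the Durfee square itself, the hook at $(i,j)\in D$ has length $2\delta+1-i-j$, so grouping boxes by hook length gives
\begin{equation}
    d_D \;=\; \frac{(\delta^2)!}{\prod_{k=1}^{2\delta-1}k^{\min(k,\,2\delta-k)}} \;=\; \frac{(\delta^2)!\,\prod_{i=0}^{\delta-1}i!}{\prod_{i=\delta}^{2\delta-1}i!}.
\end{equation}
I would then combine Stirling's lower bound on $(\delta^2)!$ with a careful upper bound on the hook product to conclude $d_D\geq (\delta/2)^{\delta(\delta-1)}$. A convenient split treats small hooks ($k<\delta$) and large hooks ($k\geq\delta$) separately, since large hooks are all at most $2\delta-1$ and appear with decreasing multiplicity $2\delta-k$; symmetrically, small hooks have multiplicity $k$.

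For the ``in particular'' statement, the hypotheses $n\geq (2e)^3$ and $\delta\geq n^{1/3}$ yield $\delta\geq 2e$, hence $\delta/2\geq e$. Therefore $(\delta/2)^{\delta(\delta-1)}\geq e^{\delta(\delta-1)}\geq e^{\delta^2/2}\geq e^{n^{2/3}/2}$, where I use $\delta-1\geq \delta/2$ (valid for $\delta\geq 2$) and $\delta^2\geq n^{2/3}$.

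The main obstacle is the quantitative hook-product bound in the second step. A direct application of AM-GM (the arithmetic mean of the hooks in $D$ equals $\delta$) gives only $\prod h\leq \delta^{\delta^2}$, producing the weaker estimate $d_D\geq (\delta/e)^{\delta^2}$, which is insufficient in the asymptotic regime to reach the target exponent $\delta(\delta-1)$ with base $\delta/2$. A finer accounting that exploits the decreasing multiplicities on both sides of the peak $k=\delta$ is therefore needed to close the gap.
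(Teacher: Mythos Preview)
Your reduction to the Durfee square $\mu=[\delta^\delta]$ via the branching rule, and your derivation of the ``in particular'' clause from the first inequality, are exactly what the paper does. The gap you flag is real: you never actually bound the hook product, and your AM--GM observation that the naive route gives only $(\delta/e)^{\delta^2}$ is correct but leaves the lemma unproved.

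The paper's device is not Stirling on the exact formula but a crude overcount of $H(\mu,\mu)$ followed by a block decomposition of the numerator. The top row of $\mu$ has exact hook product $\delta!$, while in every other row the hook in column $i$ is at most $2\delta-i+1$; hence $H(\mu,\mu)\leq \delta!\,\bigl((2\delta)^{\downarrow\delta}\bigr)^{\delta-1}$. Since $(\delta^2)!/\delta!=(\delta^2)^{\downarrow(\delta^2-\delta)}$ splits into $\delta-1$ blocks of $\delta$ consecutive integers, this yields
\[
d_\mu \;\geq\; \prod_{k=2}^{\delta}\frac{(k\delta)^{\downarrow\delta}}{(2\delta)^{\downarrow\delta}}
\;\geq\;\prod_{k=2}^{\delta}\Bigl(\frac{k}{2}\Bigr)^{\!\delta},
\]
the last step because $\frac{k\delta-i}{2\delta-i}\geq\frac{k}{2}$ for $k\geq 2$ and $0\leq i<\delta$. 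This block comparison is the concrete idea your outline is missing. One caveat: the paper then asserts this product equals $(\delta/2)^{\delta(\delta-1)}$, but in fact $\prod_{k=2}^\delta(k/2)^\delta=(\delta!/2^{\delta-1})^\delta$, which is strictly smaller for $\delta\geq 3$, and indeed the first displayed inequality of the lemma already fails for $\lambda=[10^{10}]$ (one checks $\ln d_\lambda\approx 144.55<144.85\approx 90\ln 5$). So your difficulty in closing the bound with the stated constant is not a defect of your method; the block argument delivers $d_\mu\geq(\delta!/2^{\delta-1})^\delta$, which is of the same shape and, after a minor adjustment of constants, still gives the ``in particular'' conclusion used downstream.
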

\begin{proof}
    Since $d_\lambda$ is the number of standard tableaux of $\lambda$, and $\lambda$ contains the square $\mu:=\cg \delta^\delta\cd$, we have
    \begin{equation}
        d_\lambda \geq d_{\mu}.
    \end{equation}
Moreover, we claim that
\begin{equation}
\label{eq:hmumu carre}
    H(\mu,\mu) \leq \delta! \pg (2\delta)^{\downarrow \delta}\pd^{\delta -1}.
\end{equation}
Indeed, the product of hook lengths on the top row of $\mu$ is $\delta!$. Furthermore, the hook length of a box in column $i$ is at most $2\delta-i+1$. Taking the product proves \eqref{eq:hmumu carre}.

Using \eqref{eq:hmumu carre} and the hook length formula, we obtain
\begin{equation}
\begin{split}
   d_{\mu} &= \frac{\delta^2!}{H(\mu,\mu)} \geq \frac{\delta^2!}{\delta! \pg (2\delta)^{\downarrow \delta}\pd^{\delta -1}} = \frac{\pg \delta^2\pd^{\downarrow (\delta^2 - \delta)}}{\pg (2\delta)^{\downarrow \delta}\pd^{\delta -1}} = \prod_{k=2}^\delta \frac{(k\delta)^{\downarrow \delta}}{(2\delta)^{\downarrow \delta}}\\ 
   &\geq \prod_{k=2}^\delta \frac{(k\delta)^{\delta}}{(2\delta)^{\delta}} = \frac{\pg \delta^2\pd^{\delta^2 - \delta}}{\pg (2\delta)^{ \delta}\pd^{\delta -1}}\\ 
   &= (\delta/2)^{\delta(\delta-1)}.
\end{split}
\end{equation}
Assume now that $n\geq (2e)^3$ and $\delta \geq n^{1/3}$. Then we have $\delta \geq \pg (2e)^3 \pd^{1/3} = 2e$ so $\delta/2\geq e$, and we also have $\delta(\delta-1)\geq \delta^2/2$ since $\delta \geq 2$, so $(\delta/2)^{\delta(\delta-1)}\geq e^{\delta^2/2}\geq e^{n^{2/3}/2}$.
\end{proof}

We now prove Proposition \ref{prop:estimée virtual degree augmented dimension} for small values of $n$.

\begin{lemma}\label{lem:diag term finite case}
    For any $n_0\geq 2$, for every $2\leq n \leq n_0$ and every $\lambda \vdash n$, we have
    \begin{equation}
        \frac{1}{n}\prod_i s_i \leq d_\lambda^{C/\ln n},
    \end{equation}
where $C = \frac{n_0 (\ln n_0)^2}{\ln 2}$.
\end{lemma}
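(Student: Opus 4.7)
The plan is to split the argument into two cases based on the length $\delta := \delta(\lambda)$ of the diagonal of $\lambda$. Since $n$ is bounded by $n_0$, we have a lot of slack and crude bounds will suffice throughout.

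In the case $\delta = 1$, the partition $\lambda$ is a hook, and its only diagonal box has hook length $s_1 = \lambda_1 + \lambda'_1 - 1 = n$. Hence $\frac{1}{n}\prod_i s_i = 1$, and since $d_\lambda \geq 1$, the right-hand side is also at least $1$, so the inequality is immediate.

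In the case $\delta \geq 2$, the diagram $\lambda$ contains a $2 \times 2$ square, so it differs from both $[n]$ and $[1^n]$. These are the only partitions with $d_\lambda = 1$ (corresponding to the trivial and sign representations of $\kS_n$), so $d_\lambda \geq 2$. For the left-hand side, the crude bounds $s_i \leq n$ for each $i$ and $\delta \leq n$ yield
\[
    \ln \frac{\prod_i s_i}{n} \leq (\delta - 1)\ln n \leq (n-1)\ln n,
\]
while on the right-hand side, $\ln\bigl(d_\lambda^{C/\ln n}\bigr) \geq \frac{C \ln 2}{\ln n}$. It therefore suffices to verify that $(n-1)(\ln n)^2 \leq C \ln 2$, which follows from the choice $C = n_0 (\ln n_0)^2 / \ln 2$ together with the monotonicity of $x \mapsto x (\ln x)^2$ on $[2,\infty)$.

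There is no substantial obstacle here: this lemma is a bookkeeping step that complements the asymptotic Proposition \ref{prop:estimée virtual degree augmented dimension}, meant to cover the bounded-$n$ regime where crude estimates are more than enough.
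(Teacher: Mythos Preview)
Your proof is correct and follows essentially the same approach as the paper: split off the trivial case where $\frac{1}{n}\prod_i s_i = 1$, and otherwise use $d_\lambda \geq 2$ together with a crude upper bound on the product. The only cosmetic difference is that you split according to $\delta = 1$ versus $\delta \geq 2$ (handling all hooks in the trivial case), whereas the paper splits according to flat versus non-flat; your split is arguably cleaner, but the substance is identical.
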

\begin{proof}
Let $n_0 \geq 2$, $2\leq n \leq n_0$ and $\lambda \vdash n$. If $\lambda$ is flat (horizontal or vertical), then $d_\lambda = 1$ and $\frac{1}{n}\prod_i s_i = 1$ so the result holds. Assume now that $\lambda$ is not flat. 
    Then $d_\lambda \geq 2$ so
\begin{equation}
    \frac{1}{n}\prod_i s_i \leq \prod_{1\leq i\leq \delta(\lambda)} s_i \leq n_0^{n_0} = 2^{\frac{1}{\ln 2}n_0 \ln n_0} \leq d_\lambda^{\frac{1}{\ln 2} n_0 \ln n_0} \leq d_\lambda^{\frac{1}{\ln 2} (n_0 \ln n_0) \frac{\ln n_0}{\ln n}} = d_\lambda^{C/\ln n}.
\end{equation}
\end{proof}

We now have all the tools to prove Proposition \ref{prop:estimée virtual degree augmented dimension}.

\begin{proof}[Proof of Proposition \ref{prop:estimée virtual degree augmented dimension}]
We prove that the result holds for $C := C_{diag} = 500$ and $n\geq e^{55}$. By Lemma \ref{lem:diag term finite case}, the result then extends to all $n\geq 2$, up to taking a larger constant $C_{diag}$.

We split the proof into several cases, depending on how large $c$ is.
\begin{itemize}
    \item If $c=0$ then $\frac{1}{n}\prod_i s_i = \frac{n}{n} = 1$ so the result holds.
    \item Assume that $1\leq c \leq n^{8/9}$. First, by Proposition \ref{prop: borne abdelta et dimension centre} (b) we have $d_\lambda \geq \binom{n}{c}e^{-6\sqrt{c}}$. We deduce from Lemma \ref{lem: simple bounds on parts of the lower bounds on the dimension} (b) that
    \begin{equation}\label{eq:contrainte 5400 sur 91}
       d_\lambda \geq (n/c)^c e^{-6\sqrt{c}} \geq n^{c/9} e^{-6\sqrt{c}} \geq n^{c/500}.
    \end{equation}
    On the other hand, we have $\frac{1}{n}\prod_i s_i \leq e^{\sqrt{c}\ln c}$ by Lemma \ref{lem: diag bounds product s_i in function of c} (b). We deduce, using that $\ln c \leq \sqrt{c}$ for all $c \geq 1$ so that $\sqrt{c}\ln c \leq c$, that
    \begin{equation}
       \frac{1}{n}\prod_i s_i \leq e^{\sqrt{c}\ln c}\leq e^c
       = \pg n^{c/500}\pd^{C/\ln n} \leq d_{\lambda}^{C/\ln n}.
    \end{equation}
    \item Assume that $n^{8/9} \leq c \leq n-n^{2/3}$. Then 
    \begin{equation}
    \frac{n}{c} \geq \frac{n}{n-n^{2/3}} = \frac{1}{1-n^{-1/3}} \geq \frac{1}{e^{-n^{-1/3}}} = e^{n^{-1/3}},   
    \end{equation}
    so (using again Proposition \ref{prop: borne abdelta et dimension centre} (b) and Lemma \ref{lem: simple bounds on parts of the lower bounds on the dimension} (b) for the first inequality)
    \begin{equation} \label{eq:contrainte 12 puissance 18}
        d_\lambda \geq (n/c)^ce^{-6\sqrt{c}} \geq \pg e^{n^{-1/3}}\pd^{n^{8/9}}e^{-6\sqrt{c}} = e^{n^{5/9}}e^{-6\sqrt{c}} \geq e^{n^{5/9}/2}.
    \end{equation}
    We conclude using Lemma \ref{lem: diag bounds product s_i in function of c} (b) (and that $c\leq n$) that
    \begin{equation}
        \frac{1}{n}\prod_i s_i \leq e^{\sqrt{c}\ln c} \leq e^{\sqrt{n}\ln n} \leq e^{500\frac{n^{5/9}}{2\ln n}} \leq d_\lambda^{500/\ln n} \leq d_\lambda^{C/\ln n}.
    \end{equation}
    \item Assume now that $c \geq n-n^{2/3}$, i.e. $s \leq n^{2/3}$. Then we have $\delta(\lambda) \geq n^{1/3}$ and therefore by Lemma \ref{lem: lower bound dimension via durfee square} we have $d_\lambda \geq e^{n^{2/3}/2}$. We conclude, proceeding as in the previous case, that \begin{equation}
        \frac{1}{n}\prod_i s_i \leq e^{\sqrt{c}\ln c} \leq e^{\sqrt{n}\ln n} \leq e^{500\frac{n^{2/3}}{2\ln n}} \leq d_\lambda^{500/\ln n} \leq d_\lambda^{C/\ln n}.
    \end{equation}
\end{itemize}
This concludes the proof.
\end{proof}

\begin{remark}\label{rem:bound C diag numerical}
    In the proof of Proposition \ref{prop:estimée virtual degree augmented dimension} above, \eqref{eq:contrainte 5400 sur 91} holds since $n\geq e^{5400/91}$ and \eqref{eq:contrainte 12 puissance 18} holds since $n\geq 12^{18}$. Taking $n_0=n^{55}$ in Lemma \ref{lem:diag term finite case}, we deduce that Proposition \ref{prop:estimée virtual degree augmented dimension} holds (for all $n\geq 2$ if $C_{diag} \geq \max(500, \frac{n_0 (\ln n_0)^2}{\ln 2} ) = e^{55}(55^2)/\ln 2$, and we can therefore take $C_{diag} = e^{64}$.
\end{remark}

\subsection{Proof of Proposition \ref{prop:récurrence sur les dimensions augmentées}}\label{s: proof of prop:récurrence sur les dimensions augmentées}

Our aim is to show that $\frac{d_\lambda^+}{d_\lambda}$ is small. We will proceed by induction, slicing the external hook $s=\lambda^1$.

First, we recall from Proposition \ref{prop: borne abdelta et dimension centre} (b) that the dimension of a diagram $d_\lambda$ and the dimension of its center $d_c$ are related as follows: we have $d_\lambda \geq e^{-6\sqrt{c}}\binom{n}{c}d_sd_c$, that is, recalling from Lemma \ref{lem:standard or elementary results} that $d_s=\binom{s-1}{\lambda_a^1}$,
    \begin{equation} \label{eq:lem:bound ratio d lambda d c}
        \frac{d_c}{d_\lambda} \leq \frac{e^{6\sqrt{c}}}{\binom{s-1}{\lambda_a^1} \binom{n}{s}}.
    \end{equation}

\begin{remark}\label{rem: approx d lambda d s d c}
    We also always have $d_\lambda \leq \binom{n}{c}d_sd_c$. Indeed, by first choosing which numbers from 1 to $n$ we place in $c$, we obtain $|\ST(\lambda)|\leq \binom{n}{c}|\ST(s)||\ST(c)|$. This argument was written by Diaconis and Shahshahani in their proof of cutoff for random transpositions (with $r = \lambda_{\geq 2}$ in place of $c$), see \cite[Fact 1 in Chapter 3D, p. 39-40]{LivreDiaconis1988} . One can therefore see Proposition \ref{prop: borne abdelta et dimension centre} (b) as a quantified version of the intuitive approximation $d_\lambda \approx \binom{n}{c}d_sd_c$, for large values of $s$ and $c$.
\end{remark}

Comparing the augmented dimensions $d_\lambda^+$ and $d_c^+$ is much simpler by design, since the hook products involved there are already sliced by definition. Also, by definition of $d_s^+$ we have $d_s^+ = \frac{n!}{n \lambda_a^1!\lambda_b^1!} = \frac{s-1}{\lambda_a^1!\lambda_b^1!} = \binom{s-1}{\lambda_a^1}$, and we recall from Lemma \ref{lem:standard or elementary results} that $d_s = \binom{s-1}{\lambda_a^1}$, so that
\begin{equation}\label{eq: elementary identity d s d s plus binom}
    d_s^+ = d_s = \binom{s-1}{\lambda_a^1}.
\end{equation}

In the next lemma we show that the aforementioned intuitive approximation for dimensions actually becomes an equality when it comes to augmented dimensions.
\begin{lemma}
\label{lem:approximation est une égalité}
     Let $n\geq 1$ and $\lambda\vdash n$. Then
     \begin{equation}
    d_\lambda^+ = \binom{n}{s}d_s^+d_c^+.
\end{equation}
In other words, we have
\begin{equation}
         \frac{d_\lambda^+}{d_c^+} = \binom{n}{s}{\binom{s-1}{\lambda_a^1}}.
     \end{equation}
\end{lemma}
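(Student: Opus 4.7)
The proof will be a direct computation; the real content is that, by design, the augmented hook product $H^+$ splits cleanly along the external hook. Indeed, the off-diagonal factor $a_i!\,b_i!$ depends only on the arm and leg lengths at $\delta_i$, and the on-diagonal factor $s_i$ is the \emph{full} hook length at $\delta_i$, which for $i\geq 2$ is entirely contained in the center $c$. So essentially only an indexing shift needs to be tracked.

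First I would unfold $d_\lambda^+ = n!/H^+(\lambda,\lambda)$ and isolate the $i=1$ factor $s_1 \cdot a_1!\,b_1!$ from the remaining factors indexed by $i\geq 2$. Next I would identify what this remaining data becomes for the center: the diagonal of $c = \lambda^{\geq 2}$ consists of $\delta_2(\lambda), \ldots, \delta_{\delta(\lambda)}(\lambda)$, and since no box of the hook $\lambda^i$ (for $i\geq 2$) lies on the first row or first column of $\lambda$, one checks immediately that $s_i(c) = s_{i+1}(\lambda)$, $a_i(c) = a_{i+1}(\lambda)$, $b_i(c) = b_{i+1}(\lambda)$ for $1 \leq i \leq \delta(\lambda)-1$. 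Plugging into the definition therefore yields $d_c^+ = c!\big/\bigl(\prod_{i\geq 2} s_i \cdot \prod_{i\geq 2} a_i!\,b_i!\bigr)$.

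Finally I would combine these two expressions using $s_1 = s$ (the external hook of $\lambda$ is the hook rooted at $\delta_1$, so it has size $s$) together with the already-recorded identity $d_s^+ = \binom{s-1}{a_1} = (s-1)!/(a_1!\,b_1!)$ from \eqref{eq: elementary identity d s d s plus binom}. The factorials $n!$, $s!$, $c!$ rearrange so that $\binom{n}{s}\,d_s^+\,d_c^+$ simplifies to $n! \big/ \bigl(s_1 \cdot a_1!\,b_1! \cdot \prod_{i\geq 2} s_i \cdot \prod_{i\geq 2} a_i!\,b_i!\bigr)$, which is precisely $d_\lambda^+$. The only step requiring any care is verifying the indexing shift from $\lambda$ to $c$; there is no substantive obstacle, as the lemma is really a bookkeeping statement reflecting the multiplicative structure built into the definition of $H^+$.
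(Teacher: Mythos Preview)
Your proof is correct and follows essentially the same approach as the paper: both rely on the factorization $H^{+}(\lambda,\lambda) = s\,a_1!\,b_1!\,H^{+}(c,c)$, which you justify explicitly via the indexing shift $s_i(c)=s_{i+1}(\lambda)$, $a_i(c)=a_{i+1}(\lambda)$, $b_i(c)=b_{i+1}(\lambda)$, while the paper simply states this identity and then manipulates the resulting ratio of factorials. The only difference is cosmetic—your version is slightly more detailed about why the factorization holds.
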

\begin{proof}
    By definition we have
    \begin{equation}
        H^{+}(\lambda, \lambda) = s\lambda_a^1! \lambda_b^1!H^{+}(c, c). 
    \end{equation}
We therefore get
\begin{equation}
    \frac{d_\lambda^+}{d_c^+} = \frac{n!/H^{+}(\lambda, \lambda)}{c!/H^{+}(c, c)} = \frac{n^{\downarrow s}}{s\lambda_a^1!\lambda_b^1!} = \frac{n^{\downarrow s}}{s!} \frac{(s-1)!}{\lambda_a^1!\lambda_b^1!} = \binom{n}{s}{\binom{s-1}{\lambda_a^1}} .
\end{equation}
\end{proof}
Combining \eqref{eq:lem:bound ratio d lambda d c} and Lemma \ref{lem:approximation est une égalité}, we obtain the following bound.

\begin{corollary}\label{cor: bound semivirtual degrees ratio lambda ratio c}
     Let $n\geq 1$ and $\lambda\vdash n$. Then
    \begin{equation}
        \frac{d_\lambda^+}{d_\lambda} \leq e^{6\sqrt{c}} \frac{d_c^+}{d_c}.
    \end{equation}
\end{corollary}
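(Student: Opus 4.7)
The plan is to combine the two ingredients set up immediately before the corollary: the inequality \eqref{eq:lem:bound ratio d lambda d c} that comes from Proposition \ref{prop: borne abdelta et dimension centre} (b), which controls the true dimension ratio $d_c/d_\lambda$, and the exact product identity of Lemma \ref{lem:approximation est une égalité}, which controls the augmented dimension ratio $d_\lambda^+/d_c^+$. The key observation is that both quantities involve the same combinatorial factor $\binom{n}{s}\binom{s-1}{\lambda_a^1}$, so this factor will cancel when we multiply them together and only the harmless $e^{6\sqrt{c}}$ error term survives.

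Concretely, I would first write the trivial identity
\begin{equation}
\frac{d_\lambda^+}{d_\lambda} \;=\; \frac{d_\lambda^+}{d_c^+}\cdot\frac{d_c^+}{d_c}\cdot\frac{d_c}{d_\lambda}.
\end{equation}
Next, I substitute Lemma \ref{lem:approximation est une égalité} to replace the first factor by $\binom{n}{s}\binom{s-1}{\lambda_a^1}$, and apply \eqref{eq:lem:bound ratio d lambda d c} (together with the identity $d_s = \binom{s-1}{\lambda_a^1}$ recalled in \eqref{eq: elementary identity d s d s plus binom}) to bound the third factor by $e^{6\sqrt{c}}/(\binom{s-1}{\lambda_a^1}\binom{n}{s})$. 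The binomial factors cancel, yielding exactly $e^{6\sqrt{c}}\,d_c^+/d_c$, as desired.

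There is no real obstacle here: the corollary is a one-line algebraic consequence of two previously established statements, and its role is to package them for use in the forthcoming induction on the size of the center in the proof of Proposition \ref{prop:récurrence sur les dimensions augmentées}. The only mild care is to make sure the edge cases are not an issue: when $c=0$ the diagram is a hook, both sides reduce to $1$ (since $d_\lambda = d_s = d_s^+ = d_\lambda^+$ by \eqref{eq: elementary identity d s d s plus binom} and Lemma \ref{lem:approximation est une égalité}), so the inequality holds trivially; for $c\geq 1$ the two cited results apply directly.
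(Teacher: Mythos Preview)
Your proof is correct and follows exactly the approach indicated in the paper, which simply states that the corollary follows by combining \eqref{eq:lem:bound ratio d lambda d c} and Lemma \ref{lem:approximation est une égalité}. You have spelled out the cancellation of the binomial factors cleanly, and your treatment of the edge case $c=0$ is a nice addition.
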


We now fix the constants of Proposition \ref{prop:récurrence sur les dimensions augmentées}. The next lemma collects the bounds which we will use just after in the proof of our result.

\begin{lemma}
\label{lem:conditions pour les constantes}
    There exist constants $n_0\geq 3, c_0\geq 6$ and $C \geq 10^3$ such that the following hold.
    \begin{enumerate}[label=(\roman*)]
        \item For all $n \geq n_0$, we have 
        \begin{itemize}
            \item[($i_1$)] $\sqrt{n} \leq \frac{n}{8}$,
            \item[($i_2$)] $n \geq 4 \sqrt{n} \, \ln n$,
            \item[($i_3$)] $\frac{\ln 2}{\ln n} \leq \frac{1}{2}$,
            \item[($i_4$)] $6+\frac{6}{10^3}\ln n \leq \frac{\ln n}{8}$;
        \end{itemize}
        \item for all $c \geq c_0$, $6+6 \ln(c) \leq \frac{\sqrt{c}}{4} \ln(8/7)$;
        \item we have
        \begin{itemize}
            \item[($iii_1$)] $C\geq\frac{\ln(n_0!) \ln n_0}{\ln 2}$,
            \item[($iii_2$)] $C\geq 12 \sqrt{c_0} \ln(c_0!)$.
        \end{itemize}
    \end{enumerate}
\end{lemma}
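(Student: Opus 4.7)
The plan is that this lemma is purely technical: it records the simultaneous feasibility of several inequalities that will be invoked in the forthcoming induction. All four conditions in (i) and the one in (ii) reduce to either elementary arithmetic or the standard asymptotic $\sqrt{x} \gg \ln x$, and (iii) then amounts to choosing $C$ after $n_0$ and $c_0$ have been fixed. There is no deep step; the only care needed is to handle the conditions in the right order, since $n_0$ must be fixed before evaluating ($iii_1$) and $c_0$ before evaluating ($iii_2$).

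First I would treat the four conditions in (i) individually. Condition ($i_1$) is equivalent to $n \geq 64$; condition ($i_3$) to $n \geq 4$. Rewriting ($i_2$) as $\sqrt{n} \geq 4 \ln n$ gives an inequality that holds beyond a finite explicit threshold since $\sqrt{n}/\ln n \to \infty$. Finally, ($i_4$) reads
\begin{equation}
\left(\tfrac{1}{8} - \tfrac{6}{10^3}\right)\ln n \;\geq\; 6,
\end{equation}
so it is equivalent to $\ln n \geq 6000/119$, i.e.\ $n \geq \lceil e^{6000/119}\rceil$. Taking $n_0$ to be the maximum of the four resulting thresholds (and at least $3$) secures (i).

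Next I would turn to (ii). Since $\ln(8/7) > 0$ is a fixed positive constant and $\sqrt{c}$ eventually dominates $6 + 6\ln c$ by arbitrary multiplicative margin, the inequality
\begin{equation}
\frac{\sqrt{c}}{4}\ln\!\left(\tfrac{8}{7}\right) \;\geq\; 6 + 6 \ln c
\end{equation}
holds for every $c$ larger than some finite threshold; one can, if desired, extract an explicit value by the elementary bound $\ln c \leq 2\sqrt{c}$ valid for $c \geq 1$. Setting $c_0$ to be any such threshold that is $\geq 6$ gives (ii).

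With $n_0$ and $c_0$ now fixed finite integers, the right-hand sides of ($iii_1$) and ($iii_2$) become fixed finite quantities. It therefore suffices to set
\begin{equation}
C := \max\!\left(10^3,\; \frac{\ln(n_0!)\,\ln n_0}{\ln 2},\; 12\sqrt{c_0}\,\ln(c_0!)\right),
\end{equation}
which immediately fulfils all of ($iii_1$), ($iii_2$), and the requirement $C \geq 10^3$. The main obstacle, if any, is purely bookkeeping: one must verify that the order in which the three groups of conditions are resolved respects the dependencies ((i)$\to n_0$, (ii)$\to c_0$, then (iii)$\to C$), which the structure above does.
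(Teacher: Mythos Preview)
Your proof is correct and follows the same approach as the paper's one-line argument: each of (i) and (ii) holds for sufficiently large $n_0$ and $c_0$ by elementary asymptotics, after which (iii) is satisfied by taking $C$ large enough. One minor caveat: the parenthetical suggestion to use $\ln c \leq 2\sqrt{c}$ to extract an explicit threshold in (ii) would not work as stated (substituting yields $6 + 12\sqrt{c} \leq \tfrac{\sqrt{c}}{4}\ln(8/7)$, which is never satisfied since $\ln(8/7)/4 < 12$), but since this aside is explicitly optional and the asymptotic argument suffices, the proof stands.
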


\begin{proof}
    Assertions (i) and (ii) are satisfied for $c_0,n_0$ large enough, and $c_0,n_0$ and having fixed $c_0$ and $n_0$, (iii) also clearly holds for $C$ large enough.
\end{proof}

For the rest of the section, we fix $c_0$, $n_0$ and $C$ satisfying the conditions of Lemma \ref{lem:conditions pour les constantes}.

\begin{remark}
    One can check that, for example, the values $c_0 = 10^8, n_0 = e^{50} , C = e^{60}$ satisfy the conditions of Lemma \ref{lem:conditions pour les constantes}. In other words we can take $C_{aug} = e^{60}$ in Proposition \ref{prop:récurrence sur les dimensions augmentées}.
    Combining this with Remark \ref{rem:bound C diag numerical}, we deduce that in Theorem \ref{thm:ALS virtual degree asymptotic bound}, we can take $C = e^{65}$.
\end{remark}

We first check that our result holds for small values of $n$.

\begin{lemma}\label{lem:semivirt term finite case}
    Let $n$ such that $2\leq n \leq n_0$ and let $\lambda \vdash n$. Then
    \begin{equation}
        d_\lambda^+ \leq d_\lambda^{1+C/\ln n}.
    \end{equation}
\end{lemma}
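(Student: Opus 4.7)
The plan is a brute-force reduction exploiting that $n$ ranges only over the finite set $\{2,\ldots,n_0\}$ and that condition $(iii_1)$ of Lemma \ref{lem:conditions pour les constantes} was calibrated precisely for this purpose. I will split into two cases depending on whether $\lambda$ is flat.

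For $\lambda = [n]$, a direct computation from the definition gives $\delta(\lambda) = 1$, $s_1 = n$, $a_1 = n-1$, $b_1 = 0$, so that $H^+(\lambda,\lambda) = n \cdot (n-1)! \cdot 0! = n!$ and hence $d_\lambda^+ = 1 = d_\lambda$; the column case $\lambda = [1^n]$ is handled symmetrically. The inequality is then trivial.

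When $\lambda$ is non-flat (i.e.\ both $\lambda_1 \geq 2$ and $\lambda'_1 \geq 2$), we have $d_\lambda \geq 2$, since there are at least two standard Young tableaux, distinguished by whether the entry $2$ lies at position $(1,2)$ or at position $(2,1)$. On the other hand, every factor entering $H^+(\lambda,\lambda) = \bigl(\prod_i s_i\bigr)\bigl(\prod_i a_i!\, b_i!\bigr)$ is a positive integer, hence $H^+(\lambda,\lambda) \geq 1$ and consequently $d_\lambda^+ \leq n! \leq n_0!$. Taking logarithms, the desired bound reduces to
\begin{equation*}
\ln(n_0!) \;\leq\; \Bigl(1 + \frac{C}{\ln n}\Bigr)\ln 2,
\end{equation*}
which, because $n \leq n_0$ yields $C/\ln n \geq C/\ln n_0$, follows at once from condition $(iii_1)$, namely $C \geq \ln(n_0!)\,\ln n_0 / \ln 2$.

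There is no substantive obstacle here: the lemma is a calibration step whose sole purpose is to ensure that the base case of the induction used in Section \ref{s: proof of prop:récurrence sur les dimensions augmentées} is compatible with the universal constant $C$ we eventually fix. The only subtlety worth flagging is that $d_\lambda^+$ is \emph{not} defined via a sliced hook product, so the crude bound $H^+(\lambda,\lambda) \geq 1$, combined with the trivial lower bound $d_\lambda \geq 2$ for non-flat shapes, is all that is needed.
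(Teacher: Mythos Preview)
Your proof is correct and follows essentially the same approach as the paper: split into flat and non-flat cases, observe $d_\lambda^+ = d_\lambda = 1$ for flat shapes, and for non-flat shapes combine the crude bounds $d_\lambda^+ \leq n_0!$ and $d_\lambda \geq 2$ with condition $(iii_1)$. The paper's write-up is slightly terser (it first bounds $d_\lambda^+ \leq d_\lambda^{C/\ln n}$ and only then appends the harmless $+1$ in the exponent), but the argument is identical in substance.
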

\begin{proof}
If $\lambda$ is flat (horizontal or vertical), that is, $\lambda_a^1=0$ or $\lambda_b^1=0$, then $d_\lambda = d_\lambda^+ = 1$ and the result holds. If $\lambda$ is not flat, then $d_\lambda \geq 2$ so by by Lemma \ref{lem:conditions pour les constantes} ($iii_1$) we get
\begin{equation}
    d_\lambda^+ \leq n! \leq n_0! = 2^{\frac{\ln(n_0!)}{\ln 2}} \leq d_\lambda^{\frac{\ln(n_0!)}{\ln 2}} \leq d_\lambda^{\frac{\ln(n_0!)}{\ln 2}\frac{\ln n_0}{\ln n}} \leq d_\lambda^{C/\ln n} \leq d_\lambda^{1 + C/\ln n}.
\end{equation}
\end{proof}

We now consider small values of $c$, assuming that $n$ is large enough.

\begin{lemma}\label{lem:semivirt case n large c small}
Let $n\geq n_0$ and $\lambda \vdash n$ such that $c(\lambda) \leq c_0$. Then
     \begin{equation}
        d_\lambda^+ \leq d_\lambda^{1+C/\ln n}.
    \end{equation}
\end{lemma}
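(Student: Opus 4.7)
The plan is to handle this case by observing that when $c \leq c_0$ is bounded, the Corollary \ref{cor: bound semivirtual degrees ratio lambda ratio c} bound on $d_\lambda^+/d_\lambda$ becomes bounded by a constant depending only on $c_0$, while Proposition \ref{prop: borne abdelta et dimension centre} (b) forces $d_\lambda$ to grow at least linearly in $n$. Once $\ln d_\lambda \gtrsim \ln n$, raising $d_\lambda$ to the tiny power $C/\ln n$ gives a constant that absorbs the ratio bound.

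First I would dispose of the trivial case $c=0$: the diagram is then exactly the hook $s$, and Lemma \ref{lem:approximation est une égalité} (together with $d_c^+=1$ when $c$ is empty) gives $d_\lambda^+ = d_s^+ = d_s = d_\lambda$, so the inequality is automatic.

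For $1 \leq c \leq c_0$, Corollary \ref{cor: bound semivirtual degrees ratio lambda ratio c} and the crude estimates $d_c^+ \leq c!$ (since $H^+(c,c) \geq 1$ factor by factor) and $d_c \geq 1$ yield
\begin{equation*}
\frac{d_\lambda^+}{d_\lambda} \leq e^{6\sqrt{c}} \frac{d_c^+}{d_c} \leq e^{6\sqrt{c_0}} \, c_0!.
\end{equation*}
On the other hand, Proposition \ref{prop: borne abdelta et dimension centre} (b), together with $d_s,d_c \geq 1$ and the unimodality bound $\binom{n}{c} \geq n$ (valid for $1 \leq c \leq n-1$, and in particular when $n \geq n_0 \geq c_0+1$), gives
\begin{equation*}
d_\lambda \geq e^{-6\sqrt{c}}\binom{n}{c} d_s d_c \geq e^{-6\sqrt{c_0}} \, n,
\end{equation*}
so that $\ln d_\lambda \geq \ln n - 6\sqrt{c_0} \geq (\ln n)/2$, the last inequality being guaranteed by enlarging $n_0$ (a consequence of item (i) of Lemma \ref{lem:conditions pour les constantes}).

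It then suffices to verify $e^{6\sqrt{c_0}} c_0! \leq d_\lambda^{C/\ln n}$, i.e., after taking logarithms,
\begin{equation*}
6\sqrt{c_0} + \ln(c_0!) \leq \frac{C}{\ln n}\ln d_\lambda,
\end{equation*}
and by the lower bound just obtained, the right-hand side is at least $C/2$. Since $c_0 \geq 6$ gives $12\sqrt{c_0}\ln(c_0!) \geq 12\sqrt{c_0} + 2\ln(c_0!)$, condition $(iii_2)$ of Lemma \ref{lem:conditions pour les constantes} supplies exactly $C \geq 2(6\sqrt{c_0}+\ln(c_0!))$, closing the proof. The only mildly delicate point is quantifying how large $n_0$ must be taken to ensure $\ln n - 6\sqrt{c_0}$ dominates a fixed fraction of $\ln n$; this is precisely why the conditions in Lemma \ref{lem:conditions pour les constantes} are stated the way they are.
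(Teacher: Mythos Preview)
Your argument follows the same overall strategy as the paper: bound $d_\lambda^+/d_\lambda$ by a constant depending only on $c_0$, then show $d_\lambda$ is large enough that $d_\lambda^{C/\ln n}$ absorbs this constant. The upper bound $d_\lambda^+/d_\lambda \leq e^{6\sqrt{c_0}}c_0!$ is identical to the paper's.

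The difference lies in the lower bound on $d_\lambda$. The paper uses the elementary observation that when $c\geq 1$ one has $\lambda_b^1\geq 1$, so $d_\lambda \geq d_s = \binom{s-1}{\lambda_b^1} \geq s-1 \geq \sqrt{n}$. You instead invoke Proposition~\ref{prop: borne abdelta et dimension centre}~(b) together with $\binom{n}{c}\geq n$ to get $d_\lambda \geq e^{-6\sqrt{c_0}}n$. Your bound carries the factor $e^{-6\sqrt{c_0}}$, and to conclude $\ln d_\lambda \geq (\ln n)/2$ you need $\ln n_0 \geq 12\sqrt{c_0}$. This is \emph{not} a consequence of item~(i) of Lemma~\ref{lem:conditions pour les constantes} as you claim---none of the conditions there relate $n_0$ to $c_0$---and in fact the concrete values the paper proposes ($c_0=10^8$, $n_0=e^{50}$) do not satisfy it. So there is a small gap: your argument is salvageable by adding the requirement $\ln n_0 \geq 12\sqrt{c_0}$ to Lemma~\ref{lem:conditions pour les constantes}, but as written the justification is incorrect. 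The paper's route via $d_\lambda \geq s-1$ avoids this, needing only the much milder (and satisfied) implicit condition $n_0 - \sqrt{n_0} \geq c_0+1$.
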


\begin{proof}
If $c = 0$, then $d_\lambda^+ = d_\lambda$ so the result holds. Assume now that $1\leq c\leq c_0$. Then $\lambda_b^1 \geq 1$ so $d_\lambda \geq d_s \geq \binom{s-1}{1} = s-1 \geq \sqrt{n}$ (since $n \geq n_0 \geq 3$). Then by Corollary \ref{cor: bound semivirtual degrees ratio lambda ratio c} and Lemma \ref{lem:conditions pour les constantes} ($iii_2$),
\begin{equation}
d_\lambda^+\leq d_\lambda e^{6\sqrt{c}} \frac{d_c^+}{d_c} \leq d_\lambda e^{6\sqrt{c}} d_c^+
\leq d_\lambda e^{6\sqrt{c_0}} c_0! = d_\lambda \sqrt{n}^{\frac{12\sqrt{c_0} \ln(c_0!)}{\ln n}} \leq  d_\lambda^{1+ C/\ln n}.
\end{equation}
\end{proof}

The next result concerns large values of $n$ and specific values of $s$.

\begin{lemma}\label{lem: some inequality (es/n)^{s/2}}
    Let $n\geq n_0$ and $s$ such that $\sqrt{n} \leq s \leq n/8$. Then
\begin{equation}
      \pg\frac{es}{n}\pd^{s/2} \leq e^{-\frac{\sqrt{n}\ln n}{8}}.
\end{equation}  
\end{lemma}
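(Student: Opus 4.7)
The plan is to take logarithms and reduce the claim to the equivalent one-variable inequality
\[
\frac{s}{2}\bigl(\ln(n/s) - 1\bigr) \;\geq\; \frac{\sqrt{n}\ln n}{8} \qquad \text{for all } s \in [\sqrt{n}, n/8],
\]
since $(es/n)^{s/2} = \exp\!\bigl(-\tfrac{s}{2}(\ln(n/s)-1)\bigr)$. Condition $(i_1)$ of Lemma \ref{lem:conditions pour les constantes} guarantees that this interval is non-empty for $n \geq n_0$.

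First I would exploit the upper bound on $s$: from $s \leq n/8$ we get $\ln(n/s) \geq \ln 8 \geq 2$, and since $x-1 \geq x/2$ whenever $x \geq 2$, this yields $\ln(n/s) - 1 \geq \tfrac{1}{2}\ln(n/s)$. Hence it suffices to establish the stronger-looking but friendlier bound
\[
\frac{s}{4}\ln(n/s) \;\geq\; \frac{\sqrt{n}\ln n}{8}.
\]

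Next I would analyze $h(s) := s\ln(n/s)$ on $[\sqrt{n}, n/8]$. Its derivative $h'(s) = \ln(n/s) - 1$ is nonnegative for $s \leq n/e$, and since $e < 8$ we have $n/8 < n/e$, so $h$ is increasing on the whole interval $[\sqrt{n}, n/8]$. Evaluating at the left endpoint gives
\[
h(s) \;\geq\; h(\sqrt{n}) \;=\; \sqrt{n}\,\ln(\sqrt{n}) \;=\; \frac{\sqrt{n}\ln n}{2},
\]
which, combined with the previous step, gives $\tfrac{s}{2}(\ln(n/s)-1) \geq \sqrt{n}\ln n/8$ and closes the argument.

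There is no real obstacle: this is a clean one-variable calculus computation, and the estimate is essentially tight at $s = \sqrt{n}$ (the left endpoint, where both the monotonicity argument and the $\ln 8 \geq 2$ trick are saturated up to constants). The only things one must be careful to check are that $\ln 8 \geq 2$ (so the inequality $x-1 \geq x/2$ applies to $\ln(n/s)$) and that $n/8 < n/e$ (so the maximum of $-(s/2)\ln(es/n)$ on $[\sqrt{n}, n/8]$ is attained at $\sqrt{n}$), both of which are purely numerical.
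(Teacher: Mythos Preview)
Your proof is correct. Both your argument and the paper's reduce to showing that the worst case occurs at $s=\sqrt{n}$, but the route is genuinely different. The paper observes that $g(s)=\ln\bigl((es/n)^{s/2}\bigr)$ is convex, concludes that its maximum on $[\sqrt{n},n/8]$ is at an endpoint, and then invokes condition~$(i_2)$ of Lemma~\ref{lem:conditions pour les constantes} (namely $n\geq 4\sqrt{n}\ln n$) to verify that $f(n/8)\leq f(\sqrt{n})$. You instead first peel off the factor $\ln(n/s)-1\geq \tfrac12\ln(n/s)$ using the numerical fact $\ln 8\geq 2$, and then note that $s\ln(n/s)$ is \emph{increasing} on the whole interval (since $n/8<n/e$), so the minimum of $-\ln f$ is directly at $s=\sqrt{n}$. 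Your approach is slightly more elementary (monotonicity rather than convexity plus an endpoint comparison) and, notably, does not need condition~$(i_2)$ at all---only $(i_1)$ to ensure the interval is nonempty. The paper's convexity argument is a bit more robust in principle (it would survive if the critical point $n/e^2$ of $g$ fell inside the interval), but here that extra generality is not needed.
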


\begin{proof}
    First note that since $n \geq n_0$, we have $\sqrt{n}\leq n/8$ by Lemma \ref{lem:conditions pour les constantes} ($i_1$). We define
    \begin{equation}
        \begin{array}{lrcl}
f : & \cg \sqrt{n}, n/8\cd & \longrightarrow & \bbR \\
    & x & \longmapsto & \pg\frac{ex}{n}\pd^{x/2} \end{array}
\end{equation}
and observe that $g := \ln \circ f$ is convex (since $g''(x) = \frac{1}{2x} >0$ for $x\in \cg \sqrt{n}, n/8\cd$). Hence, $g$ reaches its maximum value when $x$ is either minimal or maximal and therefore the same holds also for $f$. Moreover, $e^2\leq 8$ and $n\geq 4\sqrt{n}\ln n$ (by Lemma \ref{lem:conditions pour les constantes} ($i_2$)) so we have
\begin{equation}
    \frac{f(n/8)}{f(\sqrt{n})} = \frac{(e/8)^{n/16}}{(e/\sqrt{n})^{{\sqrt{n}/2}}} \leq \frac{\sqrt{n}^{\sqrt{n}/2}}{e^{n/16}} = \frac{n^{\sqrt{n}/4}}{e^{n/16}} \leq 1.
\end{equation}
We deduce that $f$ is maximal at $\sqrt{n}$, and conclude that
\begin{equation}
    \max_{\sqrt{n}\leq s \leq n/8} f(s) \leq f(\sqrt{n}) = (e/\sqrt{n})^{\sqrt{n}/2}\leq (n^{-1/4})^{\sqrt{n}/2} = e^{-\frac{\sqrt{n}\ln n}{8}}.
\end{equation}
\end{proof}
We now have all the tools to prove Proposition \ref{prop:récurrence sur les dimensions augmentées}.
\begin{proof}[Proof of Proposition \ref{prop:récurrence sur les dimensions augmentées}]

We proceed by (strong) induction on $n$. Our recurrence hypothesis is 
    \begin{equation}
        H(n): \text{“}d_\lambda^+ \leq d_{\lambda}^{1+C/\ln n} \text{ for every } \lambda \vdash n\text{”}.
    \end{equation}
    
\medskip

By Lemma \ref{lem:semivirt term finite case}, $H(n)$ holds for all $2\leq n \leq n_0$, which initializes the induction.

\medskip

Let now $n\geq n_0$ and assume that $H(m)$ holds for every $m\leq n$. Let $\lambda \vdash n+1$.

The case $0 \leq c \leq c_0$ follows from Lemma \ref{lem:semivirt case n large c small}. Since, in addition, $s\geq \sqrt{n}$ for every diagram, we can assume from now on that $c_0 \leq c\leq n-\sqrt{n}$.

\medskip

By Corollary \ref{cor: bound semivirtual degrees ratio lambda ratio c}, 
    \begin{equation}
        \frac{d_\lambda^+}{d_\lambda^{1+\frac{C}{\ln n}}} = \frac{d_\lambda^+}{d_\lambda} d_\lambda^{-\frac{C}{\ln n}} \leq e^{6\sqrt{c}} \frac{d_c^+}{d_c} d_\lambda^{-\frac{C}{\ln n}} = e^{6\sqrt{c}} \frac{d_c^+}{d_c^{1+\frac{C}{\ln c}}} \pg\frac{d_c}{d_\lambda} \pd^{C/\ln c} d_\lambda^{\frac{C}{\ln c}-\frac{C}{\ln n}}.
    \end{equation} 
Applying the induction hypothesis to $c$, we deduce that
\begin{equation}
    \frac{d_\lambda^+}{d_\lambda^{1+\frac{C}{\ln n}}} \leq e^{6\sqrt{c}} \pg\frac{d_c}{d_\lambda} \pd^{C/\ln c} d_\lambda^{\frac{C}{\ln c}-\frac{C}{\ln n}}.
\end{equation}
By \eqref{eq:lem:bound ratio d lambda d c} and using that $d_\lambda \leq \binom{n}{c}d_c d_s$ (as explained in Remark \ref{rem: approx d lambda d s d c}),  we deduce that 
\begin{align}
    \frac{d_\lambda^+}{d_\lambda^{1+\frac{C}{\ln n}}} 
    & \leq e^{6\sqrt{c}} \pg \frac{e^{6\sqrt{c}}}{d_s \binom{n}{s}}\pd^{\frac{C}{\ln c}} \pg \binom{n}{c}d_c d_s \pd^{\frac{C}{\ln c} - \frac{C}{\ln n}} \nonumber \\
    & = e^{6\sqrt{c}\pg  1+\frac{C}{\ln c}\pd} \pg d_s \binom{n}{c} \pd^{-C/\ln n} d_c^{\frac{C}{\ln c} - \frac{C}{\ln n}} \nonumber \\
   & = \pg \frac{e^{6\sqrt{c}(1+\frac{\ln c}{C})}}{\pg d_s \binom{n}{c}\pd^{\frac{\ln c}{\ln n}}} d_c^{1-\frac{\ln c}{\ln n}} \pd^{C/\ln c} \\
   & \leq \pg e^{6\sqrt{c}(1+\frac{\ln c}{C})} \binom{n}{c}^{-\frac{\ln c}{\ln n}} d_c^{1-\frac{\ln c}{\ln n}} \pd^{C/\ln c}
\end{align}
To show our result, it is therefore enough to prove that 

\begin{equation}
\label{eq:ce qu'on veut prouver finalement}
    e^{6\sqrt{c}(1+\frac{\ln c}{C})} \binom{n}{c}^{-\frac{\ln c}{\ln n}} d_c^{1-\frac{\ln c}{\ln n}} \leq 1.
\end{equation}
We split the proof of \eqref{eq:ce qu'on veut prouver finalement} depending on the value of $c$.

\fbox{$n-\sqrt{n} \geq c \geq 7n/8$}

\noindent In this case, we have $\sqrt{n} \leq s\leq n/8$. Then 
    \begin{equation}
        1-\frac{\ln c}{\ln n} = \frac{\ln n - \ln c}{\ln n} = \frac{\ln(\frac{c+s}{c})}{\ln n} = \frac{\ln(1+\frac{s}{c})}{\ln n} \leq \frac{s}{c \ln n}.
    \end{equation}
    In addition, by Lemma \ref{lem:standard or elementary results}, $d_c \leq \sqrt{c!} \leq c^{c/2} \leq n^{c/2}$, so that
    \begin{equation}
    \label{eq:equabourrine}
        d_c^{1-\frac{\ln c}{\ln n}} \leq e^{s/2}.
    \end{equation}
On the other hand, since $1-s/n \geq 1/2$ by assumption, we have 
\begin{equation}
    \frac{\ln c}{\ln n} = 1+\frac{\ln(c/n)}{\ln n} = 1+ \frac{\ln (1-s/n)}{\ln n} \geq 1-\frac{\ln 2}{\ln n},
\end{equation}
and in particular $\frac{\ln c}{\ln n} \geq 1/2$ by Lemma \ref{lem:conditions pour les constantes} ($i_3$).

Recall also from Lemma \ref{lem: simple bounds on parts of the lower bounds on the dimension} (b) that $\binom{n}{c}=\binom{n}{s} \geq (n/s)^s$. Therefore, by \eqref{eq:equabourrine} and Lemma \ref{lem: some inequality (es/n)^{s/2}},
\begin{equation}
     \binom{n}{c}^{-\frac{\ln c}{\ln n}} d_c^{1-\frac{\ln c}{\ln n}} \leq  \pg\frac{s}{n}\pd^{s/2} e^{s/2} = \pg\frac{es}{n}\pd^{s/2} \leq e^{-\frac{\sqrt{n}\ln n}{8}}.
\end{equation}
We conclude, using that $\ln c \leq \ln n$ and $\sqrt{c}\leq \sqrt{n}$, and by Lemma \ref{lem:conditions pour les constantes} ($i_4$) (which we can apply since $C \geq 10^3$), that 
\begin{equation}
    e^{6\sqrt{c}(1+\frac{\ln c}{C})} \binom{n}{c}^{-\frac{\ln c}{\ln n}} d_c^{1-\frac{\ln c}{\ln n}} \leq e^{\sqrt{n}\pg 6 + \frac{6\ln n}{C} -\frac{\ln n}{8}\pd} \leq 1.
\end{equation}

\bigskip

\fbox{$c_0 \leq c \leq 7n/8$}
    Since $c\geq c_0 \geq 6$, we have by Lemma \ref{lem:standard or elementary results} that $d_c \leq \sqrt{c!}\leq (c/2)^{c/2}$. 
    Therefore,
    \begin{equation}
    \begin{split}
        d_c^{1-\frac{\ln c}{\ln n}} = d_c^{\frac{\ln (n/c)}{\ln n}} \leq \pg (c/2)^{c/2}\pd^{\frac{\ln(n/c)}{\ln n}} & = \pg (c/n)^{c/2}\pd^{\frac{\ln(n/c)}{\ln n}}\pg n^{c/2}\pd^{\frac{\ln(n/c)}{\ln n}} \pg (1/2)^{c/2}\pd^{\frac{\ln(n/c)}{\ln n}}\\
        &= \pg (c/n)^{c/2}\pd^{\frac{\ln(n/c)}{\ln n}} (n/c)^{c/2} \, \cdot \, 2^{-\frac{c}{2}\frac{\ln(n/c)}{\ln n}} \\
        & = \pg\frac{n}{c} \pd^{\frac{c}{2}\pg 1- \frac{\ln(n/c)}{\ln n}\pd}2^{-\frac{c}{2}\frac{\ln(n/c)}{\ln n}}.
    \end{split}
    \end{equation}
On the other hand we still have
$\binom{n}{c} \geq (n/c)^c$
and
$\frac{\ln c}{\ln n} = 1- \frac{\ln(n/c)}{\ln n}$, so
\begin{equation}
    \binom{n}{c}^{-\frac{\ln c}{\ln n}} \leq \pg\frac{n}{c} \pd^{-c\pg 1- \frac{\ln(n/c)}{\ln n}\pd},
\end{equation}
and overall we get
\begin{equation}
\begin{split}
    \binom{n}{c}^{-\frac{\ln c}{\ln n}} d_c^{1-\frac{\ln c}{\ln n}} & \leq  \pg\frac{n}{c} \pd^{-c\pg 1- \frac{\ln(n/c)}{\ln n}\pd} \pg\frac{n}{c} \pd^{\frac{c}{2}\pg 1- \frac{\ln(n/c)}{\ln n}\pd}2^{-\frac{c}{2}\frac{\ln(n/c)}{\ln n}}\\
    &= \pg\frac{c}{n} \pd^{\frac{c}{2}\pg 1- \frac{\ln(n/c)}{\ln n}\pd} 2^{-\frac{c}{2}\frac{\ln(n/c)}{\ln n}}.
\end{split}
\end{equation}
\noindent Observe that both factors are clearly $\leq 1$.
Now, if $c\geq \sqrt{n}$ we have $1-\frac{\ln(n/c)}{\ln n}\geq 1/2$ so 
\begin{equation}
    \pg\frac{c}{n} \pd^{\frac{c}{2}\pg 1- \frac{\ln(n/c)}{\ln n}\pd} \leq \pg\frac{c}{n} \pd^{\frac{c}{4}} \leq (7/8)^{c/4}.
\end{equation}
On the other hand, if $c\leq \sqrt{n}$, we have $\frac{\ln(n/c)}{\ln n} \geq 1/2$ so
\begin{equation}
  2^{-\frac{c}{2}\frac{\ln(n/c)}{\ln n}}\leq 2^{-\frac{c}{4}} \leq (7/8)^{c/4},
\end{equation}
so we always have 
\begin{equation}
    \binom{n}{c}^{-\frac{\ln c}{\ln n}} d_c^{1-\frac{\ln c}{\ln n}} \leq (7/8)^{c/4}.
\end{equation}

\noindent Hence, we get (using Lemma \ref{lem:conditions pour les constantes} (ii) and the fact that $C \geq 1$) that

\begin{equation}
    e^{6\sqrt{c}(1+\frac{\ln c}{C})} \binom{n}{c}^{-\frac{\ln c}{\ln n}} d_c^{1-\frac{\ln c}{\ln n}} \leq e^{6\sqrt{c}\pg 1 + \frac{\ln c}{C} \pd - \frac{c}{4} \ln(8/7)} \leq 1.
\end{equation}
This concludes the proof of \eqref{eq:ce qu'on veut prouver finalement}, and hence of the proposition.

\end{proof}

\subsection{Sharpness of the bound}\label{s:examples}
We consider here two examples, namely square-shaped diagrams and almost-flat diagrams, which show sharpness of Theorem \ref{thm:ALS virtual degree asymptotic bound}. In other words, Theorem \ref{thm:ALS virtual degree asymptotic bound} provides the best possible asymptotic bound on virtual degrees that is uniform over all irreducible characters. 
For square-shaped diagrams $\lambda = [p^p]$, we show that $D(\lambda) = d_{\lambda}^{1+\frac{4\ln 2}{\ln n} (1+o(1))}$ where $n=p^2$, and for $\lambda = [n-2,2]$, we show that $D(\lambda) = d_{\lambda}^{1+\frac{\ln 2}{2\ln n}(1+o(1))}$.
We did not optimize the value of the constant $C$ in Theorem \ref{thm:ALS virtual degree asymptotic bound}, but we conjecture that as $n\to \infty$ we have $D(\lambda) \leq d_\lambda^{1+\frac{4\ln 2 + o(1)}{\ln n}}$ uniformly for any $\lambda \vdash n$, and that the constant $4 \ln 2$ is optimal.

\subsubsection{Square-shaped diagrams}

For all $n \geq 1$ such that $n$ is a perfect square (i.e. $p := \sqrt{n} \in \mathbb{N}$), let $\lambda_n = [p^p] = [p, \ldots, p]$ be the Young diagram of squared shape (see Figure \ref{fig:squareshape}).

\begin{figure}[!ht]
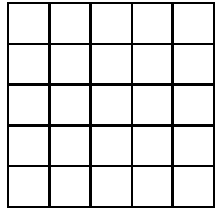

    \centering
\begin{ytableau}
    \none   & & & & & \\
    \none   & & & & & \\
    \none   & & & & & \\
    \none   & & & & & \\
    \none   & & & & & \\
\end{ytableau}
    \caption{The Young diagram $\lambda_{25}$}
    \label{fig:squareshape}
\end{figure}

Let us compute $d_{\lambda_n}$. We have

\begin{equation}
    H(\lambda_n,\lambda_n)=\prod_{i,j=1}^{p} (i+j-1)
    =\prod_{i=1}^p \frac{(i+p-1)!}{(i-1)!}
    = \frac{\prod_{j=0}^{2p-1} j!}{(\prod_{i=0}^{p-1} i!)^2}
    =\frac{G(2p+1)}{G(p+1)^2},
\end{equation}
where $G$ denotes Barnes' $G$-function.

In particular, the asymptotic expansion of its logarithm is known (see e.g. \cite[Lemma $5.1$]{Adamchik2014}): as $z \rightarrow \infty$,
\begin{equation}
\label{eq:asymptotic expansion barnes}
    \ln G(1+z)=\frac{z^2}{2} \ln z - \frac{3z^2}{4} + O(z).
\end{equation}
This provides

\begin{equation}
    \begin{split}
        \ln H(\lambda_n,\lambda_n) &= \ln G(2p+1) - 2 \ln G(p+1)\\
    &= \frac{(2p)^2}{2} \ln(2p) - \frac{3(2p)^2}{4} -2\frac{p^2}{2} \ln p + 2\frac{3p^2}{4} + O(p)\\
    &= 2p^2\ln p + 2p^2 \ln 2 - 3p^2 - p^2 \ln p +\frac{3}{2}p^2 + O(p)\\
    &= p^2 \ln p + (2\ln 2-\frac{3}{2}) p^2 + O(p).
    \end{split}
\end{equation}
Thus, we get using Stirling's approximation:

\begin{equation}
    \begin{split}
         \ln d_{\lambda_n} &= \ln n! - \ln H(\lambda_n,\lambda_n)\\
    &= n \ln n - n + O(\ln n) - p^2 \ln p - (2\ln 2-\frac{3}{2}) p^2 + O(p)\\
    &= 2 p^2 \ln p - p^2 - p^2 \ln p - (2\ln 2-\frac{3}{2}) p^2 + O(p)\\
    &= p^2 \ln p + (\frac{1}{2} - 2\ln 2)p^2 + O(p).
    \end{split}
\end{equation}
On the other hand, we have

\begin{equation}
    \begin{split}
         D(\lambda_n) = \frac{(n-1)!}{\prod a_i! \prod b_i!} = \frac{(n-1)!}{(\prod_{i=1}^{p-1} i!)^2} = \frac{(n-1)!}{G(1+p)^2}.
    \end{split}
\end{equation}
Using again Stirling's formula along with \eqref{eq:asymptotic expansion barnes}, we get that
\begin{equation}
    \begin{split}
         \ln D(\lambda_n) &= \ln((n-1)!)-2\ln G(p+1)\\
    &= n \ln n - n + O(\ln n) -2\frac{p^2}{2} \ln p + 2\frac{3p^2}{4} + O(p)\\
    &= 2p^2 \ln p - p^2 - p^2 \ln p + \frac{3}{2} p^2 + O(p)\\
    &= p^2 \ln p + \frac{1}{2} p^2 + O(p).
    \end{split}
\end{equation}
We deduce that 

\begin{equation}
    \begin{split}
        \ln D(\lambda_n)-\ln d_{\lambda_n} = \pg 2\ln 2\pd  p^2+O(p)
=\frac{4\ln 2}{\ln n} \ln d_{\lambda_n} (1+o(1)).
    \end{split}
\end{equation}
Hence, as $n \rightarrow \infty$:
\begin{equation}
    D(\lambda_n) = d_{\lambda_n}^{1+\frac{4\ln 2}{\ln n} (1+o(1))}.
\end{equation}

\subsubsection{Almost-flat diagrams}

The second case that we consider is the case of a diagram $\mu_n$ of size $n$ with two rows, one of length $n-2$ and the second of length $2$ (see Figure \ref{fig:almostflat}).

\begin{figure}[!ht]
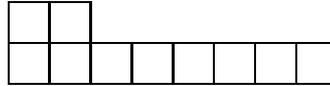

    \centering
\begin{ytableau}
    \none   & & \\
    \none   & & & & & & & & \\
\end{ytableau}
    \caption{The Young diagram $\mu_{10}$}
    \label{fig:almostflat}
\end{figure}
In this case, we can also compute

\begin{equation}
    H(\mu_n,\mu_n) = (n-4)!*(n-2)*(n-1)*2 = 2 \frac{n!}{n(n-3)},
\end{equation}
so that
\begin{equation}
    d_{\mu_n}=\frac{n(n-3)}{2}.
\end{equation}
On the other hand, we have 
\begin{equation}
    D(\mu_n) = \frac{(n-1)!}{(n-3)!} = (n-1)(n-2).
\end{equation}
Hence, we have
\begin{equation}
        \ln\left( \frac{D(\mu_n)}{d_{\mu_n}} \right) = \ln \left( \frac{2(n-1)(n-2)}{n(n-3)} \right)= \ln 2 + o(1) = \frac{\ln 2}{2\ln n} \ln d_{\mu_n} (1+o(1)). 
\end{equation}
We conclude that
\begin{equation}
    D(\mu_n) = d_{\mu_n}^{1+\frac{\ln 2}{2\ln n}(1+o(1))}.
\end{equation}

\section{Character bounds in function of the number of cycles}\label{s: Character bounds in function of the number of cycles} 

The goal of this section is to prove Theorem \ref{thm: borne caractères nombre de cycles}, which provides bounds on characters in terms of the number of cycles of the permutation $\sigma$.

To prove bounds on characters depending on the number of cycles in the permutation, Larsen and Shalev \cite{LarsenShalev2008} introduced the notion of cycle growth sequence. The cycle growth sequence $(b_k)_{k \geq 1}$ is defined by 
$n^{b_k} = \max(F_k(\sigma), 1)$, and Larsen and Shalev set 
\begin{equation}
    B(\sigma) = \sum_{k\geq 1} \frac{b_k}{k(k+1)}.
\end{equation}
They showed that asymptotically $E(\sigma) \leq B(\sigma) + o(1)$, and that if $\cyc(\sigma) \leq n^\alpha$ then $B(\sigma) \leq \alpha$. Plugging these bounds into Theorem \ref{thm:LS form E(sigma) + o(1)}, they deduced in \cite[Theorem 1.4]{LarsenShalev2008} that if $\cyc(\sigma) \leq \alpha$ then 
\begin{equation}
    \frac{|\ch^\lambda(\sigma)|}{d_\lambda} \leq d_\lambda^{\alpha + o(1)},
\end{equation}
improving on their bound from \cite{LarsenShalev2009wordmapswaring}.

\begin{remark}
    The $o(1)$ term above has two components: one coming from the bound $D(\lambda)\leq d_\lambda^{1+o(1)}$ (Theorem \ref{thm:LS form E(sigma) + o(1)}) and one from the bound $E(\sigma) \leq B(\sigma) + o(1)$. The bounds at the end of the proof of Theorem 1.1 in \cite{LarsenShalev2008} actually show that $E(\sigma) \leq B(\sigma) + O\pg \frac{\ln \ln n}{\ln n} \pd$. Combining this remark with Theorem \ref{thm:ALS improved character bound} would give the bound $\frac{|\ch^\lambda(\sigma)|}{d_\lambda} \leq d_\lambda^{\alpha + O\pg\frac{\ln \ln n}{\ln n}\pd}$ if $\cyc(\sigma) \leq \alpha$, which is another formulation of the character bound from Lifschitz and Marmor \cite[Theorem 1.7]{LifschitzMarmor2024charactershypercontractivity}. The first aim of this section is to prove Theorem \ref{thm: borne caractères nombre de cycles}, which provides a sharper bound.
\end{remark}

 An equivalent formulation (and a more direct proof) of the bound $E(\sigma) \leq B(\sigma)+o(1)$ is given in \cite[Lemma 2.8]{KellerLifschitzScheinfeld2024}: fix $\varepsilon>0$ and $\alpha>0$, then as $n\to \infty$, for all $\sigma \in \kS_n$ satisfying $\cyc(\sigma) \leq n^\alpha$ we have $E(\sigma) \leq \alpha + \varepsilon$.

In Proposition \ref{prop: borne E sigma avec max cyc 2} (and Corollary \ref{cor: borne E sigma n puissance a}), we show the following stronger result: if $2\leq \cyc(\sigma) \leq n^{\alpha}$ then $E(\sigma) \leq \alpha$. The proof relies on a convexity argument and a telescoping product. Combining it with Theorem \ref{thm:ALS improved character bound}, it allows us to extend the character bounds of Lifschitz and Marmor \cite[Theorem 1.7]{LifschitzMarmor2024charactershypercontractivity} -- which hold when $\cyc(\sigma) \leq \frac{n}{(\ln n)^{C}}$ for some constant $C>0$ -- to permutations with $\cyc(\sigma) =o(n)$. This is the content of Theorem \ref{thm: borne caractères nombre de cycles}.

\subsection{A simple bound on orbit growth exponents}

Let $n\geq 2$ and $\sigma \in \kS_n$. We start by showing the following surprisingly simple bound on $E(\sigma)$ that depends only on the number of cycles $\cyc(\sigma)$.

\begin{proposition}\label{prop: borne E sigma avec max cyc 2}
    Let $n\geq 2$ and $\sigma \in \mathfrak{S}_n$.  Then
\begin{equation}
    E(\sigma) \leq \frac{\ln \left(\max(\cyc(\sigma),2)\right)}{\ln n}.
\end{equation}
\end{proposition}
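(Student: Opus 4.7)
The plan is to combine a summation-by-parts identity with Jensen's inequality applied to the concave function $\ln$.

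First I would derive two identities via Abel summation. Set $\Sigma_0 = 0$ and extend $\Sigma_i = n$ for $i \geq n$. Since $i f_i(\sigma) = \Sigma_i - \Sigma_{i-1}$, summation by parts together with the telescoping identity $\sum_{i \geq 1} 1/(i(i+1)) = 1$ gives
\[
c := \cyc(\sigma) = \sum_{i \geq 1} f_i(\sigma) = \sum_{i \geq 1} \frac{\Sigma_i - \Sigma_{i-1}}{i} = \sum_{i \geq 1} \frac{\Sigma_i}{i(i+1)}.
\]
The same computation applied to $a_i := \ln \max(\Sigma_i, 1)$ (for which $a_0 = 0$ and $a_i = \ln n$ for $i \geq n$) rewrites the definition of $E(\sigma)$ as
\[
E(\sigma) \ln n = \sum_{i \geq 1} \frac{a_i - a_{i-1}}{i} = \sum_{i \geq 1} \frac{a_i}{i(i+1)} = \sum_{i \geq \ell_1} \frac{\ln \Sigma_i}{i(i+1)},
\]
where $\ell_1 := \min\{i \geq 1 : f_i(\sigma) \geq 1\}$ is the smallest cycle length and we used that $a_i = 0$ for $i < \ell_1$.

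Next I would apply Jensen's inequality. Since $\sum_{i \geq \ell_1} 1/(i(i+1)) = 1/\ell_1$, the weights $\ell_1/(i(i+1))$ form a probability distribution on $\{i \geq \ell_1\}$. By concavity of $\ln$ and the first identity above,
\[
E(\sigma) \ln n = \frac{1}{\ell_1} \sum_{i \geq \ell_1} \frac{\ell_1}{i(i+1)} \ln \Sigma_i \;\leq\; \frac{1}{\ell_1} \ln\!\left( \sum_{i \geq \ell_1} \frac{\ell_1\, \Sigma_i}{i(i+1)} \right) = \frac{\ln(\ell_1 c)}{\ell_1}.
\]
It then remains to check that $(\ell_1 c)^{1/\ell_1} \leq \max(c, 2)$. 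When $c \geq 2$ this reduces to $\ell_1 \leq c^{\ell_1 - 1}$: trivial for $\ell_1 = 1$, immediate for $\ell_1 = 2$ since $c \geq 2$, and following from $c^{\ell_1 - 1} \geq 2^{\ell_1 - 1} \geq \ell_1$ for $\ell_1 \geq 3$ (easy induction). When $c = 1$, $\sigma$ is a single $n$-cycle so $\ell_1 = n$, and one only needs $n^{1/n} \leq 2$, which holds for every $n \geq 1$.

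The main subtle point is the restriction of Jensen to $\{i \geq \ell_1\}$. A direct application on all $i \geq 1$ with the probability weights $1/(i(i+1))$ would only yield $E(\sigma) \ln n \leq \ln(c + 1 - 1/\ell_1)$, which exceeds $\ln c$ whenever $\sigma$ has no fixed point (i.e. $\ell_1 \geq 2$) and is therefore too weak. Exploiting that $\Sigma_i = 0$ (and thus $a_i = 0$) for $i < \ell_1$ lets us concentrate the mass on $\{i \geq \ell_1\}$ and pay a factor $\ell_1$ inside the logarithm, which is exactly the saving needed to conclude.
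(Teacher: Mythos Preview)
Your proof is correct. It follows the same overall architecture as the paper: first establish the intermediate bound
\[
E(\sigma)\leq \frac{\ln(\ell_1\cdot\cyc(\sigma))}{\ell_1\ln n}
\]
(which is the paper's Proposition~\ref{prop: borne E sigma avec imin}, with $\ell_1=\imin$), then deduce the statement by the elementary inequality $(\ell_1 c)^{1/\ell_1}\le\max(c,2)$. The final deduction is essentially identical in both arguments.

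Where you differ is in the proof of the intermediate bound. The paper works with the increments $e_i=\frac{1}{\ln n}\ln(\Sigma_i/\Sigma_{i-1})$, applies the Bernoulli-type inequality $(1+x)^{a}\le 1+ax$ to bound $\frac{\imin}{i}e_i$, and then replaces $\Sigma_{i-1}$ by the cumulative cycle count $\imin F_{i-1}$ so that the sum telescopes as $\sum_i\ln(F_i/F_{i-1})=\ln F_n=\ln\cyc(\sigma)$. You instead use Abel summation to rewrite \emph{both} $E(\sigma)\ln n$ and $\cyc(\sigma)$ in the common form $\sum_{i}\frac{(\cdot)_i}{i(i+1)}$, and then a single application of Jensen's inequality (restricted to $i\ge\ell_1$) directly gives the bound. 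Both are convexity arguments at heart, but your route is more conceptual: it makes transparent that $E(\sigma)\ln n$ and $\cyc(\sigma)$ are the $\frac{1}{i(i+1)}$-weighted averages of $\ln\Sigma_i$ and $\Sigma_i$, respectively, and avoids the auxiliary sequence $F_i$. The paper's route, on the other hand, stays closer to the recursive definition of the $e_i$ and may generalize more readily when one wants finer control on individual terms (as in their later Proposition~\ref{prop: bound E sigma with number of cycles and ibis}).
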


We postpone the proof of Proposition \ref{prop: borne E sigma avec max cyc 2} to the end of this subsection.

\begin{corollary}\label{cor: borne E sigma n puissance a}
    Let $n\geq 2$ and $\sigma \in \mathfrak{S}_n$.
    Assume that
    $\cyc(\sigma)\geq 2$, and let  $0< \alpha\leq 1$ such that $\cyc(\sigma) = n^\alpha$. Then 
     \begin{equation}
        E(\sigma) \leq \alpha.
    \end{equation}
\end{corollary}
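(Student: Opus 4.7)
The plan is to deduce this directly from Proposition \ref{prop: borne E sigma avec max cyc 2}, which provides the bound $E(\sigma) \leq \frac{\ln(\max(\cyc(\sigma),2))}{\ln n}$. Since we are assuming $\cyc(\sigma)\geq 2$, the maximum simplifies and we can substitute $\cyc(\sigma)=n^\alpha$ to get the desired conclusion.

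More precisely, I would first observe that under the assumption $\cyc(\sigma)\geq 2$, we have $\max(\cyc(\sigma),2)=\cyc(\sigma)=n^\alpha$. Therefore
\begin{equation}
\frac{\ln(\max(\cyc(\sigma),2))}{\ln n}=\frac{\ln(n^\alpha)}{\ln n}=\alpha.
\end{equation}
Applying Proposition \ref{prop: borne E sigma avec max cyc 2} then yields $E(\sigma)\leq \alpha$, as required. There is no real obstacle here since the work is entirely contained in the proposition; the corollary is just a convenient reformulation that makes the dependence on $\alpha$ explicit and directly usable for the character bound in Theorem \ref{thm: borne caractères nombre de cycles}.
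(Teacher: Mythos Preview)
Your proof is correct and follows exactly the same approach as the paper: both apply Proposition \ref{prop: borne E sigma avec max cyc 2}, use $\cyc(\sigma)\geq 2$ to drop the $\max$, and substitute $\cyc(\sigma)=n^\alpha$.
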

\begin{proof}
    Since $\cyc(\sigma)\geq 2$, from Proposition \ref{prop: borne E sigma avec max cyc 2} we have
        $E(\sigma) \leq \frac{\ln \cyc(\sigma)}{\ln n} = \frac{\ln n^\alpha}{\ln n} = \alpha$.
\end{proof}

Let $\imin := \imin(\sigma) = \min\ag i\geq 1 \du f_i \geq 1 \ad$ be the length of the smallest cycle in $\sigma$.

We can rewrite the definition of the orbit growth sequence in terms of $i_{min}$.

\begin{lemma}\label{lem: orbit growth sequence logarithmic form}
    Let $n\geq 2$ and $\sigma \in \mathfrak{S}_n$. Then 
    \begin{equation}
    e_i =  \begin{cases}
         0 & \text{ for } i<\imin\\
         \frac{\ln \pg\imin f_{\imin}\pd}{\ln n} & \text{ for } i=\imin\\
        \frac{\ln (\Sigma_i/\Sigma_{i-1})}{\ln n} = \frac{\ln(1 + if_i/\Sigma_{i-1})}{\ln n} & \text{ for } \imin<i\leq n. 
        \end{cases}
    \end{equation}
    
\end{lemma}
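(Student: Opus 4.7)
The proof is essentially an unpacking of the definitions, using the fact that $\imin$ is the smallest cycle length of $\sigma$. I would separate the three cases according to whether $i < \imin$, $i = \imin$, or $i > \imin$, and verify each one directly.

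For $i < \imin$, by the definition of $\imin$ we have $f_j(\sigma) = 0$ for all $1 \leq j < \imin$, hence $\Sigma_i = 0$. Therefore $\max(\Sigma_i,1) = 1$, so the defining relation $n^{e_1+\cdots+e_i} = 1$ forces $e_1+\cdots+e_i = 0$. Since this holds for all such $i$, a trivial induction gives $e_i = 0$ for $i < \imin$.

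For $i = \imin$, we have $\Sigma_{\imin} = \imin f_{\imin} \geq 1$ (since $f_{\imin} \geq 1$ by definition of $\imin$), so $\max(\Sigma_{\imin},1) = \imin f_{\imin}$. Using that $e_1+\cdots+e_{\imin-1} = 0$ from the previous case, the defining relation gives $n^{e_{\imin}} = \imin f_{\imin}$, i.e.\ $e_{\imin} = \frac{\ln(\imin f_{\imin})}{\ln n}$.

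For $\imin < i \leq n$, we have $\Sigma_{i-1} \geq \Sigma_{\imin} = \imin f_{\imin} \geq 1$, so $\max(\Sigma_{i-1},1) = \Sigma_{i-1}$ and similarly $\max(\Sigma_i,1) = \Sigma_i$. Taking the ratio of the defining relations at indices $i$ and $i-1$ yields
\begin{equation}
n^{e_i} = \frac{n^{e_1+\cdots+e_i}}{n^{e_1+\cdots+e_{i-1}}} = \frac{\Sigma_i}{\Sigma_{i-1}},
\end{equation}
and since $\Sigma_i = \Sigma_{i-1} + if_i$, the right-hand side equals $1 + if_i/\Sigma_{i-1}$. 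Taking logarithms gives the claimed formula. There is no real obstacle here; the only thing to be slightly careful about is confirming that $\Sigma_{i-1} \geq 1$ in the last case so that the $\max$ can be dropped, which is exactly what is supplied by the definition of $\imin$.
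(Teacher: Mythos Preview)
Your proof is correct and follows essentially the same approach as the paper's own proof: both unpack the definition of the orbit growth sequence case by case, using that $\Sigma_i=0$ for $i<\imin$ and that $\Sigma_{i-1}\geq \Sigma_{\imin}\geq 1$ for $i>\imin$ to drop the $\max$. If anything, you are slightly more explicit about the role of the $\max(\Sigma_i,1)$ in the definition, but the argument is the same.
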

\begin{proof}
    By definition of $\imin$  we have $f_i = 0$ for $i<\imin$, so for $i<\imin$ we have $\Sigma_i = 0$ and therefore $e_i=0$. It follows that $n^{e_{\imin}} = n^{e_1 + ...+e_{\imin}}= \Sigma_{\imin} = \imin f_{\imin}$, and we deduce the formula for $e_{\imin}$ taking logarithms.
    
    Let now $i>\imin$. Then $\Sigma_{i-1}\geq \Sigma_{\imin} \geq 1$ and therefore 
    \begin{equation}
        \Sigma_i = n^{e_1 + ... + e_i} = n^{e_1 + ... + e_{i-1}}n^{e_i} = \Sigma_{i-1}n^{e_i}.
    \end{equation}
We then obtain the first equality for $e_i$ by dividing both sides by $\Sigma_{i-1}$ and taking the logarithm, and the second inequality by rewriting $\Sigma_i = \Sigma_{i-1} + if_i$.
\end{proof}

We can now prove a general simple bound on $E(\sigma)$. For $1 \leq i \leq n$, we denote by $F_i = f_1 + ... + f_i$ the number of cycles of $\sigma$ of length at most $i$.

\begin{proposition}\label{prop: borne E sigma avec imin}
    Let $n\geq 2$ and $\sigma \in \mathfrak{S}_n$. Then 
\begin{equation}
    E(\sigma) \leq \frac{1}{\imin} \frac{\ln \pg \imin \cyc(\sigma)\pd}{\ln n}.
\end{equation}

\end{proposition}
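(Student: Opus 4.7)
The plan is to prove the equivalent inequality $Q^{\imin} \leq \imin\,\cyc(\sigma)$, where
\[
Q := (\imin f_{\imin})^{1/\imin} \prod_{i=\imin+1}^{n} \left(\frac{\Sigma_i}{\Sigma_{i-1}}\right)^{1/i}
\]
is set up precisely so that $E(\sigma)\ln n = \ln Q$ by Lemma \ref{lem: orbit growth sequence logarithmic form}. Taking logarithms at the end will then give the stated bound. Raising $Q$ to the power $\imin$ clears the exponent on the leading factor and replaces the exponents $1/i$ by $\imin/i \in (0,1]$, which is the key point that makes Bernoulli's inequality applicable.

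Indeed, for each $i > \imin$, writing $\Sigma_i/\Sigma_{i-1} = 1 + if_i/\Sigma_{i-1}$ and applying Bernoulli's inequality $(1+x)^{\alpha} \leq 1 + \alpha x$ (valid for $\alpha \in [0,1]$ and $x \geq 0$) with $\alpha = \imin/i$ yields
\[
\left(\frac{\Sigma_i}{\Sigma_{i-1}}\right)^{\imin/i} \leq 1 + \frac{\imin f_i}{\Sigma_{i-1}}.
\]
Because every cycle length contributing to $\Sigma_{i-1}$ is at least $\imin$, we have $\Sigma_{i-1} = \sum_{\imin \leq j \leq i-1} j f_j \geq \imin F_{i-1}$, so the right-hand side is at most $1 + f_i/F_{i-1} = F_i/F_{i-1}$.

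Multiplying these bounds, the product over $i > \imin$ telescopes:
\[
Q^{\imin} \leq \imin f_{\imin} \prod_{i=\imin+1}^{n} \frac{F_i}{F_{i-1}} = \imin f_{\imin} \cdot \frac{F_n}{F_{\imin}} = \imin f_{\imin} \cdot \frac{\cyc(\sigma)}{f_{\imin}} = \imin\,\cyc(\sigma).
\]
Taking $\frac{1}{\imin \ln n}$-times the logarithm finishes the proof. I don't anticipate any real obstacle: terms with $f_i = 0$ contribute the factor $1$ both in the definition of $Q$ and in the telescoping product, so they cause no trouble; the only conceptual ingredients are the choice of $Q$ that matches $E(\sigma)\ln n$ exactly, the subunit exponent trick that enables Bernoulli, and the elementary identity $\Sigma_{i-1} \geq \imin F_{i-1}$.
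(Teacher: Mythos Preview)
Your proof is correct and is essentially the same as the paper's: both invoke Lemma~\ref{lem: orbit growth sequence logarithmic form}, apply Bernoulli's inequality $(1+x)^{\alpha}\le 1+\alpha x$ with $\alpha=\imin/i$, use $\Sigma_{i-1}\ge \imin F_{i-1}$, and telescope over $F_i/F_{i-1}$. The only difference is cosmetic---you package the argument multiplicatively via $Q^{\imin}$ whereas the paper works additively with $\tfrac{\imin}{i}e_i$ inside the logarithm---and the two are line-by-line equivalent after taking logs.
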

\begin{proof}
Let $i> \imin$. By Lemma \ref{lem: orbit growth sequence logarithmic form}, and using that $(1+x)^{a} \leq 1+ax$ for $x \geq 0$ and $0 \leq a\leq 1$, we have  
    \begin{equation}
        \frac{\imin}{i}e_i  = \frac{\ln\pg \pg 1 + \frac{if_i}{\Sigma_{i-1}}\pd^{\imin/i} \pd}{\ln n} \leq  \frac{\ln\pg  1 + \frac{\imin f_i}{\Sigma_{i-1}}\pd}{\ln n}.
    \end{equation}
Moreover since $f_j = 0$ for $1\leq j <\imin$, we have $\Sigma_{i-1} \geq \imin F_{i-1}$, and therefore
\begin{equation}
    \frac{\imin}{i}e_i \leq \frac{\ln\pg  1 + \frac{ f_i}{F_{i-1}}\pd}{\ln n} = \frac{\ln\pg  F_i/F_{i-1}\pd}{\ln n}.
\end{equation}
We deduce, recalling from Lemma \ref{lem: orbit growth sequence logarithmic form} that $e_{\imin} = \frac{\ln \pg\imin f_{\imin}\pd}{\ln n} = \frac{\ln \pg \imin F_{\imin}\pd}{\ln n}$, that

\begin{equation}
\begin{split}
     E(\sigma) = \sum_{i\geq 1} \frac{e_i}{i} & = \frac{e_{\imin}}{\imin} + \frac{1}{\imin}\sum_{i>\imin} \frac{\imin}{i}e_i \\ & \leq \frac{1}{\imin\ln n} \pg \ln \pg \imin F_{\imin} \pd + \sum_{\imin<i\leq n} \ln\pg\frac{F_i}{F_{i-1}}\pd \pd \\
     & = \frac{1}{\imin\ln n} \ln \pg \imin F_{\imin}\prod_{\imin <i\leq n} \frac{F_i}{F_{i-1}}  \pd\\
     & =  \frac{\ln (\imin F_n)}{\imin\ln n},
\end{split}
\end{equation}
which concludes the proof since $F_n = \cyc(\sigma)$.
\end{proof}

We can now prove Proposition \ref{prop: borne E sigma avec max cyc 2}.

\begin{proof}[Proof of Proposition \ref{prop: borne E sigma avec max cyc 2}]
First, if $\imin(\sigma) = 1$, this follows immediately from Proposition \ref{prop: borne E sigma avec imin}.

Second, if $\cyc(\sigma) =1$ then $\sigma$ is an $n$-cycle; in this case we have $E(\sigma) = \frac{1}{n} \leq \frac{\ln 2}{\ln n}$ and the result also holds.

Assume finally that $\imin\geq 2$ and $\cyc(\sigma) \geq 2$. Then we have 
\begin{equation}
    \ln \cyc(\sigma) \geq \frac{\ln \imin}{\imin - 1}.
\end{equation}
Therefore $(\imin - 1)\ln \cyc(\sigma) \geq  \ln \imin$, that is $\imin\ln \cyc(\sigma) \geq  \ln \imin + \ln \cyc(\sigma)$, which rewrites as 
\begin{equation}
   \ln \cyc (\sigma)\geq  \frac{1}{\imin} \ln \pg \imin \cyc(\sigma)\pd.
\end{equation}
Dividing both sides by $\ln n$, and applying Proposition \ref{prop: borne E sigma avec imin}, we conclude that

\begin{equation}
    \frac{\ln \pg \cyc(\sigma)\pd}{\ln n} \geq \frac{1}{\imin} \frac{\ln \pg \imin \cyc(\sigma)\pd}{\ln n} \geq E(\sigma).
\end{equation}
\end{proof}

\subsection{Character bound}\label{s: character bound in terms of cyc sigma}

Combining Theorem \ref{thm:ALS improved character bound} and Proposition \ref{prop: borne E sigma avec max cyc 2}, we can prove Theorem \ref{thm: borne caractères nombre de cycles}.

\begin{proof}[Proof of Theorem \ref{thm: borne caractères nombre de cycles}].
    By Theorem \ref{thm:ALS improved character bound} and Proposition \ref{prop: borne E sigma avec max cyc 2}, we have (since $E(\sigma) \leq 1$)
    \begin{equation}
        \bg \ch^\lambda(\sigma)\bd  \leq d_\lambda^{E(\sigma)\pg 1 + \frac{C}{\ln n} \pd} \leq d_\lambda^{E(\sigma) + \frac{C}{\ln n}} \leq d_\lambda^{\frac{\ln \left(\max(\cyc(\sigma),2)\right)}{\ln n} + \frac{C}{\ln n}}.
    \end{equation}
This allows us to conclude since $
    \ln \left(\max(\cyc(\sigma),2)\right) \leq \ln \pg 2\cyc(\sigma) \pd \leq \ln \left(\cyc(\sigma)\right) + 1$.
\end{proof}

\section{Bounds on the Witten zeta function}\label{s: bounds Witten zeta improvement Liebeck Shalev}

Recall from \eqref{eq:def witten zeta} our definition of the Witten zeta function: for $n\geq 1$, $A \subset \widehat{\mathfrak{S}_n}$, and $s\geq 0$,
\begin{equation}
    \zeta_n (A,s) := \sum_{\lambda\in A} \frac{1}{d_\lambda^{s}}
\end{equation}
for $s \geq 0$.

 Let $\Lambda^n(k) = \ag \lambda \vdash n \du \lambda_1' \leq \lambda_1 \leq n-k\ad$ (recall that $\lambda_1$ denotes the size of the first row and $\lambda_1'$ the size of the first column).

Liebeck and Shalev \cite[Proposition 2.5]{LiebeckShalev2004} proved that if $s>0$ and $k$ are fixed, then as $n \to \infty$ we have the bound
\begin{equation}
    \zeta_n (\Lambda^n(k),s) = O\pg n^{-ks} \pd.
\end{equation}

This bound is very convenient. It was used for example by Larsen and Shalev \cite{LarsenShalev2008} in combination with the Diaconis--Shahshahani upper bound lemma (\cite[Lemma 1, page 24 in Chapter 3B]{LivreDiaconis1988}), to derive mixing time results from character bounds. 

However, for some applications that we will consider, we want asymptotic bounds on $\zeta_n(\Lambda_k^{n},s_n)$ where the argument $s_n$ is not fixed, but tends to $0$ as $n \rightarrow \infty$ instead.  The goal of this section is to adapt the estimates from \cite{LiebeckShalev2004} to such cases.

\begin{proposition}\label{prop: bound Witten zeta with alpha}
    Let $\alpha>0$. Let $(s_n)_{n\geq 3}$ such that $s_n \geq \frac{\alpha}{\ln n}$ for all $n\geq 3$. There exist constants $k_0 = k_0(\alpha)$, $n_0 = n_0(\alpha)$, and $C = C(\alpha)$ such that for every $k\geq k_0$ and $n\geq n_0$, we have
    \begin{equation}
        \zeta_n \left(\Lambda^{n}(k), s_n\right)  \leq C e^{-\frac{k}{12} s_n \ln n}.
    \end{equation}
Moreover, if $\alpha > 12 \ln 2$ we can take $k_0 = 1$ and $C=2$.

\end{proposition}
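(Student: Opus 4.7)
The plan is to bound $\zeta_n(\Lambda^n(k), s_n)$ by grouping partitions $\lambda \in \Lambda^n(k)$ by the parameter $r := n - \lambda_1$, which ranges over $k \leq r \leq n - \lceil\sqrt{n}\rceil$; the upper bound follows from $\lambda_1 \geq \sqrt{n}$, a consequence of $\lambda_1' \leq \lambda_1$ combined with $n \leq \lambda_1 \lambda_1'$. The two essential inputs are a lower bound on $d_\lambda$ and a count of partitions. The $\lambda_1$-slicing of Section~\ref{s:sliced hook products}, applied via Proposition~\ref{prop: propriétés produits équerres découpe}(d), yields $H(\lambda,\lambda) \leq R_{\lambda_1}(\lambda,\lambda)\,\lambda_1!\,H(\lambda_{\geq 2}, \lambda_{\geq 2}) \leq e^{3\sqrt{r}}\lambda_1!\,r!$ and thus
\begin{equation*}
d_\lambda \;\geq\; e^{-3\sqrt{r}} \binom{n}{r}.
\end{equation*}
The number of $\lambda \vdash n$ with $n - \lambda_1 = r$ is at most $p(r) \leq e^{\pi\sqrt{2r/3}} \leq e^{3\sqrt{r}}$ by Hardy--Ramanujan. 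Combined,
\begin{equation*}
\zeta_n(\Lambda^n(k), s_n) \;\leq\; \sum_{r=k}^{n-\sqrt{n}} e^{3\sqrt{r}(1+s_n)} \binom{n}{r}^{-s_n}.
\end{equation*}

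I would split this sum into two regimes. In the main regime $\lambda_1 \geq n^{2/3}$ (equivalently $r \leq n - n^{2/3}$), the bound $\binom{n}{r} \geq (n/\min(r,n-r))^{\min(r,n-r)}$ makes each summand decay exponentially in $r$; verifying that the ratio of successive terms is bounded away from $1$ once $r \geq k_0(\alpha)$ shows the sum is dominated by its first term $T(k) \leq e^{3\sqrt{k}(1+s_n)}(k/n)^{k s_n}$. Imposing $T(k) \leq C e^{-k s_n \ln n/12}$, with $s_n \geq \alpha/\ln n$ and $k \leq \sqrt{n}$, reduces to a condition of shape $k \geq k_0 \sim (c/\alpha)^2$ that absorbs the $\sqrt{k}$-overhead. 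For the tail regime $\lambda_1 \leq n^{2/3}$, the hypothesis $\lambda_1' \leq \lambda_1$ together with $n \leq \lambda_1 \lambda_1'$ forces the Durfee square side to satisfy $\delta \geq n/(2\lambda_1) \geq n^{1/3}/2$, so Lemma~\ref{lem: lower bound dimension via durfee square} gives $d_\lambda \geq e^{\Omega(n^{2/3})}$. This dominates both the $e^{O(\sqrt{r})}$ prefactor and the $p(n) \leq e^{O(\sqrt{n})}$ count, so the tail contributes $e^{-\Omega(n^{2/3}/\ln n)}$, which is negligible against the target for all $n \geq n_0(\alpha)$.

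The main obstacle is absorbing the $\sqrt{r}$-overhead (from $p(r)$ and the slicing error) into the exponential decay $r s_n \ln(n/r)$ uniformly in $s_n \geq \alpha/\ln n$: in the worst case this decay is only of order $r s_n$, which is why one must both require $k \geq k_0(\alpha) \gtrsim 1/\alpha^2$ and sacrifice a factor in the exponent. The constant $1/12$ appears precisely because one keeps $1/12$ for the conclusion and leaves the remaining $11/12$ to absorb the overhead (any constant less than $1$ would do, at the cost of enlarging $k_0$). The special case $\alpha > 12 \ln 2$ succeeds at $k_0 = 1$ because the first term satisfies $T(1) \sim n^{-s_n} = e^{-\alpha + o(1)}$, already much smaller than the target $e^{-\alpha/12}$, and the remaining geometric tail is bounded with room to spare by $C = 2$.
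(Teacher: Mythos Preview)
Your approach is structurally the same as the paper's: both parametrize by $\ell=r=n-\lambda_1$, lower--bound $d_\lambda$ by a binomial coefficient, count partitions with given $r$ by $p(r)$, and split the resulting sum into regimes. Where the paper leans on \cite{LiebeckShalev2004} (the clean bound $d_\lambda\geq\binom{n-\ell}{\ell}$ for $\Sigma_1$ and the bound $d_\lambda\geq c^n$ for $\Sigma_2$), you substitute the paper's own machinery: Proposition~\ref{prop: propriétés produits équerres découpe}(d) for the dimension, and the Durfee--square Lemma~\ref{lem: lower bound dimension via durfee square} for the tail. This is a legitimate and more self-contained variant; your main regime then has to extend all the way to $r\le n-n^{2/3}$ (rather than $r\le n/3$), which requires splitting into several sub-ranges (small $r$, $r$ near $n/2$, $r$ near $n-n^{2/3}$) rather than the single ``ratio of successive terms'' argument you sketch --- but this is routine and your plan carries through for the existence of $k_0,n_0,C$.

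The one genuine gap is in the ``moreover'' clause. Your dimension bound $d_\lambda\ge e^{-3\sqrt r}\binom{n}{r}$ contributes an extra factor $e^{3\sqrt r\,s_n}$ to each summand that the paper's bound $d_\lambda\ge\binom{n-\ell}{\ell}$ does not. The paper obtains $k_0=1$, $C=2$ precisely because its summand is $p(\ell)\binom{n-\ell}{\ell}^{-s_n}$ with no such overhead, so that the single substitution $p(\ell)\le 2^\ell\le e^{\alpha\ell/12}$ makes the geometric series in \eqref{eq:borne sur sigma1 etoile} converge with constant $(1-e^{-\alpha/12})^{-1}\le 2$. Your claim ``$T(1)\sim n^{-s_n}$'' silently drops the $e^{3(1+s_n)}$ prefactor; with it present (and $s_n$ allowed to be large), getting the exact constant $C=2$ from your bound requires at minimum replacing your $p(r)\le e^{3\sqrt r}$ by $p(r)\le 2^r$ and redoing the estimate, which you have not done.
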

\begin{proof}
    We follow the proof of \cite[Proposition $2.5$]{LiebeckShalev2004}, up to a case that we split into two. Let $k\geq 1$. To ease notation we also set $\Lambda_1^n(k) := \ag  \lambda \in \Lambda^n(k) \du  \lambda_1  \geq 2n/3 \ad$, and $\Lambda_2^n(k) = \ag \lambda \in \Lambda^n(k) \du \lambda_1 < 2n/3\ad$. We will write in the proof $\Lambda, \Lambda_1, \Lambda_2$ for $\Lambda^n(k)$, $\Lambda_1^n(k), \Lambda_2^n(k)$ respectively. We also set $\Sigma_i := \zeta_n(\Lambda_i, s_n)$ for $i\in \ag 1,2\ad$, so that $\zeta_n(\Lambda, s_n) = \Sigma_1 + \Sigma_2$. With this notation, our goal is to prove, given a constraint on $n$ and $k$, that $\Sigma_1 + \Sigma_2 \leq C e^{-\frac{k}{12} s_n \ln n}$ for some constant $C$. 

The bound obtained from \cite{LiebeckShalev2004} for $\Sigma_2 := \zeta_n(\Lambda_2, s_n)$ works ad verbum. The authors prove that there exists a constant $c>1$ such that  $\Sigma_2 \leq p(n)c^{-ns_n}$, where $p(n)$ is the number of partitions of the integer $n$. 
Since $p(n) = e^{O(\sqrt{n})}$ and $s_n \gtrsim 1/\ln n$, we have in particular 
\begin{equation}
\label{eq:borne sur sigma2}
    \Sigma_2 = O\pg e^{-n^{3/4}s_n}\pd.
\end{equation}

Let us now bound $\Sigma_1$. We start from an intermediate bound (see \cite[Proof of Proposition $2.5$]{LiebeckShalev2004}), namely
\begin{equation}\label{eq:borne intermédiaire Liebeck Shalef sur Sigma 1}
    \Sigma_1 \leq \sum_{k\leq \ell \leq n/3} \frac{p(\ell)}{\binom{n-\ell}{\ell}^{s_n}}.
\end{equation}
From this point our proof diverges from that of \cite{LiebeckShalev2004}. 

\medskip

Observe that from \eqref{eq:borne intermédiaire Liebeck Shalef sur Sigma 1} and Lemma \ref{lem: simple bounds on parts of the lower bounds on the dimension} (b) we have
\begin{equation*}
   \Sigma_1 \leq \sum_{k\leq \ell \leq n/3} p(\ell)\pg\frac{\ell}{n-\ell}\pd^{\ell s_n}.
\end{equation*}
We slice this sum according to whether $\ell > n^{2/3}$ or $\ell \leq n^{2/3}$. We have

\begin{equation}
    \begin{split}
        \Sigma_1 \leq  \Sigma_1^* + \Sigma_1^{**},
    \end{split}
\end{equation}
where $\Sigma_1^* = \sum_{k\leq \ell \leq n^{2/3}} p(\ell) \pg\frac{\ell}{n-\ell}\pd^{\ell s_n}$ and $\Sigma_1^{**} = \sum_{ n^{2/3}< \ell \leq n/3} p(\ell)\pg\frac{\ell}{n-\ell}\pd^{\ell s_n}$.

Let us first consider $\Sigma_1^{**}$.
Since $\ell \leq n/3$, we have $\frac{\ell}{n-\ell}\leq \frac{1}{2}$. Since $p(n) = e^{O(\sqrt{n})}$, we have $p(\ell) \leq p(n) \leq e^{n^{3/5}}$ for $n$ large enough. We therefore have for $n$ large enough, using that $s_n \gtrsim 1/\ln n$,
\begin{equation}
\label{eq: borne sur Sigma1 etoile etoile}
    \Sigma_1^{**} \leq \sum_{ n^{2/3}< \ell \leq n/3} e^{n^{3/5}}\pg\frac{1}{2}\pd^{n^{2/3}s_n} \leq \frac{n}{3} e^{n^{3/5}}\pg\frac{1}{2}\pd^{n^{2/3}s_n} \leq e^{-n^{0.65}s_n}.
\end{equation}
We finally bound $\Sigma_1^*$.
Lower bounding the denominator by $n^{5/6}$ and upper bounding $\ell$ by $n^{2/3}$, we get
\begin{equation}
     \Sigma_1^* \leq \sum_{k\leq \ell \leq n^{2/3}} p(\ell)\pg\frac{n^{2/3}}{n^{5/6}}\pd^{\ell s_n} = \sum_{k\leq \ell \leq n^{2/3}} p(\ell)e^{-\frac{\ell}{6} s_n\ln n}.
\end{equation}
Moreover since $p(\ell) = e^{O(\sqrt{\ell})}$ as $\ell \to \infty$, there exists $k_0 = k_0(\alpha)$ such that, for every $\ell \geq k_0$, we have $p(\ell) \leq e^{\frac{\alpha}{12}\ell}$. Therefore
for $k\geq k_0$ we have
\begin{align}
\label{eq:borne sur sigma1 etoile}
\Sigma_1^* &\leq \sum_{k\leq \ell \leq n^{2/3}} e^{\ell\left(\frac{\alpha}{12}-\frac{1}6s_n \ln n\right)} \leq \sum_{\ell = k}^{\infty} e^{\ell\left(\frac{\alpha}{12}-\frac{1}6s_n \ln n\right)} \nonumber\\ 
&= \frac{e^{k\left(\frac{\alpha}{12}-\frac{1}6s_n \ln n\right)}}{1-e^{\left(\frac{\alpha}{12}-\frac{1}6s_n \ln n\right)}} \nonumber\\
& \leq \frac{e^{-\frac{k}{12} s_n \ln n}}{1-e^{-\frac{\alpha}{12}}},
\end{align}
using the fact that $s_n \ln n \geq \alpha$.

Together with \eqref{eq: borne sur Sigma1 etoile etoile} and \eqref{eq:borne sur sigma2}, this ends the proof. Finally note that, if $\alpha > 12 \ln 2 \approx 8.32$, we can simply use the bound $p(\ell) \leq 2^{\ell} \leq e^{\frac{\alpha}{12}}$ so that \eqref{eq:borne sur sigma1 etoile} holds with $k_0 = 1$ and $C=2$. This concludes the proof, as the bounds \eqref{eq: borne sur Sigma1 etoile etoile} and \eqref{eq:borne sur sigma2} are independent of the value of $k_0$.
\end{proof}

In what follows, we write $f(n) \lll g(n)$ if $\frac{f(n)}{g(n)}\xrightarrow[n\to \infty]{} 0$, that is if $f(n) = o(g(n))$.
\begin{corollary}\label{cor: witten zeta iff condition asym}
Let $(s_n)_{n\geq 3}$ be a sequence of positive real numbers. Then for every $k\geq 1$ we have
\begin{equation}
         \zeta_n (\Lambda^{n}(k), s_n) \xrightarrow[n\to\infty]{} 0 \quad \text{ if and only if }  \quad s_n \ggg \frac{1}{\ln n}.
\end{equation}
\end{corollary}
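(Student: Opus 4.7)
My plan is to treat the two implications separately, relying on Proposition \ref{prop: bound Witten zeta with alpha} for the sufficient direction and on a single explicit low-dimensional representation for the necessary direction.

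For the sufficient direction ($\Leftarrow$), the first step is to note that $\Lambda^n(k)$ is decreasing in $k$ (the constraint $\lambda_1 \leq n-k$ becomes more restrictive as $k$ grows), so $\zeta_n(\Lambda^n(k),s_n)$ is decreasing in $k$ and it suffices to handle the worst case $k=1$. I would then fix any $\alpha > 12 \ln 2$; since $s_n \ln n \to \infty$, for $n$ large enough $s_n \geq \alpha/\ln n$, and the special case of Proposition \ref{prop: bound Witten zeta with alpha} with $k_0=1$, $C=2$ applies to give
\[
    \zeta_n(\Lambda^n(1),s_n) \leq 2 \, e^{-\frac{1}{12} s_n \ln n} \xrightarrow[n\to\infty]{} 0.
\]

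For the necessary direction ($\Rightarrow$), I would argue by contrapositive. Assuming $s_n \ln n \not\to \infty$, extract a subsequence $(n_j)$ and a constant $M>0$ with $s_{n_j}\ln n_j \leq M$. For $n \geq 2k+1$, the hook partition $\lambda^{(n)} := [n-k, 1^k]$ satisfies $\lambda_1 = n-k$ and $\lambda_1' = k+1 \leq n-k$, so $\lambda^{(n)} \in \Lambda^n(k)$. The hook-length formula (or Lemma \ref{lem:standard or elementary results}(c) applied to $s=\lambda^{(n)}$, which is its own external hook) gives $d_{\lambda^{(n)}} = \binom{n-1}{k} \leq n^k$. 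Therefore, along the subsequence,
\[
    \zeta_{n_j}(\Lambda^{n_j}(k),s_{n_j}) \geq d_{\lambda^{(n_j)}}^{-s_{n_j}} \geq n_j^{-k s_{n_j}} = e^{-k s_{n_j} \ln n_j} \geq e^{-kM},
\]
which is bounded away from $0$, contradicting $\zeta_n(\Lambda^n(k),s_n) \to 0$.

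I do not expect a real obstacle here: Proposition \ref{prop: bound Witten zeta with alpha} already supplies the quantitative exponential decay, and the only care needed is to pick $\alpha$ large enough that the $k_0=1$ variant of the proposition applies (so that monotonicity in $k$ covers every fixed $k \geq 1$), together with producing the single polynomial-dimensional member of $\Lambda^n(k)$ above for the reverse direction.
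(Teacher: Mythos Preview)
Your proof is correct and follows essentially the same route as the paper's: both directions use Proposition \ref{prop: bound Witten zeta with alpha} (with $\alpha > 12\ln 2$ so that $k_0=1$) for sufficiency, and the hook partition $[n-k,1^k]$ with $d_{[n-k,1^k]} = \binom{n-1}{k}\leq n^k$ along a subsequence for necessity. Your monotonicity-in-$k$ reduction to $k=1$ is harmless but not needed, since once $k_0=1$ the proposition already handles every $k\geq 1$ directly.
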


\begin{proof}
    First, if $s_n \ggg \frac{1}{\ln n}$ then $s_n > \frac{9}{\ln n}$ (for $n$ large enough), and for any fixed $k\geq 1$ we have $\zeta_n (\Lambda^{n}(k), s_n) \xrightarrow[n\to\infty]{} 0$ by Proposition \ref{prop: bound Witten zeta with alpha}. 
    Conversely, assume that $s_n \ggg \frac{1}{\ln n}$ does not hold. Then there exists a constant $B>0$ and an increasing function $\varphi \du \bbN \to \bbN$ such that $s_{\varphi(n)} \leq \frac{B}{\ln \varphi(n)}$ for $n\geq 3$. Fix $k\geq 1$ and, for $n$ large enough so that $n-k \geq k$, consider the diagram $[n-k,1^k]$. Then for such $n$ by the hook-length formula we have $d_{[n-k,1^k]} = \binom{n-1}{k} \leq n^k$, so
    \begin{equation}
        \zeta_{\varphi(n)} (\Lambda^{\varphi(n)}(k), s_{\varphi(n)}) \geq (d_{[\varphi(n)-k,1^k]})^{-B/\ln \varphi(n)} \geq e^{-Bk} >0,
    \end{equation}
and therefore $ \zeta_n (\Lambda^{n}(k), s_n)$ does not converge to 0.
\end{proof}

For $n\geq 1$ and $k\geq 0$, we denote
\begin{equation}
    \Lambda^n_{\mathrm{sym}}(k) = \ag \lambda \in \widehat{\mathfrak{S}_n} \du \max(\lambda_1, \lambda'_1) \leq n-k \ad.
\end{equation}

\begin{proposition}\label{prop: witten zeta iff condition S n **}
    Let $(s_n)_{n\geq 3}$ be a sequence of positive real numbers, and let $k\geq 1$. Then

\begin{equation}
    \zeta_n (\Lambda_{\mathrm{sym}}^{n}(k), s_n)
          \xrightarrow[n\to\infty]{} 0 \quad \text{ if and only if }  \quad s_n \ggg \frac{1}{\ln n}.
\end{equation}
In particular,
\begin{equation}
    \sum_{\lambda\in \widehat{\mathfrak{S}_n}^{**}} \frac{1}{d_\lambda^{s_n}}
          \xrightarrow[n\to\infty]{} 0 \quad \text{ if and only if }  \quad s_n \ggg \frac{1}{\ln n}.
\end{equation}
\end{proposition}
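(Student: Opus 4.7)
The plan is to deduce Proposition \ref{prop: witten zeta iff condition S n **} from Corollary \ref{cor: witten zeta iff condition asym} by exploiting the symmetry $d_\lambda = d_{\lambda'}$ under conjugation. The key observation is that $\Lambda_{\mathrm{sym}}^n(k) \subset \Lambda^n(k) \cup \{\mu' \du \mu \in \Lambda^n(k)\}$: indeed, any $\lambda \in \Lambda_{\mathrm{sym}}^n(k)$ satisfying $\lambda_1' \leq \lambda_1$ belongs to $\Lambda^n(k)$, while in the opposite case $\lambda_1 < \lambda_1' \leq n-k$ its conjugate $\lambda'$ belongs to $\Lambda^n(k)$. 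Together with $d_\lambda = d_{\lambda'}$ this yields the comparison
\begin{equation*}
\zeta_n(\Lambda_{\mathrm{sym}}^n(k), s_n) \leq 2\, \zeta_n(\Lambda^n(k), s_n).
\end{equation*}

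For the forward direction, I would assume $s_n \ln n \to \infty$ and invoke Corollary \ref{cor: witten zeta iff condition asym} to get $\zeta_n(\Lambda^n(k), s_n) \to 0$; the displayed comparison then immediately gives $\zeta_n(\Lambda_{\mathrm{sym}}^n(k), s_n) \to 0$.

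For the converse, I would argue by contrapositive, mimicking the second half of the proof of Corollary \ref{cor: witten zeta iff condition asym}. If $s_n \ln n \not\to \infty$, there exist $B > 0$ and an increasing function $\varphi \colon \bbN \to \bbN$ with $s_{\varphi(n)} \leq B/\ln \varphi(n)$. For $n$ large enough that $\varphi(n) \geq 2k+1$, the hook-shaped partition $\mu^{(n)} := [\varphi(n)-k, 1^k]$ lies in $\Lambda_{\mathrm{sym}}^{\varphi(n)}(k)$, since its first row has length $\varphi(n)-k$ and its first column has length $k+1 \leq \varphi(n)-k$. The hook-length formula gives $d_{\mu^{(n)}} = \binom{\varphi(n)-1}{k} \leq \varphi(n)^k$, so
\begin{equation*}
\zeta_{\varphi(n)}\bigl(\Lambda_{\mathrm{sym}}^{\varphi(n)}(k), s_{\varphi(n)}\bigr) \geq d_{\mu^{(n)}}^{-s_{\varphi(n)}} \geq \varphi(n)^{-k s_{\varphi(n)}} \geq e^{-Bk} > 0,
\end{equation*}
which prevents convergence to $0$ along the subsequence $\varphi(n)$.

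Finally, the \emph{in particular} clause follows by identifying $\widehat{\mathfrak{S}_n}^{**} = \Lambda_{\mathrm{sym}}^n(1)$: a partition $\lambda \vdash n$ equals $[n]$ iff $\lambda_1 = n$ and equals $[1^n]$ iff $\lambda_1' = n$, so excluding these two representations is exactly the same as imposing $\max(\lambda_1, \lambda_1') \leq n-1$. I do not anticipate any real obstacle, since the substantive analytic work is already carried out in Proposition \ref{prop: bound Witten zeta with alpha} and Corollary \ref{cor: witten zeta iff condition asym}; this proposition is essentially a routine symmetrization of the asymmetric version together with a cheap lower bound via hook-shaped partitions.
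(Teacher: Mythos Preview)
Your proposal is correct and follows essentially the same approach as the paper. The only minor difference is that the paper also records the reverse inclusion $\Lambda^n(k) \subset \Lambda_{\mathrm{sym}}^n(k)$, yielding the two-sided bound $\zeta_n(\Lambda^n(k), s_n) \leq \zeta_n(\Lambda_{\mathrm{sym}}^n(k), s_n) \leq 2\,\zeta_n(\Lambda^n(k), s_n)$, which lets both directions follow directly from Corollary~\ref{cor: witten zeta iff condition asym} without re-running the hook-partition lower bound; your version is slightly longer but equally valid.
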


Observe that the second point is exactly Proposition \ref{prop: bounds witten zeta intro iff}.

\begin{proof}
Let $n\geq 1$. By definition we have
\begin{equation} \Lambda_{\mathrm{sym}}^n(k) = \ag \lambda \in \widehat{\mathfrak{S}_n} \du \lambda \in \Lambda^n(k) \text{ or } \lambda' \in \Lambda^n(k) \ad.
\end{equation}
Since the dimension of a diagram $\lambda$ is the same as the dimension of its transpose $\lambda'$, we deduce that
\begin{equation}
    \zeta_n\pg \Lambda^n(k), s_n \pd \leq \zeta_n\pg \Lambda^n_{\mathrm{sym}}(k), s_n \pd \leq 2 \zeta_n\pg \Lambda^n(k), s_n \pd.
\end{equation}
The first statement then follows from Corollary \ref{cor: witten zeta iff condition asym}, and the second is an application of the first one with $k=1$, since by definition $\Lambda_{\mathrm{sym}}^n(1) = \widehat{\mathfrak{S}_n}^{**}$.
\end{proof}

\section{Products of two conjugacy invariant random permutations}\label{s: Products of two conjugacy invariant random permutations}

\subsection{The Diaconis--Shahshahani upper bound lemma}

Let $n\geq 1$ and $t\geq 1$. Let  $ \boldsymbol{\cC} = (\cC_1,..., \cC_t)$ be a family of conjugacy classes of $\kS_n$. Let $\sgn(\boldsymbol{\cC}) = \prod_{i=1}^t \sgn\pg \cC_i \pd$ and let $\kE\pg \boldsymbol{\cC} \pd$ be the coset of $\kA_n$ in $\kS_n$ with sign $\sgn(\boldsymbol{\cC})$. Note in particular that, for $\sigma \in \kS_n$, we have
\begin{equation}
    \Unif_{\kE(\boldsymbol{\cC})}(\sigma) = \frac{1 + \sgn(\sigma)\sgn(\boldsymbol{\cC})}{n!}.
\end{equation}
Let $(\sigma_i)_{1\leq i\leq t}$ be independent variables such that $\sigma_i \sim \Unif_{\cC_i}$ for $1\leq i\leq t$. We want to understand when the product $\sigma_1 \sigma_2... \sigma_t$ is close to uniform in $\kE(\boldsymbol{\cC})$. In other words we want to control the quantity
\begin{equation}
   \delta(\boldsymbol{\cC}) := \dtv\pg \Unif_{\cC_1} * \cdots * \Unif_{\cC_t} \; , \; \Unif_{\kE\pg \boldsymbol{\cC} \pd}  \pd,
\end{equation}
and in particular understand under which conditions it is close to 0.  

The Diaconis--Shahshahani upper bound lemma (see \cite[Lemma 1 on Page 24 in Chapter 3B]{LivreDiaconis1988}) can also be written for products of elements in different conjugacy classes, see \cite[Lemma 6.6]{LarsenShalev2008}. Different versions are also written in \cite{Gamburd2006, ChmutovPittel2016, Hough2016}. A detailed and clear proof when $\boldsymbol{\cC} = (\cC,...,\cC)$ was given by Hough \cite[Section 2]{Hough2016}.

We state here a general version that also takes into account the signs of the conjugacy classes. 

\begin{lemma}\label{lem: lemme DS étendu à des classes potentiellement différentes}
Let $n\geq 1$ and $t\geq 1$. Let $\boldsymbol{\cC} = (\cC_1,..., \cC_t)$ be a family of conjugacy classes of $\kS_n$. Then 
    \begin{equation}
       4\delta(\boldsymbol{\cC})^2 \leq  \sum_{\lambda \in \widehat{\kS_n}^{**}} \pg d_\lambda \prod_{i= 1}^t \chi^\lambda(\cC_i)\pd^2.
    \end{equation}
\end{lemma}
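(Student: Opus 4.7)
The plan is to combine the Cauchy--Schwarz inequality with Plancherel's formula, applied to the real functions $\mu := \Unif_{\cC_1} \ast \cdots \ast \Unif_{\cC_t}$ and $\nu := \Unif_{\kE(\boldsymbol{\cC})}$ on $\kS_n$. The starting point is the elementary bound
\begin{equation*}
    4\delta(\boldsymbol{\cC})^2 = \Bigl(\sum_{\sigma \in \kS_n} |\mu(\sigma) - \nu(\sigma)|\Bigr)^2 \leq n! \sum_{\sigma \in \kS_n} (\mu(\sigma) - \nu(\sigma))^2,
\end{equation*}
obtained by applying Cauchy--Schwarz over the $n!$ terms of the total variation sum.

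Next, I would use Plancherel's formula on $\kS_n$ (with the convention $\hat{f}(\lambda) := \sum_\sigma f(\sigma) \rho_\lambda(\sigma)$), which rewrites the right-hand side as $\frac{1}{n!} \sum_\lambda d_\lambda \Tr\bigl((\hat{\mu}(\lambda) - \hat{\nu}(\lambda))(\hat{\mu}(\lambda) - \hat{\nu}(\lambda))^*\bigr)$, so the problem reduces to identifying the Fourier coefficients on both sides. For $\mu$, Schur's lemma applied to each class sum yields $\widehat{\Unif_{\cC_i}}(\lambda) = \chi^\lambda(\cC_i) \, \Id_{d_\lambda}$, and since convolution becomes matrix product under the Fourier transform,
\begin{equation*}
    \hat{\mu}(\lambda) = \Bigl(\prod_{i=1}^t \chi^\lambda(\cC_i)\Bigr) \Id_{d_\lambda}.
\end{equation*}
For $\nu$, I would expand $\nu(\sigma) = (1 + \sgn(\sigma)\sgn(\boldsymbol{\cC}))/n!$ and use two elementary facts: $\sum_\sigma \rho_\lambda(\sigma)$ vanishes unless $\lambda$ is the trivial representation, while $\sum_\sigma \sgn(\sigma)\rho_\lambda(\sigma)$ vanishes unless $\lambda$ is the sign representation (since $\sgn \otimes \lambda \simeq \lambda'$, so this sum detects the trivial component of $\lambda'$). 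Consequently $\hat{\nu}(\lambda) = 0$ for every $\lambda \in \widehat{\kS_n}^{**}$.

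The one subtle point, which I expect to be the only place requiring care, is verifying that the trivial and sign representations contribute nothing to the Plancherel sum: a direct computation shows that at $\lambda = [n]$ both Fourier coefficients equal $1$, and at $\lambda = [1^n]$ both equal $\sgn(\boldsymbol{\cC}) = \prod_i \sgn(\cC_i)$, so these two terms cancel. This cancellation is precisely why the sum in the statement restricts to $\widehat{\kS_n}^{**}$, and one must track the sign $\sgn(\boldsymbol{\cC})$ consistently through the definition of $\nu$ to see it. Plugging the identified Fourier coefficients back in, and using $\Tr((c\,\Id_{d_\lambda})(c\,\Id_{d_\lambda})^*) = d_\lambda c^2$ together with the reality of characters of $\kS_n$, gives
\begin{equation*}
    n! \sum_{\sigma \in \kS_n} (\mu(\sigma) - \nu(\sigma))^2 = \sum_{\lambda \in \widehat{\kS_n}^{**}} d_\lambda^2 \Bigl(\prod_{i=1}^t \chi^\lambda(\cC_i)\Bigr)^2,
\end{equation*}
which combined with the Cauchy--Schwarz step yields the claimed inequality.
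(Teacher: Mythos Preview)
Your proposal is correct and follows essentially the same approach as the paper: Cauchy--Schwarz on the total variation sum, then Plancherel, with the key observation that the Fourier coefficients of $\Unif_{\kE(\boldsymbol{\cC})}$ vanish on $\widehat{\kS_n}^{**}$ and match those of $\mu$ at $[n]$ and $[1^n]$. Your write-up is more explicit than the paper's (which simply invokes Cauchy--Schwarz and Plancherel and cites Hough for details), spelling out the Schur's lemma step and the sign cancellation; the only minor ambiguity is what ``the right-hand side'' refers to when you apply Plancherel, but the final displayed identity is correct.
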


\begin{proof}
    We adapt the proof of \cite[Equation (4)]{Hough2016}. We have
    \begin{align}
    \label{eq:debut for}
        \delta(\boldsymbol{\cC}) = \frac{1}{2} \sum_{\sigma \in \kS_n} \left| \Unif_{\cC_1} * \cdots * \Unif_{\cC_t} (\sigma) - \Unif_{\kE\pg \boldsymbol{\cC} \pd}(\sigma) \right|.
    \end{align}
Observe that the Fourier coefficients of $\Unif_{\kE\pg \boldsymbol{\cC} \pd}$ evaluated at the trivial representation $[n]$ and the sign representation $[1^n]$ are 
\begin{align}
    \widehat{\Unif_{\kE(\boldsymbol{\cC})}}([n])=1 \quad \text{ and } \quad  \widehat{\Unif_{\kE(\boldsymbol{\cC})}}([1^n])=\sgn(\boldsymbol{\cC}),
\end{align}
and $\widehat{\Unif_{\kE(\boldsymbol{\cC})}}(\lambda)=0$ for $\lambda \in \widehat{\kS_n}^{**}$.
Hence, using the Cauchy--Schwarz inequality on \eqref{eq:debut for} and then the Plancherel identity, we obtain

\begin{equation}
     \delta(\boldsymbol{\cC}) \leq \frac{1}{2} \left( \sum_{\lambda \in \widehat{\kS_n}^{**}} \left( \ch^\lambda(\Id) \right)^2 \prod_{i=1}^t \left(\frac{\ch^\lambda(\cC_i)}{\ch^\lambda(\Id)}\right)^2 \right)^{\frac{1}{2}} = \frac{1}{2} \left(  \sum_{\lambda \in \widehat{\kS_n}^{**}} d_\lambda^2 \prod_{i=1}^t (\chi^\lambda(\cC_i))^2 \right)^\frac{1}{2}.
\end{equation}
The result follows.
\end{proof}

\subsection{A sufficient condition for a product of two permutations to be close to uniform}

We are particularly interested in the case of $t=2$ conjugacy classes. We will results with asymptotic notation (as $n\to \infty$); writing also $f(n) = \omega_+(g(n))$ if $g(n) = o(f(n))$ and $f(n)>0$ (for $n$ large enough).

Plugging the bounds from Theorem \ref{thm:ALS improved character bound} into Lemma \ref{lem: lemme DS étendu à des classes potentiellement différentes} immediately gives the following lemma.

\begin{lemma}\label{lem: borne produit deux classes avec E sigma et O (1/ln n)}
    Let $n\geq 1$. Let $\cC_1, \cC_2$ be conjugacy classes of $\kS_n$ and denote $\boldsymbol{\cC} = (\cC_1, \cC_2)$. Then 
\begin{equation}
    4\delta(\boldsymbol{\cC})^2 \leq  \sum_{\lambda \in \widehat{\kS_n}^{**}} d_\lambda^{2(E(\cC_1) + E(\cC_2) - 1 +O(1/\ln n))}.
\end{equation}
\end{lemma}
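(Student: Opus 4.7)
The plan is to combine directly the two main tools assembled earlier in the excerpt: the Diaconis--Shahshahani upper bound (Lemma \ref{lem: lemme DS étendu à des classes potentiellement différentes}) applied with $t=2$, and the refined character bound from Theorem \ref{thm:ALS improved character bound}. There is essentially no obstacle, since the factor $d_\lambda$ appearing in Lemma \ref{lem: lemme DS étendu à des classes potentiellement différentes} is exactly what is needed to turn normalized characters into characters before applying Theorem \ref{thm:ALS improved character bound}.

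Concretely, I would first write, thanks to Lemma \ref{lem: lemme DS étendu à des classes potentiellement différentes},
\begin{equation*}
4\delta(\boldsymbol{\cC})^2 \leq \sum_{\lambda \in \widehat{\kS_n}^{**}} \bigl( d_\lambda \,\chi^\lambda(\cC_1)\,\chi^\lambda(\cC_2) \bigr)^2 = \sum_{\lambda \in \widehat{\kS_n}^{**}} \frac{\ch^\lambda(\cC_1)^2 \,\ch^\lambda(\cC_2)^2}{d_\lambda^2}.
\end{equation*}
Then I would apply Theorem \ref{thm:ALS improved character bound}, which gives $|\ch^\lambda(\cC_i)| \leq d_\lambda^{(1+C/\ln n)E(\cC_i)}$ for $i=1,2$, so that the summand is bounded by $d_\lambda^{2((1+C/\ln n)(E(\cC_1)+E(\cC_2)) - 1)}$.

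Finally, using that $E(\cC_1) + E(\cC_2) \leq 2$ (as $E(\sigma) \leq 1$ for every permutation, cf.~the discussion before Theorem \ref{thm:LS form E(sigma) + o(1)}), the contribution of the $C/\ln n$ correction is at most $2C/\ln n = O(1/\ln n)$, which can be absorbed into the exponent. This yields
\begin{equation*}
4\delta(\boldsymbol{\cC})^2 \leq \sum_{\lambda \in \widehat{\kS_n}^{**}} d_\lambda^{2(E(\cC_1) + E(\cC_2) - 1 + O(1/\ln n))},
\end{equation*}
as desired. The entire argument is just a substitution, so I expect the whole proof to fit in a few lines with no technical difficulty.
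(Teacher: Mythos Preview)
Your proposal is correct and matches the paper's approach exactly: the paper simply states that the lemma follows immediately by plugging the bounds from Theorem \ref{thm:ALS improved character bound} into Lemma \ref{lem: lemme DS étendu à des classes potentiellement différentes}, which is precisely what you do (including the use of $E(\sigma)\leq 1$ to absorb the $C/\ln n$ correction into the $O(1/\ln n)$ term).
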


Using our bounds on the Witten zeta function we deduce the following sufficient condition for $\Unif_{\cC_1} * \Unif_{\cC_2}$ to be asymptotically close to uniform.

\begin{proposition}\label{prop: sufficient condition for mixing with sum of two E(sigma)}
    For each $n\geq 1$, let $\cC_1^{(n)}$ and $ \cC_2^{(n)}$ be two conjugacy classes of $\kS_n$, and denote $\boldsymbol{\cC}^{(n)} = (\cC_1^{(n)},\cC_2^{(n)})$. Assume that as $n\to \infty$ we have
    \begin{equation}
        E(\cC_1^{(n)}) + E(\cC_2^{(n)}) = 1-\omega_+(1/\ln n).
    \end{equation}
    Then
    \begin{equation}
        \delta(\boldsymbol{\cC}^{(n)}) \xrightarrow[n\to \infty]{} 0.
    \end{equation}
\end{proposition}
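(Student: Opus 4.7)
The plan is to combine the character bound from Lemma \ref{lem: borne produit deux classes avec E sigma et O (1/ln n)} with the Witten zeta estimate from Proposition \ref{prop: witten zeta iff condition S n **}. The key observation is that Lemma \ref{lem: borne produit deux classes avec E sigma et O (1/ln n)} gives
\begin{equation*}
    4\delta(\boldsymbol{\cC}^{(n)})^2 \leq \sum_{\lambda \in \widehat{\kS_n}^{**}} d_\lambda^{-s_n} = \zeta_n(\widehat{\kS_n}^{**}, s_n),
\end{equation*}
with exponent $s_n := 2\pg 1 - E(\cC_1^{(n)}) - E(\cC_2^{(n)}) \pd - O(1/\ln n)$. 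So the problem reduces to verifying that the positive sequence $s_n$ satisfies $s_n \ln n \to \infty$, and then invoking Proposition \ref{prop: witten zeta iff condition S n **} (equivalently Proposition \ref{prop: bounds witten zeta intro iff}).

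Setting $\eta_n := 1 - E(\cC_1^{(n)}) - E(\cC_2^{(n)})$, the hypothesis $E(\cC_1^{(n)}) + E(\cC_2^{(n)}) = 1 - \omega_+(1/\ln n)$ is exactly the statement that $\eta_n > 0$ (for $n$ large enough) and $\eta_n \ln n \to \infty$. Writing $s_n = 2\eta_n - c_n/\ln n$ for some bounded sequence $c_n$ coming from the $O(1/\ln n)$ term of Theorem \ref{thm:ALS improved character bound}, we get $s_n \ln n = 2 \eta_n \ln n - c_n$, which tends to $\infty$. In particular $s_n > 0$ for $n$ large enough, so $d_\lambda^{-s_n} \leq 1$ when $d_\lambda \geq 1$, and the right-hand side above is bounded by $\zeta_n(\widehat{\kS_n}^{**}, s_n)$.

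Applying Proposition \ref{prop: witten zeta iff condition S n **} (the ``if'' direction) with this sequence $s_n$, we conclude $\zeta_n(\widehat{\kS_n}^{**}, s_n) \to 0$, hence $\delta(\boldsymbol{\cC}^{(n)}) \to 0$. There is essentially no obstacle here: the work has been done in the earlier sections. The only minor point to be careful about is ensuring that the $O(1/\ln n)$ correction in the character bound does not swallow the gap $\eta_n$, which is precisely why the hypothesis is formulated as $\omega_+(1/\ln n)$ rather than merely $o(1)$.
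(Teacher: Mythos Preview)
Your proof is correct and follows the same approach as the paper: apply Lemma \ref{lem: borne produit deux classes avec E sigma et O (1/ln n)} to bound $4\delta(\boldsymbol{\cC}^{(n)})^2$ by a Witten zeta sum with exponent $s_n = \omega_+(1/\ln n)$, then invoke Proposition \ref{prop: witten zeta iff condition S n **}. Your write-up is in fact more explicit than the paper's, carefully tracking how the $O(1/\ln n)$ correction is absorbed by the $\omega_+(1/\ln n)$ hypothesis.
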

\begin{proof}
Plugging the assumption $E(\cC_1^{(n)}) + E(\cC_2^{(n)}) = 1-\omega_+(1/\ln n)$ into Lemma \ref{lem: borne produit deux classes avec E sigma et O (1/ln n)}, and Proposition \ref{prop: witten zeta iff condition S n **}, we get
\begin{equation}
     4\delta(\boldsymbol{\cC})^2 \leq  \sum_{\lambda \in \widehat{\kS_n}^{**}} d_\lambda^{-\omega_+(1/\ln n))} = o(1),
\end{equation}
as desired.
\end{proof}

\subsection{Some character bounds}

We gather here some bounds on $E(\cC)$, that we will later use in applications in combination with Proposition \ref{prop: sufficient condition for mixing with sum of two E(sigma)}.

\begin{proposition}\label{prop:bounds on E(sigma) different conditions}
    For each $n\geq 1$, let $\cC^{(n)}$ be a conjugacy class of $\kS_n$.
\begin{enumerate}
    \item Assume that $f_1(\cC^{(n)}) = O(1)$ and $f_2(\cC^{(n)}) = o(n)$. Then 
    \begin{equation}
        E(\cC^{(n)}) = \frac{1}{2} - \omega_+\pg \frac{1}{\ln n}\pd.
    \end{equation}
    \item Assume that $\cyc(\cC^{(n)})=o(\sqrt{n})$. Then 
    \begin{equation}
        E(\cC^{(n)}) = \frac{1}{2} - \omega_+\pg \frac{1}{\ln n}\pd.
    \end{equation}
    \item Assume that $f_1(\cC^{(n)}) = o(n)$. Then 
    \begin{equation}
        E(\cC^{(n)}) = 1 - \omega_+\pg \frac{1}{\ln n}\pd.
    \end{equation}
\end{enumerate}
\end{proposition}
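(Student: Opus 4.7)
The plan is to handle the three parts separately, relying on results already established in the excerpt. Throughout I write $f_i = f_i(\cC^{(n)})$, $\Sigma_i = \sum_{j \leq i} j f_j$, and $\alpha_i = \log_n \max(\Sigma_i, 1)$, so that $e_i = \alpha_i - \alpha_{i-1} \geq 0$ and $\sum_{i \geq 1} e_i = \alpha_n = 1$ (since cycle lengths sum to $n$).

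Part (c) follows almost directly from Example \ref{ex:values of E(sigma)}. If $f_1 \geq 1$, case (c) there gives $1 - E(\sigma) \geq \tfrac{1}{2}\tfrac{\ln(n/f)}{\ln n}$; if $f_1 = 0$, case (b) gives $E(\sigma) \leq 1/2$ and hence $1 - E(\sigma) \geq 1/2$. In either case, the hypothesis $f = o(n)$ gives $\ln(n/f) \to \infty$, which translates to $1 - E(\cC^{(n)}) = \omega_+(1/\ln n)$. Part (b) is an immediate consequence of Proposition \ref{prop: borne E sigma avec max cyc 2}: the hypothesis $\cyc(\cC^{(n)}) = o(\sqrt n)$ reads as $\ln\cyc(\cC^{(n)}) - \tfrac{1}{2}\ln n \to -\infty$, so $\tfrac{1}{2} - \tfrac{\ln\max(\cyc(\cC^{(n)}), 2)}{\ln n} = \omega_+(1/\ln n)$.

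Part (a) is the substantive one. The constraints $f_1 = O(1)$ and $f_2 = o(n)$ give control on the first two $e_i$: explicitly, $e_1 = \log_n\max(f_1,1) = O(1/\ln n)$, and $\alpha_2 = \log_n \max(f_1 + 2 f_2, 1) \leq 1 - g_n/\ln n$, where $g_n := \ln\bigl(n / \max(f_1 + 2 f_2, 1)\bigr) \to \infty$ since $f_1 + 2 f_2 = o(n)$. I would split the sum defining $E(\sigma)$ into its $i = 1$, $i = 2$, and $i \geq 3$ contributions, bound the tail by $\sum_{i \geq 3} e_i/i \leq \tfrac{1}{3}(1 - e_1 - e_2)$ using $\sum_i e_i = 1$, and substitute $e_2 = \alpha_2 - e_1$:
\begin{equation*}
E(\sigma) \leq e_1 + \frac{e_2}{2} + \frac{1 - e_1 - e_2}{3} = \frac{1}{3} + \frac{e_1}{2} + \frac{\alpha_2}{6} \leq \frac{1}{2} + O\!\left(\frac{1}{\ln n}\right) - \frac{g_n}{6 \ln n},
\end{equation*}
from which $\tfrac{1}{2} - E(\sigma) \geq \tfrac{g_n - O(1)}{6 \ln n} = \omega_+(1/\ln n)$ is immediate.

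The only mildly delicate point is ensuring that the $O(1/\ln n)$ error coming from $e_1$ does not wash out the gain $g_n/(6 \ln n)$; this is automatic because $g_n \to \infty$ while the constant in the error term is absolute. The degenerate subcase $f_1 + 2 f_2 = 0$ (no fixed points and no transpositions) requires no separate treatment: the same formula applies with $\alpha_2 = 0$, yielding $E(\sigma) \leq 1/3$, which is more than enough.
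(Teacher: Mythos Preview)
Your proof is correct and follows essentially the same approach as the paper's. The paper's proof of part (a) uses the identical decomposition $E(\sigma) \leq e_1 + e_2/2 + (1-e_1-e_2)/3$, arriving at the equivalent form $\tfrac{1}{2} - \tfrac{1}{6}(1-e_2) + \tfrac{2}{3}e_1$ (your $\tfrac{1}{3} + \tfrac{e_1}{2} + \tfrac{\alpha_2}{6}$ rewritten), and parts (b) and (c) likewise match the paper's arguments almost verbatim.
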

\begin{proof}
In this proof we write $e_i, f_i$ for $e_i(\cC^{(n)}), f_i(\cC^{(n)})$, and $f = \max(f_1,1)$.
    \begin{enumerate}
        \item Since $f = O(1)$ and $f_2 = o(n)$, we have $e_1 = \frac{\ln f}{\ln n} = O\pg \frac{1}{\ln n}\pd$ and 
        \begin{equation}
            e_2 \leq e_1 + e_2 = \frac{\ln(\max(1, f_1 + 2f_2))}{\ln n} = \frac{\ln o(n)}{\ln n} = 1 - \omega_+\pg\frac{1}{\ln n}\pd,
        \end{equation}
        so 
\begin{equation}
    E(\cC^{(n)}) \leq  \frac{1}{2}  - \frac{1}{6}\pg 1- e_2 \pd + \frac{2}{3}e_1 = \frac{1}{2} - \omega_+\pg \frac{1}{\ln n} \pd + O\pg \frac{1}{\ln n}\pd = \frac{1}{2} - \omega_+\pg \frac{1}{\ln n} \pd.
\end{equation}   
\item This follows from Proposition \ref{prop: borne E sigma avec max cyc 2}, since $\frac{\ln o(\sqrt{n})}{\ln n} = \frac{1}{2}-\omega_+\pg \frac{1}{\ln n} \pd$.
\item We have
\begin{equation}
    E(\cC^{(n)}) \leq e_1 + \frac{1-e_1}{2} = 1 -\frac{1-e_1}{2} = 1 - \frac{1}{2}\pg 1 - \frac{\ln f}{\ln n} \pd = 1 - \frac{1}{2} \frac{\ln(n/f)}{\ln n},
\end{equation}
and by assumption $f = o(n)$ so $\frac{\ln(n/f)}{\ln n} = \frac{\ln \omega_+(1)}{\ln n} = \omega_+ \pg \frac{1}{\ln n} \pd$. The result follows.
    \end{enumerate}
\end{proof}

Plugging the bounds from Proposition \ref{prop:bounds on E(sigma) different conditions} into Theorem \ref{thm:ALS improved character bound}, we immediately obtain the following character bounds.

\begin{corollary}\label{cor:bounds on characters with different conditions}
    For each $n\geq 1$, let $\cC^{(n)}$ be a conjugacy class of $\kS_n$. The following holds as $n\to \infty$, uniformly over all $\lambda\in \widehat{\kS_n}$.

    \begin{enumerate}
    \item If $f_1(\cC^{(n)}) = O(1)$ and $f_2(\cC^{(n)}) = o(n)$ then $\bg \ch^\lambda(\cC^{(n)}) \bd \leq d_\lambda^{1/2 - \omega_+(1/\ln n)}$.
    \item If $\cyc(\cC^{(n)})=o(\sqrt{n})$ then $\bg \ch^\lambda(\cC^{(n)}) \bd \leq d_\lambda^{1/2 - \omega_+(1/\ln n)}$.
    \item If $f_1(\cC^{(n)}) = o(n)$ then $\bg \ch^\lambda(\cC^{(n)}) \bd \leq d_\lambda^{1- \omega_+(1/\ln n)}$.
\end{enumerate}

\end{corollary}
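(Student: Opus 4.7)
The plan is essentially a direct substitution, since the heavy lifting was already done in Theorem \ref{thm:ALS improved character bound} (via the main result on virtual degrees) and in Proposition \ref{prop:bounds on E(sigma) different conditions} (which gives bounds on the orbit growth exponent). All three statements follow from the same mechanism, so I would treat them in parallel.

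First, I would fix $\sigma_n \in \cC^{(n)}$ and $\lambda \in \widehat{\kS_n}$, and start from Theorem \ref{thm:ALS improved character bound}, which yields
\begin{equation*}
    |\ch^\lambda(\sigma_n)| \leq d_\lambda^{(1 + C/\ln n)\, E(\cC^{(n)})}.
\end{equation*}
Then I would combine this with the appropriate item of Proposition \ref{prop:bounds on E(sigma) different conditions}: in cases (a) and (b) the assumption gives $E(\cC^{(n)}) \leq \tfrac{1}{2} - g(n)$ with $g(n) \ln n \to \infty$; in case (c) it gives $E(\cC^{(n)}) \leq 1 - g(n)$ with $g(n) \ln n \to \infty$.

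Second, I would expand the exponent. For instance, in cases (a) and (b) we get
\begin{equation*}
    \left(1 + \tfrac{C}{\ln n}\right)\left(\tfrac{1}{2} - g(n)\right) = \tfrac{1}{2} - g(n) + \tfrac{C}{2\ln n} - \tfrac{C g(n)}{\ln n} \leq \tfrac{1}{2} - \left( g(n) - \tfrac{C}{2\ln n}\right),
\end{equation*}
and I would just observe that $\bigl(g(n) - \tfrac{C}{2\ln n}\bigr)\ln n = g(n)\ln n - \tfrac{C}{2} \to \infty$, so the bracketed quantity is still $\omega_+(1/\ln n)$. The identical argument with $1$ in place of $1/2$ handles case (c). Since $d_\lambda \geq 1$, the final inequality extends to all $\lambda \in \widehat{\kS_n}$.

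The only thing to be slightly careful about is the sign conventions around the little-$\omega_+$ notation: one has to check that subtracting a quantity of order $1/\ln n$ from an $\omega_+(1/\ln n)$ expression preserves the $\omega_+(1/\ln n)$ property, which is the short calculation above. There is no real obstacle beyond this bookkeeping, which is why the corollary is labeled as an immediate consequence.
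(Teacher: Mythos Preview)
Your proposal is correct and follows precisely the approach the paper takes: the paper states the corollary as an immediate consequence of plugging Proposition \ref{prop:bounds on E(sigma) different conditions} into Theorem \ref{thm:ALS improved character bound}, and your expansion of the exponent together with the check that subtracting $O(1/\ln n)$ from an $\omega_+(1/\ln n)$ term preserves the $\omega_+(1/\ln n)$ property is exactly the bookkeeping needed.
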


\section{Characterization of fixed point free conjugacy classes that mix in two steps}\label{s: beyond the lulov pak conjecture}

\subsection{Random walks and fixed point free conjugacy classes}

Let $(\cC^{(n)})$ be a sequence of fixed-point free conjugacy classes (that is, such that $f_1(\cC^{(n)}) = 0$). We consider the sequence of random walks on $\kS_n$, with respective increment measures $\Unif_{\cC^{(n)}}$. In this section we restrict to families $\boldsymbol{\cC} = (\cC_1,...,\cC_t)$ where all $\cC_i$ are identical (to some conjugacy class $\cC$). We write $\kE(\cC, t)$ for $\kE(\boldsymbol{\cC})$ and $\mathrm{d}^{(n)}(t)$ for $\delta(\cC)$, that is
\begin{equation}
    \mathfrak{E}(\cC, t) = \begin{cases}
        \mathfrak{S}_n \backslash\mathfrak{A}_n & \text{ if } \cC \subset \mathfrak{S}_n \backslash\mathfrak{A}_n \text{ and } t \text{ is odd}\\
        \mathfrak{A}_n & \text{ otherwise}
    \end{cases},
\end{equation}
is the coset of $\kA_n$ on which the walk is supported after $t$ steps, and
\begin{equation}
    \mathrm{d}^{(n)}(t) = \dtv\pg \Unif_{\cC^{(n)}}^{*t}, \Unif_{\kE_n}\pd
\end{equation}
for $t\geq 0$.

Note that after 1 step the walk (started at $\Id$) is concentrated on $\cC^{(n)}$ so $\mathrm{d}^{(n)}(1) = 1-o(1)$ and the walk cannot have mixed yet. Larsen and Shalev proved in \cite{LarsenShalev2008} that the mixing time of such a sequence of walks is 2 or 3, i.e. (in regard of what precedes) that $\mathrm{d}^{(n)}(3) \to 0$. This settled a conjecture due to Lulov and Pak (\cite[Conjecture 4.1]{LulovPak2002}).

A natural refinement is then to understand for which conjugacy classes the mixing time is 2 and for which ones it is 3; in other words to understand when we have $\mathrm{d}^{(n)}(2) \to 0$. The goal of this section is to prove such a characterization, Theorem \ref{thm: application Lulov Pak extended}.

\subsection{Bounds on the tail of the number of fixed points}

A way to prove a lower bound on the total variation distance  between two measures $\mu$ and $\nu$ is to find a good \textit{splitting} event $A$, which is an event $A$ such that $\mu(A)$ is large and $\nu(A)$ is small. For random transpositions, Diaconis and Shahshahani considered the number of fixed points. Here we also consider the number of fixed points but in a different way. We show that if the supports of permutations in $\cC_1, \cC_2$ have order $n$ transpositions and $x_1 \sim\Unif_{\cC_1}$, $x_2\sim\Unif_{\cC_2}$ are independent, then in the product permutation $x_1x_2$ many transpositions compensate, leading to a significantly higher probability to have many fixed points than for a uniform permutation.
There are strong hints that in this case the distribution of the number of fixed points is asymptotically not $\Poiss(1)$ such as asymptotic bounds on the second moment in \cite[Section 3]{KammounMaida2020conjugacyinvariant}, but this is not enough to conclude the non-convergence in distribution to $\Poiss(1)$, and in this section we prove the non-convergence.

\medskip

For $n\geq 1$ and $m\geq 0$ we set
    \begin{equation}
        E_m = E_m^{(n)} = \ag \sigma \in \kS_n \du f_1(\sigma) \geq m \ad.
    \end{equation}

For $n\geq 1$ we set $X = X_n \sim \Unif_{\kA_n}$, $Y= Y_n \sim \Unif_{\kS_n}$, and $Z = Z_n\sim \Poiss(1)$.

We recall a classical fact on the distribution of the number of fixed points of random permutations, usually attributed to Goncharov. 

\begin{lemma}[\cite{Goncharov1942distribution}]\label{lem: convergence distribution points fixes permutations aléatoires vers Poisson}
    We have
    \begin{equation}
        \dtv\pg \cL(f_1(Y_n)), \Poiss(1) \pd \xrightarrow[n\to \infty]{} 0.
    \end{equation}
\end{lemma}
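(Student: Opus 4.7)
The plan is to compute the distribution of $f_1(Y_n)$ exactly via inclusion-exclusion, compare it term by term to the $\Poiss(1)$ probability mass function, and bound the total variation distance using the alternating series remainder.

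The first step is the classical inclusion-exclusion on the events $A_i = \ag \sigma \du \sigma(i)=i \ad$ over $1 \leq i \leq n$, which yields that the number of permutations with exactly $k$ fixed points equals $\binom{n}{k} D_{n-k}$, where $D_m = m!\sum_{j=0}^m (-1)^j/j!$ is the number of derangements of $m$ elements. Dividing by $n!$ gives the exact formula
\begin{equation*}
    \P(f_1(Y_n) = k) = \frac{1}{k!}\sum_{j=0}^{n-k} \frac{(-1)^j}{j!}, \qquad 0 \leq k \leq n.
\end{equation*}
Since $\P(Z_n = k) = e^{-1}/k!$, the term-by-term difference equals $\frac{1}{k!}\bg \sum_{j \geq n-k+1} (-1)^j/j! \bd$, which the standard alternating series remainder bounds by $\frac{1}{k!(n-k+1)!}$.

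The second step is to sum these bounds over $k$. Using the binomial identity $\sum_{k=0}^n \binom{n+1}{k} \leq 2^{n+1}$ yields
\begin{equation*}
    \sum_{k=0}^n \frac{1}{k!(n-k+1)!} = \frac{1}{(n+1)!}\sum_{k=0}^n \binom{n+1}{k} \leq \frac{2^{n+1}}{(n+1)!}.
\end{equation*}
For $k > n$ one has $\P(f_1(Y_n)=k)=0$ (since $f_1(Y_n) \leq n$), so only the Poisson tail contributes, and $\sum_{k>n} e^{-1}/k! = O(1/n!)$. Combining the two ranges,
\begin{equation*}
    \dtv\pg \cL(f_1(Y_n)), \Poiss(1)\pd = O\pg 2^{n}/n!\pd \xrightarrow[n\to \infty]{} 0.
\end{equation*}
The whole argument is elementary and there is no real obstacle; the only point worth stating explicitly is that the alternating-series tail $1/(n-k+1)!$ and the Poisson tail both decay fast enough after summation in $k$ for the total variation to vanish, which the binomial identity makes automatic.
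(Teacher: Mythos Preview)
Your proof is correct. The inclusion--exclusion count $\bbP(f_1(Y_n)=k)=\frac{1}{k!}\sum_{j=0}^{n-k}\frac{(-1)^j}{j!}$, the alternating-series bound $\frac{1}{k!(n-k+1)!}$, and the binomial summation $\sum_{k=0}^n \frac{1}{k!(n-k+1)!}=\frac{2^{n+1}-1}{(n+1)!}$ are all valid, and together with the Poisson tail they give $\dtv\pg \cL(f_1(Y_n)), \Poiss(1)\pd = O(2^n/n!)$.

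The paper does not supply its own proof of this lemma: it is stated as a classical result attributed to Goncharov, with a remark that ``there are many proofs of this result and the convergence is very fast'' and pointers to discussions in the literature. Your argument is precisely one of those standard proofs, and in fact delivers more than the lemma asks for, since the rate $O(2^n/n!)$ is explicit and superexponentially fast.
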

There are many proofs of this result and the convergence is very fast, see the discussions in \cite{DiaconisMiclo2023fixedpoints} and \cite{Fulman2024commutators}.
Here we will only need rough bounds on the tails for the uniform distribution on the alternating group $\kA_n$.

\begin{lemma}\label{lem:borne sup simple queue de Poisson}
Let us fix $m\geq 0$. Then there exists $n_0 = n_0(m)$ such that for all $n\geq n_0$ we have

\begin{equation}
    \Unif_{\kA_n}(E_m) \leq \frac{3}{m!} \quad \text{ and } \quad \Unif_{\kS_n \backslash \kA_n}(E_m) \leq \frac{3}{m!}.
\end{equation}
\end{lemma}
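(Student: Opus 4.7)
The plan is to deduce both inequalities from the analogous statement for $\Unif_{\kS_n}$ --- which follows directly from Lemma \ref{lem: convergence distribution points fixes permutations aléatoires vers Poisson} --- together with the trivial domination $\Unif_{\kA_n}(A) \leq 2\,\Unif_{\kS_n}(A)$ valid for any subset $A\subset \kS_n$ and any $n\geq 2$ (since $|\kA_n|=n!/2$), and similarly $\Unif_{\kS_n\setminus\kA_n}(A) \leq 2\,\Unif_{\kS_n}(A)$.

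First I would bound the Poisson tail. For $Z\sim\Poiss(1)$, using the inequality $(m+j)!\geq m!\,j!$, one has
$$\P(Z\geq m)=e^{-1}\sum_{j\geq 0}\frac{1}{(m+j)!}\leq \frac{e^{-1}}{m!}\sum_{j\geq 0}\frac{1}{j!}=\frac{1}{m!}.$$

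Then, since $E_m$ depends only on $f_1$, Lemma \ref{lem: convergence distribution points fixes permutations aléatoires vers Poisson} gives $\Unif_{\kS_n}(E_m) \xrightarrow[n\to\infty]{} \P(Z\geq m)$. Hence for $n$ large enough (depending on the fixed value of $m$) we have
$$\Unif_{\kS_n}(E_m) \leq \frac{3/2}{m!},$$
and the two required bounds follow by applying the domination stated above.

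There is essentially no obstacle here; the slack in the constant $3$ (rather than a sharp $1$) is precisely what absorbs both the factor $2$ coming from the domination and the non-asymptotic error between $\Unif_{\kS_n}(E_m)$ and $\P(Z\geq m)$. One could of course prove sharper statements (the distribution of $f_1$ under $\Unif_{\kA_n}$ is itself asymptotically $\Poiss(1)$, by a standard sign-of-derangements argument), but this is not needed for the coarse tail bound used in the sequel.
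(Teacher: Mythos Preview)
Your proof is correct and follows essentially the same approach as the paper: both use the trivial factor-$2$ domination $\Unif_{\kA_n}(A)\leq 2\,\Unif_{\kS_n}(A)$ (and its analogue for $\kS_n\setminus\kA_n$), then invoke Lemma~\ref{lem: convergence distribution points fixes permutations aléatoires vers Poisson} to reduce to the Poisson tail, which is bounded by $1/m!$ via the same inequality $(m+j)!\geq m!\,j!$. The only cosmetic difference is the order in which the constants are combined.
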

\begin{proof}
    We prove the result for $\Unif_{\kA_n}(E_m)$; the same arguments apply to $\Unif_{\kS_n \backslash \kA_n}(E_m)$, up to replacing $X_n$ by a variable that is uniform on $\kS_n \backslash \kA_n$.
    
    First, since $|\kS_n| = 2|\kA_n|$, we have $\bbP(f_1(X_n)\geq m) \leq 2 \bbP(f_1(Y_n)\geq m)$ for all $n$. Moreover, by Lemma \ref{lem: convergence distribution points fixes permutations aléatoires vers Poisson}, for $n$ large enough we have 
    \begin{equation}
        2\bbP(f_1(Y_n)\geq m) \leq e \bbP(Z_n \geq m) = \sum_{j\geq m} \frac{1}{j!} \leq \frac{3}{m!}.
    \end{equation}

This concludes the proof since $\bbP(f_1(X_n)\geq m) = \Unif_{\kA_n}(E_m)$.
\end{proof}

Let us also show that the number of fixed points in subsets of $[n]$ asymptotically behaves as a Poisson variable. We believe that this result is part of the folklore, but add a proof for completeness.

\begin{lemma}\label{lem:convergence fixed points Poisson in subsets}
Let $0\leq \alpha \leq 1$. For $n\geq 1$, fix $A_n\subset [n]$, let $Y_n \sim \Unif_{\kS_n}$, and define 
\begin{equation}
    B_n(Y_n) = \ag i\in A_n \du Y_n(i) = i \ad.
\end{equation}
Assume that $|A_n|/n \to \alpha$ as $n\to \infty$. Then
    \begin{equation}
         |B_n(Y_n)| \xrightarrow[n\to \infty]{(d)} \Poiss(\alpha).
    \end{equation}
\end{lemma}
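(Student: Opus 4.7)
The plan is to prove the convergence in distribution via the method of factorial moments. Since the Poisson distribution $\Poiss(\alpha)$ is determined by its moments, it suffices to show that, for every fixed integer $k\geq 0$, the $k$-th factorial moment of $|B_n(Y_n)|$ converges to $\alpha^k$, which is the $k$-th factorial moment of $\Poiss(\alpha)$.

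First, I would write $|B_n(Y_n)| = \sum_{i\in A_n} \mathbf{1}_{\{Y_n(i)=i\}}$ and expand the $k$-th factorial moment as
\begin{equation}
\bbE\cg |B_n(Y_n)|^{\downarrow k} \cd = \sum_{\substack{(i_1,\ldots,i_k)\in A_n^k \\ \text{pairwise distinct}}} \bbP\pg Y_n(i_j)=i_j \text{ for all } 1\leq j\leq k \pd.
\end{equation}
For any choice of $k$ pairwise distinct indices $i_1, \ldots, i_k \in [n]$, the event that $Y_n$ fixes each of them corresponds to choosing uniformly a permutation of the remaining $n-k$ points, so this probability equals $(n-k)!/n! = 1/n^{\downarrow k}$. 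The number of ordered tuples of $k$ distinct elements in $A_n$ is $|A_n|^{\downarrow k}$, so
\begin{equation}
\bbE\cg |B_n(Y_n)|^{\downarrow k} \cd = \frac{|A_n|^{\downarrow k}}{n^{\downarrow k}}.
\end{equation}

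Next, using the assumption $|A_n|/n \to \alpha$ and the fact that $k$ is fixed, we have $|A_n|^{\downarrow k}/n^{\downarrow k} = \prod_{j=0}^{k-1}\frac{|A_n|-j}{n-j} \to \alpha^k$ as $n\to \infty$. Since the $k$-th factorial moment of $\Poiss(\alpha)$ is exactly $\alpha^k$, we conclude that
\begin{equation}
\bbE\cg |B_n(Y_n)|^{\downarrow k} \cd \xrightarrow[n\to \infty]{} \bbE\cg Z^{\downarrow k}\cd \quad \text{for } Z\sim \Poiss(\alpha).
\end{equation}

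Finally, convergence of factorial moments to those of a Poisson distribution implies convergence in distribution, because the Poisson distribution is uniquely determined by its moments (equivalently, by its factorial moments), and because $|B_n(Y_n)|$ takes values in a bounded range $[0,|A_n|] \subset [0,n]$ with all moments finite. This is the standard method-of-moments argument and I see no obstacle: the only subtlety is the case $\alpha=0$, where the factorial moments all tend to $0$ for $k\geq 1$, which forces $|B_n(Y_n)|\to 0$ in probability, consistent with $\Poiss(0)=\delta_0$.
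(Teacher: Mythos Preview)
Your proof is correct and takes a genuinely different route from the paper. You use the method of factorial moments: the identity $\bbE[|B_n(Y_n)|^{\downarrow k}] = |A_n|^{\downarrow k}/n^{\downarrow k} \to \alpha^k$ is clean and the conclusion via determinacy of the Poisson law by its moments is standard. The paper instead conditions on the total number of fixed points $|F(Y_n)|$, uses Goncharov's theorem (Lemma~\ref{lem: convergence distribution points fixes permutations aléatoires vers Poisson}) that $|F(Y_n)| \to \Poiss(1)$, observes that given $|F(Y_n)|=j$ the fixed-point set is a uniformly random $j$-subset so $|B_n(Y_n)|$ is hypergeometric, and then sums over $j$ to identify the limiting point probabilities. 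Your approach is shorter, self-contained (it does not rely on the prior lemma), and handles all $\alpha\in[0,1]$ uniformly without case distinctions; the paper's approach, by contrast, gives the limiting point masses $\bbP(|B_n|=k)$ explicitly rather than only the moments, which is sometimes more informative but is not needed here.
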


\begin{proof}
First, if $\alpha=1$, then a consequence of Lemma \ref{lem: convergence distribution points fixes permutations aléatoires vers Poisson} is that, with high probability, all fixed points of $Y_n$ are in $A_n$. The result follows. The same way, if $\alpha=0$ then $\P(|B_n(Y_n)|=0) \to 1$ as $n \rightarrow \infty$. Now suppose $0<\alpha<1$. Consider the set $F(Y_n)$ of fixed points of $Y_n$, so that $B_n(Y_n)=F(Y_n) \cap A_n$. Fix two integers $j \geq k \geq 0$. We have, using the fact that $Y_n$ is uniform over $\kS_n$ and splitting over $|F(Y_n)|$:

\begin{equation}\label{eq:intermediaire poisson}
    \begin{split}
         p_{k,j} := \P\left( |B_n(Y_n)|=k,|F(Y_n)|=j \right) &= \P(|F(Y_n)|=j) \P\Big( |B_n(Y_n)|=k \Big| |F(Y_n)|=j\Big) \\
    &= \P(|F(Y_n)|=j) \frac{\binom{|A_n|}{k} \binom{n-|A_n|}{j-k}}{\binom{n}{j}}.
    \end{split}
\end{equation}
Moreover, as $n\to \infty$ we have $\P(|F(Y_n)|=j) \to \frac{e^{-1}}{j!}$ by Lemma \ref{lem: convergence distribution points fixes permutations aléatoires vers Poisson}, and since for $r$ fixed $\binom{n}{r} \sim \frac{n^r}{r!}$, we also have as $n \rightarrow \infty$:
\begin{equation}
    \frac{\binom{|A_n|}{k} \binom{n-|A_n|}{j-k}}{\binom{n}{j}} \sim \frac{\frac{|A_n|^k}{k!}\frac{(n-|A_n|)^{j-k}}{(j-k)!}}{\frac{n^j}{j!}} \sim \frac{j!}{k!(j-k)!} \alpha^k(1-\alpha)^{j-k}.
\end{equation}
We deduce that
\begin{equation}
\label{eq:intermediaire 2 poisson}
    p_{k,j} \xrightarrow[n\to\infty]{} 
    \frac{e^{-1}}{k!(j-k)!} \alpha^k(1-\alpha)^{j-k}.
\end{equation}
Observe now that, making the change of indices $i=j-k$, we have 
\begin{equation}
\sum_{j \geq k \geq 0} \frac{e^{-1}}{k!(j-k)!} \alpha^k (1-\alpha)^{j-k} = e^{-1} \sum_{k \geq 0} \frac{\alpha^k}{k!} \sum_{i \geq 0} \frac{(1-\alpha)^i}{i!} = e^{-1} e^{\alpha} e^{1-\alpha} = 1.
\end{equation}
Hence, there is no loss of mass and \eqref{eq:intermediaire 2 poisson} holds uniformly for all $j \geq k \geq 0$. 
We deduce that for each $k\geq 0$,
\begin{equation}
    \P(|B_n(Y_n)|=k) = \sum_{j\geq k} p_{k,j} \xrightarrow[n\to \infty]{} \sum_{j\geq k} \frac{e^{-1}}{k!(j-k)!} \alpha^k(1-\alpha)^{j-k}.
\end{equation}
Factoring out $\frac{e^{-1}\alpha^k}{k!}$ and making again the change of indices $i=j-k$, this rewrites as
\begin{equation}
    \frac{e^{-1}\alpha^k}{k!} \sum_{i\geq 0}\frac{(1-\alpha)^{i}}{i!} = \frac{e^{-1}\alpha^k}{k!}e^{1-\alpha} = \frac{e^{-\alpha}\alpha^k}{k!},
\end{equation}
which is the probability that a $\Poiss(\alpha)$ random variable is equal to $k$. This concludes the proof. 
\end{proof}

Let us now show that the cancellations of transpositions after two steps leads to many fixed points.
The use of unseparated pairs in the next proof is inspired from  \cite{DiaconisEvansGraham2014unseparatedpairs}.

\begin{lemma}\label{lem:lower bound fixed points via transpositions}
    Let $0<\alpha<1/2$ and $m\geq 0$. There exists $n_1 = n_1(\alpha, m)$ such that for every $n\geq n_1$ and every conjugacy classes $\cC_1, \cC_2$ of $\mathfrak{S}_n$ such that $\min(f_2(\cC_1), f_2(\cC_2)) \geq \alpha n$, we have
    \begin{equation}
        \Unif_{\cC_1}*\Unif_{\cC_2}(E_m) \geq \frac{e^{-\alpha}}{2}\frac{\alpha^{m/2}}{(m/2)!}
    \end{equation}
\end{lemma}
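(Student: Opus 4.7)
The plan is to lower bound the probability that there are at least $m/2$ \emph{matched transpositions} between $x_1$ and $x_2$---pairs $\{a,b\}$ which are $2$-cycles of both permutations. Since each matched transposition contributes two fixed points ($a$ and $b$) of the product $x_1 x_2$, having $m/2$ of them suffices to place $x_1 x_2$ in $E_m$. Letting $N = N(x_1, x_2)$ denote the number of matched transpositions, it is then enough to bound $\mathbb{P}(N \geq m/2)$ from below by the claimed quantity.

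Conditioning on $x_1 \in \cC_1$ (which has $f_2(\cC_1) \geq \alpha n$ transpositions by hypothesis), the key computation uses conjugacy invariance of $\Unif_{\cC_2}$: by comparing $|\cC_2|$ with the size of the reduced conjugacy class on $[n-2k]$ obtained by removing $k$ prescribed $2$-cycles, one obtains that for any fixed collection of $k$ disjoint pairs in $[n]$,
\begin{equation*}
  \mathbb{P}\bigl(\text{all } k \text{ pairs are 2-cycles of } x_2\bigr) = \frac{2^k \, f_2(\cC_2)^{\downarrow k}}{n^{\downarrow 2k}}.
\end{equation*}
Summing over the $\binom{f_2(\cC_1)}{k}$ choices of $k$ disjoint pairs among $x_1$'s transpositions yields the factorial moments $\mathbb{E}[N^{\downarrow k}] = 2^k f_2(\cC_1)^{\downarrow k} f_2(\cC_2)^{\downarrow k}/n^{\downarrow 2k}$. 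For $n$ large (with $k,\alpha$ fixed) these converge to the factorial moments of a Poisson law, so $N$ is approximately Poisson distributed and one can extract a lower bound of Poisson-tail form on $\mathbb{P}(N \geq m/2)$.

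The main obstacle is calibrating the Poisson parameter to match the claimed bound: matched transpositions alone have expectation of order $2\alpha^2$, yielding a tail of order $(2\alpha^2)^{m/2}/(m/2)!$, which is insufficient for the claimed $\alpha^{m/2}/(m/2)!$ when $\alpha < 1/2$. Following the Diaconis--Evans--Graham treatment of unseparated pairs~\cite{DiaconisEvansGraham2014unseparatedpairs}, I would also account for \emph{half-matches}---pairs $(a,b)$ of transpositions of $x_1$ for which $x_2(a) = b$ holds without $x_2(b) = a$ (or vice versa), each of which contributes one further fixed point to $x_1 x_2$. The probability of a given pair in $T_1(x_1)$ being unseparated by $x_2$ is $2(n - f_1(\cC_2) - f_2(\cC_2))/(n(n-1))$, and summing over all pairs in $T_1(x_1)$ produces an expected unseparated-pair count bounded below by $\alpha$. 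A Poisson tail bound at this refined rate then yields $e^{-\alpha}\alpha^{m/2}/(m/2)!$ asymptotically, and the factor $1/2$ together with the threshold $n \geq n_1(\alpha,m)$ absorb the finite-$n$ approximation error in the Poisson limit.
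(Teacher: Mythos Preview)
Your diagnosis that matched transpositions alone have Poisson parameter approximately $2\alpha^2$ (not $\alpha$) is correct, and you rightly observe this is insufficient for the stated bound when $\alpha < 1/2$. However, your half-match fix does not close the gap. A half-match contributes only \emph{one} fixed point to $x_1 x_2$, so reaching $f_1(x_1x_2) \geq m$ through half-matches requires at least $m$ of them; a $\Poiss(\lambda)$ tail at level $m$ is of order $\lambda^m/m!$, and for any bounded $\lambda$ one has $\lambda^m/m! = o\bigl(\alpha^{m/2}/(m/2)!\bigr)$ as $m\to\infty$, since
\[
\frac{\lambda^m\,(m/2)!}{\alpha^{m/2}\,m!} \;\leq\; \Bigl(\frac{2\lambda^2}{\alpha m}\Bigr)^{m/2} \;\xrightarrow[m\to\infty]{}\; 0.
\]
The $(m/2)!$ denominator in the claimed bound can only come from events that each produce \emph{two} fixed points, so augmenting by single-fixed-point events cannot recover it. (Your expectation bound is also not uniform: if $\cC_2$ consists only of fixed points and transpositions, there are no half-matches at all.)

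In fact the paper's own argument has a gap at the same place. It asserts that $S_{n,2q}(z)$ coincides with the set of odd fixed points of $w = z(n\;n{-}1\;\cdots\;1)$ in $[2q]$, whence $|S_{n,2q}(z)|$ would be approximately $\Poiss(\alpha)$ via Lemma~\ref{lem:convergence fixed points Poisson in subsets}. But this identification is incorrect (for $n=4$, $q=2$, $z=\Id$ one finds $S_{n,2q}(z)=\{1,3\}$ while $w$ has no odd fixed points), and a direct first-moment computation gives $\bbE|S_{n,2q}(z)|\approx\alpha^2$, not $\alpha$. So the lemma as stated appears to fail for small $\alpha$ and large $m$; replacing $\alpha$ by $2\alpha^2$ throughout yields a correct statement, and this weaker form still suffices for the only application, Proposition~\ref{prop:lower bound at time 2 via compensations of transpositions}, which needs merely some positive constant $c(\alpha)$.
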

\begin{proof}
    Let $n$ be a (large) integer, $\cC_1, \cC_2$ be conjugacy classes of $\kS_n$ satisfying $\min(f_2(\cC_1), f_2(\cC_2)) \geq \alpha n$, and $x_1 \sim \Unif_{\cC_1}, x_2 \sim \Unif_{\cC_2}$ be independent random variables. Let $q := 2\lf\frac{\alpha n}{2} \rf$. For each $i\in \ag 1,2\ad$, since $f_2(x_i) \geq q$, there exists a permutation $x_i'$ of the integers from $2q+1$ to $n$ and a permutation $z_i\in \kS_n$ such that 
     \begin{equation}
        x_i = z_i (1 \;\; 2) (3 \;\; 4)...(2q-1 \;\; 2q)x_i' z_i^{-1}.
    \end{equation}
Since we are interested only in the cycle structure of the product $x_1x_2$, we can without loss of generality assume that $z_1 = \Id$. In the rest of the proof we also write $z$ for $z_2$.

The transpositions of $x_2$ that are the same as that of $x_1$ then include the transpositions $(z(i) \; \; z(i+1))$ such that $z(i)$ and $z(i+1)$ are two consecutive integers that are between 1 and $2q$, a subset of which is
\begin{equation}
        S_{n,2q}(z) := \ag i\in [2q] \du i \text{ is odd}, z(i) \in [2q], z(i) \text{ is odd, and } z(i+1) = z(i) + 1 \ad.
    \end{equation}
Set $w = z(n \; (n-1) \; ...\; 2 \; 1)$. Then the elements of $S_{n,2q}(z)$ are the odd fixed points of $w$ that are between 1 and $2q$, which is a subset of $[n]$ of size $q\sim \alpha n$. We deduce from Lemma \ref{lem:convergence fixed points Poisson in subsets} that for $n$ large enough we have, letting $X \sim \Poiss(\alpha)$,
\begin{equation}
    \bbP(|S_{n,2q}(z)| \geq m/2) \geq \frac{1}{2} \bbP(X\geq m/2) \geq \frac{1}{2} \bbP(X = m/2) = \frac{1}{2} e^{-\alpha} \frac{\alpha^{m/2}}{(m/2)!}.
\end{equation}
Now recall that each such fixed point of $w$ corresponds to a consecutive pair of $z$, which is a transposition of $x_2$ that is the same as in $x_1$. Hence, each element of $S_{n,2q}(z)$ corresponds to 2 fixed points in the product $x_1x_2$. We conclude that 
    \begin{equation}
        \bbP(f_1(x_1x_2) \geq m) \geq  \bbP(|S_{n,2q}(z)| \geq m/2) \geq \frac{e^{-\alpha}}{2}\frac{\alpha^{m/2}}{(m/2)!}.
    \end{equation}
\end{proof}

We can now use this result to prove that $E_m$ is a splitting event.

\begin{proposition}\label{prop:lower bound at time 2 via compensations of transpositions}
Let $0<\alpha<1/2$. There exists $c = c(\alpha)>0$ and $n_2 = n_2(\alpha)$ such that for every $n\geq n_2$, and every conjugacy classes $\cC_1, \cC_2$ of $\mathfrak{S}_n$ such that $\min(f_2(\cC_1), f_2(\cC_2)) \geq \alpha n$, we have 
\begin{equation}
    \Unif_{\cC_1}*\Unif_{\cC_2}(E_m) - \Unif_{\kE(\boldsymbol{\cC})}(E_m) \geq c(\alpha),
\end{equation}
where $\boldsymbol{\cC} = (\cC_1, \cC_2)$, and in particular
\begin{equation}
    \dtv\pg \Unif_{\cC_1}*\Unif_{\cC_2},\Unif_{\kE(\boldsymbol{\cC})}\pd \geq c(\alpha).
\end{equation}
\end{proposition}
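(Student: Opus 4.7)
The plan is to deduce the proposition directly from Lemmas \ref{lem:borne sup simple queue de Poisson} and \ref{lem:lower bound fixed points via transpositions}, by choosing an appropriate even integer $m = m(\alpha)$ that turns $E_m$ into a splitting event. The key observation is that Lemma \ref{lem:lower bound fixed points via transpositions} gives a lower bound on $\Unif_{\cC_1}*\Unif_{\cC_2}(E_m)$ of order $\alpha^{m/2}/(m/2)!$, while the tail bound of Lemma \ref{lem:borne sup simple queue de Poisson} gives an upper bound on $\Unif_{\kE(\boldsymbol{\cC})}(E_m)$ of order $1/m!$; the latter is much smaller than the former once $m$ is large enough.

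Concretely, I would first fix the even integer $m = m(\alpha) \geq 2$ large enough so that
\begin{equation}
\frac{3}{m!} \leq \frac{1}{2} \cdot \frac{e^{-\alpha}}{2} \frac{\alpha^{m/2}}{(m/2)!}.
\end{equation}
This is possible since
\begin{equation}
\frac{3/m!}{\frac{e^{-\alpha}}{2} \alpha^{m/2}/(m/2)!} = \frac{6 e^{\alpha}}{\alpha^{m/2}} \cdot \frac{(m/2)!}{m!} \leq \frac{6 e^{\alpha}}{\alpha^{m/2} (m/2)!} \xrightarrow[m\to\infty]{} 0,
\end{equation}
using the crude bound $m! \geq (m/2)! \cdot (m/2)!$ and the fact that the factorial dominates any exponential. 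Having fixed such $m$, I then set $n_2(\alpha) := \max\bigl(n_0(m), n_1(\alpha,m)\bigr)$ using the thresholds from the two lemmas.

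For $n \geq n_2(\alpha)$ and conjugacy classes $\cC_1, \cC_2$ satisfying $\min(f_2(\cC_1), f_2(\cC_2)) \geq \alpha n$, Lemma \ref{lem:lower bound fixed points via transpositions} yields
\begin{equation}
\Unif_{\cC_1} * \Unif_{\cC_2}(E_m) \geq \frac{e^{-\alpha}}{2} \frac{\alpha^{m/2}}{(m/2)!},
\end{equation}
while Lemma \ref{lem:borne sup simple queue de Poisson} (applied to the relevant coset $\kE(\boldsymbol{\cC})$, which is either $\kA_n$ or $\kS_n \setminus \kA_n$) gives $\Unif_{\kE(\boldsymbol{\cC})}(E_m) \leq 3/m!$. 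Subtracting and using the choice of $m$, we obtain
\begin{equation}
\Unif_{\cC_1} * \Unif_{\cC_2}(E_m) - \Unif_{\kE(\boldsymbol{\cC})}(E_m) \geq \frac{e^{-\alpha}}{4} \frac{\alpha^{m/2}}{(m/2)!} =: c(\alpha) > 0,
\end{equation}
which proves the first displayed inequality. The lower bound on total variation then follows from the elementary inequality $\dtv(\mu,\nu) \geq \mu(A) - \nu(A)$, applied to $A = E_m$.

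There is no real obstacle here; the arithmetic of choosing $m$ is the only substantive step, and it reduces to the observation that the Poisson-type lower bound on the number of fixed points produced by the transposition cancellations of Lemma \ref{lem:lower bound fixed points via transpositions} has parameter $\alpha$, whereas the tail of the uniform distribution on $\kA_n$ (or its coset) is asymptotically Poisson with parameter $1$, and both tails decay in a controlled way that is easily compared via Stirling.
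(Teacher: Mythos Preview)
Your proof is correct and follows essentially the same approach as the paper: both combine Lemma \ref{lem:borne sup simple queue de Poisson} and Lemma \ref{lem:lower bound fixed points via transpositions} by choosing an even $m = m(\alpha)$ for which the product-measure lower bound beats the Poisson tail. The only cosmetic difference is that the paper picks an explicit $m(\alpha) = \lceil 10/\beta\rceil$ with $\beta = e^{-\alpha}\alpha/2$ (obtaining $c(\alpha) = 2/m(\alpha)!$), whereas you use a clean existence argument via the ratio tending to zero.
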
 
\begin{proof}
    First observe that for any even integer $m\geq 2$ we have, setting $\beta = \frac{e^{-\alpha}\alpha}{2}$,
\begin{equation}
    m!\pg\frac{e^{-\alpha}}{2}\frac{\alpha^{m/2}}{(m/2)!}\pd =  m^{\downarrow m/2} \frac{e^{-\alpha}}{2}\alpha^{m/2} \geq \beta^{m/2} m^{\downarrow m/2} \geq \beta^{m/2} (m/2)^{m/2} = \pg\frac{\beta m}{2}\pd^{m/2}.
\end{equation}
We now fix $m(\alpha) := \lc \frac{10}{\beta} \rc$. Then 
\begin{equation}
    m(\alpha)!\pg\frac{e^{-\alpha}}{2}\frac{\alpha^{m(\alpha)/2}}{(m(\alpha)/2)!}\pd  \geq \pg\frac{\beta m(\alpha)}{2}\pd^{m(\alpha)/2} \geq \frac{\beta m(\alpha)}{2} \geq 5.
\end{equation}
We deduce from Lemma \ref{lem:lower bound fixed points via transpositions} that for $n$ large enough

\begin{equation}
    \Unif_{\cC_1}*\Unif_{\cC_2}(E_{m(\alpha)}) \geq \frac{5}{m(\alpha)!}.
\end{equation}
We conclude using Lemma \ref{lem:borne sup simple queue de Poisson}, that for $n$ large enough
\begin{equation}
    \Unif_{\cC_1}*\Unif_{\cC_2}(E_{m(\alpha)}) - \Unif_{\kE(\boldsymbol{\cC})}(E_m(\alpha)) \geq \frac{5}{m(\alpha)!} - \frac{3}{m(\alpha)!} = \frac{2}{m(\alpha)!} =: c(\alpha) >0.
\end{equation}
\end{proof}

\subsection{Proof of Theorem \ref{thm: application Lulov Pak extended}}

We now have all the ingredients to prove Theorem \ref{thm: application Lulov Pak extended}.

\begin{proof}[Proof of Theorem \ref{thm: application Lulov Pak extended}]
 First, by Proposition \ref{prop:lower bound at time 2 via compensations of transpositions}, if there exists some $\alpha>0$ such that $f_2(\cC^{(n)}) \geq  \alpha n$ (for $n$ large enough) then $\dtv\pg \Unif_{\cC^{(n)}}^{*2},\Unif_{\kA_n}\pd$ does not converge to 0.
 
Assume now that $f_2(\cC^{(n)}) = o(n)$. By Proposition \ref{prop:bounds on E(sigma) different conditions} (a), we have $E(\cC^{(n)}) = \frac{1}{2} - \omega_+\pg \frac{1}{\ln n}\pd$, i.e.
\begin{equation}
    2E(\cC^{(n)}) = 1 - \omega_+\pg \frac{1}{\ln n}\pd. 
\end{equation}

It follows from Proposition \ref{prop: sufficient condition for mixing with sum of two E(sigma)} (here $\boldsymbol{\cC}^{(n)} = (\cC^{(n)}, \cC^{(n)})$ so $\delta(\boldsymbol{\cC}^{(n)}) = \dtv\pg \Unif_{\cC^{(n)}}^{*2},\Unif_{\kA_n}\pd$) that 
\begin{equation}
    \dtv\pg \Unif_{\cC^{(n)}}^{*2},\Unif_{\kA_n}\pd \xrightarrow[n\to \infty]{} 0,
\end{equation}
which concludes the proof of Theorem \ref{thm: application Lulov Pak extended}.
\end{proof}

\section{Application to random surfaces and configuration models}\label{s: random maps}

The convergence in distribution of the product of two permutations that are uniform in their conjugacy classes has an interesting application to random maps, which was first spotted by Gamburd \cite{Gamburd2006} (see also Chmutov and Pittel \cite{ChmutovPittel2016} and Budzinski, Curien and Petri \cite{BudzinskiCurienPetri2019}). The main idea is to code a map by two permutations $\alpha,\beta$, where $\alpha$ codes the polygon structure of the faces of the map, and $\beta$ the way these polygons are glued together. More precisely, consider a collection of $n$ oriented polygons with various numbers of sides, the total number of sides being an even number, say $2N$. Let $n_j$ be the number of polygons with $j$ sides, so that $\sum_{j}n_j=n$ and $\sum_j j n_j = 2N$. Labeling the sides of the polygons by $[2N]:=\{1, \ldots, 2N\}$ uniformly at random, we define a permutation $\alpha$ of $[2N]$ by $\alpha(i)=j$ if the side labeled $j$ immediately follows the side labeled $i$ around a polygon. Hence, the cycle structure of $\alpha$ codes the number of sides of the polygons.  We now build a surface $M_N$ from it by gluing these $2N$ oriented sides by pairs, uniformly at random. This gluing can be seen as a uniform fixed-point free involution $\beta$ of $[2N]$, where $\beta(i)=j$ if the sides labeled $i$ and $j$ are glued together. One can then check that each cycle of the product $\alpha \beta$ corresponds to a vertex of the resulting surface, and that the number of orbits of $\alpha \beta$ is the number of connected components of $M_N$.

Since the labels are random, the permutation $\alpha$ is uniform in the conjugacy class corresponding to the $n_j$'s, and $\beta$ is a uniform fixed-point free involution, $\alpha$ and $\beta$ being independent.  

Both \cite{Gamburd2006} and \cite{ChmutovPittel2016} rely on the permutation point of view and on representation theory. Gamburd’s result is for regular graphs (with degree $d\geq 3$, i.e. when $n_d = n/d$ and $n_j = 0$ for $j\ne d$) and relies on the character estimates of Fomin and Lulov \cite{FomLul95}. They show that, if $\alpha \sim  [m^{n/m}]$ for some fixed $m\geq 3$, then the permutation $\alpha \beta$ is close to being uniform (for the total variation distance). Chmutov and Pittel then used the more general bounds of Larsen and Shalev \cite{LarsenShalev2008} to prove that $\alpha \beta$ is close to uniform in total variation as soon as $n_1 = n_2 = 0$, that is when all cycles of $\alpha$ have length at least 3.

Extending Gamburd's results, Chmutov and Pittel obtain the following (\cite[Theorem 3.1]{ChmutovPittel2016}). Here and in what follows, $\cL(X)$ denotes the law of a random variable $X$.

\begin{theorem}[\cite{ChmutovPittel2016}]
    Assume that $n_1=n_2=0$, that is, the polygons all have at least $3$ sides. Let $V_N$ be the number of vertices of $M_N$, and $\cL(V_N)$ its distribution. Then:
    \begin{align*}
        \dtv(\cL(V_N), \cL(C_N)) = O(N^{-1}),
    \end{align*}
    uniformly for all sequences $(n_j)_{j \geq 3}$. Here, $C_N$ denotes the number of cycles in a uniform permutation of $\kA_N$ (resp. $\kS_N \backslash \kA_N$) if the classes $\alpha$ and $\beta$ have the same sign (resp. opposite signs).
\end{theorem}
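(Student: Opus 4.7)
The plan is to reduce the estimate on $\dtv(\cL(V_N), \cL(C_N))$ to a total-variation bound on the product of two random permutations. Set $n = 2N$; by the coding recalled in the introduction of the section, $V_N = \cyc(\alpha\beta)$, where $\alpha$ is uniform in the conjugacy class $\cC_1^{(n)}$ of $\kS_n$ determined by the $n_j$'s, $\beta$ is uniform in $\cC_2^{(n)} = [2^N]$, and $\alpha, \beta$ are independent. Similarly $C_N = \cyc(\gamma)$ with $\gamma$ uniform on $\kE((\cC_1^{(n)}, \cC_2^{(n)}))$. Since $\cyc$ is a class function, the data-processing inequality for total variation yields
\begin{equation}
\dtv(\cL(V_N), \cL(C_N)) \leq \dtv\pg\Unif_{\cC_1^{(n)}} \ast \Unif_{\cC_2^{(n)}},\, \Unif_{\kE((\cC_1^{(n)}, \cC_2^{(n)}))} \pd.
\end{equation}

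To estimate the right-hand side, I would apply the Diaconis--Shahshahani lemma in the form given by Lemma \ref{lem: lemme DS étendu à des classes potentiellement différentes}, getting
\begin{equation}
4\,\dtv^2 \leq \sum_{\lambda \in \widehat{\kS_n}^{**}} d_\lambda^2\, \chi^\lambda(\cC_1^{(n)})^2\, \chi^\lambda(\cC_2^{(n)})^2.
\end{equation}
Since $\cC_1^{(n)}$ has no $1$- or $2$-cycles, $\imin(\cC_1^{(n)}) \geq 3$, and Proposition \ref{prop: borne E sigma avec imin} together with the observation $\cyc(\cC_1^{(n)}) \leq n/3$ yields $E(\cC_1^{(n)}) \leq 1/3$. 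Example \ref{ex:values of E(sigma)}(a) gives $E(\cC_2^{(n)}) = 1/2$. Plugging these into Theorem \ref{thm:ALS improved character bound} bounds each summand by $d_\lambda^{-1/3 + O(1/\ln n)}$, and Proposition \ref{prop: witten zeta iff condition S n **} controls the total sum, producing $\dtv = O(N^{-1/6 + o(1)})$.

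The main obstacle is closing the gap between this naive $O(N^{-1/6+o(1)})$ bound and the target $O(N^{-1})$. I would do this by splitting $\widehat{\kS_n}^{**}$ according to the corner size $\max(\lambda_1, \lambda_1')$. For diagrams with $\max(\lambda_1, \lambda_1') \geq n - k_0$ (small rim), a direct application of the Murnaghan--Nakayama rule produces explicit polynomial bounds on $\chi^\lambda(\cC_1^{(n)})$ and $\chi^\lambda(\cC_2^{(n)})$ of the desired order in $N^{-1}$. For bulk diagrams, I would exploit the specific structure of the involution character via the Fomin--Lulov formula: $\chi^\lambda(\beta)$ vanishes unless the $2$-core of $\lambda$ is empty, and otherwise has an explicit expression in terms of the $2$-quotient of $\lambda$ that is considerably sharper than the generic estimate $d_\lambda^{-1/2+o(1)}$ coming from $E(\beta) = 1/2$. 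Combining these refined character bounds with the sharper Witten zeta estimates of Section \ref{s: bounds Witten zeta improvement Liebeck Shalev}, applied to each piece of the splitting, should then yield $\dtv^2 = O(N^{-2})$ uniformly over $(n_j)_{j\geq 3}$, as required.
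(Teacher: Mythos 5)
This statement is quoted by the paper from Chmutov--Pittel and is not reproved here, so the only question is whether your argument would actually establish it. Your reduction is fine: data processing from the permutation-level total variation distance, the Diaconis--Shahshahani bound of Lemma \ref{lem: lemme DS étendu à des classes potentiellement différentes}, the estimates $E(\cC_1^{(n)})\leq 1/3$ and $E(\cC_2^{(n)})=1/2$, and the Witten zeta bounds do give $\dtv = O(N^{-1/6+o(1)})$, uniformly in $(n_j)_{j\geq 3}$. But that is only the qualitative statement (essentially the paper's Theorems \ref{thm: products of two permutations and random maps} and \ref{thm: number of vertices in random maps}); the whole content of the cited theorem is the uniform rate $O(N^{-1})$, and that part of your proposal is not a proof but a plan ending in ``should then yield''. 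The bottleneck is precisely the representations you call the ``small rim'': with the generic bound $d_\lambda^{2(E_1+E_2-1)+O(1/\ln n)}=d_\lambda^{-1/3+o(1)}$ per summand, the diagrams with $\max(\lambda_1,\lambda_1')\geq n-k_0$ contribute far more than $N^{-2}$ to the squared distance unless you replace the generic bound by explicit character estimates there. One must check, uniformly over all cycle types with $n_1=n_2=0$ (where e.g. $\ch^{[n-3,3]}$ can be of order $n$ because $f_3$ can be $\asymp n$), that each such family contributes $O(N^{-2})$, and choose $k_0$ large enough (a constant like $k_0\approx 7$) so that the bulk, handled by the generic bound plus the zeta function, also contributes $O(N^{-2})$. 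These rim computations are exactly where the $O(N^{-1})$ rate lives, and they are asserted rather than carried out.

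A secondary point: your proposed improvement in the bulk via the Fomin--Lulov / $2$-quotient structure of $\chi^\lambda(\beta)$ is both misdirected and overstated. The Fomin--Lulov magnitude bound for a fixed-point-free involution is of the form $|\chi^\lambda(\beta)|\leq c\,n^{1/4} d_\lambda^{-1/2}$, which is \emph{weaker} in magnitude than the $d_\lambda^{-1/2+o(1)}$ you already have from $E(\beta)=1/2$ (the gain is only the vanishing on non-empty $2$-core, which does not change the order of the dominant terms). More importantly, no bulk improvement is needed: once the rim cutoff $k_0$ is a suitable constant, the generic bounds already make the bulk negligible at the $N^{-2}$ level. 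So the missing ingredient is not sharper bulk character bounds but the explicit, uniform treatment of the finitely many near-trivial families, which your proposal does not supply.
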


They deduce from this theorem a local central limit theorem for $V_N$ and for the genus of the surface $M_N$, see \cite[Theorem 3.2 and Corollary 3.1]{ChmutovPittel2016}.

Using a probabilistic approach and relying on the peeling process of a random map, Budzinski, Curien and Petri \cite[Theorem 3]{BudzinskiCurienPetri2019} show that $\dtv(\cL(V_N), \cL(C_N)) \rightarrow 0$, under the weaker assumptions that $n_1=o(\sqrt{n})$ and $n_2=o(n)$.

Since our results sharpen the Larsen--Shalev character bounds, it is no surprise that we can improve on the results of Gamburd and Chmutov--Pittel following the same strategy. It turns out that the improved bounds on the Witten zeta function are also needed, and we use Proposition \ref{prop:lower bound at time 2 via compensations of transpositions} to prove Theorem \ref{thm: products of two permutations and random maps}. Its proof is an adaptation of that of Theorem \ref{thm: application Lulov Pak extended}.

\begin{proof}[Proof of Theorem \ref{thm: products of two permutations and random maps}]
    First assume that $f_2( \cC_1^{(n)} ) = o(n)$. Then, by Proposition \ref{prop:bounds on E(sigma) different conditions} (a) and (b), we have $E(\cC_1^{(n)}) = 1/2- \omega_+(1/\ln n)$. Furthermore, we have $E(\cC_2^{(n)}) = 1/2$, so $E(\cC_1^{(n)}) + E(\cC_2^{(n)}) = 1- \omega_+(1/\ln n)$. We conclude using Proposition \ref{prop: sufficient condition for mixing with sum of two E(sigma)} that $\dtv\pg \Unif_{\cC_1^{(n)}}*\Unif_{\cC_2^{(n)}}, \Unif_{\kE(\boldsymbol{\cC}^{(n)})} \pd \to 0$ as $n\to \infty$.
On the other hand, if $f_2( \cC_1^{(n)} ) = o(n)$ does not hold, then there exists $0<a\leq 1/2$ such that $f_2( \cC_1^{(n)}) \geq a n$ along a subsequence, and therefore (since $f_2( \cC_2^{(n)}) =n/2 \geq a n$), by Proposition \ref{prop:lower bound at time 2 via compensations of transpositions} the convergence does not hold.
\end{proof}

In terms of random maps, Theorem \ref{thm: products of two permutations and random maps} immediately implies the following theorem.

\begin{theorem}\label{thm: number of vertices in random maps}
 Consider for all $N$ a sequence $(n_j^{(N)})_{1 \leq j \leq N}$ such that $\sum_{j} j n_j^{(N)}=2N$. Assume that $n_1^{(N)}=0$ for all $N$, and that $n_2^{(N)}=o(N)$. Let $V_N$ be the number of vertices of $M_N$, and $C_N$ be the number of cycles in a uniform permutation of $\kA_N$ (resp. $\kS_N \backslash \kA_N$) if the conjugacy classes of $\alpha$ and $\beta$ have the same sign (resp. opposite signs).
 Then, as $N \rightarrow \infty$:
    \begin{align*}
        \dtv(\cL(V_N), \cL(C_N)) \rightarrow 0.
    \end{align*}
\end{theorem}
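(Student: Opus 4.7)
The plan is essentially to translate Theorem \ref{thm: products of two permutations and random maps} into the random map language via Gamburd's correspondence recalled at the start of Section \ref{s: random maps}. First, I would set up the permutation model: the surface $M_N$ with $2N$ half-edges is encoded by an independent pair $(\alpha, \beta)$, where $\alpha \in \kS_{2N}$ is uniform in the conjugacy class $\cC_1^{(2N)}$ with cycle type $[1^{n_1^{(N)}} 2^{n_2^{(N)}} 3^{n_3^{(N)}}\cdots]$, and $\beta$ is uniform in the class $\cC_2^{(2N)}$ of fixed-point-free involutions of $[2N]$. The key structural fact is that each cycle of $\alpha\beta$ corresponds to a vertex of $M_N$, so $V_N = \cyc(\alpha\beta)$.

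Next, I would apply Theorem \ref{thm: products of two permutations and random maps} with $n = 2N$. Its hypothesis $f_2(\cC_1^{(2N)}) = n_2^{(N)} = o(N) = o(2N)$ is exactly the assumption on the sequence $(n_j^{(N)})$, so the theorem yields
\begin{equation}
\dtv\!\pg \cL(\alpha\beta),\; \Unif_{\kE(\boldsymbol{\cC}^{(2N)})} \pd \xrightarrow[N\to\infty]{} 0,
\end{equation}
where $\kE(\boldsymbol{\cC}^{(2N)})$ is the coset of $\kA_{2N}$ of sign $\sgn(\cC_1^{(2N)})\sgn(\cC_2^{(2N)})$.

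Finally, I would invoke the elementary contraction of total variation under deterministic pushforwards: applied to the map $\sigma \mapsto \cyc(\sigma)$, this gives
\begin{equation}
\dtv\!\pg \cL(\cyc(\alpha\beta)),\; \cL\pg\cyc(\Unif_{\kE(\boldsymbol{\cC}^{(2N)})})\pd \pd \xrightarrow[N\to\infty]{} 0.
\end{equation}
The left-hand distribution is $\cL(V_N)$. For the right-hand side, note that the coset $\kE(\boldsymbol{\cC}^{(2N)})$ equals $\kA_{2N}$ when $\sgn(\cC_1^{(2N)}) = \sgn(\cC_2^{(2N)})$ and $\kS_{2N}\setminus\kA_{2N}$ otherwise, which is exactly the distribution defining $C_N$ in the statement. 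Hence $\dtv(\cL(V_N),\cL(C_N)) \to 0$, as claimed.

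There is no substantial obstacle here: the entire content of the theorem lies in Theorem \ref{thm: products of two permutations and random maps}, itself resting on the sharp character bounds and Witten zeta estimates of the paper. The only care needed is the bookkeeping of signs and of the factor $2$ between $n$ and $N$, both of which resolve immediately.
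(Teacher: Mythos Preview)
Your proof is correct and is exactly the argument the paper intends: the paper simply states that Theorem \ref{thm: number of vertices in random maps} follows immediately from Theorem \ref{thm: products of two permutations and random maps}, and you have made this deduction explicit via the Gamburd correspondence and the contraction of total variation under the pushforward $\sigma\mapsto\cyc(\sigma)$. The only cosmetic remark is that the paper's statement writes $\kA_N$ where the underlying permutation group is $\kS_{2N}$; your identification with $\kA_{2N}$ (resp.\ $\kS_{2N}\setminus\kA_{2N}$) is the intended reading.
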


This provides an algebraic proof of \cite[Theorem 3]{BudzinskiCurienPetri2019}, in the case where $n_1^{(N)}=0$ for all $N$. Combining Theorem \ref{thm: number of vertices in random maps} with \cite[Equation (3.5)]{ChmutovPittel2016}, we immediately find that, under the assumptions of Theorem \ref{thm: number of vertices in random maps}, $V_N$ satisfies a central limit theorem. As a corollary, using Euler's formula, the genus $G_N$ of $M_N$ also satisfies a central limit theorem. 

Applying our bounds carefully allows us to obtain in addition a local central limit theorem when $n_2^{(N)}$ does not grow too fast. 

\begin{theorem}
    Assume that $n_1^{(N)}=0$ for all $N$, and that $n_2^{(N)}=o\left( \frac{N}{\ln N} \right)$. Fix $a>0$. Then, uniformly for all $\ell$ satisfying
    \begin{align*}
    \frac{\ell-\E[C_N]}{\sqrt{\Var(C_N)}} \in [-a,a],
    \end{align*}
    we have
    \begin{align*}
    \P(V_N=\ell) = \P\left( G_N=-N/2+n+\ell \right) = \frac{\left(2+O\left( \Var(C_N)^{-1/2} \right)\right) \exp \left( -\frac{\left( \ell-\E[C_N] \right)^2}{2 \Var(C_N)} \right)}{\sqrt{2\pi \Var(C_N)}},
    \end{align*}
    where we recall that $n=\sum_{j} n_j$ is the number of polygons.
\end{theorem}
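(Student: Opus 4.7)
The plan is to deduce the local CLT for $V_N$ from the corresponding one for $C_N$ by a sharp pointwise comparison of their distributions, obtained via a character expansion. Using the Fourier formula
\begin{equation*}
\Unif_{\cC_1^{(N)}} \ast \Unif_{\cC_2^{(N)}}(\sigma) = \frac{1}{n!}\sum_\lambda \frac{\ch^\lambda(\cC_1^{(N)})\ch^\lambda(\cC_2^{(N)})}{d_\lambda}\,\ch^\lambda(\sigma)
\end{equation*}
together with the classical generating-function identity $\sum_{\sigma\in\kS_n}\ch^\lambda(\sigma)\,x^{\cyc(\sigma)} = \prod_{u\in\lambda}(x + c(u))$, where $c(u) = j-i$ is the content of the box $u=(i,j)$, one extracts $[x^\ell]$ to obtain $\P(V_N = \ell) = \P(C_N = \ell) + R(\ell)$. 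Here the contributions of $\lambda \in \{[n],[1^n]\}$ combine into exactly $\P(C_N = \ell) = (c(n,\ell) + \varepsilon\,s(n,\ell))/n!$ with $\varepsilon = \sgn(\cC_1^{(N)})\sgn(\cC_2^{(N)})$ (and $c(n,\ell), s(n,\ell)$ the unsigned and signed Stirling numbers of the first kind), while
\begin{equation*}
R(\ell) = \frac{1}{n!}\sum_{\lambda \in \widehat{\kS_n}^{**}}\frac{\ch^\lambda(\cC_1^{(N)})\ch^\lambda(\cC_2^{(N)})}{d_\lambda}\,[x^\ell]\prod_{u \in \lambda}(x + c(u)).
\end{equation*}

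For the main term I would invoke the classical local CLT for $C_N$ (uniform on $\kA_N$ or $\kS_N\setminus\kA_N$), which follows from a saddle-point analysis of $x(x+1)\cdots(x+n-1)$ on the contour $|x|=1$ and produces the stated Gaussian form with the factor $(2 + O(\Var(C_N)^{-1/2}))$. For the error I would prove the pointwise estimate $|R(\ell)| = O(1/\ln N)$ uniformly in $\ell$ in the CLT window; since the Gaussian density is of order $1/\sqrt{\ln N}$ and $\Var(C_N) \sim \ln N$, this fits within the stated $O(\Var(C_N)^{-1/2})$ multiplicative correction on the prefactor $2$.

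The bound on $R(\ell)$ combines three ingredients: the sharp character bound $|\ch^\lambda(\sigma)| \leq d_\lambda^{(1+C/\ln n)E(\sigma)}$ of Theorem \ref{thm:ALS improved character bound}; the refined orbit-growth estimate $E(\cC_1^{(N)}) \leq \tfrac{1}{2} - \omega_+\!\bigl(\tfrac{\ln\ln N}{\ln N}\bigr)$, which follows as in Proposition \ref{prop:bounds on E(sigma) different conditions}(a) by tracking constants in the stronger hypothesis $n_2^{(N)} = o(N/\ln N)$ (the extra log-log factor relative to $o(N)$ is exactly the saving required); and the Witten-zeta estimate of Proposition \ref{prop: bound Witten zeta with alpha}, applied to the exponent $s_N$ that this ingredient controls. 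The coefficient extraction $[x^\ell]\prod_u(x+c(u))$ is handled by Cauchy's integral formula on a contour $|x|=r$, with $r$ chosen near the saddle point $r \asymp 1$ (since $\ell \asymp \E C_N \asymp \ln N$ while the polynomial has degree $n$), using that the zero set of $\prod_u(x+c(u))$ is the set of contents of $\lambda$ and yields strong cancellation near $x=1$.

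The main obstacle is precisely this pointwise character of the estimate on $R(\ell)$: a naive Cauchy--Schwarz argument bounding $|R(\ell)|$ by the total-variation distance of Theorem \ref{thm: number of vertices in random maps} only yields $|R(\ell)| = O((\ln N)^{-c})$ for some small $c < 1/2$, which is not enough to feed into a local CLT. The essential saving must come from the analytic structure of $\prod_u(x+c(u))$: for each $\lambda \in \widehat{\kS_n}^{**}$ this polynomial has roots lined up along the contents of $\lambda$, and Cauchy's formula on a contour near $|x|=1$ provides cancellation in $[x^\ell]$ for $\ell$ near the mean, which combines multiplicatively with the sharpened character bounds and the Witten-zeta decay to produce the required $O(1/\ln N)$ accuracy.
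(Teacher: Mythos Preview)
The paper does not actually give a proof of this theorem: it is stated with only the prefatory phrase ``Applying our bounds carefully allows us to obtain in addition a local central limit theorem\ldots'', and the subsequent remark that the general case $n_2^{(N)}=o(N)$ would require better control on the speed of convergence in Theorem~\ref{thm: number of vertices in random maps}. The intended argument is thus to rerun Chmutov--Pittel's proof of their Theorem~3.1 (the $O(N^{-1})$ rate for $\dtv(\cL(V_N),\cL(C_N))$ under $n_1=n_2=0$), substituting the paper's sharpened character bound (Theorem~\ref{thm:ALS improved character bound}) and Witten-zeta estimate (Proposition~\ref{prop: bound Witten zeta with alpha}) for the Larsen--Shalev bounds they used. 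Your Fourier/content-polynomial framework is precisely this strategy, so at the structural level your proposal matches the paper's implicit approach and is in fact more detailed than what the paper writes down.

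Where your proposal remains incomplete is exactly where you say it is: the pointwise bound on $R(\ell)$. You are right that controlling $|R(\ell)|$ via total variation alone gives only $(\ln N)^{-c}$ with $c<1/2$, which is too weak, and that the extra saving must come from the content polynomial. But your contour-integral idea is a sketch rather than an argument: you do not fix the radius, estimate $\bigl|\prod_{u}(x+c(u))\bigr|$ on the contour, or show that the resulting bound sums over $\lambda\in\widehat{\kS_n}^{**}$ via the Witten zeta to the claimed $O(1/\ln N)$. (That target is correct: the statement allows a multiplicative $O(\Var(C_N)^{-1/2})$ on the Gaussian density, which is an additive $O(1/\ln N)$.) Chmutov--Pittel's own treatment of this step is more combinatorial than analytic --- exploiting directly the factorisation of $\prod_u(x+c(u))$ induced by the first row of $\lambda$ --- and that is likely the cleaner route here as well; whichever method you choose, this is the part of the argument that requires real work and is not yet done.
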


Obtaining a local central limit theorem in the general case $n_2^{(N)}=o(N)$ would require a better control on the speed of convergence in Theorem \ref{thm: number of vertices in random maps}.

Let us finally mention that \cite[Proposition 14]{BudzinskiCurienPetri2019} states that the surface $M_N$ is connected with probability going to $1$ as $N \rightarrow \infty$.

\section{Further mixing time estimates}
\label{s: further mixing time estimates}

This last section consists of the proofs of Theorems \ref{thm: mixing time 2 if few cycles} and \ref{thm: cutoff large support no short cycles} about mixing time estimates for specific conjugacy classes, and complementary results about products of 2 permutations.

\subsection{Products of two permutations with constraints on cycle types}\label{s: mixing time few cycles}

\subsubsection{Mixing time of permutations with few cycles}
We prove here Theorem \ref{thm: mixing time 2 if few cycles}, which states that conjugacy classes of permutations with $o(\sqrt{n})$ cycles mix in 2 steps.

\begin{proof}[Proof of Theorem \ref{thm: mixing time 2 if few cycles}]
This is the same proof as for Theorem \ref{thm: application Lulov Pak extended}: by Proposition \ref{prop:bounds on E(sigma) different conditions} (b), we have $2E(\cC^{(n)}) = 1 - \omega_+\pg \frac{1}{\ln n}\pd$, and it follows from Proposition \ref{prop: sufficient condition for mixing with sum of two E(sigma)} that $\dtv\pg \Unif_{\cC^{(n)}}^{*2},\Unif_{\kA_n}\pd \xrightarrow[n\to \infty]{} 0$.
\end{proof}
\begin{remark}
    The condition $\cyc(\cC^{(n)}) = o(\sqrt{n})$ is not a characterization. For instance if $\cC^{(n)} \sim [3^{n/3}]$, we have $\dtv\pg \Unif_{\cC^{(n)}}^{*2},\Unif_{\kA_n}\pd \xrightarrow[n\to \infty]{} 0$, while $\cyc(\cC^{(n)}) \asymp n$. However, the condition $\cyc(\cC^{(n)}) = o(\sqrt{n})$ is sharp. For example, if $\cC^{(n)}$ is the conjugacy class of cycles of length $k = n - \lfloor \alpha \sqrt{n}\rfloor$ for some fixed $\alpha>0$, one can show that the number of fixed points of permutations taken according to $\sigma \sim \Unif_{\cC^{(n)}}^{*2}$ is asymptotically $\Poiss(1+ \alpha^2)$, and therefore (looking at the probability to have 0 fixed point) that $\dtv\pg \Unif_{\cC^{(n)}}^{*2},\Unif_{\kA_n}\pd +o(1) \geq e^{-1} - e^{-(1+\alpha^2)}>0$.
\end{remark}

\subsubsection{Products of long cycles with other permutations}

The characters of long cycles are very small: if $\sigma \in \kS_n$ is an $n$-cycle, then $E(\sigma) = 1/n$ and therefore by Theorem \ref{thm:ALS improved character bound} we have $ \bg \ch^\lambda(\sigma)\bd \leq d_\lambda^{O(1/n)}$. Actually, by the Murnaghan--Nakayama rule, if $\sigma$ is an $n$-cycle we have $\ch^\lambda(\sigma) = (-1)^i$ if $\lambda = [n-i, 1^i]$ for some $0\leq i \leq n-1$, and $\ch^\lambda(\sigma) = 0$ for other diagrams, and therefore that for all diagrams we have $\bg \ch^\lambda(\sigma)\bd \leq 1$. 

If $\cC^{(n)}$ is the conjugacy class of $n$-cycles, the $L^2$ distance to stationarity then rewrites as 
\begin{equation}\label{eq: L2 distance n cycles}
    \mathrm{d}_{L^2} \pg \Unif_{\cC^{(n)}}^{*t}, \Unif_{\kE_n} \pd = \sqrt{\sum_{i=0}^{n-1} d_{[n-i, 1^i]}^2 \pg \frac{1}{d_{[n-i, 1^i]}} \pd^{2t}} = \sqrt{\sum_{i=0}^{n-1} d_{[n-i, 1^i]}^{2(1-t)}}.
\end{equation}

The bound above hints that $n$-cycles mix very fast: they need hardly more than one step to mix. Although it does not make sense in terms of random walks to consider times that are not integers, if we replace $t$ by $1 + \omega_+(1/\ln n)$ in \eqref{eq: L2 distance n cycles}, then the right hand side tends to $0$.

Another way to see that $n$-cycles mix well is to show that they can compensate the irregularities of other cycle types. This is the content of the next theorem.

\begin{theorem}\label{thm: n cycles compensate irregularities}
For each $n\geq 2$ let $\cC_1^{(n)}$ denote the conjugacy class of $n$-cycles, and let $\cC_2^{(n)}$ be a conjugacy class of $\kS_n$. Denote $\boldsymbol{\cC}^{(n)} = (\cC_1^{(n)}, \cC_2^{(n)})$. Assume that $f_1( \cC_2^{(n)} ) = o(n)$.
Then we have

\begin{equation}
    \dtv\pg \Unif_{\cC_1^{(n)}}*\Unif_{\cC_2^{(n)}}, \Unif_{\kE(\boldsymbol{\cC}^{(n)})} \pd \xrightarrow[n\to \infty]{} 0.
\end{equation}
\end{theorem}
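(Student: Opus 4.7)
The plan is to follow the Diaconis--Shahshahani approach as in the proof of Theorem \ref{thm: application Lulov Pak extended}, but exploiting the very special structure of characters of $n$-cycles. First I would apply Lemma \ref{lem: lemme DS étendu à des classes potentiellement différentes} with $t=2$, giving
\begin{equation}
4 \delta(\boldsymbol{\cC}^{(n)})^2 \leq \sum_{\lambda \in \widehat{\kS_n}^{**}} d_\lambda^2 \, \chi^\lambda(\cC_1^{(n)})^2 \, \chi^\lambda(\cC_2^{(n)})^2 = \sum_{\lambda \in \widehat{\kS_n}^{**}} \ch^\lambda(\cC_1^{(n)})^2 \, \chi^\lambda(\cC_2^{(n)})^2.
\end{equation}

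The key simplification comes from the Murnaghan--Nakayama rule applied to $\cC_1^{(n)}$: if $\sigma$ is an $n$-cycle, then $\ch^\lambda(\sigma)$ vanishes unless $\lambda$ is a hook $[n-i,1^i]$ (in which case $\ch^\lambda(\sigma) = (-1)^i$, so $|\ch^\lambda(\sigma)| \leq 1$). This collapses the sum to one over hooks $\lambda = [n-i, 1^i]$ with $1 \leq i \leq n-2$ (the two excluded hooks being $[n]$ and $[1^n]$, which are precisely the representations in $\widehat{\kS_n} \setminus \widehat{\kS_n}^{**}$):
\begin{equation}
4 \delta(\boldsymbol{\cC}^{(n)})^2 \leq \sum_{i=1}^{n-2} \chi^{[n-i,1^i]}(\cC_2^{(n)})^2.
\end{equation}

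To bound $|\chi^\lambda(\cC_2^{(n)})|$, I would invoke Corollary \ref{cor:bounds on characters with different conditions}(c): since $f_1(\cC_2^{(n)}) = o(n)$ by assumption, uniformly in $\lambda \in \widehat{\kS_n}$ we have $|\ch^\lambda(\cC_2^{(n)})| \leq d_\lambda^{1 - \omega_+(1/\ln n)}$, i.e. $|\chi^\lambda(\cC_2^{(n)})| \leq d_\lambda^{-s_n/2}$ for some sequence $s_n$ with $s_n \ln n \to \infty$. Plugging this into the previous display yields
\begin{equation}
4 \delta(\boldsymbol{\cC}^{(n)})^2 \leq \sum_{i=1}^{n-2} d_{[n-i,1^i]}^{-s_n} \leq \zeta_n\!\left(\widehat{\kS_n}^{**}, s_n\right),
\end{equation}
since the nontrivial hooks form a subset of $\widehat{\kS_n}^{**}$. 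The conclusion then follows directly from Proposition \ref{prop: witten zeta iff condition S n **}, which gives $\zeta_n(\widehat{\kS_n}^{**}, s_n) \to 0$ as soon as $s_n \ln n \to \infty$. No step looks delicate: the main content is recognizing that the $n$-cycle restricts the Fourier sum to hooks, at which point the improved Witten zeta bound of Section \ref{s: bounds Witten zeta improvement Liebeck Shalev} is strong enough to absorb the $n-2$ summands despite the smallness of $s_n$.
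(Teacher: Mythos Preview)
Your proof is correct. The paper's own argument is slightly different: rather than invoking the Murnaghan--Nakayama rule to kill all non-hook representations, it simply notes that $E(\cC_1^{(n)}) = 1/n = O(1/\ln n)$, combines this with Proposition \ref{prop:bounds on E(sigma) different conditions}(c) to get $E(\cC_1^{(n)}) + E(\cC_2^{(n)}) = 1 - \omega_+(1/\ln n)$, and then appeals directly to Proposition \ref{prop: sufficient condition for mixing with sum of two E(sigma)}. Your route is more hands-on (you exploit the exact vanishing on non-hooks instead of a crude bound $|\ch^\lambda(\cC_1^{(n)})| \leq D(\lambda)^{1/n}$), which keeps the Fourier sum finite and explicit; the paper's route is more abstract but has the advantage, noted in the remark following the theorem, of immediately generalizing to any $\cC_1^{(n)}$ with $E(\cC_1^{(n)}) = O(1/\ln n)$ (e.g.\ $\cyc(\cC_1^{(n)}) = O(1)$), where no such exact vanishing is available.
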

\begin{proof}
    First we have $E(\cC_1^{(n)}) = 1/n =  O(1/\ln n)$, and by Proposition \ref{prop:bounds on E(sigma) different conditions} (c) we have $E(\cC_2^{(n)}) = 1- \omega_+(1/\ln n)$. We deduce that $E(\cC_1^{(n)}) + E(\cC_2^{(n)}) = 1- \omega_+(1/\ln n)$, and conclude by Proposition \ref{prop: sufficient condition for mixing with sum of two E(sigma)} that $\dtv\pg \Unif_{\cC_1^{(n)}}*\Unif_{\cC_2^{(n)}}, \Unif_{\kE(\boldsymbol{\cC}^{(n)})} \pd \to 0$ as $n\to \infty$.
\end{proof}

\begin{remark}
    The same arguments apply for conjugacy classes $\cC_1^{(n)}$ such that $E(\cC_1^{(n)}) = O(1/\ln n)$, for instance if $\cyc(\cC_1^{(n)}) = O(1)$.
\end{remark}

\begin{proposition}
    For each $n\geq 2$ let $\cC_1^{(n)}$ denote the conjugacy class of $n$-cycles, and let $\cC_2^{(n)}$ be a conjugacy class of $\kS_n$. Denote $\boldsymbol{\cC}^{(n)} = (\cC_1^{(n)}, \cC_2^{(n)})$. Assume that there exists a constant $\beta>0$ such that for $n$ large enough we have $f_1(\cC_2^{(n)}) \geq \beta n$. Then 
\begin{equation}
\label{eq:convergence en variation totale selon points fixes}
    \dtv\pg \Unif_{\cC_1^{(n)}}*\Unif_{\cC_2^{(n)}}, \Unif_{\kE(\boldsymbol{\cC}^{(n)})} \pd \underset{n \to \infty}{\not\rightarrow} 0.
\end{equation}
\end{proposition}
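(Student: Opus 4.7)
The plan is to use the number of fixed points $f_1(\pi)$ of the product $\pi=\sigma_1\sigma_2$ as a distinguishing statistic between $\mu_1:=\Unif_{\cC_1^{(n)}}*\Unif_{\cC_2^{(n)}}$ and $\mu_2:=\Unif_{\kE(\boldsymbol{\cC}^{(n)})}$. The intuition is that, since $\sigma_2$ has at least $\beta n$ fixed points, the product $\sigma_1\sigma_2$ agrees with $\sigma_1$ on most of $[n]$ and hence largely inherits the rigid cyclic structure of the $n$-cycle $\sigma_1$; this should force $f_1(\pi)$ to be strictly smaller in expectation than under the uniform measure. Concretely, I will show that $\bbE_{\mu_2}[f_1]-\bbE_{\mu_1}[f_1]\geq\beta/2$ asymptotically, while both second moments $\bbE_{\mu_i}[f_1^2]$ are uniformly bounded, and conclude via a truncation argument.

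Applying Schur's lemma to the class averages $\bbE_{\sigma\sim\Unif_\cC}[\rho^\lambda(\sigma)]=\chi^\lambda(\cC)\Id_{d_\lambda}$ and the independence of $\sigma_1,\sigma_2$ gives the identity $\bbE_{\mu_1}[\ch^\lambda(\pi)]=d_\lambda\,\chi^\lambda(\cC_1^{(n)})\chi^\lambda(\cC_2^{(n)})=\ch^\lambda(\cC_1^{(n)})\chi^\lambda(\cC_2^{(n)})$. Combining this with the identity $f_1(\sigma)=1+\ch^{[n-1,1]}(\sigma)$, the Murnaghan--Nakayama value $\ch^{[n-1,1]}(\cC_1^{(n)})=-1$, and $\chi^{[n-1,1]}(\cC_2^{(n)})=(m-1)/(n-1)$ where $m:=f_1(\cC_2^{(n)})\geq\beta n$, yields $\bbE_{\mu_1}[f_1]=1-(m-1)/(n-1)\leq 1-\beta+O(1/n)$, while an analogous (simpler) computation under the uniform coset measure gives $\bbE_{\mu_2}[f_1]=1$. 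For the second moment, I would use the decomposition $[n-1,1]^{\otimes 2}=[n]\oplus[n-1,1]\oplus[n-2,2]\oplus[n-2,1,1]$ (valid for $n\geq 4$) to expand $f_1^2=2+3\ch^{[n-1,1]}+\ch^{[n-2,2]}+\ch^{[n-2,1,1]}$. For each of the four diagrams, Murnaghan--Nakayama provides $\ch^\lambda(\cC_1^{(n)})\in\{-1,0,1\}$ (with $\ch^{[n-2,2]}(\cC_1^{(n)})=0$ since $[n-2,2]$ is not a hook), and $|\chi^\lambda(\cC_2^{(n)})|\leq 1$ always, so every contribution is $O(1)$ and $\bbE_{\mu_i}[f_1^2]=O(1)$.

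To conclude, set $X:=\min(f_1,K)$ for an integer $K=K(\beta)$ to be chosen. The elementary inequality $(x-K)_+\leq x^2/K$ for $x\geq 0$ combined with the uniform second-moment bound gives $\bbE_{\mu_i}[(f_1-K)_+]\leq\bbE_{\mu_i}[f_1^2]/K=O(1/K)$, hence $\bbE_{\mu_2}[X]-\bbE_{\mu_1}[X]\geq\beta-O(1/K)$. Choosing $K$ large enough so that $O(1/K)\leq\beta/2$, and using the standard bound $\dtv(\mu_1,\mu_2)\geq|\bbE_{\mu_1}[X]-\bbE_{\mu_2}[X]|/(2\|X\|_\infty)$, we obtain $\dtv(\mu_1,\mu_2)\geq\beta/(4K)=:c(\beta)>0$, which establishes~\eqref{eq:convergence en variation totale selon points fixes}. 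The main technical step is the second-moment bound, which relies crucially on having explicit and $O(1)$ values of $\ch^\lambda$ at $n$-cycles for all four small diagrams $[n],[n-1,1],[n-2,2],[n-2,1,1]$.
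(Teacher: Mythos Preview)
Your proof is correct and follows the same core strategy as the paper: use $f_1$ as a distinguishing statistic, show that $\bbE_{\mu_1}[f_1]\leq 1-\beta+o(1)$ while $\bbE_{\mu_2}[f_1]=1+o(1)$, and conclude via a truncation argument. The differences are in the technical execution. The paper computes $\bbE_{\mu_1}[f_1]$ by a direct probabilistic argument (conditioning on $\tau_2$ being a fixed representative of its class and noting that an $n$-cycle has no fixed points, so $\tau_1\tau^*(i)\neq i$ whenever $\tau^*(i)=i$), whereas you obtain the same value via the character identity $f_1=1+\ch^{[n-1,1]}$ together with $\bbE_{\mu_1}[\ch^\lambda]=\ch^\lambda(\cC_1)\chi^\lambda(\cC_2)$. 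For the truncation, the paper avoids any second-moment bound by exploiting the one-sided inequality $h_K(x):=x\mathds{1}_{x\leq K}\leq x$ under $\mu_1$ and using the Poisson limit of $f_1$ under $\mu_2$ to lower-bound $\bbE_{\mu_2}[h_K(f_1)]$; your route via $\bbE_{\mu_i}[f_1^2]=O(1)$ and $(x-K)_+\leq x^2/K$ is slightly heavier but self-contained (no appeal to Goncharov's theorem) and yields explicit constants. Both approaches are short and elementary; yours has the pleasant feature that the tensor-square decomposition makes the second moments completely explicit, while the paper's is marginally simpler in that it needs only first moments.
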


\begin{proof}
In this proof we drop the dependence in $n$ from the notation of most variables, and we denote $F = f_1(\cC_2^{(n)})$.
Let $\tau_1\sim \Unif_{\cC_1^{(n)}}$ and $\tau_2\sim \Unif_{\cC_2^{(n)}}$ be independent variables. 
Denote by $\tau^*$ a deterministic permutation of $\cC_2^{(n)}$ whose fixed points are $\{1, \ldots, F\}$. By conjugacy invariance of $\tau_1$ we have
\begin{align*}
    f_1(\tau_1 \circ \tau_2) \overset{(d)}{=} f_1(\tau_1 \circ \tau^*),
\end{align*}
where $\overset{(d)}{=}$ denotes the equality in distribution.
Since $\tau_1$ is a uniformly random $n$-cycle, for any $i \in \{1, \ldots, n\}$ we have 

\begin{equation}
   \P(\tau_1 \circ \tau^*(i)=i)  = \left\{
    \begin{array}{ll}
        0 & \mbox{if } 1 \leq i \leq F, \\
        \frac{1}{n} & \mbox{otherwise}.
    \end{array}
\right. 
\end{equation}
In particular, letting $\sigma$ be distributed as $\tau_1 \circ \tau_2$, we have: 
\begin{align*}
    \E[f_1(\sigma)] = \sum_{i=1}^{n} \P\left( \tau_1 \circ \tau_2(i)=i \right) = m_2\cdot 0 + (n-m_2) \cdot \frac{1}{n} =1- \frac{m_2}{n}\leq 1-\beta.
\end{align*}
Hence, setting $h_K: x \mapsto x \mathds{1}_{x \leq K}$ for $K \geq 1$, we have for any $K\geq 1$

\begin{equation}
\label{eq:hK de f1 de sigma}
    \E[h_K(f_1(\sigma))] \leq \E[f_1(\sigma)] \leq 1-\beta.
\end{equation}

On the other hand, fix an integer $K_0 := K_0(\beta)$ large enough such that $\sum_{k=0}^{K_0} k \frac{e^{-1}}{k!} \geq 1-\beta/3$. Such a $K_0$ exists, since $\sum_{k \geq 0} k \frac{e^{-1}}{k!} = 1$.
Now, let $\Pi \sim \Unif_{\kE(\boldsymbol{\cC}^{(n)})}$ be a uniformly random permutation in $\kE(\boldsymbol{\cC}^{(n)})$. Since $f_1(\Pi)$ converges in distribution to $\Poiss(1)$, for any fixed $k \geq 0$, we have $\P(f_1(\Pi)=k) \xrightarrow[n\to \infty]{} \frac{e^{-1}}{k!}$. Therefore, for $n$ large enough:

\begin{align*}
     \E[h_{K_0}(f_1(\Pi))] \geq \sum_{k=0}^{K_0} k \frac{e^{-1}}{k!}  - \frac{\beta}{6} \geq 1-\frac{\beta}{2}.
\end{align*}
By \eqref{eq:hK de f1 de sigma}, this implies that, for $n$ large enough (recall that $\cL(X)$ denotes the law of a random variable $X$): 

\begin{equation}
     \E[h_{K_0}(f_1(\Pi))] -  \E[h_{K_0}(f_1(\sigma))] \geq \frac{\beta}{2}.
\end{equation}
Letting $\mu := \cL(f_1(\Pi))$ and $\nu := \cL(f_1(\sigma))$ be the laws of $f_1(\Pi)$ and $f_1(\sigma)$, we get
\begin{equation}
    \frac{\beta}{2} \leq \sum_{k=0}^{K_0} k(\mu(k)-\nu(k)) \leq \sum_{k=0}^{K_0} K_0 \max_{0\leq k \leq K_0}(\mu(k)-\nu(k)) = (K_0 + 1) K_0 \max_{0\leq k \leq K_0}(\mu(k)-\nu(k)).
\end{equation}
In particular, there exists $k \leq K_0$ for which
\begin{equation}
    \bbP(f_1(\Pi) = k) - \bbP(f_1(\sigma) = k) \geq \frac{\beta}{2K_0(K_0+1)},
\end{equation}
which concludes the proof.
\end{proof}

\subsection{Cutoff for conjugacy classes with large support and no short cycles}

Here we prove Theorem \ref{thm: cutoff large support no short cycles}, that is, cutoff for conjugacy classes with no short cycles.

Let us first prove another bound on $E(\sigma)$, this time depending on the length of the smallest non-trivial cycle of $\sigma$.

For $n\geq 2$ and $\sigma \in \mathfrak{S}_n \backslash \{\Id\}$, let $\ibis := \ibis(\sigma) = \min\ag i\geq 2 \du f_i \geq 1 \ad$ be the length of the smallest non-trivial cycle of $\sigma$. We also denote by $\cyc_{\geq i}(\sigma)$ the number of cycles of $\sigma$ of length at least $i$.

\begin{proposition}\label{prop: bound E sigma with number of cycles and ibis}
    Let $n\geq 2$ and $\sigma \in \mathfrak{S}_n\backslash\ag \Id\ad$ with at least one fixed point. Then

    \begin{equation}
        E(\sigma) - 1 \leq -\frac{\ln (n/f_1)}{\ln n}\pg 1-\frac{1}{\ibis}\pd.
    \end{equation}
\end{proposition}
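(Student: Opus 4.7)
The plan is to rewrite the inequality in an equivalent form that makes the role of $\ibis$ and of the sum $\sum_i e_i$ transparent, and then exploit the two elementary facts about the orbit growth sequence that $e_i=0$ for $2\leq i<\ibis$ and that $\sum_{i\geq 1} e_i = 1$ (since $\Sigma_n=n$ and $\sigma\neq \Id$).

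First, I would use Lemma \ref{lem: orbit growth sequence logarithmic form}. Because $f_1\geq 1$ by hypothesis, we have $\imin(\sigma)=1$, so $e_1 = \frac{\ln f_1}{\ln n}$. Moreover, for $2\leq i<\ibis$ we have $f_i=0$, hence $\Sigma_i=\Sigma_{i-1}=f_1$ and thus $e_i=0$. In particular
\begin{equation*}
    \sum_{i\geq \ibis} e_i \;=\; 1 - e_1 \;=\; 1-\frac{\ln f_1}{\ln n} \;=\; \frac{\ln(n/f_1)}{\ln n}.
\end{equation*}

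Next, I would rewrite the target bound as follows. Since $\sum_{i\geq 1}e_i = 1$,
\begin{equation*}
    1-E(\sigma) \;=\; \sum_{i\geq 1} e_i - \sum_{i\geq 1}\frac{e_i}{i} \;=\; \sum_{i\geq 1} e_i\!\left(1-\frac{1}{i}\right),
\end{equation*}
so the conclusion is equivalent to
\begin{equation*}
    \sum_{i\geq 1} e_i\!\left(1-\frac{1}{i}\right) \;\geq\; \frac{\ln(n/f_1)}{\ln n}\!\left(1-\frac{1}{\ibis}\right).
\end{equation*}
The term $i=1$ contributes $0$, and all terms with $2\leq i<\ibis$ vanish because $e_i=0$ there. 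Therefore the sum reduces to a sum over $i\geq \ibis$, and on this range $1-\frac{1}{i}\geq 1-\frac{1}{\ibis}$. Factoring this constant out yields
\begin{equation*}
    \sum_{i\geq 1} e_i\!\left(1-\frac{1}{i}\right) \;\geq\; \left(1-\frac{1}{\ibis}\right)\sum_{i\geq \ibis} e_i \;=\; \left(1-\frac{1}{\ibis}\right)\frac{\ln(n/f_1)}{\ln n},
\end{equation*}
which is exactly what we want.

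There is essentially no obstacle here: the argument is a one-line telescoping after the two structural observations ($e_i=0$ for $1<i<\ibis$ and $\sum e_i=1$). The only thing to be careful about is to use $\sigma\neq\Id$ to ensure $\ibis$ is well-defined and that $\Sigma_n=n$, so that the normalization $\sum_i e_i=1$ really holds.
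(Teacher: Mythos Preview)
Your proof is correct and is in fact more direct than the paper's. The paper adapts the telescoping argument of Proposition \ref{prop: borne E sigma avec imin}: it introduces auxiliary quantities $G_i = f_1 + \ibis\sum_{\ibis\leq j\leq i} f_j$, uses the concavity bound $(1+x)^{a}\leq 1+ax$ to obtain $\frac{\ibis}{i}e_i \leq \frac{\ln(G_i/G_{i-1})}{\ln n}$, and then telescopes to get $E(\sigma)\leq e_1 + \frac{1}{\ibis}\frac{\ln(G_n/f_1)}{\ln n}\leq e_1 + \frac{1}{\ibis}\frac{\ln(n/f_1)}{\ln n}$. You instead exploit the simple identity $1-E(\sigma)=\sum_i e_i(1-1/i)$ together with the nonnegativity of the $e_i$, the vanishing of $e_i$ on $2\leq i<\ibis$, and the monotonicity of $1-1/i$. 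Your argument is shorter and avoids both the auxiliary sequence and the concavity inequality; the paper's route has the (minor) virtue of paralleling the earlier proposition, which may explain why the authors chose it.
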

\begin{proof}
    For convenience, in this proof, set $\ell = \ibis(\sigma)$. We define, for $i\geq \ell$,
    \begin{equation}
        G_i = f_1 + \ell \sum_{\ell \leq j \leq i} f_j.
    \end{equation} 

    Lemma \ref{lem: orbit growth sequence logarithmic form} shows that $e_1 = \frac{\ln f_1}{\ln n}$, that $e_i = 0$ for $2\leq i<\ell$, 
    and that $e_\ell = \frac{\ln \pg 1 + \frac{\ell f_\ell}{f_1} \pd}{\ln n} = \frac{\ln \pg \frac{G_\ell}{f_1} \pd}{\ln n}$. Then, using Lemma \ref{lem: orbit growth sequence logarithmic form} and the fact that $(1+x)^a \leq 1+ax$ for [$x \geq 0$ and $0 \leq a \leq 1$], we get for $i>\ell$: 
    \begin{equation}
        \frac{\ell}{i}e_i 
        =  \frac{\ln\pg \pg 1 + \frac{i f_i}{\Sigma_{i-1}}\pd^{\ell/i}\pd}{\ln n} \leq \frac{\ln\pg 1 + \frac{\ell  f_i}{\Sigma_{i-1}}\pd}{\ln n} \leq \frac{\ln\pg 1 + \frac{\ell f_i}{ G_{i-1}}\pd}{\ln n} = \frac{\ln\pg G_{i}/ G_{i-1}\pd}{\ln n},
    \end{equation}
where we used in the last inequality that

\begin{equation}
    \Sigma_{i-1} = f_1 + \sum_{\ell\leq j \leq i-1} j f_j \geq f_1 + \ell \sum_{\ell\leq j \leq i-1} f_j = G_{i-1}.
\end{equation}
The terms then compensate as in the proof of Proposition \ref{prop: borne E sigma avec imin}: we have
\begin{equation}
\begin{split}
    E(\sigma) = e_1 + \frac{e_{\ell}}{\ell} + \sum_{i>\ell} \frac{e_i}{i}  = \frac{\ln f_1}{\ln n} + \frac{1}{\ell} \frac{\ln (G_\ell/f_1)}{\ln n} + \frac{1}{\ell} \sum_{i>\ell} \frac{\ell}{i}e_i  \leq \frac{\ln f_1}{\ln n} + \frac{1}{\ell}\frac{\ln(G_n/f_1)}{\ln n}.
\end{split}
\end{equation}
We conclude that
\begin{equation}
    E(\sigma) \leq \frac{\ln f_1}{\ln n} + \frac{1}{\ell}\frac{\ln(n/f_1)}{\ln n} = \frac{\ln f_1}{\ln n}\pg 1- \frac{1}{\ell}\pd + \frac{1}{\ell} = 1 - \frac{\ln (n/f_1)}{\ln n} \pg 1- \frac{1}{\ell} \pd.
\end{equation}
\end{proof}

\begin{remark}
    If $\sigma$ consists only of fixed points and $\ibis$-cycles, then $E(\sigma) = e_1 + \frac{1-e_1}{\ibis} = e_1\pg 1-\frac{1}{\ibis}\pd+ \frac{1}{\ibis}$. The bound from Proposition \ref{prop: bound E sigma with number of cycles and ibis} is therefore sharp in this case.
\end{remark}

We now turn to the proof of Theorem \ref{thm: cutoff large support no short cycles}.

\begin{proof}[Proof of Theorem \ref{thm: cutoff large support no short cycles}]
    Let $\varepsilon>0$.
    First assume that $f_1=0$. Recall that $\mathrm{d}^{(n)}(0) = 1-o(1)$ since the walk starts from the identity permutation, and $\mathrm{d}^{(n)}(1) = 1-o(1)$ since the walk after 1 step is concentrated of $\cC^{(n)}$. The result to prove is therefore that $\mathrm{d}^{(n)}(2) \to 0$. 
    
    In this case we have $\imin =\min \ag i\geq 2 \du f_i(\cC^{(n)}) \geq 1 \ad \to \infty$ by assumption, so by Proposition \ref{prop: borne E sigma avec imin} we have $E(\cC^{(n)}) \leq \frac{1}{\imin} = o(1)$. It follows from Theorem \ref{thm:LS form E(sigma) + o(1)} that $|\chi^\lambda(\cC^{(n)})| \leq d_\lambda^{o(1)}$, so $d_\lambda|\chi^\lambda(\cC^{(n)})|^2 = d_\lambda^{-1+o(1)}$, and we conclude by the Diaconis--Shahshahani upper bound lemma, Theorem \ref{thm:ALS improved character bound} and bounds on the Witten zeta function that $\mathrm{d}^{(n)}(2) \to 0$.

    From now on we assume that $f_1(\cC^{(n)}) \geq 1$.
    The lower bound is a classical counting argument that can be done with coupon collector methods. We briefly recall it. A second moment argument shows that, if one multiplies $\lf \frac{\ln n}{\ln(n/f)}(1-\varepsilon)\rf$ permutations (i.i.d., uniform in a conjugacy class) that have support size $n-f$, then, with probability $1-o(1)$, at least $n^{\varepsilon}/2$ are fixed points of all these permutations and therefore are fixed points of their product. Since uniform permutations (also on $\kA_n$ and $\kS_n \backslash \kA_n$) have less than $\ln n$ fixed points with probability $1-o(1)$, we deduce that $\mathrm{d}^{(n)}\pg \lf \frac{\ln n}{\ln(n/f)}(1-\varepsilon)\rf\pd \xrightarrow[n\to \infty]{} 1$.

Let us now show the cutoff upper bound. The proof is similar to that of Theorem \ref{thm: application Lulov Pak extended}.
Set for the rest of the proof $t := \lc\frac{\ln n}{\ln (n/f)}(1+\varepsilon)\rc$, $\kE_n = \kE\pg \cC^{(n)}, t \pd$, and $\ell = \ibis = \min\ag i\geq 2 \du f_i(\cC^{(n)}) \geq 1\ad$.

Let $C$ be the constant appearing in Theorems \ref{thm:ALS virtual degree asymptotic bound} and \ref{thm:ALS improved character bound}. First, by  Proposition \ref{prop: bound E sigma with number of cycles and ibis}, and the facts that $\ell \to \infty$, and $f = o(n)$, we have
\begin{equation}
\begin{split}
     \pg 1+ \frac{C}{\ln n} \pd E(\cC^{(n)}) -1 & = \pg 1+ \frac{C}{\ln n} \pd \pg E(\cC^{(n)}) -1\pd + \frac{C}{\ln n} \\
     & \leq -\pg 1+ \frac{C}{\ln n} \pd\frac{\ln (n/f)}{\ln n} \left( 1-\frac{1}{\ell} \right)  + \frac{C}{\ln n} \\
     & = - \frac{\ln (n/f)}{\ln n}(1+o(1)).
\end{split}
\end{equation}
In particular, it is negative for $n$ large enough. We therefore have as $n \rightarrow \infty$:

\begin{equation}
\begin{split}
    1 + t\pg \pg 1+ \frac{C}{\ln n} \pd E(\cC^{(n)}) -1 \pd & = 1 + \pg (1+\varepsilon)\frac{\ln n}{\ln (n/f)}\pd\pg-\frac{\ln (n/f)}{\ln n}(1+o(1))\pd \\
    & = -\varepsilon+o(1).
\end{split}
\end{equation}
We deduce from the Diaconis--Shahshahani upper bound lemma, Theorem \ref{thm:ALS improved character bound}, and bounds on the Witten zeta function,
\begin{equation}
    4\mathrm{d}^{(n)}(t) \leq \sum_{\lambda\in \widehat{\mathfrak{S}_n}^{**}} \pg d_\lambda |\chi^\lambda(\cC^{(n)})|^{t}\pd^2 \leq \sum_{\lambda\in \widehat{\mathfrak{S}_n}^{**}} d_\lambda^{-2\varepsilon + o(1)} \xrightarrow[n\to \infty]{} 0,
\end{equation}
which concludes the proof.
\end{proof}

\section*{Acknowledgements}
          We are grateful to Nathanaël Berestycki, Persi Diaconis, Valentin Féray and Sam Olesker-Taylor for interesting conversations and helpful comments on early versions of the paper.
          
          L.T. was supported by the Pacific Institute for the Mathematical Sciences, the Centre national de la recherche scientifique, and the Simons fundation, via a PIMS-CNRS-Simons postdoctoral fellowship.
          L.T. and P.T. were supported by the Austrian Science Fund (FWF) under grants 10.55776/P33083 and SFB F 1002.
\bibliographystyle{alpha}
\bibliography{bibliographieLucas}
\end{document}